\newenvironment{pic}[1][]
{\begin{aligned}\begin{tikzpicture}[#1]}
{\end{tikzpicture}\end{aligned}}
\def\strarr{length=2pt, width=3pt}
\tikzset{arrow/.style={decoration={
    markings,
    mark=at position #1 with \arrow{>[\strarr]}},
    postaction=decorate},
    reverse arrow/.style={decoration={
    markings,
    mark=at position #1 with {{\arrow{<[\strarr]}}}},
    postaction=decorate}
}
\tikzstyle{dot}=[circle, draw=black, fill=white, inner sep=.4ex, on layer=foreground]
\tikzstyle{blackdot}=[dot, fill=black!50]
\tikzstyle{whitedot}=[dot, fill=white]
\newif\ifvflip\pgfkeys{/tikz/vflip/.is if=vflip}
\newif\ifhflip\pgfkeys{/tikz/hflip/.is if=hflip}
\newif\ifhvflip\pgfkeys{/tikz/hvflip/.is if=hvflip}
\newlength\morphismheight
\newlength\wedgewidth
\tikzset{width/.initial=1mm}
\tikzstyle{morphism}=[font=\small,morphismshape]
    \gdef\node@@on@layer{%
      \setbox\tikz@tempbox=\hbox\bgroup\pgfonlayer{#1}\unhbox\tikz@tempbox\endpgfonlayer\pgfsetlinewidth{\thickness}\egroup}
\def\node@on@layer{\aftergroup\node@@on@layer}
\newcommand{\tinycounit}[1][dot]{
\smash{\raisebox{-1pt}{\ensuremath{\hspace{0pt}\begin{pic}[scale=0.33]
        \node (0) at (0,0) {};
        \node (1) at (0,1) {};
        \node[#1, inner sep=1.5pt] (d) at (0,0.55) {};
        \draw (0.center) to (d.south);
    \end{pic}
    \hspace{-1pt}}}}}
\newcommand{\tinyid}{
\smash{\raisebox{-1pt}{\ensuremath{\hspace{0pt}\begin{pic}[scale=0.33]
        \draw (0,0) to (0,1.25);
    \end{pic}
    \hspace{-1pt}}}}}
\newcommand{\tinycomult}[1][dot]{
\smash{\raisebox{-1pt}{\hspace{-2pt}\ensuremath{\begin{pic}[scale=0.33]
    \node (0) at (0,0) {};
    \node[#1, inner sep=1.5pt] (1) at (0,0.55) {};
    \node (2) at (-0.5,1) {};
    \node (3) at (0.5,1) {};
    \draw (0.center) to (1.south);
    \draw (1.west) to [out=left, in=down, out looseness=1.3] (2.center);
    \draw (1.east) to [out=right, in=down, out looseness=1.3] (3.center);
\end{pic}
}\hspace{-3pt}}}}
\tikzset{halo/.style={
         preaction={draw,white,line width=4pt,-},
         preaction={draw,white,ultra thick, shorten >=-2.5\pgflinewidth}}}
\newcommand{\halfbraiding}[2]{
  % \node[dot, fill=gray!50, opacity=.5, draw=none] at (#1,#2) {};
  \draw[-] (#1+.025,#2+.08) to (#1-.15,#2);
  \draw[-] (#1+.15,#2) to (#1-.025,#2-.08);
}
\newcommand{\changed}[1]{#1}%{\color{blue}#1}}
\theoremstyle{plain}
\newtheorem{theorem}{Theorem}[section]
\newtheorem{proposition}[theorem]{Proposition}
\newtheorem{lemma}[theorem]{Lemma}
\newtheorem{corollary}[theorem]{Corollary}
\theoremstyle{definition}
\newtheorem{definition}[theorem]{Definition}
\newtheorem{example}[theorem]{Example}
\newtheorem*{theorem*}{Theorem}
\newtheorem*{corollary*}{Corollary}
\numberwithin{equation}{section}
\DeclareMathOperator{\ZI}{ZI}
\DeclareMathOperator{\ISub}{ISub}
\DeclareMathOperator{\Spec}{Spec}
\DeclareMathOperator{\Idl}{Idl}
\DeclareMathOperator{\Tr}{Tr}
\DeclareMathOperator{\colim}{colim}
\newcommand{\cat}[1]{\ensuremath{\mathbf{#1}}}
\newcommand{\op}{\ensuremath{^{\mathrm{op}}}}
\newcommand{\id}[1][]{\ensuremath{\mathrm{id}_{#1}}}
\newcommand{\restrict}[1]{\ensuremath{\|_{#1}}}
\newcommand{\Restrict}[1]{\ensuremath{\|^{#1}}}
\newcommand{\downset}{\ensuremath{\mathop{\downarrow}\!}}
\newcommand{\tuple}[1]{\left\langle#1\right\rangle}
\begin{document}
\title{Sheaf representation of monoidal categories}
\author[R.S.\ Barbosa]{Rui Soares Barbosa}
\address{INL -- International Iberian Nanotechnology Laboratory, Braga, Portugal}
\email{rui.soaresbarbosa@inl.int}
\author[C.\ Heunen]{Chris Heunen}
\address{University of Edinburgh, United Kingdom}
\email{chris.heunen@ed.ac.uk}
\date{\today}
\begin{abstract}
  Every small monoidal category with universal \changed{finite} joins of central idempotents is monoidally equivalent to the category of global sections of a sheaf of local monoidal categories on a topological space. 
  Every small stiff monoidal category monoidally embeds into such a category of global sections.
  \changed{An infinitary version of these theorems also holds in the spatial case.}
  These representation results are functorial and subsume the Lambek--Moerdijk--Awodey sheaf representation for toposes, the Stone representation of Boolean algebras, and the Takahashi representation of Hilbert modules as continuous fields of Hilbert spaces.
  Many properties of a monoidal category carry over to the stalks of its sheaf, including having a trace, having exponential objects, having dual objects, having limits of some shape, and the central idempotents forming a Boolean algebra.
\end{abstract}
\maketitle

\section{Introduction}

Representation theorems make abstract structures easier to work with by showing that they always have a more concrete form.
For example, the category $\cat{Vect}$ of vector spaces is easy in the sense that it is not a product of other categories, whereas the category $\cat{Vect} \times \cat{Vect}$ is also monoidal, but in a more complicated way.
The goal of this article is to prove that any monoidal category embeds into a product of easier ones. 
In fact we prove something stronger: nice enough monoidal categories are always equivalent to a dependent product of easy ones. 

To explain what we mean by ``easy'' categories, ``dependent'' products, and ``nice enough'' monoidal categories, consider again $\prod_{i \in \{0,1\}} \cat{Vect}$. Its decomposable nature can be detected by its \emph{central idempotents}, which correspond to the open sets of the discrete topological space $\{0,1\}$. We consider a monoidal category ``nice'' when its central idempotents are respected by tensor products, and have the structure of a \changed{distributive lattice} that is respected by tensor products; in the former case we call the category \emph{stiff} and in the latter we say that the category has \emph{universal finite joins} of central idempotents.
We consider a category ``easy'' when its central idempotents are \emph{\changed{$\vee$-local}} in that any finite cover already contains the covered element;
topologically this means there is a single focal point that every net converges to; logically this is the \changed{disjunction property} that if a finite disjunction holds then one of the disjoints holds. 

\changed{
In addition to the above finitary readings of ``nice'' and ``easy'', we also prove an infinitary version.
Here, the original category is ``nicer'' in that central idempotents form a frame whose structure is respected by tensor products, called having \emph{universal joins}, and that frame is spatial.
At the same time the constituent categories are ``easier'' in that its central idempotents are \emph{$\bigvee$-local}\footnote{\changed{What we call $\vee$-local here is sometimes called ``local''~\cite{awodey:sheafrepresentations,lambekmoerdijk:sheafrepresentations,lambekscott:categoricallogic} or ``sublocal''~\cite{awodey:sheafrepresentationsinlogic} in the literature. What we call $\bigvee$-local here remains unnamed in the literature, but see also footnote~\ref{footnote:hyperlocal}.}}, meaning that every infinite cover already contains the covered element.
}

Finally, by a ``dependent'' product we mean that the fibre $\cat{Vect}$ does not have to be constant but can vary continuously with the index $i$, that is, the category consists of global sections of a sheaf. With this terminology, made more precise below, here are our main results.

\begin{theorem*}
  Any small monoidal category with universal finite joins of central idempotents is monoidally equivalent to the category of global sections of a sheaf of $\vee$-local categories.

  \changed{
	  Any small monoidal category with universal joins of central idempotents forming a spatial frame is monoidally equivalent to the category of global sections of a sheaf of $\bigvee$-local categories.
  }
\end{theorem*}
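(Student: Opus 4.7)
The plan is to follow the standard spectral recipe for sheaf representation theorems, adapting it to the monoidal setting in the style of Lambek--Moerdijk--Awodey. Given a small monoidal category $\cat{C}$ satisfying the finitary hypothesis, the central idempotents $\ZI(\cat{C})$ form a distributive lattice (resp.\ a spatial frame in the infinitary case). I would take the base space $X$ to be the spectrum $\Spec(\ZI(\cat{C}))$, whose points are the prime filters (resp.\ completely prime filters) and whose basic opens $U_u$ are indexed by $u \in \ZI(\cat{C})$.

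Next I would construct a sheaf $\cat{F}$ of monoidal categories on $X$. For a basic open $U_u$, set $\cat{F}(U_u) = \cat{C}\Restrict{u}$, the localisation of $\cat{C}$ obtained by inverting the counit of $u$; this has an induced monoidal structure because central idempotents are well-behaved with respect to $\otimes$, and the restriction maps are induced by the order on idempotents. I would check the sheaf condition directly from the universal joins axiom: a compatible family on a cover $u = \bigvee u_i$ glues to a section on $U_u$ because joins of central idempotents are universal, meaning colimits over such covers are respected by the monoidal structure. The stalk at a (completely) prime filter $F$ is then the filtered colimit $\colim_{u \in F} \cat{C}\Restrict{u}$, inheriting a monoidal structure.

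To show each stalk is $\vee$-local (resp.\ $\bigvee$-local), I would argue that if a cover $\bigvee u_i = 1$ exists in the stalk's lattice of central idempotents, then by primality of $F$ one of the $u_i$ already belongs to $F$ and so is identified with the monoidal unit in the stalk. This is where the primality of points in $\Spec(\ZI(\cat{C}))$ pays off, and it is the whole reason for choosing the spectrum in this way. Finally, the canonical monoidal functor $\cat{C} \to \Gamma(\cat{F})$ sends $X$ to the global section with components $X\Restrict{u}$ on each $U_u$; faithfulness and fullness follow from the sheaf condition applied to hom-sets, again appealing to universal joins so that morphisms in $\cat{C}$ are determined by their restrictions to a cover.

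The hard part, and the step where I expect the main obstacle, is essential surjectivity, which promotes the representation from an embedding to an equivalence. Given a global section --- a compatible family of objects in the various $\cat{C}\Restrict{u}$ for a cover of $X$ --- one must reconstruct an actual object of $\cat{C}$, and this gluing really is what the universal (finite or spatial) joins of central idempotents are designed to supply at the object level. Coherence of the monoidal structure under all these colimits and localisations must be verified throughout, and for the second, infinitary part of the theorem the spatiality hypothesis is needed to ensure that $X$ has enough points to detect the frame structure of $\ZI(\cat{C})$, so that the same argument goes through with joins replaced by arbitrary joins. This dichotomy --- finitary distributive lattice versus spatial frame --- is what separates the two halves of the statement, but the shape of the proof is uniform.
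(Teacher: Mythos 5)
Your overall strategy matches the paper's: base space $\Spec(\ZI(\cat{C}))$, the presheaf $B_u \mapsto \cat{C}\restrict{u}$ on basic opens, the sheaf condition extracted from universal joins, $\vee$-locality (resp.\ $\bigvee$-locality) of stalks from primality (resp.\ complete primality) of filters, and spatiality to supply enough points in the infinitary case. All of that is how the paper proceeds.

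However, you have mislocated the difficulty, and the step you single out as ``the hard part'' would in fact fail as you describe it. You expect essential surjectivity to require reconstructing an object of $\cat{C}$ from a compatible family of objects over a cover, with universal joins supplying the gluing ``at the object level.'' They do not: the universal-join axiom only provides pushouts/colimits of the specific diagrams $A \otimes U \leftarrow A \otimes U \otimes V \rightarrow A \otimes V$ for a \emph{fixed} object $A$, i.e.\ it glues \emph{morphisms}, not objects. The paper sidesteps object-gluing entirely by taking $\cat{C}\restrict{u}$ to be the co-Kleisli category of the comonad $-\otimes U$, which has exactly the same objects as $\cat{C}$, with all restriction functors identity-on-objects. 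Consequently the category of global sections is literally $F(X) = \cat{C}\restrict{1} \simeq \cat{C}$, and the ``equivalence with global sections'' is immediate; the real work is the morphism-level descent, namely that
$\cat{C}\restrict{u \vee v} \to \cat{C}\restrict{u} \times \cat{C}\restrict{v} \rightrightarrows \cat{C}\restrict{u \wedge v}$
is an equaliser in $\cat{MonCat}$ (the paper's Proposition on the pullback square), where the pushout half of the universal-joins axiom is used to glue a morphism $H(m)$ from its restrictions $F(m)$ and $G(m)$. If you instead pursue the stack-style route of gluing objects over a cover, you will need hypotheses the theorem does not grant you; the paper explicitly advertises avoiding stacks for this reason. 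Two smaller points: your notation $\cat{C}\Restrict{u}$ clashes with the paper's, where the superscript denotes the left adjoint extension functor rather than restriction; and you omit the nullary cover, where $\cat{C}\restrict{0}$ is only \emph{equivalent}, not isomorphic, to the terminal category, forcing a small adjustment (replace the value at $\emptyset$ by $\cat{1}$, i.e.\ pass to the sheafification) before the presheaf is honestly a sheaf.
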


\begin{corollary*}
  Any small stiff monoidal category monoidally embeds into a category of global sections of a sheaf of \changed{$\vee$-}local categories, \changed{and into a product of $\vee$-local monoidal categories.}
\end{corollary*}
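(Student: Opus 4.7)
The plan is to reduce to the main theorem by constructing, for any small stiff monoidal category $\mathbf{C}$, a monoidal embedding $\mathbf{C} \hookrightarrow \widetilde{\mathbf{C}}$ into a monoidal category $\widetilde{\mathbf{C}}$ that has universal finite joins of central idempotents. In a stiff monoidal category, the central idempotents $\ZI(\mathbf{C})$ should already form a meet-semilattice respected by the tensor product, but finite joins may be missing. The key step is therefore to freely adjoin them, together with the universality condition that tensor distributes over the new joins.

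The natural candidate for $\widetilde{\mathbf{C}}$ is a free cocompletion of $\mathbf{C}$ under finite colimits of the shape needed to realise joins in $\ZI(\mathbf{C})$ as pushouts over meets, analogous to the way the lattice of finitely generated ideals freely adjoins finite joins to a meet-semilattice. One expects Day convolution to transport the monoidal structure of $\mathbf{C}$ to $\widetilde{\mathbf{C}}$ and to make the Yoneda-style embedding monoidal. Stiffness is used to ensure that no identifications between objects of $\mathbf{C}$ are forced by the new joins, so that the embedding remains full and faithful, and that central idempotents of $\mathbf{C}$ remain central in $\widetilde{\mathbf{C}}$.

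Once $\widetilde{\mathbf{C}}$ is in hand, the first part of the main theorem yields a monoidal equivalence of $\widetilde{\mathbf{C}}$ with the category of global sections of a sheaf of $\vee$-local monoidal categories on some topological space $X$; composing with the embedding $\mathbf{C} \hookrightarrow \widetilde{\mathbf{C}}$ establishes the first half of the corollary. For the second half, I would further compose with the canonical monoidal functor from the category of global sections of a sheaf on $X$ to the product of its stalks indexed by the points of $X$. Each stalk is $\vee$-local by construction of the sheaf, and stalks are formed as filtered colimits, which commute with the tensor product, so this functor is monoidal. The hard part will be the free cocompletion step: verifying that $\widetilde{\mathbf{C}}$ really has universal finite joins of central idempotents, that the monoidal structure extends correctly, and that the embedding is fully faithful with a faithful action on central idempotents, is where the stiffness hypothesis must be leveraged carefully.
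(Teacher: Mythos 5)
Your overall strategy --- freely complete $\cat{C}$ with universal finite joins of central idempotents, apply the representation theorem, and then project onto stalks for the product statement --- is exactly the skeleton of the paper's proof. But the load-bearing step is the completion itself, and that is where your proposal has a real gap. A generic free cocompletion under finite colimits with Day convolution adds far more than formal joins of central idempotents: you do not address why the central idempotents of such a completion would be exactly the free distributive lattice on $\ZI(\cat{C})$ rather than a much larger, uncontrolled collection, and without that identification you cannot verify the universal-join condition (which requires the relevant square to be simultaneously a pullback and a pushout), nor that the embedding preserves and reflects central idempotents. The paper avoids this by constructing a bespoke category $D[\cat{C}]$ whose objects are pairs $\tuple{D,A}$ of a down-closed $D \subseteq \ZI(\cat{C})$ and an object $A$, with morphisms given by compatible families $\{\eta_u \colon A \otimes U \to B\}_{u \in D}$ each restricting into the target downset; it then computes $\ZI(D[\cat{C}])$ explicitly (Proposition~\ref{prop:ZIDfreeframe}) and verifies universal joins by hand (Lemma~\ref{lem:DChasjoins}), with the finite-join version obtained by restricting to finitely generated downsets. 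The paper even remarks that the presheaf encoding you propose was used in earlier work on subunits and led to ad-hoc steps that do not generalise to central idempotents, which is precisely why it was replaced.

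The second half also has an unaddressed point: for $\cat{C}$ to embed into a product of $\vee$-local categories you need the canonical functor from global sections to $\prod_{x} \cat{C}\restrict{x}$ to be faithful, which is not automatic for a sheaf of categories. The paper's Corollary~\ref{cor:subdirect} derives this from compactness of the basic opens of the spectrum (Lemma~\ref{lem:basiscompact}), which reduces arbitrary covers to finite ones, combined with the fact that equality of morphisms can be checked central-idempotent-wise (Lemma~\ref{lem:epimonoisolocal}). Monoidality of the comparison functor, which is what you argue, is not the issue; faithfulness is.
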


This subsumes sheaf representation theorems for toposes~\cite{awodey:sheafrepresentationsinlogic,lambekmoerdijk:sheafrepresentations,lambekscott:categoricallogic,maclanemoerdijk:sheaves}\footnote{\label{footnote:hyperlocal}\changed{Our representation does not directly subsume~\cite{awodey:sheafrepresentations}, where the terminal object in each stalk is additionally projective. This property, sometimes called ``hyperlocal''~\cite{awodey:sheafrepresentationsinlogic} or ``local''~\cite{awodey:sheafrepresentations,lambek:world}, logically corresponds to the existence property. Such an extension is less clear in our monoidal case, and is left to future work.}}, where the lattice of central idempotents corresponds to that of elements of the subobject classifier.
In fact, the simple main insight underlying this work is that in cartesian categories, subterminal objects can be characterised entirely algebraically as central idempotents. This improves on earlier work~\cite{enriquemolinerheunentull:tensortopology}, which focused on the special case of central idempotents called subunits.
From a logical point of view, it extends the sheaf representation of (topos) categorical models of higher-order intuitionistic logic\changed{~\cite{aneljoyal:topologie,lambek:possibleworlds,lurie:highertopos}} to (symmetric monoidal closed) categorical models of multiplicative linear logic.
An additional improvement over results from the literature is that our proof is entirely concrete and avoids stacks.
Furthermore, by virtue of being monoidal, our results are not just analogous to, but directly capture, sheaf representation theorems for frames and modules~\cite{johnstone:stonespaces,daunshofmann:sections}.
The representation theorem may also be regarded as a structure theorem for higher-dimensional algebra~\cite{baez:2hilbertspaces,kapranovvoevodsky:2vectorspaces}, because 2-vector spaces may be seen as sheaves of vector spaces~\cite[Appendix~A]{heunenreyes:frobenius}.
Finally, our results find applications in physics and concurrent (probabilistic) computing~\cite{constantindicaireheunen:localisablemonads}, where a monoidal category models a process theory~\cite{heunenvicary:cqm} and its central idempotents may be interpreted as an underlying causal structure~\cite{enriquemolinerheunentull:space}.

We proceed as follows. Section~\ref{sec:centralidempotents} defines central idempotents and gives examples. Basic properties of central idempotents that make the category ``nice'' are discussed in Section~\ref{sec:universaljoins}. In Section~\ref{sec:basespace} we construct the topological space on which the structure sheaf will be based. Section~\ref{sec:presheaf} details the presheaf structure, and Section~\ref{sec:sheaf} establishes the sheaf condition. Next, Section~\ref{sec:stalks} investigates the stalks, Section~\ref{sec:local} shows that they are local, \changed{and Section~\ref{sec:inflocal} extends to case of infinitary joins}, finishing the proof of the Theorem above.
The sheaf representation is deepened further in Section~\ref{sec:preservation}, by showing that it preserves being Boolean, having limits, being closed, being compact, having a trace, \changed{and satisfying the external axiom of choice}. Section~\ref{sec:examples} works out several examples 
\changed{ to which sheaf representations for toposes do not apply directly, including recovering the Stone representation of  Boolean algebras regarded as posetal categories} and the Takahashi representation of Hilbert modules.
Section~\ref{sec:functoriality} settles functoriality of the main construction, after which Section~\ref{sec:embedding} proves the Corollary above.
Finally, in Section~\ref{sec:conclusion} we discuss several open questions that may be attacked using the representation theorem.
Appendix~\ref{sec:subunits} compares central idempotents with the special case of subunits~\cite{enriquemolinerheunentull:tensortopology}.

\section{Central idempotents}\label{sec:centralidempotents}

This section introduces central idempotents, gives examples, and discusses basic properties. 
If $\cat{C}$ is a monoidal category, \changed{$I$ its tensor unit, and we write $\lambda_U \colon I \otimes U \to U$ and $\rho_U \colon U \otimes I \to U$ for the unitors}, then the slice category $\cat{C}\slash I$ is again monoidal: the (terminal) tensor unit is the identity $I \to I$, the tensor product of objects $u \colon U \to I$ and $v \colon V \to I$ is $\rho_I \circ (u \otimes v) \colon U \otimes V \to I$, and the tensor product of morphisms is as in $\cat{C}$. An object $u \colon U \to I$ in $\cat{C}\slash I$ may be idempotent in the sense that $u \simeq u \otimes u$. We are interested in objects in $\cat{C}\slash I$ that are idempotent in a canonical way.

\begin{definition}
  A morphism $u \colon U \to I$ in a monoidal category is a \emph{left idempotent} when $\lambda_U \circ (u \otimes U) \colon U \otimes U \to U$ is invertible, and a \emph{right idempotent} when $\rho_U \circ (U \otimes u) \colon U \otimes U \to U$ is invertible.
\end{definition}

We will freely use the graphical calculus for monoidal categories~\cite{heunenvicary:cqm}, and draw a left idempotent $u \colon U \to I$ as:
\[\begin{pic}
  \node[dot] (d) {};
  \draw (d) to ++(0,-.5) node[below]{$U$};
\end{pic}\]
For a left idempotent we draw the inverse of $\lambda_U \circ (u \otimes U) \colon U \otimes U \to U$ as:
\[\begin{pic}
  \node[dot] (d) at (0,0) {};
  \draw (d) to ++(0,-.4) node[below]{$U$};
  \draw (d) to[out=0,in=-90] ++(.3,.4) node[above]{$U$};
  \draw (d) to[out=180,in=-90] ++(-.3,.4) node[above]{$U$};
\end{pic}\]

Recall that the \emph{centre} of a monoidal category $\cat{C}$ has as objects $U \in \cat{C}$ equipped with a \emph{half-braiding} (see~\cite[XIII.4]{kassel:quantumgroups} and~\cite{joyalstreet:yangbaxter}): a natural transformation $\sigma_A \colon U \otimes A \to A \otimes U$ such that $\sigma_{A \otimes B} = (A \otimes \sigma_B) \circ (\sigma_A \otimes B)$. A morphism $(U,\sigma) \to (V,\tau)$ in the centre is a morphism $f \colon U \to V$ in $\cat{C}$ satisfying $(A \otimes f) \circ \sigma_A = \tau_A \circ (f \otimes A)$. The tensor unit $I$ of a monoidal category always carries a half-braiding $\rho_A^{-1} \circ \lambda_A \colon I \otimes A \to A \otimes I$. We draw $\sigma_A$ as:
\[\begin{pic}[xscale=.6,yscale=.75]
  \draw (1,0) node[below]{$A$} to[out=90,in=-90] (0,1) node[above]{$A$};
  \draw[halo] (0,0) node[below]{$U$} to[out=90,in=-90] (1,1) node[above]{$U$};
\end{pic}\]

\begin{definition}  
  A \emph{central idempotent} in a monoidal category is \changed{a morphism $u \colon U \to I$ in its centre that is a left idempotent} such that:
  \begin{equation}\label{eq:central}
    \lambda_U \circ (u \otimes U) = \rho_U \circ (U \otimes u) \colon U \otimes U \to U
  \end{equation}
  Explicitly, it is a morphism $u \colon U \to I$ such that~\eqref{eq:central} holds and is invertible, equipped with a half-braiding satisfying:
  \begin{equation}\label{eq:halfbraidingrespectsu}
    \begin{pic}[xscale=.5,yscale=.75]
      \draw (1,0) node[below]{$A$} to[out=90,in=-90] (0,.8) to (0,1) node[above]{$A$};
      \draw[halo] (0,0) node[below]{$U$} to[out=90,in=-90] (1,.8) node[dot]{};
    \end{pic}
    =
    \begin{pic}[xscale=.5,yscale=.75]
      \draw (0,0) node[below]{$U$} to (0,.8) node[dot]{};
      \draw (1,0) node[below]{$A$} to (1,1) node[above]{$A$};
    \end{pic}
  \end{equation}

  We will identify two central idempotents $u \colon U \to I$ and $v \colon V \to I$ when $u=v \circ m$ for an isomorphism $m \colon U \to V$ that respects the half-braidings, and write $\ZI(\cat{C})$ for the set\footnote{\changed{Throughout this article we only consider central idempotents of \emph{small} categories to prevent size subtleties.}} of central idempotents of $\cat{C}$.
\end{definition}

The central equation~\eqref{eq:central} graphically becomes:
\[\begin{pic}
    \draw (0,0) node[below]{$U$} to (0,.5) node[dot]{};
    \draw (.3,0) node[below]{$U$} to (.3,1);
  \end{pic}
  =
  \begin{pic}
    \draw (-.3,0) node[below]{$U$} to (-.3,1);
    \draw (0,0) node[below]{$U$} to (0,.5) node[dot]{};
  \end{pic}
\]

If $u \colon U \to I$ and $v \colon V \to I$ are central idempotents, we draw a morphism $m \colon U \to V$ satisfying $u = v \circ m$ as:
\[\begin{pic}
  \node[dot] (d) {};
  \draw (d) to ++(0,-.5) node[below]{$U$};
  \draw (d) to ++(0,.5)  node[above]{$V$};
\end{pic}\]
All in all, the following graphical identities will be useful:
\[
  \begin{pic}
    \node[dot] (d) at (0,0) {};
    \draw (d) to ++(0,-.4) node[below]{$U$};
    \draw (d) to[out=0,in=-90] ++(.3,.3) to ++(0,.3) node[above]{$U$};
    \draw (d) to[out=180,in=-90] node[left]{$U$} ++(-.3,.3) node[dot]{};
  \end{pic}
  =
  \begin{pic}
    \draw (0,0) node[below]{$U$} to (0,1) node[above]{$U$};
  \end{pic}
  =
  \begin{pic}
    \node[dot] (d) at (0,0) {};
    \draw (d) to ++(0,-.4) node[below]{$U$};
    \draw (d) to[out=180,in=-90] ++(-.3,.3) to ++(0,.3) node[above]{$U$};
    \draw (d) to[out=0,in=-90] node[right]{$U$} ++(.3,.3) node[dot]{};
  \end{pic}
  \qquad\qquad
  \begin{tikzpicture}[baseline=-2mm]
    \node[dot] (b) at (0,0) {};
    \node[dot] (t) at (0,.6) {};
    \draw (b) to ++(0,-.6) node[below]{$U$};
    \draw (b) to node[left=-.5mm]{$V$} (t);
  \end{tikzpicture}
  =
  \begin{tikzpicture}[baseline=-2mm]
    \draw (0,-.6) node[below]{$U$} to ++(0,.6) node[dot]{};
  \end{tikzpicture}
  \qquad\qquad
  \begin{pic}
    \node[dot] (t) at (0,.5) {};
    \node[dot] (b) at (0,0) {};
    \draw (b) to ++(0,-.3) node[below]{$U$};
    \draw (b) to node[right=-.75mm]{$V$} (t);
    \draw (t) to[out=180,in=-90] ++(-.3,.3) node[above]{$V$};
    \draw (t) to[out=0,in=-90] ++(.3,.3) node[above]{$V$};
  \end{pic}
  =
  \begin{pic}
    \node[dot] (b) at (0,0) {};
    \node[dot] (l) at (-.3,.5) {};
    \node[dot] (r) at (.3,.5) {};
    \draw (b) to ++(0,-.3) node[below]{$U$};
    \draw (b) to[out=180,in=-90] node[left=-.4mm]{$U$} (l);
    \draw (b) to[out=0,in=-90] node[right=-.4mm]{$U$} (r);
    \draw (l) to ++(0,.3) node[above]{$V$};
    \draw (r) to ++(0,.3) node[above]{$V$};
  \end{pic}
\]
The following proposition ensures that these graphical notations are well-defined.

\begin{proposition}
  For central idempotents $u \colon U \to I$ and $v \colon V \to I$ in a monoidal category $\cat{C}$, the following are equivalent:
  \begin{itemize}
    \item $u = v \circ m$ for a necessarily unique $m \colon U \to V$ that respects half-braidings;
    \item $U \otimes v \colon U \otimes V \to U \otimes I$ is invertible;
    \item $v \otimes U \colon V \otimes U \to I \otimes U$ is invertible.
  \end{itemize}
  We say $u \leq v$ when these conditions hold. This partially orders $\ZI(\cat{C})$.
\end{proposition}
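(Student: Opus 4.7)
The plan is to work in a strict monoidal category by Mac Lane coherence, writing $s_u \colon U \to U \otimes U$ for the inverse of $u \otimes U = U \otimes u$ (the two agreeing by equation~\eqref{eq:central}). I will first establish (2) $\Leftrightarrow$ (3), then a uniqueness lemma for $m$, then the two implications between (1) and (2), and finally the partial order axioms. The equivalence of (2) and (3) is immediate from equation~\eqref{eq:halfbraidingrespectsu} applied to $v$ at $A = U$, which reads $(U \otimes v) \circ \sigma^V_U = v \otimes U$; since $\sigma^V_U$ is a natural isomorphism (half-braidings in the centre being iso), invertibility of one side is equivalent to invertibility of the other.

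For uniqueness, any $m$ with $vm = u$ satisfies $m = (u \otimes V) \circ (U \otimes m) \circ s_u$ by interchange (as $m \circ (u \otimes U) = (u \otimes V) \circ (U \otimes m)$, both equal to $u \otimes m$), and the equation $(U \otimes v) \circ (U \otimes m) = U \otimes u$ pins down $U \otimes m$ once $U \otimes v$ is invertible; so $m$ is uniquely determined without even needing the half-braiding compatibility. For (2) $\Rightarrow$ (1), set $m := (u \otimes V) \circ (U \otimes v)^{-1}$; by the interchange law both $v \circ (u \otimes V)$ and $u \circ (U \otimes v)$ compute to $u \otimes v$, so $v \circ m = u$, and compatibility of $m$ with half-braidings follows by naturality of $\sigma^V$ together with equation~\eqref{eq:halfbraidingrespectsu} applied to both $u$ and $v$.

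For (1) $\Rightarrow$ (2), define $\beta := (U \otimes m) \circ s_u \colon U \to U \otimes V$. A direct calculation yields $(U \otimes v) \circ \beta = (U \otimes vm) \circ s_u = (U \otimes u) \circ s_u = \id_U$, so $\beta$ is a right inverse of $U \otimes v$. For the left inverse, consider $h := \beta \circ (U \otimes v) \colon U \otimes V \to U \otimes V$ and compute $(u \otimes v) \circ h = u \circ (U \otimes v) \circ h = u \circ (U \otimes v) = u \otimes v$, using $(U \otimes v) \circ \beta = \id_U$. The key move is that $w := u \otimes v$ is itself a central idempotent (as a tensor of central idempotents), which makes $W \otimes w$ invertible automatically for $W := U \otimes V$; so the uniqueness lemma applies unconditionally to the pair $(w, w)$, and since both $h$ and $\id_W$ are endomorphisms of $W$ with $w \circ (-) = w$, it forces $h = \id_{U \otimes V}$. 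Thus $\beta$ is a two-sided inverse.

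Uniqueness of the $m$ in (1) is exactly the lemma above, and the partial order axioms on $\ZI(\cat{C})$ then follow formally: reflexivity takes $m = \id_U$; transitivity composes witnesses; for antisymmetry, given $u \leq v$ and $v \leq u$ via $m, m'$, the endomorphisms $m' \circ m$ and $\id_U$ both satisfy $u \circ (-) = u$, so uniqueness forces $m' m = \id_U$ and similarly $m m' = \id_V$, making $m$ an isomorphism respecting half-braidings and identifying $u$ with $v$ in $\ZI(\cat{C})$. The main subtlety I expect is the trick in (1) $\Rightarrow$ (2): although we are trying to establish (2) for the pair $(u, v)$, the uniqueness lemma already applies unconditionally within the slice of the central idempotent $u \otimes v$, which closes the loop.
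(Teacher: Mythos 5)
Your proof is correct, but it takes a genuinely different route from the paper's at three points, each time trading a direct graphical computation for a structural fact. (i) The paper proves uniqueness of $m$ unconditionally, by inserting the comultiplication $s_u$ and using the central equation~\eqref{eq:central} for $v$ to slide $v$ from the $m$-branch to the $n$-branch; your uniqueness instead cancels the invertible $U \otimes v$, so it is conditional on condition (2). The logic still closes, because your (1) $\Rightarrow$ (2) uses only the existence of $m$, but the ``necessarily unique'' in (1) is thereby justified only a posteriori. (ii) For the left inverse in (1) $\Rightarrow$ (2), the paper again computes directly with~\eqref{eq:central} for $v$, moving $v$ between the two $V$-wires; your ``uniqueness within the slice of $w = u \otimes v$'' trick is elegant and genuinely different, but it leans on the fact that $u \otimes v$ is itself a central idempotent, so that $W \otimes w$ is invertible. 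That fact is true --- and the paper silently assumes it in the very next lemma when defining $u \wedge v$ --- but it is not established at this point in the text, so you should either prove it (use~\eqref{eq:halfbraidingrespectsu} to conjugate $W \otimes w$ into $(U \otimes u) \otimes (V \otimes v)$ up to half-braidings) or flag it as a forward reference. (iii) Your (2) $\Leftrightarrow$ (3) via $\sigma^V_U$ requires the half-braiding to be invertible; that is the standard convention for the centre and matches the cited sources, but the paper's stated definition only asks for a natural transformation, which is presumably why it instead proves (1) $\Leftrightarrow$ (3) by repeating the (1) $\Leftrightarrow$ (2) argument. On the credit side, your antisymmetry argument (forcing $m'm = \id[U]$ and $mm' = \id[V]$ via the uniqueness lemma) is more explicit than the paper's ``clearly''; and your gesture at the half-braiding compatibility of the constructed $m$ in (2) $\Rightarrow$ (1) is no less complete than the paper's, which omits that verification entirely.
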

\begin{proof}
  First we prove that the morphism $m$ is unique. 
  If $u=v \circ m = v \circ n$, then:
  \[
    \begin{pic}
      \node[morphism] (m) at (0,0) {$m$};
      \draw (m.north) to ++(0,.3) node[above]{$V$};
      \draw (m.south) to ++(0,-.75) node[below]{$U$};
    \end{pic}
    \;=\;
    \begin{pic}
      \node[morphism] (m) at (0,0) {$m$};
      \draw (m.north) to ++(0,.3) node[above]{$V$};
      \node[dot] (d) at (.35,-.6) {};
      \draw (m.south) to[out=-90,in=180] (d.west);
      \draw (d.south) to ++(0,-.3) node[below]{$U$};
      \draw (d.east) to[out=0,in=-90] ++(.3,.6) node[dot]{};
    \end{pic}
    \;=\;
    \begin{pic}
      \node[morphism] (m) at (0,0) {$m$};
      \node[morphism] (n) at (.7,0) {$n$};
      \draw (m.north) to ++(0,.3) node[above]{$V$};
      \draw (n.north) to ++(0,.2) node[dot]{} node[above=1mm]{$V$};
      \node[dot] (d) at (.35,-.6) {};
      \draw (m.south) to[out=-90,in=180] (d.west);
      \draw (n.south) to[out=-90,in=0] (d.east);
      \draw (d.south) to ++(0,-.3) node[below]{$U$};
    \end{pic}
    \;=\;
    \begin{pic}
      \node[morphism] (m) at (0,0) {$m$};
      \node[morphism] (n) at (.7,0) {$n$};
      \draw (n.north) to ++(0,.3) node[above]{$V$};
      \draw (m.north) to ++(0,.2) node[dot]{} node[above=1mm]{$V$};
      \node[dot] (d) at (.35,-.6) {};
      \draw (m.south) to[out=-90,in=180] (d.west);
      \draw (n.south) to[out=-90,in=0] (d.east);
      \draw (d.south) to ++(0,-.3) node[below]{$U$};
    \end{pic}
    \;=\;
    \begin{pic}
      \node[morphism] (n) at (0,0) {$n$};
      \draw (n.north) to ++(0,.3) node[above]{$V$};
      \node[dot] (d) at (-.35,-.6) {};
      \draw (n.south) to[out=-90,in=0] (d.east);
      \draw (d.south) to ++(0,-.3) node[below]{$U$};
      \draw (d.west) to[out=180,in=-90] ++(-.3,.6) node[dot]{};
    \end{pic}
    =
    \begin{pic}
      \node[morphism] (n) at (0,0) {$n$};
      \draw (n.north) to ++(0,.3) node[above]{$V$};
      \draw (n.south) to ++(0,-.75) node[below]{$U$};
    \end{pic}
  \]
  To see that the first point implies the second:
  \[
    \begin{pic}
      \node[dot] (d) at (0,0) {};
      \draw (d.south) to ++(0,-.3) node[below]{$U$};
      \draw (d.west) to[out=180,in=-90,looseness=.8] ++(-.25,.75) node[above]{$U$};
      \draw (d.east) to[out=0,in=-90] node[below right=-1mm]{$U$} ++(.25,.3) node[dot]{} to ++(0,.6) node[dot]{} node[right]{$V$};
      \draw[dashed,gray] (-.5,-.3) rectangle (.7,.45);
    \end{pic}
    =
    \begin{pic}
      \node[dot] (d) at (0,0) {};
      \draw (d.south) to ++(0,-.3) node[below]{$U$};
      \draw (d.west) to[out=180,in=-90,looseness=.8] ++(-.25,.75) node[above]{$U$};
      \draw (d.east) to[out=0,in=-90] ++(.25,.3) node[dot]{};
    \end{pic}
    =
    \begin{pic}
      \draw (0,0) node[below]{$U$} to ++(0,1.1) node[above]{$U$};
    \end{pic}
    \qquad
    \begin{pic}
      \node[dot] (d) at (0,0) {};
      \draw (d.south) to ++(0,-.5) node[below]{$U$};
      \draw (.4,-.59) node[below]{$V$} to ++(0,.25) node[dot]{};
      \draw (d.west) to[out=180,in=-90] ++(-.25,.55) node[above]{$U$};
      \draw (d.east) to[out=0,in=-90] ++(.25,.3) node[dot]{} to ++(0,.30) node[above]{$V$};
      \draw[dashed,gray] (-.5,-.15) rectangle (.6,.45);
    \end{pic}
    =\!
    \begin{pic}
      \node[dot] (d) at (0,0) {};
      \draw (d.south) to ++(0,-.3) node[below]{$U$};
      \draw (.5,-.39) node[below]{$V$} to ++(0,.9) node[dot]{};
      \draw (d.west) to[out=180,in=-90,looseness=.8] ++(-.25,.75) node[above]{$U$};
      \draw (d.east) to[out=0,in=-90] ++(.2,.3) node[dot]{} to ++(0,.50) node[above]{$V$};
    \end{pic}
    =\!
    \begin{pic}
      \node[dot] (d) at (0,0) {};
      \draw (d.south) to ++(0,-.3) node[below]{$U$};
      \draw (.5,-.39) node[below]{$V$} to ++(0,1.15) node[above]{$V$};
      \draw (d.west) to[out=180,in=-90,looseness=.7] ++(-.25,.75) node[above]{$U$};
      \draw (d.east) to[out=0,in=-90] ++(.2,.3) node[dot]{} to ++(0,.3) node[dot]{} node[left]{$V$};
    \end{pic}
    =\!
    \begin{pic}
      \node[dot] (d) at (0,0) {};
      \draw (d.south) to ++(0,-.3) node[below]{$U$};
      \draw (.5,-.39) node[below]{$V$} to ++(0,1.15) node[above]{$V$};
      \draw (d.west) to[out=180,in=-90,looseness=.8] ++(-.25,.75) node[above]{$U$};
      \draw (d.east) to[out=0,in=-90] ++(.2,.3) node[dot]{};
    \end{pic}
    =
    \begin{pic}
      \draw (0,0) node[below]{$U$} to ++(0,1.15) node[above]{$U$};
      \draw (.3,0) node[below]{$V$} to ++(0,1.15) node[above]{$V$};
    \end{pic}  
  \]
  Hence the dashed morphism inverts $U \otimes v$.

  To see that the second point implies the first, suppose $f$ inverts $U \otimes v$. Then:
  \[
    \begin{pic}
      \node[dot] (d) at (0,0) {};
      \draw (d) node[left]{\vphantom{$U$}} to ++(0,-1.4) node[below]{$U$};
    \end{pic}
    =
    \begin{pic}
      \node[morphism, width=6mm] (f) at (0,0) {$f$};
      \draw (f.south) to ++(0,-.3) node[below]{$U$};
      \draw (f.north west) to ++(0,.7) node[dot]{} node[left]{$U$};
      \draw (f.north east) to ++(0,.3) node[dot]{} node[right]{$V$};
    \end{pic}
    = 
    \begin{pic}
      \node[morphism, width=6mm] (f) at (0,0) {$f$};
      \draw (f.south) to ++(0,-.3) node[below]{$U$};
      \draw (f.north west) to ++(0,.3) node[dot]{} node[left]{$U$};
      \draw (f.north east) to ++(0,.7) node[dot]{} node[right]{$V$};
      \draw[dashed,gray] (-.7,-.4) rectangle (.7,.7);
    \end{pic}
  \]
  Hence the dashed morphism $m$ satisfies $u = v \circ m$.

  The first and third points are similarly equivalent.
  These conditions clearly satisfy transitivity, reflexivity, and anti-symmetry.
\end{proof}

The previous proposition justifies drawing the mediating morphism $m \colon U \to V$ as an unlabeled dot. It also follows from the previous proposition that a central idempotent is completely determined by its domain: if $u,u' \colon U \to I$ both represent central idempotents, then $u=u' \circ m$ for a unique isomorphism $m \colon U \to U$. This justifies drawing $u$ as an unlabeled dot on a wire labelled $U$. 

\begin{lemma}
  If $\cat{C}$ is any monoidal category, $\ZI(\cat{C})$ is a (meet-)semilattice, with
  \[
    u \wedge v 
    \quad = \quad
    \begin{tikzpicture}[baseline=-4mm]
      \node[dot] (l) at (0,0) {};
      \node[dot] (r) at (.3,0) {};
      \draw (l.south) to ++(0,-.3) node[below]{$U$};
      \draw (r.south) to ++(0,-.3) node[below]{$V$};
    \end{tikzpicture}
  \]
  and largest element $1 \colon I \to I$.
\end{lemma}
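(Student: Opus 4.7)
The plan is to construct $u \wedge v$ explicitly as a central idempotent with underlying morphism $\rho_I \circ (u \otimes v) \colon U \otimes V \to I$, then verify its universal property as the meet, and finally note that $1 \colon I \to I$ is the top.

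First I would equip $U \otimes V$ with the half-braiding $\sigma^{U \otimes V}_A = (\sigma^U_A \otimes V) \circ (U \otimes \sigma^V_A)$ built from those of $U$ and $V$ (up to associators). The composite $\rho_I \circ (u \otimes v)$ then lies in the centre since $u$ and $v$ do. Its left-idempotent property — invertibility of $\lambda_{U \otimes V} \circ (u \otimes v \otimes U \otimes V)$ — follows graphically by using~\eqref{eq:halfbraidingrespectsu} to slide the dot capping one wire past the other, reducing to the left-idempotent property for $u$ and $v$ individually. The centrality equation~\eqref{eq:central} and the compatibility~\eqref{eq:halfbraidingrespectsu} for $u \wedge v$ follow similarly by iterating the corresponding properties of the factors.

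Next I would show $u \wedge v$ is a lower bound: the morphism $m = \rho_U \circ (U \otimes v) \colon U \otimes V \to U$ satisfies $u \circ m = \rho_I \circ (u \otimes v)$ by naturality of $\rho$; pictorially this merely caps off the $V$-wire. Hence $u \wedge v \leq u$, and symmetrically $u \wedge v \leq v$. For the greatest-lower-bound property, suppose $w \colon W \to I$ is a central idempotent with $w \leq u$ and $w \leq v$, witnessed by $p \colon W \to U$ and $q \colon W \to V$. Using the splitting $\mu_w \colon W \to W \otimes W$ of $w$ (the inverse of $\lambda_W \circ (w \otimes W)$, drawn as the downward co-multiplication dot), define $m = (p \otimes q) \circ \mu_w$. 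Then
\[
  \rho_I \circ (u \otimes v) \circ m \;=\; \rho_I \circ (w \otimes w) \circ \mu_w \;=\; w
\]
because $\mu_w$ splits $w$. All the pieces $p$, $q$, and $\mu_w$ respect half-braidings, hence so does $m$, giving $w \leq u \wedge v$.

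Finally, $1 \colon I \to I$ with the canonical half-braiding $\rho_A^{-1} \circ \lambda_A$ on $I$ is trivially a central idempotent, and for any central idempotent $u \colon U \to I$ the morphism $u$ itself satisfies $u = 1 \circ u$ and respects half-braidings, so $u \leq 1$. The main obstacle I anticipate is bookkeeping around half-braidings: one must check both that the newly defined half-braiding on $U \otimes V$ behaves coherently and that each comparison morphism respects the relevant half-braidings. This turns out to be routine once the half-braiding on $U \otimes V$ is correctly identified, because every comparison map constructed in the proof is assembled from ingredients ($u$, $v$, $w$, $p$, $q$, $\mu_w$, and the unitors) that individually respect half-braidings.
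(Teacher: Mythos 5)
Your proposal is correct and follows essentially the same route as the paper: the greatest-lower-bound step is exactly the paper's argument, using the splitting $\mu_w$ (the comultiplication inverse to $\lambda_W \circ (w \otimes W)$) to build the mediating morphism $(p \otimes q) \circ \mu_w$ into $U \otimes V$ and collapsing $(u \otimes v)\circ(p\otimes q)$ to $w \otimes w$. You are somewhat more explicit than the paper in verifying that $\rho_I \circ (u \otimes v)$ with the composite half-braiding is itself a central idempotent and in exhibiting the lower-bound witnesses, which the paper treats as immediate from the preceding proposition and the established graphical conventions.
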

\begin{proof}
  Clearly $u \geq u \wedge v \leq v$.
  If $u \geq w \leq v$, then:
  \[
    \begin{pic}
      \node[dot] (l) at (-.2,.3) {};
      \node[dot] (r) at (.2,.3) {};
      \node[dot] (d) at (0,0) {};
      \draw (d.south) to ++(0,-.3) node[below]{$W$};
      \draw (d.west) to[out=180,in=-90] (l.south);
      \draw (d.east) to[out=0,in=-90] (r.south);
      \draw (l.north) to ++(0,.2) node[dot]{} node[left]{$U$};
      \draw (r.north) to ++(0,.2) node[dot]{} node[right]{$V$};
    \end{pic}
    \;=\;
    \begin{tikzpicture}[baseline=-1.5mm]
      \node[dot] (l) at (-.2,.3) {};
      \node[dot] (r) at (.2,.3) {};
      \node[dot] (d) at (0,0) {};
      \draw (d.south) to ++(0,-.3) node[below]{$W$};
      \draw (d.west) to[out=180,in=-90] (l.south);
      \draw (d.east) to[out=0,in=-90] (r.south);
    \end{tikzpicture}
    \;=\;\;
    \begin{tikzpicture}[baseline=-1.5mm]
      \node[dot] (d) at (0,0) {};
      \draw (d.south) to ++(0,-.3) node[below]{$W$};
    \end{tikzpicture}
  \]
  So $w \leq u \wedge v$, and $u \wedge v$ is the greatest lower bound of $u$ and $v$.
  That $1$ is the greatest element is clear.
\end{proof}

We have taken some pains to define central idempotents for monoidal categories. The next two lemmas show that for braided monoidal categories, the half-braiding is superfluous, and hence the definition of central idempotents simplifies.

\begin{lemma}\label{lem:braiding}
  A half-braiding $\sigma$ makes a left idempotent $u$ in a monoidal category into a central idempotent if and only if $\sigma_{U,U}=U \otimes U$:
  \[
    \begin{pic}
      \draw (0,0) node[below]{$U$} to (0,.5) node[dot]{};
      \draw (.4,0) node[below]{$U$} to (.4,.75) node[above]{$U$};
    \end{pic}
    =
    \begin{pic}
      \draw (-.4,0) node[below]{$U$} to (-.4,.75) node[above]{$U$};
      \draw (0,0) node[below]{$U$} to (0,.5) node[dot]{};
    \end{pic}
    \qquad\iff\qquad
    \begin{pic}[xscale=.6,yscale=.75]
      \draw (1,0) node[below]{$U$} to[out=90,in=-90] (0,1) node[above]{$U$};
      \draw[halo] (0,0) node[below]{$U$} to[out=90,in=-90] (1,1) node[above]{$U$};
    \end{pic}
    =
    \begin{pic}[xscale=.5,yscale=.75]
      \draw (0,0) node[below]{$U$} to (0,1) node[above]{$U$};
      \draw (1,0) node[below]{$U$} to (1,1) node[above]{$U$};
    \end{pic}
  \]
\end{lemma}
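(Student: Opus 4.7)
The strategy is to apply naturality of the half-braiding $\sigma$ to the morphism $u \colon U \to I$ itself. This single instance of naturality yields
\[
  \sigma_I \circ (U \otimes u) = (u \otimes U) \circ \sigma_U
\]
as morphisms $U \otimes U \to I \otimes U$. Now the hexagon axiom specialised at $A=B=I$, together with unitor coherence, forces $\sigma_I$ to be the canonical comparison $\lambda_U^{-1} \circ \rho_U \colon U \otimes I \to I \otimes U$: this is a standard coherence check, and in the graphical calculus the map $\sigma_I$ is simply invisible. After rearranging unitors, the naturality identity becomes
\[
  \rho_U \circ (U \otimes u) = \lambda_U \circ (u \otimes U) \circ \sigma_U,
\]
which is exactly the left-hand side of \eqref{eq:central} postcomposed with $\sigma_U$ on the right.

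Both directions of the equivalence fall out of this single identity. For $(\Leftarrow)$, substituting $\sigma_U = \mathrm{id}$ collapses the right-hand side to $\lambda_U \circ (u \otimes U)$, which recovers the central equation~\eqref{eq:central} verbatim. For $(\Rightarrow)$, we substitute the central equation to rewrite the identity as
\[
  \lambda_U \circ (u \otimes U) = \lambda_U \circ (u \otimes U) \circ \sigma_U;
\]
the left idempotent hypothesis is precisely the invertibility of $\lambda_U \circ (u \otimes U)$, so we may cancel it on the left to conclude $\sigma_U = \mathrm{id}$. The main potential obstacle is purely bookkeeping, namely the coherence identification of $\sigma_I$ with the canonical iso; once that identification is made, the mathematical content of the lemma is a single application of naturality followed by a left-cancellation by an invertible morphism, and the graphical calculus makes both steps essentially pictorial.
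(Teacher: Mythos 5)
Your proof is correct and follows essentially the same route as the paper: the key identity in both arguments is naturality of $\sigma$ at $u\colon U \to I$ combined with the identification of $\sigma_I$ with the canonical unitor comparison, after which the converse direction is immediate and the forward direction amounts to cancelling the invertible morphism $\lambda_U \circ (u \otimes U)$. The paper's longer graphical computation for the forward direction is exactly this cancellation, carried out by composing with the inverse $\delta$ of $\lambda_U \circ (u \otimes U)$ and then using the central equation, so there is no real difference in approach.
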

\begin{proof}
  If $u$ is a left idempotent satisfying~\eqref{eq:central}, then:
  \[
    \begin{pic}[xscale=.6,yscale=.75]
      \draw (1,0) node[below]{$U$} to[out=90,in=-90] (0,1) to ++(0,.3) node[above]{$U$};
      \draw[halo] (0,0) node[below]{$U$} to[out=90,in=-90] (1,1) to ++(0,.3) node[above]{$U$};
    \end{pic}
    =
    \begin{pic}[xscale=.5,yscale=.75]
      \node[dot] (l) at (0,.8) {};
      \node[dot] (r) at (1,.8) {};
      \draw (1,0) node[below]{$U$} to[out=90,in=-90] (l.south);
      \draw[halo] (0,0) node[below]{$U$} to[out=90,in=-90] (r.south);
      \node[dot] (r) at (1,.8) {};
      \draw (r.west) to[out=180,in=-90] ++(-.3,.5) node[above]{$U$};
      \draw (r.east) to[out=0,in=-90] ++(.3,.5) node[above]{$U$};
    \end{pic}
    =
    \begin{pic}[xscale=.5,yscale=.75]
      \node[dot] (l) at (0,.6) {};
      \node[dot] (r) at (1,1) {};
      \draw (0,0) node[below]{$U$} to (l.south);
      \draw (1,0) node[below]{$U$} to (r.south);
      \draw (l.west) to[out=180,in=-90] ++(-.3,.7) node[above]{$U$};
      \draw (l.east) to[out=0,in=-90] ++(.3,.7) node[above]{$U$};
    \end{pic}
    =
    \begin{pic}[xscale=.5,yscale=.75]
      \node[dot] (l) at (0,.6) {};
      \draw (0,0) node[below]{$U$} to (l.south);
      \draw (1,0) node[below]{$U$} to ++(0,1.3) node[above]{$U$};
      \draw (l.west) to[out=180,in=-90] ++(-.3,.7) node[above]{$U$};
      \draw (l.east) to[out=0,in=-90] ++(.3,.4) node[dot]{};
    \end{pic}
    =
    \begin{pic}[xscale=.5,yscale=.75]
      \draw (0,0) node[below]{$U$} to ++(0,1.3) node[above]{$U$};
      \draw (1,0) node[below]{$U$} to ++(0,1.3) node[above]{$U$};
    \end{pic}
  \]
  Conversely, if $\sigma_{U,U}=U \otimes U$, then $\lambda_U \circ (u \otimes U) = \lambda_U \circ (u \otimes U) \circ \sigma_{U,U} = \rho_U \circ (U \otimes u)$.
\end{proof}

\begin{lemma}\label{lem:halfbraidingisbraiding}
  If $u$ is a central idempotent in a braided monoidal category, then the half-braiding $\sigma_u$ equals \changed{the braiding $U \otimes B \to B \otimes U$}.
\end{lemma}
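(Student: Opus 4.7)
The plan is to show that in a braided category the braiding itself equips $(U,u)$ with a central-idempotent structure, and then argue that the half-braiding of a central idempotent is essentially unique.

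First, verify that $(U,u,c_{U,-})$ is a central idempotent, where $c$ denotes the braiding. The hexagon identity makes $c_{U,-}$ a half-braiding, and naturality of $c$ with respect to $u \colon U \to I$, combined with the coherence $c_{I,A} = \rho_A^{-1} \circ \lambda_A$, yields $(A \otimes u) \circ c_{U,A} = \rho_A^{-1} \circ \lambda_A \circ (u \otimes A)$, which is precisely~\eqref{eq:halfbraidingrespectsu} for the braiding. Applying Lemma~\ref{lem:braiding} to both $(U,u,\sigma_u)$ and $(U,u,c_{U,-})$ gives $(\sigma_u)_U = \id[U \otimes U] = c_{U,U}$.

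Second, \eqref{eq:halfbraidingrespectsu} applied to $\sigma_u$ and to $c_{U,-}$ expresses both $(A \otimes u) \circ (\sigma_u)_A$ and $(A \otimes u) \circ c_{U,A}$ as $\rho_A^{-1} \circ \lambda_A \circ (u \otimes A)$; hence these two composites agree. Replacing $A$ by $A \otimes U$ lets us cancel the isomorphism $(A \otimes U) \otimes u = A \otimes (U \otimes u)$, which is invertible by~\eqref{eq:central}, to obtain $(\sigma_u)_{A \otimes U} = c_{U, A \otimes U}$. Using the half-braiding axiom, the hexagon, and the identities $(\sigma_u)_U = c_{U,U} = \id$ from the previous step, this reduces to $(\sigma_u)_A \otimes U = c_{U,A} \otimes U$.

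Third, cancel the trailing $-\otimes U$. Post-composing with $\rho_{A \otimes U} \circ (A \otimes U \otimes u)$ and using interchange rewrites the equation as $(\sigma_u)_A \circ \tau = c_{U,A} \circ \tau$, where $\tau = \rho_{U \otimes A} \circ (U \otimes A \otimes u) \colon U \otimes A \otimes U \to U \otimes A$ is the ``cap off the trailing $U$'' map. The braiding supplies an explicit section $s = (U \otimes c_{U,A}) \circ (\phi^{-1} \otimes A)$, with $\phi = \rho_U \circ (U \otimes u)$ the isomorphism provided by~\eqref{eq:central}. A short check using naturality of $c$ with respect to $u$, the coherence $c_{I,A} = \rho_A^{-1} \circ \lambda_A$, and the identity $(U \otimes u) \circ \phi^{-1} = \rho_U^{-1}$ (immediate from $\phi \circ \phi^{-1} = \id$) verifies that $\tau \circ s = \id$; precomposing with $s$ on the right then yields $(\sigma_u)_A = c_{U,A}$.

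The main obstacle is precisely this final cancellation: equality after tensoring by $U$ does not descend in a bare monoidal setting, and the braiding is essential in constructing the section $s$ of~$\tau$.
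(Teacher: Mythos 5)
Your proof is correct. It uses the same toolbox as the paper's proof --- Lemma~\ref{lem:braiding} to get $\sigma_{U,U}=c_{U,U}=\mathrm{id}$, equation~\eqref{eq:halfbraidingrespectsu} together with naturality of the braiding and the coherence $c_{I,A}=\rho_A^{-1}\circ\lambda_A$, and invertibility of $U\otimes u$ --- but it is organised genuinely differently. The paper gives a single chain of string-diagram equalities transforming $\sigma_B$ directly into the braiding: it doubles the $U$ wire via the comultiplication, threads the spare copy around $B$ using the braiding and the triviality of the self-crossings, and lets~\eqref{eq:halfbraidingrespectsu} absorb the half-braiding once its $U$ is capped by $u$. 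You instead factor the argument into two reusable pieces: (i) by~\eqref{eq:halfbraidingrespectsu} any half-braiding witnessing centrality of $u$ has the same form after post-composition with $A\otimes u$, and the braiding is such a half-braiding; (ii) this capping becomes cancellable once you tensor with $U$ (so that $A\otimes U\otimes u$ is invertible) and, for the residual trailing $-\otimes U$, once you exhibit the explicit section $s$ of $\tau$ built from $\phi^{-1}$ and $c_{U,A}$. Your step (i) is essentially a uniqueness statement for half-braidings on central idempotents in the braided setting, and the section $s$ isolates precisely where the braiding is indispensable --- which is a clarifying way to package the same underlying move (double $U$, braid one copy out of the way, cap it) that the paper performs inline. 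The cost is more unitor/associator bookkeeping, which the graphical proof hides by working in the strictification; the benefit is a more modular argument whose intermediate statements could be reused elsewhere.
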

\begin{proof}
  For this proof only, to distinguish them we will draw the half-braiding of $U$ as
  $\begin{pic}[scale=.5]
    \draw (1,0) to[out=90,in=-90] (0,1);
    \draw[preaction={draw,white,line width=2pt,-},
          preaction={draw,white,ultra thick, shorten >=-2.5\pgflinewidth}] (0,0) to[out=90,in=-90] (1,1);
    \halfbraiding{.5}{.5};
  \end{pic}$,
  the braiding as 
  $\begin{pic}[scale=.5]
    \draw (1,0) to[out=90,in=-90] (0,1);
    \draw[preaction={draw,white,line width=2pt,-},
          preaction={draw,white,ultra thick, shorten >=-2.5\pgflinewidth}] (0,0) to[out=90,in=-90] (1,1);
  \end{pic}$,
  and the inverse braiding as
  $\begin{pic}[scale=.5]
    \draw (0,0) to[out=90,in=-90] (1,1);
    \draw[preaction={draw,white,line width=2pt,-},
          preaction={draw,white,ultra thick, shorten >=-2.5\pgflinewidth}] (1,0) to[out=90,in=-90] (0,1);
  \end{pic}$.
  Then, graphically:
  \begin{align*}
    \begin{pic}[scale=.75]
      \draw (1,-3) node[below]{$B$} to (1,0) to[out=90,in=-90] (0,1) node[above]{$B$};
      \draw[halo] (0,-3) node[below]{$U$} to (0,0) to[out=90,in=-90] (1,1) node[above]{$U$};
      \halfbraiding{.5}{.5};
    \end{pic}
    & = 
    \begin{pic}[scale=.75]
      \node[dot] (d) at (-.5,0) {};
      \draw (d) to (-.5,-.5) node[below]{$U$};
      \draw (d) to[out=180,in=-90] (-1,.5) to (-1,2.5) node[dot]{};
      \draw (1,-.5) node[below]{$B$} to (1,2.5) to[out=90,in=-90] (0,3.5) node[above]{$B$};
      \draw[halo] (d) to[out=0,in=-90] (0,.5) to (0,2.5) to[out=90,in=-90] (1,3.5) node[above]{$U$};
      \halfbraiding{.5}{3};
    \end{pic}
    =
    \begin{pic}[scale=.75]
      \node[dot] (d) at (-.5,0) {};
      \draw (d) to (-.5,-.5) node[below]{$U$};
      \draw (d) to[out=180,in=-90] (-1,.5) to (-1,2.5) node[dot]{};
      \draw (d) to[out=0,in=-90] (0,.5);
      \draw (1,-.5) node[below]{$B$} to (1,.5) to[out=90,in=-90] (0,1.5) to[out=90,in=-90] (1,2.5) to[out=90,in=-90] (0,3.5) node[above]{$B$};
      \draw[halo] (0,.5) to[out=90,in=-90] (1,1.5) to[out=90,in=-90] (0,2.5) to[out=90,in=-90] (1,3.5) node[above]{$U$};
      \halfbraiding{.5}{3};
    \end{pic}
    = 
    \begin{pic}[scale=.75]
      \node[dot] (d) at (-.5,0) {};
      \draw (d) to (-.5,-.5) node[below]{$U$};
      \draw (1,-.5) node[below]{$B$} to (1,.5) to[out=90,in=-90] (0,1.5) to[out=90,in=-90] (1,2.5) to[out=90,in=-90] (0,3.5) node[above]{$B$};
      \draw[halo] (d) to[out=180,in=-90] (-1,.5) to[out=90,in=-150] (.5,3) to[out=30,in=-90] (1,3.5) node[above]{$U$};  
      \draw[halo] (d) to[out=0,in=-90] (0,.5) to[out=90,in=-90] (1,1.5) to[out=90,in=-80] (-1,2.5) node[dot]{};
      \halfbraiding{.5}{3};
    \end{pic}
    \\
    & =
    \begin{pic}[scale=.75]
      \node[dot] (d) at (-.5,0) {};
      \draw (d) to (-.5,-.5) node[below]{$U$};
      \draw (1,-.5) node[below]{$B$} to (1,.5) to[out=90,in=-90] (-1,1.75) to[out=90,in=-90] (0,3) node[above]{$B$};
      \draw[halo] (d) to[out=180,in=-90] (-1,.5) to[out=90,in=-90] (1,2.5) to (1,3) node[above]{$U$};
      \draw[halo] (d) to[out=0,in=-90] (0,.5) to[out=90,in=-90] (1,1.5) to[out=90,in=-80] (-1,2.5) node[dot]{};
      \halfbraiding{-.525}{1.225};
    \end{pic}
    =
    \begin{pic}[scale=.75]
      \node[dot] (d) at (-.5,0) {};
      \draw (d) to (-.5,-.5) node[below]{$U$};
      \draw[halo] (1,-.5) node[below]{$B$} to (1,0) to[out=90,in=-90] (-1,1.5) to[out=90,in=-90] (0,3) node[above]{$B$};
      \draw[halo] (d) to[out=180,in=-90] (-1,.5) to[out=90,in=-90] (0,1.5) to[out=90,in=-90] (-1,2.5) node[dot]{};
      \draw[halo] (d) to[out=0,in=-90] (1,3) node[above]{$U$};
      \halfbraiding{-.6}{.95};
    \end{pic}  
    = 
    \begin{pic}[scale=.75]
      \node[dot] (d) at (-.5,0) {};
      \draw (d) to (-.5,-.5) node[below]{$U$};
      \draw (.5,-.5) node[below]{$B$} to (.5,.5) to[out=90,in=-90] (-1,2) to (-1,3) node[above]{$B$};
      \draw[halo] (d) to[out=180,in=-90] (-1,.5) to (-1,1) to[out=90,in=-100] (-.25,1.75) to (-.25,2.5) node[dot]{};
      \draw[halo] (d) to[out=0,in=-90,looseness=.8] (.5,3) node[above]{$U$};
      \halfbraiding{-.6}{1.4};
    \end{pic}
    =
    \begin{pic}[scale=.75]
      \node[dot] (d) at (-.5,.5) {};
      \draw (d) to (-.5,0) node[below]{$U$};
      \draw (d) to[out=180,in=-90] (-1,1) to (-1,2.5) node[dot]{};
      \draw (1,0) node[below]{$B$} to (1,2.5) to[out=90,in=-90] (0,3.5) node[above]{$B$};
      \draw[halo] (d) to[out=0,in=-90] (0,1) to (0,2.5) to[out=90,in=-90] (1,3.5) node[above]{$U$};
    \end{pic}
    = 
    \begin{pic}[scale=.75]
      \draw (1,-2.5) node[below]{$B$} to (1,0) to[out=90,in=-90] (0,1) node[above]{$B$};
      \draw[halo] (0,-2.5) node[below]{$U$} to (0,0) to[out=90,in=-90] (1,1) node[above]{$U$};
    \end{pic}
  \end{align*}
  These equalities used, respectively: 
  invertibility of $u \otimes U$,
  invertibility of the braiding,
  Lemma~\ref{lem:braiding},
  naturality of the braiding in the first argument,
  Lemma~\ref{lem:braiding},
  naturality of the braiding in the second argument,
  equation~\eqref{eq:halfbraidingrespectsu},
  and finally invertibility of $u \otimes U$.
  Notice that (the proof of) Lemma~\ref{lem:braiding} indeed holds for the braiding rather than the half-braiding, and really only depends on invertibility of $u \otimes U$ and the central equation~\ref{eq:central}.
\end{proof}

The rest of this section elaborates some examples.

\begin{lemma}\label{lem:zi:cartesian}
  In a cartesian category, an object is (the domain of) a central idempotent if and only if it is subterminal.
\end{lemma}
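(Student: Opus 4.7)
The strategy is to unpack the cartesian structure and observe that the defining conditions of a central idempotent become elementary statements about projections. In a cartesian category the tensor is the categorical product, the unit $I$ is a terminal object $1$, and for any object $U$ the morphism $u \colon U \to I$ is uniquely determined. The unitors $\lambda_U \colon 1 \times U \to U$ and $\rho_U \colon U \times 1 \to U$ are the canonical projections, so I compute $\lambda_U \circ (u \otimes U) = \pi_2$ and $\rho_U \circ (U \otimes u) = \pi_1$, both viewed as morphisms $U \times U \to U$. Consequently, the central equation~\eqref{eq:central} is literally $\pi_1 = \pi_2$. I then invoke the standard fact that $U$ is subterminal if and only if $\pi_1 = \pi_2 \colon U \times U \to U$: the forward direction is immediate since any two maps $U \times U \to U$ are forced to agree, and conversely any pair $f,g \colon X \to U$ satisfies $f = \pi_1 \circ \langle f,g \rangle = \pi_2 \circ \langle f,g \rangle = g$.

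With this dictionary in hand, the only-if direction is automatic: any central idempotent satisfies~\eqref{eq:central}, hence $\pi_1 = \pi_2$, hence $U$ is subterminal. For the if direction, suppose $U$ is subterminal. Then $\Delta_U \colon U \to U \times U$ is a two-sided inverse to each projection, so $\lambda_U \circ (u \otimes U) = \pi_2$ is invertible, making $u$ a left idempotent, and~\eqref{eq:central} reduces to the identity $\pi_2 = \pi_1$ already established. To supply the remaining datum, I take the half-braiding $\sigma_{U,A}$ to be the cartesian symmetry; equation~\eqref{eq:halfbraidingrespectsu} then compares two morphisms $U \times A \to A \times I$, both of which collapse to the projection on $A$ up to canonical unitors, by uniqueness of the terminal morphism on the $U$-strand.

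No step stands out as a serious obstacle; the content of the lemma is essentially the observation that~\eqref{eq:central} translates verbatim into the projection identity characterising subterminality. The mildest piece of bookkeeping is the half-braiding clause, but this is forced by the symmetric monoidal structure on a cartesian category (and in any case is covered by Lemma~\ref{lem:halfbraidingisbraiding}), so it adds no real content.
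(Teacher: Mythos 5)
Your proof is correct and follows essentially the same route as the paper's: the paper's graphical argument that $f=g$ for any $f,g\colon A \to U$ is precisely your observation that~\eqref{eq:central} reads $\pi_1=\pi_2$ on $U\times U$, precomposed with the pairing $\langle f,g\rangle$. Your version is a cleaner repackaging in terms of projections, and it additionally spells out the converse direction (subterminal $\Rightarrow$ central idempotent, including the invertibility of $\pi_2$ via the diagonal and the half-braiding clause), which the paper's proof leaves implicit.
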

\begin{proof}
  Suppose the unique morphism $u \colon U \to 1$ is a central idempotent.
  Let $f,g \colon A \to U$.
  Write $a$ for the unique morphism $A \to 1$.
  Then $A \times a \colon A \otimes A \to A$ has an inverse $\Delta \colon A \to A \otimes A$, and:
  \[
    \begin{pic}
      \node[morphism] (f) at (0,0) {$f$};
      \draw (f.north) to ++(0,.5) node[above]{$U$};
      \draw (f.south) to ++(0,-1.1) node[below]{$A$};
    \end{pic}
    =
    \begin{pic}
      \node[morphism,width=8mm] (d) at (0,0) {$\Delta$};
      \node[morphism] (f) at ([yshift=6mm]d.north west) {$f$};
      \node[morphism] (a) at ([yshift=6mm]d.north east) {$a$};
      \draw (d.north east) to node[right]{$A$} (a.south);
      \draw (f.north) to ++(0,.5) node[above]{$U$};
      \draw (d.north west) to node[left]{$A$} (f.south);
      \draw (d.south) to ++(0,-.3) node[below]{$A$};
    \end{pic}
    =
    \begin{pic}
      \node[morphism,width=8mm] (d) at (0,0) {$\Delta$};
      \node[morphism] (f) at ([yshift=6mm]d.north west) {$f$};
      \node[morphism] (g) at ([yshift=6mm]d.north east) {$g$};
      \draw (d.north east) to node[right]{$A$} (g.south);
      \draw (f.north) to ++(0,.5) node[above]{$U$};
      \draw (g.north) to ++(0,.3) node[dot]{} node[right]{$U$};
      \draw (d.north west) to node[left]{$A$} (f.south);
      \draw (d.south) to ++(0,-.3) node[below]{$A$};
    \end{pic}
    =
    \begin{pic}
      \node[morphism,width=8mm] (d) at (0,0) {$\Delta$};
      \node[morphism] (f) at ([yshift=6mm]d.north west) {$f$};
      \node[morphism] (g) at ([yshift=6mm]d.north east) {$g$};
      \draw (d.north east) to node[right]{$A$} (g.south);
      \draw (g.north) to ++(0,.5) node[above]{$U$};
      \draw (f.north) to ++(0,.3) node[dot]{} node[left]{$U$};
      \draw (d.north west) to node[left]{$A$} (f.south);
      \draw (d.south) to ++(0,-.3) node[below]{$A$};
    \end{pic}
    =
    \begin{pic}
      \node[morphism,width=8mm] (d) at (0,0) {$\Delta$};
      \node[morphism] (a) at ([yshift=6mm]d.north west) {$a$};
      \node[morphism] (g) at ([yshift=6mm]d.north east) {$g$};
      \draw (d.north west) to node[left]{$A$} (a.south);
      \draw (g.north) to ++(0,.5) node[above]{$U$};
      \draw (d.north east) to node[right]{$A$} (g.south);
      \draw (d.south) to ++(0,-.3) node[below]{$A$};
    \end{pic}
    =
    \begin{pic}
      \node[morphism] (g) at (0,0) {$g$};
      \draw (g.north) to ++(0,.5) node[above]{$U$};
      \draw (g.south) to ++(0,-1.1) node[below]{$A$};
    \end{pic}
  \]
  So $u$ is monic.
\end{proof}

In particular, if $X$ is a topological space, then central idempotents in its category of sheaves $\mathrm{Sh}(X)$ correspond to open subsets of $X$~\cite[Corollary~2.2.16]{borceux:3}.
% \todo{Also, if $M$ is a monoid, then central idempotents in the category $[M\op,\cat{Set}]$ of $M$-actions correspond to right ideals of $M$.}

\begin{example}\label{ex:semilattices}
  Any \emph{(meet-)semilattice} may be regarded as a (strict) monoidal category: objects are elements of the semilattice, there is a unique morphism $u \to v$ if and only if $u \leq v$, tensor products are given by greatest lower bounds, and the tensor unit is the greatest element. 
  In this special case of Lemma~\ref{lem:zi:cartesian}, any object is a central idempotent.
	Write $\cat{SLat}_{\leq}$ for the category of semilattices and monotone functions that preserve the greatest element.
	Write $\cat{MonCat}$ for the category of monoidal categories and functors $F$ that are (lax) monoidal and whose coherence morphism $I \to F(I)$ is invertible (see also Definition~\ref{def:moncat} below).
  The former is a coreflective subcategory of the latter:
  \[\begin{pic}
    \node (l) at (0,0) {$\cat{SLat}_{\leq}$};
    \node (r) at (4,0) {$\cat{MonCat}$};
    \draw[->] ([yshift=2mm]l.east) to ([yshift=2mm]r.west);
    \draw[<-] ([yshift=-2mm]l.east) to node[below]{$\ZI$} ([yshift=-2mm]r.west);
    \draw[draw=none] (l) to node{$\perp$} (r);
  \end{pic}\]  
\end{example}

\begin{example}\label{ex:quantale}
  A \emph{quantale}~\cite{rosenthal:quantales} is a complete lattice $Q$ with an element $e \in Q$ and an associative multiplication $Q \times Q \to Q$ such that:
  \[
    u (\bigvee v_i) = \bigvee uv_i
    \qquad
    (\bigvee u_i) v = \bigvee u_i v
    \qquad
    eu = u = ue
  \]
  Any quantale may be regarded as a monoidal category: objects are elements of the quantale, (composition of) morphisms is induced by the partial order, and the tensor product is induced by the multiplication.
  The central idempotents of this category are the central elements $q^2 = q \leq e$.
  Taking as morphisms between quantales functions that preserve $\bigvee$, $\cdot$, and $e$, this gives a functor $\cat{Quantale} \to \cat{MonCat}$.

  For an important special case, recall that a \emph{frame} is a complete lattice in which finite joins distribute over suprema~\cite{johnstone:stonespaces}: a frame is a commutative quantale in which the multiplication is idempotent and whose unit is the largest element.
  Frames form a coreflective subcategory of commutative quantales~\cite[Proposition~3.5]{enriquemolinerheunentull:tensortopology}, where a morphism of frames is a function that preserves $\bigvee$, $\wedge$, and $1$:
  \[\begin{pic}
    \node (l) at (0,0) {$\cat{Frame}$};
    \node (r) at (4,0) {$\cat{cQuantale}$};
    \draw[->] ([yshift=2mm]l.east) to ([yshift=2mm]r.west);
    \draw[<-] ([yshift=-2mm]l.east) to node[below]{$\ZI$} ([yshift=-2mm]r.west);
    \draw[draw=none] (l) to node{$\perp$} (r);
  \end{pic}\]
\end{example}

\begin{example}\label{ex:modules}
  Consider the monoidal category $\cat{Mod}_R$ of \emph{modules} over a commutative ring $R$.
  Centrality of $a \colon A \to R$ means that:
  \begin{equation}\label{eq:mod:central}
    a(x) \cdot y = a(y) \cdot x
  \end{equation}
  for every $x,y \in A$. 
  Idempotency means that:
  \begin{align}
    \forall x\: \exists x_i, y_i \colon x = \sum_{i=1}^n a(x_i) \cdot y_i \label{eq:mod:idempotent:surjective} \\
    a(x) \cdot y = 0 \iff x \otimes y = 0 \label{eq:mod:idempotent:injective}
  \end{align}
  If $e \in R$ is a (central) idempotent element of $R$, then the (inclusion $a$ into $R$ of the) ideal $A=eR$ is a central idempotent in $\cat{Mod}_R$.
  Conversely, \eqref{eq:mod:idempotent:surjective} implies that $a(A) \subseteq R$ is an idempotent ideal.
  % \todo{
  % If $a=0$, then~\eqref{eq:mod:idempotent:surjective} and~\eqref{eq:mod:idempotent:injective} imply $A=0$, so we may assume $a \neq 0$, say $a(x_0) \neq 0$.
  % By~\eqref{eq:mod:central} then every $y \in A$ is linearly dependent on $x_0$.
  %
  % If $R$ is a field, then every $y \in A$ allows $r=a(y)/a(x_0) \in R$ with $y=r \cdot x_0$, hence $A$ is 1-dimensional, $A \simeq R$, and $a$ is injective.
  % \todo{What if $R$ is not a field?}
  In general we cannot say much more, but if $A$ is faithfully flat~\cite[Section~4I]{lam:modules}, then $A \otimes a$ being injective implies that $a\colon A \to R$ is injective, and hence central idempotents correspond to idempotent ideals.
  This includes free modules $A$, and hence vector spaces $A$.
  %
  % If $a$ is surjective, and $A$ nondegenerate, it follows from~\eqref{eq:mod:idempotent:injective} that $a(x)=0$ implies that $r\cdot x=0$ for all $r \in R$ and so $x=0$, making $a$ injective.
  %
  % But $a$ need not be surjective; for example, $A=\mathbb{R}$ over $R=\mathbb{R}^2$ with $(r,s) \cdot x = rx$ allows $a(x)=(x,0)$ which is linear and satisfies~\eqref{eq:mod:central}--\eqref{eq:mod:idempotent:injective} but is not surjective.
  % Notice~\eqref{eq:mod:central} makes $A$ into a commutative associative $R$-algebra with multiplication $(x,y) \mapsto f(x) \cdot y$, and $a \colon A \to R$ into an $R$-algebra morphism.
  % In general $A$ is nonunital, because if it had a unit $u \colon R \to A$, then $A = u \circ a$, making $a$ split monic and so injective.
  % Nevertheless $R$ is an $A$-module with scalar multiplication $(x,r) \mapsto f(x) \cdot r$. 
  % Now~\eqref{eq:mod:idempotent:surjective} and~\eqref{eq:mod:idempotent:injective} imply that $a \colon A \to R$ is an epimorphism of rings and hence of $R$-algebras~\cite[Proposition~IX.1.2]{stenstrom:ringsofquotients}.
  % }
  % \todo{[What about half-braidings? Cf.~\cite{schauenburg:center}]}
\end{example}

% \begin{example}\label{ex:bialgebra}
% \todo{  Let $A$ be a \emph{bialgebra} over a field $k$. Then the category $\cat{Mod}_A$ of left $A$-modules is monoidal~\cite[Example~9.1.3]{majid:quantumgroups}.
%   Note that $A$ need not be commutative, and $\cat{Mod}_A$ is only braided when $A$ is quasitriangular.
%   If $u \colon U \to A$ is a central idempotent in $\cat{Mod}_A$, then composition with the counit $\varepsilon \colon A \to I$ makes $\varepsilon \circ u \colon U \to I$ a central idempotent in the category of vector spaces over $k$. By Example~\ref{ex:modules} and~\cite[Proposition~3.11]{enriquemolinerheunentull:tensortopology}, this means that up to isomorphism $U=0$, or $U=I$ and $\varepsilon \circ U = I$. 
%   That $u$ is $A$-linear means $\varepsilon(a) \cdot u = a \cdot u$.
%   Thus central idempotents are in bijection with elements $u \in U$ satisfying $u \cdot u = u$ and $a \cdot u = u \cdot a$ for all $a \in A$; the former case is $u=0$, the latter case has $\varepsilon(u)=1$.
%   Thus central idempotents in $\cat{Mod}_A$ correspond to central idempotent elements of $A$, that is, $\ZI(\cat{Mod}_A) \simeq \{ u \cdot u = u \in Z(A) \}$.}
% \end{example}

\begin{example}\label{ex:hilbert}
  Consider the category $\cat{Hilb}_{C_0(X)}$ of \emph{Hilbert C*-modules}~\cite{lance1995hilbert} for a locally compact Hausdorff space $X$. 
  This is not an abelian category~\cite[Appendix~A]{heunen:embedding}, nor a topos.
  It is monoidally equivalent to the category of fields of Hilbert spaces over $X$~\cite[Corollary~4.9]{heunenreyes:frobenius}.
  Under this equivalence the monoidal structure is fibrewise, so a central idempotent $a \colon A \to C_0(X)$ in $\cat{Hilb}_{C_0(X)}$ becomes a continuous functions that is fibrewise a central idempotent $a_x \colon A_x \to \mathbb{C}$ in the category of Hilbert spaces and bounded linear maps.
  It follows from Example~\ref{ex:modules} that each $a_x$ is injective. 
  Finally, because composition of maps between fields of Hilbert spaces is fibrewise too, we conclude that the central idempotent $a$ in $\cat{Hilb}_{C_0(X)}$ is monic. 
  It follows~\cite[Proposition~3.16]{enriquemolinerheunentull:tensortopology} that $\ZI(\cat{Hilb}_{C_0(X)}) \simeq \{ U \subseteq X \text{ open} \}$.
\end{example}

\begin{example}\label{ex:idempotentmonad}
  If $\cat{C}$ is any category, the functor category $[\cat{C},\cat{C}]$ is monoidal with composition as tensor product. 
  %It is almost never braided.
  If $\varepsilon \colon T \Rightarrow \cat{C}$ is a left idempotent satisfying~\eqref{eq:central}, then the inverse $\varepsilon T = T\varepsilon \colon T^2 \Rightarrow T$ is a comultiplication that makes $T$ into an \emph{idempotent comonad} on $\cat{C}$.
  Conversely, if $T$ is an idempotent comonad on $\cat{C}$, then its counit is a left idempotent in $[\cat{C},\cat{C}]$ satisfying~\eqref{eq:central} by~\cite[Proposition~4.2.3]{borceux:2}.
  (In fact, the counit is central if and only if the comonad is idempotent~\cite[Proposition~2.11, e$\Leftrightarrow$f]{clarkwisbauer:idempotent}.) 
  Thus idempotents in $[\cat{C},\cat{C}]$ satisfying~\eqref{eq:central} correspond (up to natural isomorphism) to idempotent comonads.
  Notice that the counit of an idempotent comonad need not be monic~\cite[Lemma~3.4]{lambekrattray:localization}. 
  See also~\cite{kelly:transfinite}.

  % This entails that $T(A) \simeq A$ for all $A \in \cat{C}$: if not, take $F$ to be the constant functor $A$, to get the contradiction $A = F(T(A)) \simeq T(F(A)) = T(A)$.
  An idempotent comonad $\varepsilon \colon T \Rightarrow \cat{C}$ is in the centre of the monoidal category $[\cat{C},\cat{C}]$ when $FT \simeq TF$ for all endofunctors $F \colon \cat{C} \to \cat{C}$, and $\varepsilon$ respects these natural isomorphisms.
  As in Lemma~\ref{lem:braiding}, it follows that:% from the Yang-Baxter equation for half-braidings $\sigma^F \colon FT \Rightarrow TF$ that:
  \[
    \begin{pic}[xscale=.5,yscale=.8]
      \draw (1,0) node[below]{$F$} to[out=90,in=-90] (0,1) to ++(0,.5);
      \draw[halo] (0,0) node[below]{$T$} to[out=90,in=-90] (1,1) to ++(0,.5);
      \draw (2,0) node[below]{$T$} to ++(0,1) node[dot]{};
    \end{pic}
    =
    \begin{pic}[xscale=.5,yscale=.8]
      \draw (0,0) node[below]{$T$} to ++(0,1) node[dot]{};
      \draw (1,0) node[below]{$F$} to ++(0,1.5);
      \draw (2,0) node[below]{$T$} to ++(0,1.5);
    \end{pic}
  \]
  Hence $\varepsilon_{FT(A)} = FT(\varepsilon_A) \circ \sigma^F_{T(A)}$ is a composition of invertible maps and so an isomorphism for every $A \in \cat{C}$ and $F \colon \cat{C} \to \cat{C}$. Taking $F$ to be the functor that maps every object to $A$ and every morphism to the identity on $A$ shows that $\varepsilon$ is a natural isomorphism.
  Hence $\ZI([\cat{C},\cat{C}]) = 1$.
\end{example}

% \begin{example}
% \todo{[What are half-braidings in $[\cat{C}\op,\cat{Set}]$ under Day convolution?]}
%   Let $\cat{C}$ be a \todo{(braided)} monoidal category, and consider $[\cat{C}\op,\cat{Set}]$ under Day convolution.
%   Let $U \colon \cat{C}\op \to \cat{Set}$ and $u_A \colon U(A) \to \cat{C}(A,I)$.
%   Let $\big[f \colon A \to B \otimes C,\, x \in U(B),\, y \in U(C)\big]$ be an element of $(U \widehat{\otimes} U)(A)$.
%   Centrality of $u$ means:
%   \begin{equation}\label{eq:day}
%     U\big( \rho \circ (B \otimes u_C(y)) \circ f \big) (x)
%     =
%     U\big( \lambda \circ (u_B(x) \otimes C) \circ f \big)(y)
%   \end{equation}
%   And $u$ is idempotent when $[f,x,y] \mapsto $\eqref{eq:day} is injective and surjective.  
%   \todo{Can this be simplified?}

%   For example, if $\cat{C}=(\mathbb{N},\leq,+,0)$, then $\cat{C}(0,0)=1$ and $\cat{C}(n+1,0)=\emptyset$, so $U(n+1)=\emptyset$ and $u_0=! \colon U(0) \to 1$. Centrality then means that $y = U(\id[0])(y) = U\big(u_0(x)\big)(y) = U\big(u_0(y)\big)(x) = U(\id[0])(x) = x$ for any $x,y \in U(0)$, so $U(0) \subseteq 1$, which is also idempotent. So the only central idempotents are subunits $U=\emptyset$ and $U=\delta_0$. The same goes for $\cat{C}=(\cat{Inj},+,\emptyset)$.
%   \todo{Can there be central idempotents that are not monic under Day convolution?}
% \end{example}

% \begin{example}
%   Any commutative monoid may be regarded as (the morphisms of) a monoidal category with a single object. Then there is a unique central idempotent, represented by any invertible morphism. 
% \end{example}

For more examples, see~\cite[3.7--3.8, 3.12--3.14]{enriquemolinerheunentull:tensortopology}.

\section{Universal joins}\label{sec:universaljoins}

This section defines properties of central idempotents that make them more well-behaved, starting with the weakest one.

\begin{definition}\label{def:universaljoins}
  A monoidal category is \emph{stiff} if
  \[
    \begin{pic}[xscale=3,yscale=1.5]
      \node (tl) at (0,1) {$A \otimes U \otimes V$};
      \node (tr) at (1,1) {$A \otimes V$};
      \node (bl) at (0,0) {$A \otimes U$};
      \node (br) at (1,0) {$A$};
      \draw[->] (tl) to (tr);
      \draw[->] (tl) to (bl);
      \draw[->] (tr) to (br);
      \draw[->] (bl) to (br);
      \draw (.1,.7) to (.15,.7) to (.15,.825);
    \end{pic}
    \qquad\qquad
    \begin{pic}
      \node[dot] (s) at (0,.2) {};
      \draw (s) to (0,-.5) node[below]{$U$};
      \node[dot] (t) at (.5,-.2) {};
      \draw (t) to (.5,-.5) node[below]{$V$};
      \draw (-.5,-.5) node[below]{$A$} to (-.5,.5) node[above]{$A$};
    \end{pic}
    =
    \begin{pic}
  	  \node[dot] (s) at (0,-.2) {};
  	  \draw (s) to (0,-.5) node[below]{$U$};
  	  \node[dot] (t) at (.5,.2) {};
  	  \draw (t) to (.5,-.5) node[below]{$V$};
  	  \draw (-.5,-.5) node[below]{$A$} to (-.5,.5) node[above]{$A$};
    \end{pic}
  \]
  is a pullback for all objects $A$ and central idempotents $u$ and $v$.

  A monoidal category $\cat{C}$ has \emph{finite universal joins}\changed{, or universal $\vee$-joins,} of central idempotents when it has an initial object $0$ satisfying $A \otimes 0 \simeq 0$ for all objects $A$, and $\ZI(\cat{C})$ has binary joins such that
    \begin{equation} \label{eq:pullback-pushout}
      \begin{pic}[xscale=3.3,yscale=1.5]
	    \node (tl) at (0,1) {$A \otimes U \otimes V$};
	    \node (tr) at (1,1) {$A \otimes V$};
	    \node (bl) at (0,0) {$A \otimes U$};
	    \node (br) at (1,0) {$A \otimes (U \vee V)$};
	    \draw[>->] (tl) to node[above]{} (tr);
	    \draw[>->] (tl) to node[left]{} (bl);
	    \draw[>->] (tr) to node[right]{} (br);
	    \draw[>->] (bl) to node[below]{} (br);
	    \draw (.1,.7) to (.15,.7) to (.15,.825);
	    \draw (.95,.3) to (.90,.3) to (.90,.175);
	  \end{pic}
	  \qquad\qquad
	  \begin{pic}[xscale=2]
  		\node[dot] (d) at (.3,0)	{};
  		\draw (d) to (.3,-.5) node[below]{$V$};
  		\node[dot] (D) at (0,0) {};
  		\draw (0,-.5) node[below]{$U$} to (D);
  		\draw (D) to (0,.5) node[above]{$\quad\quad U \vee V$};
  		\draw (-.3,-.5) node[below]{$A$} to (-.3,.5) node[above]{$A$};
	  \end{pic}
    =
    \begin{pic}[xscale=2.3]
      \node[dot] (d) at (0,0) {};
      \draw (d) to (0,-.5) node[below]{$U$};
      \node[dot] (D) at (.3,0) {};
      \draw (.3,-.5) node[below]{$V$} to (D);
      \draw (D) to (.3,.5) node[above]{$U \vee V$};
      \draw (-.3,-.5) node[below]{$A$} to (-.3,.5) node[above]{$A$};
    \end{pic}
   \end{equation}
   is both a pullback and pushout for all objects $A$ and central idempotents $u$ and $v$.

  A monoidal category has \emph{universal joins}\changed{, or universal $\bigvee$-joins,} of central idempotents when $\ZI(\cat{C})$ has all joins such that the cocone
  \[\begin{pic}[xscale=1.2]
  	\node[dot] (d) at (0,0) {};
  	\draw (d) to (0,-.5) node[below]{$U_i$};
  	\draw (d) to (0,.5) node[above]{\smash{$\bigvee\!U_i$}};
  	\draw (-.5,-.5) node[below]{$A$} to (-.5,.5) node[above]{$A$};
  \end{pic}\]
  is a colimit of the diagram with morphisms
  \[\begin{pic}[xscale=1.2]
  	\node[dot] (d) at (0,0) {};
  	\draw (d) to (0,-.5) node[below]{$U_i$};
  	\draw (d) to (0,.5) node[above]{$\smash{U_j}$};
  	\draw (-.5,-.5) node[below]{$A$} to (-.5,.5) node[above]{$A$};
  \end{pic}\]
  if $u_i \leq u_j$ for any set $\{u_i\}$ of central idempotents satisfying $\{u_i\} = \{u_i \wedge u_j\}$.
\end{definition}

If $\cat{C}$ has universal finite joins of central idempotents, then $\ZI(\cat{C})$ is a distributive lattice with least element $0$.
If $\cat{C}$ has universal joins of central idempotents, then $\ZI(\cat{C})$ is even a frame. 

Any coherent category has universal finite joins of central idempotents~\cite[Proposition~7.4]{enriquemolinerheunentull:tensortopology}.
Any cocomplete Heyting category has universal joins of central idempotents~\cite[Proposition~7.12]{enriquemolinerheunentull:tensortopology}; this includes all cocomplete toposes, such as Grothendieck toposes.
Any quantale, regarded as a monoidal category as in Example~\ref{ex:quantale}, has universal joins of central idempotents.
The category of Hilbert C*-modules of Example~\ref{ex:hilbert} has universal joins of central idempotents.

Next, we show that the properties of being equal, being monic, being epic, and being invertible, may be verified `central idempotent-wise'.

\begin{lemma}\label{lem:epimonoisolocal}
  If (finitely many) central idempotents $u_i$ in a monoidal category with universal (finite) joins of central idempotents satisfy $\bigvee u_i = 1$, then:
  \begin{itemize}
    \item morphisms $f,g \colon A \to B$ are equal if and only if $f \otimes u_i = g \otimes u_i$ for each $i$;
    \item a morphism $f$ is epic if $f \otimes u_i$ or $f \otimes U_i$ is epic for each $i$;
    \item a morphism $f$ is monic if $f \otimes u_i$ is monic for each $i$;
    \item a morphism $f$ is invertible if and only if $f \otimes U_i$ is invertible for each $i$.
  \end{itemize}
\end{lemma}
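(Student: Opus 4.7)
The plan is to reduce all four statements to the single structural fact that, under universal (finite or infinitary) joins of central idempotents, the cocone
\[
  \{A \otimes U_i \to A\}_i
\]
coming from the dot-diagrams in Definition~\ref{def:universaljoins} is a colimit whenever $\bigvee u_i = 1$ (using $A \otimes I \cong A$). Once this is in place, each bullet follows from the universal property together with two elementary factorisations of $f \otimes u_i$: namely
\[
  f \otimes u_i = (f \otimes I) \circ (A \otimes u_i) = (B \otimes u_i) \circ (f \otimes U_i).
\]

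First I would handle equality. The assignment $f \mapsto \{f \otimes u_i\}_i$ is the cocone-restriction map induced by the colimit $A = \colim_i A \otimes U_i$; since $\rho_A$ and $\rho_B$ are invertible, agreement of $f \otimes u_i$ and $g \otimes u_i$ for every $i$ is exactly agreement of the two cocones, so $f = g$ by the uniqueness half of the universal property. In the finite case this is the pullback--pushout square~\eqref{eq:pullback-pushout} iterated, and in the infinitary case it is the stated colimit.

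Next I would derive the epic and monic statements from equality. For epic, suppose $g \circ f = h \circ f$; then $(gf) \otimes u_i = (hf) \otimes u_i$, and using the two factorisations above, either hypothesis (that $f \otimes u_i$ is epic, or that $f \otimes U_i$ is epic) yields $g \otimes u_i = h \otimes u_i$ for that index $i$: from $f \otimes U_i$ epic we cancel directly, while from $f \otimes u_i$ epic we cancel to get $g \otimes I = h \otimes I$ and then post-compose with $B \otimes u_i$. Collecting over $i$ and applying equality gives $g = h$. The monic case is strictly dual: from $f \circ g = f \circ h$ deduce $(f \otimes u_i) \circ (g \otimes U_i) = (f \otimes u_i) \circ (h \otimes U_i)$, cancel $f \otimes u_i$ to get $g \otimes U_i = h \otimes U_i$, post-compose with $A \otimes u_i$ to get $g \otimes u_i = h \otimes u_i$, and apply part~(1).

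Finally, invertibility is the only part that uses more than uniqueness of the cocone factorisation, so I expect it to be the main obstacle. The idea is that for $u_i \leq u_j$ the transition morphism $A \otimes U_i \to A \otimes U_j$ is $A \otimes m_{ij}$ for the mediating dot $m_{ij} \colon U_i \to U_j$, and bifunctoriality of $\otimes$ makes $\{f \otimes U_i\}_i$ a natural transformation between the two diagrams $\{A \otimes U_i\}_i$ and $\{B \otimes U_i\}_i$. If each $f \otimes U_i$ is invertible, the inverses $\{(f \otimes U_i)^{-1}\}_i$ are again natural, and therefore induce a morphism $B \to A$ between the colimits $A$ and $B$ that is a two-sided inverse to the map induced by $\{f \otimes U_i\}_i$, which is $f$ itself. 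The converse ($f$ invertible $\Rightarrow$ each $f \otimes U_i$ invertible) is immediate from bifunctoriality of $\otimes$. The care point is checking that the diagrams indexing the colimits for $A$ and for $B$ are the same (they are, since they depend only on the $u_i$'s and the ambient category), and that all identifications with $-\otimes I$ and the unitors $\rho$ are suppressed consistently.
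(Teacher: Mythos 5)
Your proposal is correct and follows essentially the same route as the paper: equality via the universal property of the (iterated binary, or wide) pushout/colimit defining universal joins, epi and mono by the two factorisations of $f \otimes u_i$ followed by the equality clause, and invertibility by gluing the local inverses $(f \otimes U_i)^{-1}$ into a cocone over $\{B \otimes U_i \to B\}$ and using uniqueness of the mediating morphism. The only nit is a harmless slip in the monic case, where the post-composition recovering $g \otimes u_i$ from $g \otimes U_i$ should be with $B \otimes u_i$ rather than $A \otimes u_i$.
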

% If $\cat{C}$ was merely stiff and we first embed it into a category with universal joins, this is not useful, though, as in that case $\bigvee s_i = 1$ if and only if $s_i=1$ for some $i$ already.
\begin{proof}
  The point about equality of morphisms follows directly from Definition~\ref{def:universaljoins}.

  Next, let $f \colon A \to B$ and suppose that $g \circ f = h \circ f$ for $g,h \colon B \to C$. Then also $(g \otimes u_i) \circ (f \otimes U_i) = (h \otimes u_i) \circ (f \otimes U_i)$. Therefore $g \otimes u_i = h \otimes u_i$ because $f \otimes U_i$ is epic. But then $g=h$ as before because $\bigvee u_i = 1$. 
  If instead each $f \otimes u_i$ is epic, then immediately $g=h$.
  In either case $f$ is epic.

  We turn to monomorphisms. Let $f \colon B \to C$. Suppose that $f \circ g = f \circ h$ for $g,h \colon A \to B$. Because $f \otimes u_i$ is epic, then:
  % also $(f \otimes u_i) \circ (g \otimes U_i) = (f \otimes u_i) \circ (h \otimes U_i)$. Therefore $g \otimes U_i = h \otimes U_i$ because $f \otimes u_i$ is monic. But then $g \otimes u_i = h \otimes u_i$, 
  \[
    \begin{pic}
      \node[morphism] (f) at (0,0) {$f$};
      \node[morphism] (g) at (0,-.8) {$g$};
      \draw (g.north) to node[left=-1mm]{$B$} (f.south);
      \draw (f.north) to  ++(0,.3) node[left=-1mm]{$C$};
      \draw (g.south) to  ++(0,-.3) node[left=-1mm]{$A$};
      \draw (.5,0) node[dot]{} to ++(0,-1.3) node[right=-1mm]{$U\smash{_i}$};
    \end{pic}
    \;=
    \begin{pic}
      \node[morphism] (f) at (0,0) {$f$};
      \node[morphism] (g) at (0,-.8) {$h$};
      \draw (g.north) to node[left=-1mm]{$B$} (f.south);
      \draw (f.north) to  ++(0,.3) node[left=-1mm]{$C$};
      \draw (g.south) to  ++(0,-.3) node[left=-1mm]{$A$};
      \draw (.5,0) node[dot]{} to ++(0,-1.3) node[right=-1mm]{$U\smash{_i}$};
    \end{pic}
    \;\;\implies\;\;
    \begin{pic}
      \node[morphism] (g) at (0,0) {$g$};
      \draw (g.north) to  ++(0,.3) node[above]{$B$};
      \draw (g.south) to  ++(0,-.3) node[below]{$A$};
      \draw (.5,-.5) node[below]{$U_i$} to ++(0,1) node[above]{$U\smash{_i}$};
    \end{pic}
    \!\!=\;
    \begin{pic}
      \node[morphism] (g) at (0,0) {$h$};
      \draw (g.north) to  ++(0,.3) node[above]{$B$};
      \draw (g.south) to  ++(0,-.3) node[below]{$A$};
      \draw (.5,-.5) node[below]{$U_i$} to ++(0,1) node[above]{$U\smash{_i}$};
    \end{pic}
    \;\implies\;\;
    \begin{pic}
      \node[morphism] (g) at (0,0) {$g$};
      \draw (g.north) to  ++(0,.3) node[above]{$B$};
      \draw (g.south) to  ++(0,-.3) node[below]{$A$};
      \draw (.5,-.5) node[below]{$U_i$} to ++(0,.5) node[dot]{};
    \end{pic}
    =
    \begin{pic}
      \node[morphism] (g) at (0,0) {$h$};
      \draw (g.north) to  ++(0,.3) node[above]{$B$};
      \draw (g.south) to  ++(0,-.3) node[below]{$A$};
      \draw (.5,-.5) node[below]{$U_i$} to ++(0,.5) node[dot]{};
    \end{pic}
  \]
  So $g=h$ because $\bigvee u_i=1$. Thus $f$ is monic.

  Finally, suppose $f \colon A \to B$ makes each $f \otimes U_i \colon A \otimes U_i \to B \otimes U_i$ invertible. Write $g_i \colon B \otimes U_i \to A \otimes U_i$ for the inverses. 
  % If $s_i \leq s_j$ via $m \colon S_i \to S_j$, then $(B \otimes m) \circ (f \otimes S_i) = f \otimes m = (f \otimes S_j) \circ (A \otimes m)$. 
  % It follows that %$(A \otimes m) \circ g_i = g_j \circ (B \otimes m)$, and so 
  % $g_i \otimes s_i = (g_j \otimes s_j) \circ (B \otimes m)$. So the 
  Now the morphisms $g_i \otimes u_i \colon B \otimes U_i \to A$ form a cocone for the diagram $B \otimes u_i \colon B \otimes U_i \to B$, giving a mediating morphism $g \colon B \to A$. It follows from uniqueness that $g \circ f = B$ and $f \circ g = A$, making $f$ invertible.
  Conversely, if $f$ has inverse $g$, then $f \otimes U_i$ has inverse $g \otimes U_i$.
\end{proof}

In Section~\ref{sec:embedding} below we will see that any stiff monoidal category can be freely completed with universal (finite) joins of central idempotents. We end this section by considering which functors between monoidal categories preserve central idempotents.

\begin{definition}
  A lax monoidal functor $F \colon \cat{C} \to \cat{D}$ with coherence morphisms $\theta_{A,B} \colon F(A) \otimes F(B) \to F(A \otimes B)$ and $\theta_I \colon I \to F(I)$ \emph{preserves a half-braiding} $\sigma^U_A \colon U \otimes A \to A \otimes U$ in $\cat{C}$ when there is a chosen half-braiding $F(\sigma^U)_D \colon F(U) \otimes D \to D \otimes F(U)$ in $\cat{D}$ satisfying $\theta_{A,U} \circ F(\sigma^U)_{F(A)} = F(\sigma_A^U) \circ \theta_{U,A}$.
\end{definition}

Any braided lax monoidal functor preserves all half-braidings by Lemma~\ref{lem:halfbraidingisbraiding}.
Conversely, if \changed{a} lax monoidal functor $F \colon \cat{C} \to \cat{D}$ preserves all half-braidings, then it induces a functor from the centre of $\cat{C}$ to the centre of $\cat{D}$ (but this functor need not be lax monoidal without further assumptions).

\begin{lemma}\label{lem:functorspreservingZI}
  Let $F \colon \cat{C} \to \cat{D}$ be an lax monoidal functor with coherence morphisms $\theta_{A,B} \colon F(A) \otimes F(B) \to F(A \otimes B)$ and $\theta_I \colon I \to F(I)$.
  Suppose that $F$ preserves half-braidings of central idempotents, and that $\theta_I$ and $\theta_{A,U}$ are invertible for all objects $A$ and $u \in \ZI(\cat{C})$.
  If $u \colon U \to I$ is a central idempotent in $\cat{C}$, then
  \[
    F(U) 
    \stackrel{F(u)}{\longrightarrow}
    F(I)
    \stackrel{\theta_I^{-1}}{\longrightarrow}
    I
  \]
  is a central idempotent in $\cat{D}$.
  This induces a semilattice morphism $\ZI(\cat{C}) \to \ZI(\cat{D})$ if additionally:
  \begin{align*}
    F(\sigma^I)_D & = (D \otimes \theta_I) \circ \rho_D^{-1} \circ \lambda_D \circ (\theta_I^{-1} \otimes D) \\
    F(\sigma^{U \otimes V})_D & = (D \otimes \theta_{U,V}) \circ \big( F(\sigma_U)_D \otimes F(V) \big) \circ \big( F(U) \otimes F(\sigma^V)_D \big) \circ (\theta_{U,V}^{-1} \otimes D)
  \end{align*}
\end{lemma}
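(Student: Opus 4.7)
My plan is to prove the lemma in two parts corresponding to the two claims.

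For the first claim, take $v := \theta_I^{-1} \circ F(u)$ with candidate half-braiding $F(\sigma^U)$ on $F(U)$, and verify the three defining properties of a central idempotent. For \emph{left idempotency}, using the lax monoidal coherence $\lambda_{F(U)} = F(\lambda_U) \circ \theta_{I,U} \circ (\theta_I \otimes F(U))$ together with naturality of $\theta$, one shows
\[
\lambda_{F(U)} \circ (v \otimes F(U)) = F(\lambda_U \circ (u \otimes U)) \circ \theta_{U,U},
\]
which is invertible since $F$ preserves the invertibility of $\lambda_U \circ (u \otimes U)$ and $\theta_{U,U}$ is invertible by hypothesis. The \emph{central equation} follows symmetrically: a parallel computation gives $\rho_{F(U)} \circ (F(U) \otimes v) = F(\rho_U \circ (U \otimes u)) \circ \theta_{U,U}$, which equals the previous expression by the central equation for $u$ in $\cat{C}$. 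Finally, for $v$ to be a \emph{morphism in the centre}, the required equation $(D \otimes v) \circ F(\sigma^U)_D = \sigma^I_D \circ (v \otimes D)$ is first verified on objects of the form $D = F(A)$ by expanding $v$ and combining the preservation compatibility $\theta_{A,U} \circ F(\sigma^U)_{F(A)} = F(\sigma^U_A) \circ \theta_{U,A}$ with naturality of $\theta$, the unitor coherences, and the corresponding equation for $u$ in $\cat{C}$.

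For the second claim, the task is to verify that $u \mapsto v$ preserves both the top element $1 = \id[I]$ and binary meets of $\ZI(\cat{C})$. The image of $1$ is $\theta_I^{-1}\colon F(I) \to I$, which is identified as a central idempotent with $1$ in $\ZI(\cat{D})$ exactly when $\theta_I^{-1}$ is an isomorphism in the centre from $(F(I), F(\sigma^I))$ to $(I, \sigma^I)$; unfolding this condition recovers the first additional hypothesis. For binary meets, writing $u \wedge w = \rho_I \circ (u \otimes w)$, the image $\theta_I^{-1} \circ F(u \wedge w)$ on the one hand and the meet $\rho_I \circ (v_u \otimes v_w)$ computed in $\cat{D}$ on the other differ by the coherence morphism $\theta_{U,V}\colon F(U) \otimes F(V) \to F(U \otimes V)$. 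These are identified as the same element of $\ZI(\cat{D})$ precisely when $\theta_{U,V}$ is an isomorphism in the centre between the tensor-product half-braiding on $F(U) \otimes F(V)$ and the chosen $F(\sigma^{U \otimes V})$ on $F(U \otimes V)$, which unfolds to the second additional hypothesis.

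The main obstacle is extending the centrality equation in the first claim from $D = F(A)$ to arbitrary $D \in \cat{D}$. Both sides assemble into natural transformations $F(U) \otimes (-) \Rightarrow (-) \otimes I$ in $D$, agreeing on the image of $F$ by the preceding argument; moreover at $D = I$ they coincide automatically, because the half-braiding axiom forces $F(\sigma^U)_I = \lambda_{F(U)}^{-1} \circ \rho_{F(U)}$, which combined with the Mac Lane coherence $\lambda_I = \rho_I$ collapses both sides to the same morphism. I would then propagate agreement to all of $\cat{D}$ using naturality of $F(\sigma^U)$ in $D$ together with the tensor axiom for half-braidings, with the invertibility of $\theta_I$ providing the bridge between $D = F(I)$ and $D = I$; it is precisely here that the lax monoidal coherence machinery works hardest.
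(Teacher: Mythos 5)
Your computational core is correct and follows essentially the same route as the paper: the identity $\lambda_{F(U)} \circ (v \otimes F(U)) = F(\lambda_U \circ (u \otimes U)) \circ \theta_{U,U}$ and its $\rho$-counterpart are exactly how the paper establishes that \eqref{eq:central} holds for $v = \theta_I^{-1} \circ F(u)$ and is invertible, and your reading of the two additional hypotheses as saying that $\theta_I$ and $\theta_{U,V}$ are isomorphisms in the centre is precisely how the paper deduces preservation of the top element and of binary meets.

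The gap is in your final propagation step. You rightly note that \eqref{eq:halfbraidingrespectsu} for $v$ must hold at \emph{every} object $D$ of $\cat{D}$, whereas the hypothesis that $F$ preserves the half-braiding only ties $F(\sigma^U)_D$ to $\sigma^U$ at objects $D=F(A)$ in the image of $F$. But naturality of $F(\sigma^U)$ in $D$ only transports the equation $(D \otimes v) \circ F(\sigma^U)_D = \rho_D^{-1}\circ\lambda_D\circ(v \otimes D)$ along morphisms into or out of $D$, and the tensor axiom only constrains the component at $D \otimes D'$ in terms of those at $D$ and $D'$; neither reaches an object of $\cat{D}$ that admits no morphisms to or from the image of $F$ and is not a tensor product of such objects, so there is no ``bridge'' to propagate across. (Moreover the cheap instances of naturality --- at $v$ itself, at $\lambda_D\circ(v\otimes D)$, at $\rho_D\circ(D \otimes v)$ --- all reduce to tautologies once \eqref{eq:central} is known, so they give nothing new.) What is really needed is naturality of the half-braiding in its $F(U)$-argument, which a bare half-braiding does not possess; it is available exactly when $\cat{D}$ is braided and $F(\sigma^U)$ is the braiding, which is why the condition is automatic in that motivating case. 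For what it is worth, the paper does not attempt this verification either: it simply takes the chosen half-braiding supplied by the hypothesis as the required datum (``by assumption $F(\sigma)$ extends to a half-braiding''). So your extra care exposes a genuine subtlety, but the argument you sketch to resolve it would not close the gap.
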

\begin{proof}
  The central equation~\eqref{eq:central} holds:
  % https://q.uiver.app/?q=WzAsMTIsWzAsMCwiRihVKSBcXG90aW1lcyBGKFUpIl0sWzEsMCwiRihJKSBcXG90aW1lcyBGKFUpIl0sWzIsMCwiSSBcXG90aW1lcyBGKFUpIl0sWzIsMSwiRihVKSJdLFswLDMsIkYoVSkgXFxvdGltZXMgRihVKSJdLFsxLDMsIkYoVSkgXFxvdGltZXMgRihJKSJdLFsyLDMsIkYoVSkgXFxvdGltZXMgSSJdLFsxLDIsIkYoVSBcXG90aW1lcyBJKSJdLFsxLDEsIkYoSSBcXG90aW1lcyBVKSJdLFswLDEsIkYoVSBcXG90aW1lcyBVKSJdLFswLDIsIkYoVSBcXG90aW1lcyBVKSJdLFsyLDIsIkYoVSkiXSxbMCwxLCJGKHUpIFxcb3RpbWVzIEYoVSkiXSxbMSwyLCJcXHRoZXRhX0leey0xfSBcXG90aW1lcyBGKFUpIl0sWzIsMywiXFxsYW1iZGFfe0YoVSl9Il0sWzQsNSwiRihVKSBcXG90aW1lcyBGKHUpIiwyXSxbNSw2LCJGKFUpIFxcb3RpbWVzIFxcdGhldGFfSV57LTF9IiwyXSxbNiwxMSwiXFxyaG9fe0YoVSl9IiwyXSxbMTEsMywiIiwxLHsibGV2ZWwiOjIsInN0eWxlIjp7ImhlYWQiOnsibmFtZSI6Im5vbmUifX19XSxbNCwxMCwiXFx0aGV0YV97VSxVfSJdLFswLDksIlxcdGhldGFfe1UsVX0iLDJdLFs5LDEwLCJGKFxcc2lnbWFfVSkiLDJdLFs4LDcsIkYoXFxzaWdtYV9JKSJdLFsxMCw3LCJGKFUgXFxvdGltZXMgdSkiLDJdLFs5LDgsIkYodSBcXG90aW1lcyBVKSIsMl0sWzEsOCwiXFx0aGV0YV97SSxVfSJdLFs1LDcsIlxcdGhldGFfe1UsSX0iLDJdLFs3LDExLCJGKFxccmhvKSIsMl0sWzgsMywiRihcXGxhbWJkYSkiXV0=
  \[\begin{tikzcd}[column sep=2cm]
    {F(U) \otimes F(U)} & {F(I) \otimes F(U)} & {I \otimes F(U)} \\
    {F(U \otimes U)} & {F(I \otimes U)} & {F(U)} \\
    {F(U \otimes U)} & {F(U \otimes I)} & {F(U)} \\
    {F(U) \otimes F(U)} & {F(U) \otimes F(I)} & {F(U) \otimes I}
    \arrow["{F(u) \otimes F(U)}", from=1-1, to=1-2]
    \arrow["{\theta_I^{-1} \otimes F(U)}", from=1-2, to=1-3]
    \arrow["{\lambda_{F(U)}}", from=1-3, to=2-3]
    \arrow["{F(U) \otimes F(u)}"', from=4-1, to=4-2]
    \arrow["{F(U) \otimes \theta_I^{-1}}"', from=4-2, to=4-3]
    \arrow["{\rho_{F(U)}}"', from=4-3, to=3-3]
    \arrow[Rightarrow, no head, from=3-3, to=2-3]
    \arrow["{\theta_{U,U}}", from=4-1, to=3-1]
    \arrow["{\theta_{U,U}}"', from=1-1, to=2-1]
    \arrow["{F(\sigma_U)}"', from=2-1, to=3-1]
    \arrow["{F(\sigma_I)}", from=2-2, to=3-2]
    \arrow["{F(U \otimes u)}"', from=3-1, to=3-2]
    \arrow["{F(u \otimes U)}"', from=2-1, to=2-2]
    \arrow["{\theta_{I,U}}", from=1-2, to=2-2]
    \arrow["{\theta_{U,I}}"', from=4-2, to=3-2]
    \arrow["{F(\rho)}"', from=3-2, to=3-3]
    \arrow["{F(\lambda)}", from=2-2, to=2-3]
  \end{tikzcd}\]
  To see that~\eqref{eq:central} is invertible:
  % https://q.uiver.app/?q=WzAsNyxbMCwxLCJGKFUpIl0sWzEsMCwiRihVIFxcb3RpbWVzIFUpIl0sWzEsMSwiRihVIFxcb3RpbWVzIEkpIl0sWzEsMiwiRihVKSJdLFsyLDAsIkYoVSkgXFxvdGltZXMgRihVKSJdLFsyLDEsIkYoVSkgXFxvdGltZXMgRihJKSJdLFsyLDIsIkYoVSkgXFxvdGltZXMgSSJdLFsyLDMsIkYoXFxyaG9fVSkiXSxbNiwzLCJcXHJob197RihVKX0iXSxbNSw2LCJGKFUpIFxcb3RpbWVzIFxcdGhldGFfSV57LTF9Il0sWzIsNSwiXFx0aGV0YV97VSxJfV57LTF9Il0sWzEsNCwiXFx0aGV0YV97VSxVfV57LTF9Il0sWzQsNSwiRihVKSBcXG90aW1lcyBGKHUpIl0sWzEsMiwiRihVIFxcb3RpbWVzIHUpIl0sWzAsMSwiRlxcYmlnKChVIFxcb3RpbWVzIHUpXnstMX1cXGJpZykiXSxbMCwzLCIiLDIseyJsZXZlbCI6Miwic3R5bGUiOnsiaGVhZCI6eyJuYW1lIjoibm9uZSJ9fX1dXQ==
  \[\begin{tikzcd}
    & {F(U \otimes U)} & {F(U) \otimes F(U)} \\
    {F(U)} & {F(U \otimes I)} & {F(U) \otimes F(I)} \\
    & {F(U)} & {F(U) \otimes I}
    \arrow["{F(\rho_U)}", from=2-2, to=3-2]
    \arrow["{\rho_{F(U)}}", from=3-3, to=3-2]
    \arrow["{F(U) \otimes \theta_I^{-1}}", from=2-3, to=3-3]
    \arrow["{\theta_{U,I}^{-1}}", from=2-2, to=2-3]
    \arrow["{\theta_{U,U}^{-1}}", from=1-2, to=1-3]
    \arrow["{F(U) \otimes F(u)}", from=1-3, to=2-3]
    \arrow["{F(U \otimes u)}", from=1-2, to=2-2]
    \arrow["{F\big((U \otimes u)^{-1}\big)}", from=2-1, to=1-2]
    \arrow[Rightarrow, no head, from=2-1, to=3-2]
  \end{tikzcd}\]
  By assumption $F(\sigma)$ extends to a half-braiding.

  That $F(u \wedge v) = F(u) \wedge F(v)$ follows because $\theta_{U,V}$ is invertible and from the condition on $F(\sigma^{U \otimes V})$. 
  Finally, top element of the semilattice is preserved by the condition on $F(\sigma^I)$.
\end{proof}

We say that a lax monoidal functor \emph{preserves central idempotents} when it satisfies the conditions of the previous lemma. Braided strong monoidal functors between braided monoidal categories do so automatically.

\begin{definition}\label{def:moncat}
  Write $\cat{MonCat}$ for the category of (small) monoidal categories and lax monoidal functors $F$ whose coherence morphism $I \to F(I)$ is invertible.
  Write $\cat{MonCat_s}$ for the subcategory of stiff monoidal categories and lax monoidal functors $F$ that preserve central idempotents.
  Write $\cat{MonCat}_{\cat{fj}}$ for the further subcategory of monoidal categories with universal finite joins of central idempotents and   functors that preserve finite joins of central idempotents, and $\cat{MonCat}_{\cat{j}}$ for the subcategory of monoidal categories \changed{whose central idempotents have universal joins and form a spatial frame} and functors that preserve joins of central idempotents.
\end{definition}

Example~\ref{ex:semilattices} above also gives a coreflection between $\cat{MonCat}_s$ and $\cat{SLat}$, the category of semilattices and semilattice morphisms, that is, functions that preserve meets and the greatest element.

\section{Base space}\label{sec:basespace}

We will build the representation for the case of universal finite joins first, starting with the base space in this section, followed by the presheaf, sheaf, and stalks in following sections. In Section~\ref{sec:local} below this will \changed{be} upgraded to universal joins. Therefore we first consider distributive lattices. As base space, we will take the prime spectrum of the distributive lattice of central idempotents~\cite{dickmannschwartztressl:spectralspaces,johnstone:stonespaces}.
Recall that a \emph{prime filter} of a lattice $L$ is a nonempty upward-closed and downward-directed \changed{proper} subset $P$ where $u \vee v \in P$ implies $u \in P$ or $v \in P$; equivalently, $P$ is the inverse image of 1 under a lattice homomorphism $L \to \{0,1\}$.

\begin{definition}\label{def:primespectrum}
  Let $L$ be a distributive lattice with a least element.	
  Its \emph{prime spectrum} is a topological space $\mathrm{Spec}(L)$, whose points are prime filters $P \subseteq L$, and whose topology is generated by a basis consisting of the sets
  \[
    B_u = \{P \in \Spec(L) \mid u \in P \}
  \]
  where $u$ ranges over $L$.
\end{definition}

\begin{lemma}\label{lem:basiscompact}
  The basic opens $B_u$ of the prime spectrum of a lattice $L$ are compact.
\end{lemma}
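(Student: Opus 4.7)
The plan is to reduce to covers by the basic opens $B_v$ (since they form a basis for the topology), and then use the prime ideal theorem for distributive lattices to extract a finite subcover.

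More precisely, I would first record the two useful identities $B_u \cap B_v = B_{u \wedge v}$ and $B_u \cup B_v = B_{u \vee v}$, both of which follow directly from the defining properties of a prime filter (upward-closed, downward-directed, and prime). These also give $B_u \subseteq B_w$ whenever $u \leq w$. So given any open cover of $B_u$, by refining we may assume the cover has the form $\{B_{v_i}\}_{i \in I}$ for some family $\{v_i\} \subseteq L$.

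Next I would introduce the ideal $J \subseteq L$ generated by $\{v_i\}_{i \in I}$, explicitly
\[
  J = \{ w \in L \mid w \leq v_{i_1} \vee \cdots \vee v_{i_n} \text{ for some finite } i_1,\dots,i_n \in I \}.
\]
The key claim is that $u \in J$. Once this is established, one has $u \leq v_{i_1} \vee \cdots \vee v_{i_n}$ for some finite subset, and hence $B_u \subseteq B_{v_{i_1} \vee \cdots \vee v_{i_n}} = B_{v_{i_1}} \cup \cdots \cup B_{v_{i_n}}$, giving the desired finite subcover.

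The main obstacle is proving $u \in J$, and this is where the prime ideal theorem enters. Suppose for contradiction that $u \notin J$. The principal filter $\mathop{\uparrow}u$ is then disjoint from the ideal $J$. By the Birkhoff--Stone prime ideal theorem (a standard application of Zorn's lemma in a distributive lattice), there exists a prime filter $P$ with $\mathop{\uparrow}u \subseteq P$ and $P \cap J = \emptyset$. Then $u \in P$, so $P \in B_u$, but $v_i \notin P$ for every $i$, so $P \notin B_{v_i}$ for any $i$, contradicting that $\{B_{v_i}\}$ covers $B_u$. Hence $u \in J$, finishing the proof. The only subtlety worth flagging is the use of the existence of a least element (ensuring $J$ is a genuine ideal and $\mathop{\uparrow}u$ is proper unless $u$ is minimal), which is exactly the hypothesis of the lemma.
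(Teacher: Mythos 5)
Your proof is correct, but it takes a more hands-on route than the paper. The paper's argument is frame-theoretic: it identifies $\mathcal{O}(\Spec(L))$ with the ideal frame $\Idl(L)$, notes that $B_u$ corresponds to the principal ideal $\downset u$ under this isomorphism, and concludes from the (purely order-theoretic) fact that principal ideals are compact elements of $\Idl(L)$. You instead argue directly with points: reduce to basic covers $\{B_{v_i}\}$, form the ideal $J$ generated by the $v_i$, and use the Birkhoff--Stone separation theorem to show $u \in J$, which yields the finite subcover. The two proofs have the same mathematical content, because the only non-trivial ingredient in the paper's version is the spatiality isomorphism $\mathcal{O}(\Spec(L)) \simeq \Idl(L)$, whose injectivity is exactly the separation of a filter from an ideal by a prime filter --- the step where you invoke the prime filter theorem. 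What your version buys is self-containedness and transparency about where choice (BPI) and distributivity enter; what the paper's version buys is brevity and a cleaner conceptual placement of $B_u$ as a compact (finite) element of a coherent frame. Two small points you handle correctly but could state once and for all: distributivity of $L$ is needed for the separation theorem (it is a standing hypothesis from Definition~\ref{def:primespectrum}, even though the lemma says only ``lattice''), and the least element guarantees that $J$ is nonempty even for an empty cover, covering the degenerate case $B_0 = \emptyset$.
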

\begin{proof}
  The frame $\mathcal{O}(\Spec(L))$ of opens of $\Spec(L)$ is in bijection with the frame $\Idl(L)$ of ideals of $L$.
  The map that sends an element $v \in L$ to its principal ideal $\downset v = \{u \in L \mid u \leq v\}$ embeds $L$ in $\Idl(L)$ as a lattice. On the topological side, it sends $u \in L$ to $B_u$, establishing an isomorphism between $L$ and the basis $\{ B_u \mid u \in L \} \subseteq \mathcal{O}(\Spec(L))$. Because the principal ideals are compact in $\Idl(L)$, so are $B_u$ in $\mathcal{O}(\Spec(L))$.
\end{proof}

The following lemma holds more generally for sheaves valued in any category $\cat{V}$, but we spell it out for $\cat{MonCat}$-valued sheaves.

\begin{lemma}\label{lem:equaliser}
	To specify a sheaf $F \colon \mathcal{O}(\Spec(L))\op \to \cat{MonCat}$ for a distributive lattice $L$, it suffices to give a presheaf $F \colon \{ B_u \mid u \in L\}\op \to \cat{MonCat}$, such that $F(B_0)$ is terminal in $\cat{MonCat}$ and the following is an equaliser in $\cat{MonCat}$:
  \[\begin{pic}[xscale=5]
    \node (l) at (-1.3,0) {$F(B_{u \vee v})$};
    \node (m) at (0,0) {$F(B_u) \times F(B_v)$};
    \node (r) at (.95,0) {$F(B_{u \wedge v})$};
    \draw[->] (l) to node[above]{$\langle F(B_u \!\subseteq\! B_{u \vee v}), F(B_v \!\subseteq\! B_{u \vee v}) \rangle$} (m);
    \draw[->] ([yshift=-1.5mm]m.east) to node[below]{$F(B_{u \wedge v} \!\subseteq\! B_v) \circ \pi_2$} ([yshift=-1.5mm]r.west);
    \draw[->] ([yshift=1.5mm]m.east) to node[above]{$F(B_{u \wedge v} \!\subseteq\! B_u) \circ \pi_1$} ([yshift=1.5mm]r.west);
  \end{pic}\]  
\end{lemma}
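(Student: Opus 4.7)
The plan is to verify the standard basis-to-sheaf extension theorem in our setting, specialised to the fact that the hypotheses only require checking two-element covers of basic opens (plus the empty cover).

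First I would observe that the basis $\mathcal{B} = \{B_u \mid u \in L\}$ is closed under finite intersections and finite unions: since each point of $\Spec(L)$ is a prime filter, $B_u \cap B_v = B_{u \wedge v}$ and $B_u \cup B_v = B_{u \vee v}$, and $B_0 = \emptyset$. Hence $u \mapsto B_u$ is a surjective lattice homomorphism from $L$ onto $\mathcal{B}$. The key step is to upgrade the given two-element equaliser condition to a sheaf condition for arbitrary covers of basic opens by basic opens, namely that for any cover $B_w = \bigcup_{i \in I} B_{u_i}$ the diagram
\[\begin{pic}[xscale=5]
  \node (l) at (-1,0) {$F(B_w)$};
  \node (m) at (0,0) {$\prod_{i} F(B_{u_i})$};
  \node (r) at (1,0) {$\prod_{i,j} F(B_{u_i \wedge u_j})$};
  \draw[->] (l) to (m);
  \draw[->] ([yshift=1mm]m.east) to (  [yshift=1mm]r.west);
  \draw[->] ([yshift=-1mm]m.east) to ([yshift=-1mm]r.west);
\end{pic}\]
is an equaliser in $\cat{MonCat}$. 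By Lemma~\ref{lem:basiscompact}, $B_w$ is compact, so the cover admits a finite subcover, and a straightforward cofinality argument (using that the extended sheaf condition for the subcover implies it for the whole cover) reduces to the case $|I| < \infty$. I would then induct on $|I|$: the case $|I|=0$ concerns the empty cover of $B_0 = \emptyset$ and is handled by $F(B_0)$ being terminal in $\cat{MonCat}$; the case $|I|=1$ is trivial; for $|I| = n+1$, I would set $v = u_1 \vee \cdots \vee u_n$, apply the hypothesis to $B_w = B_v \cup B_{u_{n+1}}$ to express $F(B_w)$ as an equaliser involving $F(B_v)$, $F(B_{u_{n+1}})$, and $F(B_{v \wedge u_{n+1}})$, and then paste the inductive equaliser descriptions of $F(B_v)$ (using the cover by the $B_{u_i}$ for $i\le n$) and of $F(B_{v \wedge u_{n+1}})$ (using the cover by the $B_{u_i \wedge u_{n+1}}$ for $i\le n$).

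With the $\mathcal{B}$-sheaf condition in hand, I would extend $F$ to $\mathcal{O}(\Spec(L))$ by right Kan extension: set $F(U) = \lim_{B_u \subseteq U} F(B_u)$, which exists because $\cat{MonCat}$ has all small limits. Restriction maps come from the universal property. To verify the sheaf condition on arbitrary covers $U = \bigcup_j V_j$, given a compatible family of sections on the $V_j$, each basic $B_u \subseteq U$ is covered by basic opens $B_{u^{(k)}} \subseteq V_{j(k)}$ (finitely many by compactness of $B_u$), and the extended $\mathcal{B}$-sheaf condition glues these into a unique section over $B_u$; these assemble into a unique section over $U$ by the universal property of the limit. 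The main obstacle I expect is the bookkeeping in the inductive pasting step: checking that the pasted equaliser matches the target equaliser on the nose in $\cat{MonCat}$, where one must keep track of the monoidal coherence data of the restriction functors through the products and diagonals. Once that is settled, extension along the basis and the sheaf condition on $\mathcal{O}(\Spec(L))$ are routine via the usual comparison lemma for sheaves on a basis.
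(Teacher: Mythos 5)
Your proposal is correct and follows essentially the same route as the paper: invoke the comparison lemma for sheaves on a basis, use the compactness of the basic opens $B_u$ (Lemma~\ref{lem:basiscompact}) to reduce arbitrary basic covers to finite ones, and then reduce finite covers to the binary and nullary cases encoded by the stated equaliser and the terminality of $F(B_0)$. The only difference is that you spell out the finite-to-binary induction and the Kan-extension construction explicitly, where the paper delegates these to the standard reference.
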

\begin{proof}
  A sheaf on a basis uniquely determines a sheaf on the whole topological space~\cite[Theorem~II.1.3]{maclanemoerdijk:sheaves}.
  The presheaf $F \colon \{B_u \mid u \in L\}\op \to \cat{MonCat}$ provides this data.
  It furthermore needs to satisfy the sheaf condition on the covers of basic opens by basic opens, $B_v = \bigcup_{u \in J} B_u$ with $J \subseteq L$.
  Since the basic open $B_v$ is compact by Lemma~\ref{lem:basiscompact}, it suffices to consider finitary and hence binary covers, $B_{u \vee v} = B_u \cup B_v$ for $u, v \in L$, and nullary covers, $B_0 = \bigvee \emptyset$.
  The sheaf condition for covers of this form says exactly that the functor in the statement is an equaliser, and $F(B_0)$ is isomorphic to the terminal monoidal category $\cat{1}$.
\end{proof}

\section{Structure presheaf}\label{sec:presheaf}

In this section we define the structure presheaf of a monoidal category.
For a mere monoidal category \changed{$\cat{C}$} this is a presheaf on the semilattice \changed{$\ZI(\cat{C})$} of central idempotents, which in the presence of universal finite joins extends to a presheaf on the topological space \changed{$\Spec(\ZI(\cat{C}))$} of the previous section.
So we define a presheaf of monoidal categories on the central idempotents by describing its behaviour over basic open sets. We will generalise the filter-quotient construction from topos theory to monoidal categories~\cite[V.9]{maclanemoerdijk:sheaves}.

Any central idempotent $U$ in a monoidal category $\cat{C}$ is a (commutative) comonoid with comultiplication $\tinycomult$ and counit $\tinycounit$. Hence $- \otimes U$ is a comonad on $\cat{C}$. We are interested in its co-Kleisli category.

\begin{definition}
  Let $\cat{C}$ be a monoidal category, and $u \colon U \to I$ a chosen representative of a central idempotent. Define a category $\cat{C}\restrict{u}$ as follows:
  \begin{itemize}
    \item objects are those of $\cat{C}$;
    \item morphisms $A\to B$ in $\cat{C}\restrict{u}$ are morphisms $A \otimes U \to B$ in $\cat{C}$;
    \item composition of $f \colon A \to B$ and $g \colon B \to C$ is:
      \[\begin{pic}
		\node[morphism,width=8mm] (f) at (0,0) {$f$};
		\node[morphism,width=8mm,anchor=south west] (g) at ([yshift=5mm]f.north east) {$g$};
		\draw (f.north east) to node[left]{$B$} (g.south west);
		\node[dot] (d) at ([xshift=3mm,yshift=-3mm]f.south east) {};
		\draw (d) to ++(0,-.4) node[below]{$U$};
		\draw (d) to[out=180,in=-90] (f.south east);
		\draw (d) to[out=0,in=-90,looseness=.7] (g.south east);
		\draw (f.south west) to ++(0,-.7) node[below]{$A$};
		\draw (g.north) to ++(0,.5) node[above]{$C$};
      \end{pic}\]
	\item identity on $A$ is $\tinyid\!\raisebox{-1.25pt}{\tinycounit}$.
  \end{itemize}
\end{definition}

\begin{lemma}\label{lem:Cumonoidal}
  If $\cat{C}$ is a monoidal category, then so is $\cat{C}\restrict{u}$.
\end{lemma}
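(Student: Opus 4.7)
The plan is to define the monoidal structure on $\cat{C}\restrict{u}$ explicitly, exploiting that $U$ carries a canonical commutative comonoid structure (with counit $\tinycounit = u$ and comultiplication $\tinycomult$ given by the inverse of $\lambda_U \circ (u \otimes U)$) together with its half-braiding $\sigma$. The construction is essentially the well-known transfer of monoidal structure to the co-Kleisli category of the comonad $-\otimes U$, specialised to the case where $U$ is a central idempotent.

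First, on objects set $A \otimes_u B := A \otimes B$, with tensor unit $I$. For morphisms $f \colon A \to B$ and $g \colon C \to D$ in $\cat{C}\restrict{u}$, i.e., morphisms $f \colon A \otimes U \to B$ and $g \colon C \otimes U \to D$ in $\cat{C}$, define $f \otimes_u g \colon A \otimes C \otimes U \to B \otimes D$ by first comultiplying the input $U$ into $U \otimes U$, then using $\sigma$ to bring the wires into the shape $A \otimes U \otimes C \otimes U$, and applying $f \otimes g$. Take the associator, left unitor, and right unitor of $\cat{C}\restrict{u}$ to be those of $\cat{C}$ composed with $\tinycounit$ on the input $U$-wire.

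Next, I would verify functoriality of $\otimes_u$: identity preservation reduces to counitality of the comonoid $U$ together with naturality of the half-braiding; compositionality requires coassociativity of $\tinycomult$ combined with the central equation~\eqref{eq:central} to slide a $\tinycounit$ past a half-braiding, and equation~\eqref{eq:halfbraidingrespectsu} to slide it through general wires. Then the pentagon, triangle, and naturality of the coherence isomorphisms in $\cat{C}\restrict{u}$ all reduce to the corresponding axioms in $\cat{C}$, augmented by the hexagon identity $\sigma_{A \otimes B} = (A \otimes \sigma_B) \circ (\sigma_A \otimes B)$ for the half-braiding and the identity $\sigma_{U,U} = \mathrm{id}$ from Lemma~\ref{lem:braiding}.

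The main obstacle is the interchange law $(f' \otimes_u g') \circ_u (f \otimes_u g) = (f' \circ_u f) \otimes_u (g' \circ_u g)$. Unpacked, the left-hand side contains three separate applications of $\tinycomult$ and several half-braidings, while the right-hand side only has one $\tinycomult$ placed around the merged morphism. Reducing one to the other requires merging comultiplications via coassociativity, sliding $U$-wires past $B$ and $D$ using naturality of $\sigma$ and equation~\eqref{eq:halfbraidingrespectsu}, and using the central equation to commute a $\tinycounit$ past a parallel $U$-wire. Each elementary step is routine in the graphical calculus of Section~\ref{sec:centralidempotents}, but threading them together is the substantive content of the lemma.
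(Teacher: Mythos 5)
Your construction is exactly the one the paper uses: the same tensor product of morphisms via comultiplication of $U$ and the half-braiding, the same coherence isomorphisms obtained by discarding the $U$-wire with the counit, and the same supporting identities (counitality, coassociativity, the central equation, equation~\eqref{eq:halfbraidingrespectsu}, and naturality of $\sigma$). The paper simply compresses all the verifications into ``easy graphical manipulation,'' so your proposal is a correct, slightly more explicit rendering of the same proof.
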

\begin{proof}
  The monoidal structure of $\cat{C}\restrict{u}$ is defined as follows. The tensor product of morphisms $f \colon A \to B$ and $g \colon C \to D$ is:
  \[\begin{pic}
  \node[morphism,width=7mm] (f) at (0,0) {$f$};
  \node[morphism,width=7mm] (g) at (1.5,0) {$g$};
  \node[dot] (d) at (1.5,-.7) {};
  \draw (f.north) to ++(0,.4) node[above]{$B$};
  \draw (g.north) to ++(0,.4) node[above]{$D$};
  \draw (g.south east) to[out=-90,in=0] (d);
  \draw (d) to (1.5,-1) node[below]{$U$};
  \draw (f.south west) to ++(0,-.8) node[below]{$A$};
  \draw (g.south west) to[out=-90,in=90] ++(-.6,-.8) node[below]{$C$};
  \draw[halo] (f.south east) to[out=-90,in=180] (d);
  \node[morphism,width=7mm] (f) at (0,0) {$f$};
  \node[dot] (d) at (1.5,-.7) {};
  \end{pic}\]
  The associator and left and right unitors are given by:
  \[\begin{pic}
      \draw (0,0) node[below]{$A$} to ++(0,1);
      \draw (.5,0) node[below]{$B$} to ++(0,1);
      \draw (1,0) node[below]{$C$} to ++(0,1);
      \draw (1.5,0) node[below]{$U$} to ++(0,.5) node[dot]{};
    \end{pic}
    \qquad\qquad
    \begin{pic}
      \draw (0,0) node[below]{$A$} to ++(0,1);
      \draw (.5,0) node[below]{$U$} to ++(0,.5) node[dot]{};
    \end{pic}
    \qquad\qquad
    \begin{pic}
      \draw (0,0) node[below]{$A$} to ++(0,1);
      \draw (.5,0) node[below]{$U$} to ++(0,.5) node[dot]{};
    \end{pic}
  \]
  Easy graphical manipulation establishes naturality, the interchange law, as well as the triangle and pentagon equations.
\end{proof}

\begin{lemma}\label{lem:slice}
  If $\cat{C}$ is cartesian, there is a monoidal equivalence $\cat{C}\restrict{u} \simeq \cat{C}/U$.
\end{lemma}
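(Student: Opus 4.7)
My plan is to construct an explicit monoidal equivalence between $\cat{C}\restrict{u}$ and $\cat{C}/U$. The key fact, which does essentially all the work, is that by Lemma~\ref{lem:zi:cartesian} the map $u \colon U \to 1$ is monic, so $U$ is subterminal: any two morphisms $X \to U$ are equal.

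Define $F \colon \cat{C}\restrict{u} \to \cat{C}/U$ on objects by $F(A) = (\pi_U \colon A \times U \to U)$, and on a morphism $f \colon A \times U \to B$ in $\cat{C}$ (representing $A \to B$ in $\cat{C}\restrict{u}$) by $F(f) = \langle f, \pi_U\rangle \colon A \times U \to B \times U$, which is automatically a morphism over $U$. Going the other way, define $G \colon \cat{C}/U \to \cat{C}\restrict{u}$ by $G(p \colon A \to U) = A$ on objects and $G(h) = h \circ \pi_A \colon A \times U \to B$ on a morphism $h \colon A \to B$ over $U$. I would first check that $F$ and $G$ are functors by unfolding the co-Kleisli composition, which in the cartesian case has the diagonal $\Delta_U \colon U \to U \times U$ as comultiplication; both verifications are direct diagram chases.

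For the natural isomorphism $FG \cong \id_{\cat{C}/U}$, the component at $(p \colon A \to U)$ is $\pi_A \colon A \times U \to A$ with inverse $\langle \id_A, p\rangle$; that $\pi_A$ is a morphism over $U$ uses $p \circ \pi_A = \pi_U$, which holds because $U$ is subterminal. For $GF \cong \id_{\cat{C}\restrict{u}}$, the component at $A$ is $\id_{A \times U}$ viewed as a morphism $A \to A \times U$ in $\cat{C}\restrict{u}$, with inverse $\pi_A \colon A \times U \times U \to A$ viewed as a morphism $A \times U \to A$; verifying that one of the composites equals the identity in $\cat{C}\restrict{u}$ requires exactly that $\pi_2, \pi_3 \colon A \times U \times U \to U$ coincide, which again follows from $U$ subterminal.

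For the monoidal structure, note that the tensor on $\cat{C}\restrict{u}$ is the cartesian product inherited from $\cat{C}$, while $\cat{C}/U$ carries its own cartesian structure with products given by pullbacks over $U$. Since $U$ is subterminal, pullback over $U$ collapses to the ordinary product, giving $(A \times U) \times_U (B \times U) \cong A \times B \times U$, which exhibits $F$ as strong monoidal; preservation of the unit is immediate from $F(1) \cong (\id_U)$, the terminal object of $\cat{C}/U$. The main obstacle is really just the bookkeeping around the co-Kleisli composition, but every nontrivial check reduces to the uniqueness of maps into the subterminal object $U$.
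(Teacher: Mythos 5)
Your proof is correct and follows essentially the same route as the paper: both send $A$ to $\pi_U\colon A\times U\to U$ and invert by taking underlying objects, with every nontrivial verification reducing to subterminality of $U$ (the paper's Lemma~\ref{lem:zi:cartesian}). The only cosmetic difference is that the paper first replaces $\cat{C}\restrict{u}$ by its full subcategory on objects $A\otimes U$ (using $A\simeq A\otimes U$ in $\cat{C}\restrict{u}$) and then exhibits the equivalence, whereas you write the unit and counit of the equivalence directly.
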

\begin{proof}
  First, notice that $A \otimes U \colon A \to A \otimes U$ is an isomorphism $A \simeq A \otimes U$ in $\cat{C}\restrict{u}$ with inverse $A \otimes u \otimes u \colon A \otimes U \to A$. Therefore, $\cat{C}\restrict{u}$ is equivalent to its full subcategory $\changed{\cat{C}|_u}$ of objects of the form $A \otimes U$. Moreover, this equivalence preserves tensor products on the nose.

  There is a functor $\changed{\cat{C}|_u} \to \cat{C}/U$ that sends an object $A \otimes U$ to $\pi_2 \colon A \otimes U \to U$, and that sends a morphism $f \colon A \otimes U \otimes U \to B \otimes U$ to $f \circ (A \otimes U \otimes u)^{-1}$. It is monoidal.
  There is also a functor $\cat{C}/U \to \cat{C}\restrict{u}$ that sends an object $! \colon A \to U$ to $A \otimes U$, and that sends a morphism $f \colon A \to B$ to $f \otimes U \otimes u$. This functor is monoidal too.
  These two functors form an equivalence, because $A \otimes U \otimes U \simeq A \otimes U$ in $\cat{C}\restrict{u}$.
\end{proof}

\begin{lemma}\label{lem:restrictionfunctor}
  If $u \leq v$ are central idempotents in a monoidal category $\cat{C}$, 
  there is a strict monoidal functor
  $\cat{C}\restrict{u \leq v} \colon \cat{C}\restrict{v} \to \cat{C}\restrict{u}$ that acts as identity on objects and on morphisms as:
  \[\begin{pic}
  	\node[morphism,width=8mm] (f) at (0,0) {$f$};
  	\draw (f.north) to ++(0,+.2) node[above]{$B$};
  	\draw (f.south west) to ++(0,-.7) node[below]{$A$};
  	\draw (f.south east) to ++(0,-.7) node[below]{$V$};
    \end{pic}
    \qquad\longmapsto\qquad
    \begin{pic}
  	\node[morphism,width=8mm] (f) at (0,0) {$f$};
  	\draw (f.north) to ++(0,+.2) node[above]{$B$};
  	\draw (f.south west) to ++(0,-.7) node[below]{$A$};
  	\node[dot] (d) at ([yshift=-4.5mm]f.south east) {};
  	\draw (f.south east) to node[right=-.5mm]{$V$} (d);
  	\draw (d) to ++(0,-.25) node[below]{$U$};
    \end{pic}
  \]
\end{lemma}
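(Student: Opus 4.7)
Since $u \leq v$, the previous proposition yields a morphism $m \colon U \to V$ (the unlabelled dot between wires $U$ and $V$) satisfying $u = v \circ m$ and respecting the half-braidings. The plan is to show that $m$ is a comonoid morphism between the canonical comonoid structures carried by $U$ and $V$, and then to exhibit the asserted assignment as precomposition with $A \otimes m$, which is essentially the co-Kleisli functor induced by a morphism of comonads. The two compatibilities we need are $v \circ m = u$, which is the defining property of $m$, and $\Delta_V \circ m = (m \otimes m) \circ \Delta_U$, where $\Delta_U, \Delta_V$ denote the inverses of $\lambda_U \circ (u \otimes U)$ and $\lambda_V \circ (v \otimes V)$; the latter is the rightmost graphical identity recorded just before the proposition that introduces $m$.

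With these two identities, functoriality is immediate. The identity on $A$ in $\cat{C}\restrict{v}$ is $A \otimes v$, and precomposing with $A \otimes m$ yields $A \otimes u$, the identity on $A$ in $\cat{C}\restrict{u}$. For composition, given $f \colon A \to B$ and $g \colon B \to C$ in $\cat{C}\restrict{v}$, the restriction of $g \circ_v f$ is $g \circ (f \otimes V) \circ (A \otimes \Delta_V) \circ (A \otimes m)$, while the composition $\text{restrict}(g) \circ_u \text{restrict}(f)$ unfolds, using bifunctoriality and $(B \otimes m) \circ (f \otimes U) = f \otimes m = (f \otimes V) \circ (A \otimes U \otimes m)$, to $g \circ (f \otimes V) \circ (A \otimes (m \otimes m)) \circ (A \otimes \Delta_U)$; the comonoid morphism identity $\Delta_V \circ m = (m \otimes m) \circ \Delta_U$ then matches the two expressions.

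Strict monoidality is verified analogously. The associator and unitors of $\cat{C}\restrict{v}$ are drawn as plain strings in the ambient objects with the $V$ wire terminated by the counit $v$; precomposing with $m$ replaces $v$ by $v \circ m = u$, recovering the coherence morphisms of $\cat{C}\restrict{u}$ on the nose. For the tensor product of morphisms, one uses that $\Delta_V \circ m = (m \otimes m) \circ \Delta_U$ exactly as in the composition calculation above, together with the fact that $m$ respects half-braidings so that the crossing of the $V$-wire with the interior wires is compatible with the crossing of the $U$-wire after precomposition. The only step requiring any thought is this last compatibility with the half-braiding that appears in the definition of the tensor product in $\cat{C}\restrict{u}$; but this is precisely the content of $m$ being a morphism in the centre, so no new work is required.
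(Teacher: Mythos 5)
Your proposal is correct and follows essentially the same route as the paper: the paper's proof is a direct graphical verification that uses exactly the two identities you isolate, namely $v \circ m = u$ for preservation of identities and $\Delta_V \circ m = (m \otimes m) \circ \Delta_U$ (together with $m$ commuting with the half-braiding) for preservation of composition and of tensor products of morphisms. Your packaging of these facts as ``$m$ is a comonoid morphism, hence induces a co-Kleisli functor by precomposition with $A \otimes m$'' is just a more abstract phrasing of the same argument.
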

\begin{proof}
We first show functoriality.
To verify that $\cat{C}\restrict{u \leq v}$ preserves identity morphisms:
\[
  \begin{pic}
	\node[morphism,width=6mm] (f) at (0,0) {$\id[A]$};
	\node[dot] (d) at ([yshift=-5mm]f.south east) {};
	\draw (f.north) to ++(0,.3) node[above]{$A$};
	\draw (f.south east) to node[right=-.7mm]{$V$} (d);
	\draw (d) to ++(0,-.3) node[below]{$U$};
	\draw (f.south west) to ++(0,-.8) node[below]{$A$};
  \end{pic}
  =
  \begin{pic}
	\node[dot] (t) at (.5,.6) {};
	\node[dot] (b) at (.5,0) {};
	\draw (b) to node[right=-.7mm]{$V$} (t);
	\draw (b) to ++(0,-.4) node[below]{$U$};
	\draw (0,-.4) node[below]{$A$} to ++(0,1.5) node[above]{$A$};
  \end{pic}
  =
  \begin{pic}
	\draw (0,-.4) node[below]{$A$} to ++(0,1.5) node[above]{$A$};
	\draw (.5,-.4) node[below]{$U$} to ++(0,.7) node[dot]{};
  \end{pic}
\]
The fact that
\[\cat{C}\restrict{u\leq v}(g \circ_v f)
 =
 \cat{C}\restrict{u \leq v}(g) \circ_u \cat{C}\restrict{u \leq v}(f)\]
is proven by:
\[
  \begin{pic}
	\node[morphism,width=6mm] (gf) {$g \circ_v f$};
	\node[dot] (d) at ([yshift=-7mm]gf.south east) {};
	\draw (gf.south east) to node[right=-1mm]{$V$} (d);
	\draw (d) to ++(0,-.45) node[below]{$U$};
	\draw (gf.south west) to ++(0,-1.15) node[below]{$A$};
	\draw (gf.north) to ++(0,1.15) node[above]{$C$};
  \end{pic}
  =
  \begin{pic}
	\node[morphism,width=10mm] (g) at (0,1) {$g$};
	\node[morphism,width=6mm] (f) at ([yshift=-6mm]g.south west) {$f$};
	\node[dot] (t) at ([xshift=3mm,yshift=-4mm]f.south east) {};
	\node[dot] (d) at ([yshift=-5mm]t) {};
	\draw (g.north) to ++(0,.3) node[above]{$C$};
	\draw ([xshift=-1mm]g.south west) to node[left]{$B$} ([xshift=-1mm]f.north);
	\draw (f.south east) to[out=-90,in=180] (t);
	\draw ([xshift=1mm]g.south east) to ++(0,-.8) to[out=-90,in=0] (t);
	\draw (t) to node[right=-.5mm]{$V$} (d);
	\draw (d) to ++(0,-.3) node[below]{$U$};
	\draw (f.south west) to ++(0,-1.2) node[below]{$A$};
  \end{pic}
  =
  \begin{pic}
	\node[morphism,width=10mm] (g) at (0,1) {$g$};
	\node[morphism,width=6mm] (f) at ([yshift=-6mm]g.south west) {$f$};
	\node[dot] (l) at ([yshift=-5mm]f.south east) {};
	\node[dot] (r) at ([xshift=1mm,yshift=-13mm]g.south east) {};
	\node[dot] (d) at ([xshift=3.3mm,yshift=-4mm]l) {};
	\draw (g.north) to ++(0,.3) node[above]{$C$};
	\draw ([xshift=-1mm]g.south west) to node[left]{$B$} ([xshift=-1mm]f.north);
	\draw (f.south east) to node[right=-1mm]{$V$} (l);
	\draw ([xshift=1mm]g.south east) to node[right=-1mm]{$V$} (r);
	\draw (d) to ++(0,-.3) node[below]{$U$};
	\draw (l) to[out=-90,in=180] (d);
	\draw (r) to[out=-90,in=0] (d);
	\draw (f.south west) to ++(0,-1.2) node[below]{$A$};
  \end{pic}
  =
  \begin{pic}
	\node[morphism,width=10mm] (g) at (0,1) {$\cat{C}\restrict{u \leq v}(g)$};
	\node[morphism,width=6mm] (f) at ([yshift=-6mm]g.south west) {$\cat{C}\restrict{u \leq v}(f)$};
	\node[dot] (d) at ([xshift=4.2mm,yshift=-2mm]l) {};
	\draw (g.north) to ++(0,.3) node[above]{$C$};
	\draw ([xshift=-1mm]g.south west) to node[left]{$B$} ([xshift=-1mm]f.north);
	\draw ([xshift=-.7mm]f.south east) to[out=-90,in=180,looseness=.5] (d);
	\draw ([xshift=1mm]g.south east) to[out=-90,in=0,looseness=.5] (d);
	\draw (d) to ++(0,-.5) node[below]{$U$};
	\draw (f.south west) to ++(0,-1.2) node[below]{$A$};
  \end{pic}
\]

  As for monoidality,
  for $f \colon A \otimes V \to B$ and $g \colon C \otimes V \to D$ in $\cat{C}$:
  \[
  \cat{C}\restrict{u \leq v}\big(f \otimes g\big)
  =
  \begin{pic}
	\node[morphism,width=9mm] (fg) at (0,0) {$f \otimes g$};
	\draw ([xshift=-2mm]fg.north) to ++(0,.2) node[above]{$B$};
	\draw ([xshift=2mm]fg.north) to ++(0,.2) node[above]{$D$};
	\draw (fg.south west) to ++(0,-1.25) node[below]{$A$};
	\draw (fg.south) to ++(0,-1.25) node[below]{$C$};
    \node[dot] (m) at ([yshift=-7mm]fg.south east) {};
	\draw (fg.south east) to node[right=-.5mm]{$V$} (m);
	\draw (m) to ++(0,-.55) node[below]{$U$};	
  \end{pic}
  =
  \begin{pic}
	\node[morphism,width=5mm] (f) at (0,0) {$f$};
	\node[morphism,width=5mm] (g) at (1,0) {$g$};
    \node[dot] (d) at ([xshift=1mm,yshift=-4mm]g.south west) {};
    \node[dot] (m) at ([xshift=1mm,yshift=-9mm]g.south west) {};
	\draw (f.north) to ++(0,.2) node[above]{$B$};
	\draw (g.north) to ++(0,.2) node[above]{$D$};
	\draw (g.south east) to[out=-90,in=0]   (d);
	\draw (f.south west) to ++(0,-1.2) node[below]{$A$};
	\draw (g.south west) to[out=-90,in=90] ++(-.5,-1.2) node[below]{$C$};
	\draw (d) to node[right=-.5mm]{$V$} (m);
	\draw (m) to ++(0,-.3) node[below]{$U$};	
  \draw[halo] (f.south east) to[out=-90,in=180] (d);
  \node[morphism,width=5mm] (f) at (0,0) {$f$};
    \node[dot] (d) at ([xshift=1mm,yshift=-4mm]g.south west) {};
  \end{pic}
  =
  \begin{pic}
	\node[morphism,width=5mm] (f) at (0,0) {$f$};
	\node[morphism,width=5mm] (g) at (1,0) {$g$};
    \node[dot] (m1) at ([yshift=-5mm]f.south east) {};
    \node[dot] (m2) at ([yshift=-5mm]g.south east) {};
    \node[dot] (d) at ([yshift=-9mm]g.south west) {};
	\draw (f.north) to ++(0,.2) node[above]{$B$};
	\draw (g.north) to ++(0,.2) node[above]{$D$};
	\draw (f.south east) to node[right=-.5mm]{$V$} (m1);
	\draw (g.south east) to node[right=-.5mm]{$V$} (m2);
	\draw (m1) to[out=-90,in=180] (d);
	\draw (m2) to[out=-90,in=0]   (d);
	\draw (f.south west) to ++(0,-1.2) node[below]{$A$};
	\draw[halo] (g.south west) to[out=-90,in=90] ++(-.5,-1.2) node[below]{$C$};
  \node[morphism,width=5mm] (g) at (1,0) {$g$};
	\draw (d) to ++(0,-.3) node[below]{$U$};	
  \end{pic}
% =
% \begin{pic}
% 	\node[morphism,width=7mm] (f) at (0,0) {$\cat{C}\restrict{s \leq t}(f)$};
% 	\node[morphism,width=7mm] (g) at (1.5,0) {$\cat{C}\restrict{s \leq t}(g)$};
%         \node[dot] (d) at ([yshift=-1.15cm]g.south west) {};
% 	\draw (f.north) to ++(0,.7) node[above]{$B$};
% 	\draw (g.north) to ++(0,.7) node[above]{$D$};
% 	\draw (f.south east) to[out=-90,in=180] (d);
% 	\draw (g.south east) to[out=-90,in=0]   (d);
% 	\draw (f.south west) to ++(0,-1.5) node[below]{$A$};
% 	\draw (g.south west) to[out=-90,in=90] ++(-7.5mm,-1.5) node[below]{$C$};
% 	\draw (d) to ++(0,-.35) node[below]{$S$};	
%   \end{pic}
  =
  \cat{C}\restrict{u \leq v}(f) \otimes \cat{C}\restrict{u \leq v}(g)
  \]
  This functor preserves the tensor unit, and hence is strict monoidal.
\end{proof}

The special case $v=1$ gives a functor $\cat{C} \to \cat{C}\restrict{u}$ that acts as the identity on objects and acts on morphisms as:
\[\begin{pic}
	\node[morphism,width=6mm] (f) at (0,0) {$f$};
	\draw (f.north) to ++(0,+.2) node[above]{$B$};
	\draw (f.south) to ++(0,-.2) node[below]{$A$};
  \end{pic}
  \qquad\longmapsto\qquad
  \begin{pic}
	\node[morphism,width=6mm] (f) at (0,0) {$f$};
	\draw (f.north) to ++(0,+.2) node[above]{$B$};
	\draw (f.south) to ++(0,-.2) node[below]{$A$};
	\draw (.8,0) node[dot]{} to ++(0,-.4) node[below]{$U$};
  \end{pic}
\]

\begin{lemma}\label{lem:lari}
  If $u \leq v$ are central idempotents in a monoidal category $\cat{C}$, there is an oplax monoidal functor $\cat{C}\Restrict{u \leq v} \colon \cat{C}\restrict{u} \to \cat{C}\restrict{v}$, that acts on objects as $A \mapsto A \otimes U$, and on morphisms as:
  \[\begin{pic}
    \node[morphism,width=8mm] (f) at (0,0) {$f$};
    \draw (f.north) to ++(0,+.2) node[above]{$B$};
    \draw (f.south west) to ++(0,-.7) node[below]{$A$};
    \draw (f.south east) to ++(0,-.7) node[below]{$U$};
    \end{pic}
    \qquad\longmapsto\qquad
    \begin{pic}
    \node[morphism,width=8mm] (f) at (0,0) {$f$};
    \node[dot] (d) at (.65,-.5) {};
    \draw (f.north) to ++(0,+.2) node[above]{$B$};
    \draw (f.south west) to ++(0,-.7) node[below]{$A$};
    \draw (d) to[out=180,in=-90] (f.south east);
    \draw (d) to ++(0,-.4) node[below]{$U$};
    \draw (d) to[out=0,in=-90] ++(.4,.5) to ++(0,.4) node[above]{$U$};
    \draw (1.3,-.5) node[dot]{} to ++(0,-.4) node[below]{$V$};
    \end{pic}
  \]
  This functor is left adjoint to $\cat{C}\restrict{u \leq v}$, and the unit of this adjunction is invertible.
\end{lemma}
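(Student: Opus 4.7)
The plan is to verify that $\cat{C}\Restrict{u \leq v}$ is a functor, exhibit explicit unit and counit for the adjunction, check the two triangle identities, and observe that oplax monoidality comes for free. Functoriality rests on the comonoid structure of $U$ (counit $u$ and comultiplication $\Delta_U$ given by the inverse of $\lambda_U \circ (u \otimes U)$) combined with centrality of $v$, which lets the $V$-wire slide freely past other wires; preservation of identities uses $(u \otimes U) \circ \Delta_U = \id[U]$, while preservation of composition uses coassociativity of $\Delta_U$ together with centrality of both $u$ and $v$ to pull counits past the $f$- and $g$-boxes.

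For the adjunction $\cat{C}\Restrict{u \leq v} \dashv \cat{C}\restrict{u \leq v}$, I would take $\eta_A \colon A \to A \otimes U$ in $\cat{C}\restrict{u}$ to be $\id[A \otimes U]$ regarded in $\cat{C}$, and $\varepsilon_B \colon B \otimes U \to B$ in $\cat{C}\restrict{v}$ to be $B \otimes u \otimes v$ regarded in $\cat{C}$. One triangle identity reduces, via the relation $v \circ m = u$ (with $m \colon U \to V$ the mediating morphism for $u \leq v$) and the counit law for $\Delta_U$, to $(B \otimes u \otimes u) \circ (B \otimes \Delta_U) = B \otimes u$, which is a direct consequence of the comonoid axioms. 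The other triangle unfolds to a composite involving both $\Delta_V$ and $\Delta_U$, and collapses via $(v \otimes V) \circ \Delta_V = \id[V]$ together with the counit law for $\Delta_U$ to $A \otimes U \otimes v$, which is the identity of $A \otimes U$ in $\cat{C}\restrict{v}$.

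The unit $\eta_A$ is invertible in $\cat{C}\restrict{u}$: its inverse is $A \otimes u \otimes u \colon A \otimes U \otimes U \to A$ regarded in $\cat{C}\restrict{u}$, and the two required identities follow from the counit law for $\Delta_U$ together with the central equation~\eqref{eq:central}, which interchanges $u \otimes U$ and $U \otimes u$ up to unitors. This is essentially the isomorphism $A \otimes U \simeq A$ that already appeared in the proof of Lemma~\ref{lem:slice}, now appearing in the opposite role. Finally, oplax monoidality of $\cat{C}\Restrict{u \leq v}$ is automatic from doctrinal adjunction since $\cat{C}\restrict{u \leq v}$ is strict monoidal by Lemma~\ref{lem:restrictionfunctor}, with coherence morphisms explicitly built from $\Delta_U$ and $u$. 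The main obstacle will be the bookkeeping in the two co-Kleisli categories, where $\Delta_U$, $\Delta_V$, $u$, $v$, and $m$ interleave in the triangle-identity computations; but once the composition formulas are spelled out graphically, everything reduces to the comonoid axioms for $U$ and $V$ and the defining relation $u = v \circ m$.
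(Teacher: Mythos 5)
Your proposal is correct, and the data it produces (the functor, the unit $\eta_A = \id[A\otimes U]$, its inverse $A \otimes u \otimes u$) coincides with the paper's; but you certify the two remaining claims by different means. For the adjunction, the paper does not exhibit a counit or check triangle identities at all: it observes directly that $\cat{C}\restrict{v}(\cat{C}\Restrict{u\leq v}A, B) = \cat{C}(A \otimes U \otimes V, B) \simeq \cat{C}(A \otimes U, B) = \cat{C}\restrict{u}(A, \cat{C}\restrict{u\leq v}B)$, naturally in $A$ and $B$, using $u \wedge v = u$ — the hom-set form of the adjunction, which is shorter than your unit--counit verification but yields the same unit. For oplax monoidality, the paper writes out $\theta_I = I \otimes u \otimes v$ and $\theta_{A,B}$ explicitly and checks naturality and coherence by hand, whereas you invoke doctrinal adjunction: since $\cat{C}\restrict{u \leq v}$ is strict (hence lax) monoidal by Lemma~\ref{lem:restrictionfunctor} and is the right adjoint, the mate construction equips the left adjoint with an oplax structure whose coherence axioms come for free. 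This is a legitimate shortcut (and the mates are exactly the paper's $\theta_{A,B}$ and $\theta_I$), with the caveat that it forces you to establish the plain adjunction before monoidality, the reverse of the paper's order; the paper's explicit route has the minor advantage of making visible that $\theta_{A,B}$ is invertible while $\theta_I$ is not, a fact it uses nowhere in this lemma but records for context.
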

\begin{proof}
  It is straightforward to verify that $\cat{C}\Restrict{u \leq v}$ indeed preserves identities and composition. 
  It is oplax monoidal with $\theta_I = I \otimes u \otimes v \colon \cat{C}\restrict{u \leq v}(I) \to I$ and:
  \[\theta_{A,B} = 
    \begin{pic}
    \node[dot] (d) at (0,0) {};
    \draw (.5,-.5) node[below]{$V$} to ++(0,.5) node[dot]{};
    \draw (-.5,-.5) node[below]{$B$} to[out=90,in=-90] ++(.45,1) node[above]{$B$};
    \draw (-1,-.5) node[below]{$A$} to ++(0,1) node[above]{$A$};
    \draw (d) to ++(0,-.5) node[below]{$U$};
    \draw (d) to[out=0,in=-90] ++(.4,.5) node[above]{$U$};
    \draw[halo] (d) to[out=180,in=-90] ++(-.5,.5) node[above]{$U$};
    \end{pic}
    \colon \cat{C}\Restrict{u \leq v}(A \otimes B) \to \cat{C}\Restrict{u \leq v}(A) \otimes \cat{C}\Restrict{u \leq v}(B)
  \]
  % To see that this is natural:
  % \[
  %   \cat{C}\Restrict{u \leq v}(f \otimes g) \circ \theta_{A,C}^{-1}
  %   = 
  %   \begin{pic}
  %     \node[morphism,width=5.5mm] (f) at (0,0) {$f$};
  %     \node[morphism,width=5.5mm] (g) at (1,0) {$g$};
  %     \node[dot] (d) at (1.5,-.4) {};
  %     \draw (f.north) to ++(0,.5) node[above]{$B$};
  %     \draw (g.north) to ++(0,.5) node[above]{$D$};
  %     \draw (f.south west) to ++(0,-.5) node[below]{$A$};
  %     \draw (g.south west) to ++(0,-.5) node[below]{$C$};
  %     \draw (f.south east) to ++(0,-.5) node[below]{$U$};
  %     \draw (g.south east) to[out=-90,in=180] (d);
  %     \draw (d) to ++(0,-.3) node[below]{$U$};
  %     \draw (d) to[out=0,in=-90] ++(.3,.5) to ++(.3,1.1) node[above] {$U$};
  %     \draw (2.1,-.7) node[below]{$V$} to ++(0,.4) node[dot]{};
  %   \end{pic}
  %   = 
  %   \theta_{B,D}^{-1} \circ \big(\cat{C}\Restrict{u \leq v}(f) \otimes \cat{C}\Restrict{u \leq v}(g)\big)
  % \]
  It is routine to verify that $\theta_{A,B}$ and $\theta_I$ are natural and respect the coherence isomorphisms of $\cat{C}\restrict{u}$ and $\cat{C}\restrict{v}$. 
  Notice that in fact $\theta_{A,B}$ is invertible, with inverse $\theta_{A,B}^{-1} = A \otimes u \otimes B \otimes U \otimes v \colon \cat{C}\Restrict{u \leq v}(A) \otimes \cat{C}\Restrict{u \leq v}(B) \to \cat{C}\Restrict{u \leq v}(A \otimes B)$, but that $\theta_I$ is not invertible.

  The fact that $\cat{C}\Restrict{u \leq v}$ is left adjoint to $\cat{C}\restrict{u \leq v}$ simply means:
  \[
    \cat{C}\restrict{v}(A \otimes U,B)
    = \cat{C}(A \otimes U \otimes V,B)
    \simeq \cat{C}(A \otimes U,B)
    = \cat{C}\restrict{v}(A \otimes U,B)
  \]
  This holds because $u \wedge v = u$ as $u \leq v$, and is easily seen to be natural in $A$ and $B$.
  The unit of the adjunction is $\eta_A = A \otimes U \colon A \to A \otimes U$ in $\cat{C}\restrict{u}$. It is inverted by $\eta_A^{-1}=A \otimes u \otimes u \colon A \otimes U \to A$ in $\cat{C}\restrict{u}$.
\end{proof}

To finish this section on the structure presheaf, we determine the central idempotents of the categories $\cat{C}\restrict{u}$.

\begin{lemma}\label{lem:zi:Cu}
  If $u$ is a central idempotent in a monoidal category $\cat{C}$, then there is an isomorphism $\ZI(\cat{C}\restrict{u}) \simeq \ZI(\cat{C}) \cap \downset{u}$ of semilattices.
\end{lemma}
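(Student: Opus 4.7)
The plan is to exhibit mutually inverse semilattice homomorphisms $\Phi \colon \ZI(\cat{C}) \cap \downset u \to \ZI(\cat{C}\restrict{u})$ and $\Psi \colon \ZI(\cat{C}\restrict{u}) \to \ZI(\cat{C}) \cap \downset u$, and to check the meet-semilattice structure is respected.

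\emph{Forward direction.} I would obtain $\Phi$ by restricting a semilattice morphism $F_* \colon \ZI(\cat{C}) \to \ZI(\cat{C}\restrict{u})$ induced by the strict monoidal functor $F \colon \cat{C} \to \cat{C}\restrict{u}$ that is the case $v = 1$ of Lemma~\ref{lem:restrictionfunctor}. Being strict monoidal with identity coherence morphisms, $F$ trivially satisfies the hypotheses of Lemma~\ref{lem:functorspreservingZI}, yielding $F_*$. One then checks that $F_*(u)$ is the top of $\ZI(\cat{C}\restrict{u})$: in $\cat{C}\restrict{u}$ the object $U$ is isomorphic to $I$ via a representative of $\lambda_U$, with inverse represented by $u \otimes u$. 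Since $F_*$ preserves meets, $F_*(v) = F_*(v \wedge u) = F_*(v) \wedge F_*(u)$ automatically identifies $F_*(v)$ with $F_*(v \wedge u)$, so $F_*$ factors through $\downset u$ as a map $\Phi$.

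\emph{Backward direction.} Given $v' \in \ZI(\cat{C}\restrict{u})$ represented by a $\cat{C}$-morphism $\tilde v' \colon V \otimes U \to I$, I would set $\Psi(v')$ to have underlying $\cat{C}$-morphism $\tilde v'$, with domain $V \otimes U$. The relation $\Psi(v') \leq u$ is immediate, since $(V \otimes U) \otimes u = V \otimes (U \otimes u)$ is invertible by idempotency of $u$. A half-braiding for $V \otimes U$ in $\cat{C}$ comes from combining the half-braiding of $v'$ in $\cat{C}\restrict{u}$ (a family of $\cat{C}$-morphisms $V \otimes A \otimes U \to A \otimes V$) with the half-braiding of $u$ in $\cat{C}$, adjusted by the unitors. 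Central idempotency of $\tilde v'$ in $\cat{C}$ is then obtained by unpacking the corresponding conditions on $v'$ in $\cat{C}\restrict{u}$ via the explicit descriptions of composition and tensor in Lemma~\ref{lem:Cumonoidal}: the invertibility of $v' \otimes \id_V$ in $\cat{C}\restrict{u}$, after absorbing the extra copies of $U$ produced by comultiplication and counit, yields the invertibility of $\tilde v' \otimes (V \otimes U)$ in $\cat{C}$, and similarly the central equation~\eqref{eq:central} in $\cat{C}\restrict{u}$ translates to the same equation in $\cat{C}$ at the underlying morphism.

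\emph{Mutual inverses and semilattice structure.} For $\Psi \Phi = \id$: starting from $v \colon V \to I$ with $v \leq u$, the morphism $\Phi(v)$ has representative $v$ followed by a counit on $U$, namely $V \otimes U \to I$, and $\Psi \Phi(v)$ is this same morphism in $\cat{C}$ with domain $V \otimes U$; invertibility of $V \otimes u$ identifies it with the original central idempotent $v \colon V \to I$ via an isomorphism respecting half-braidings. For $\Phi \Psi = \id$: applying $\Psi$ then $\Phi$ post-composes with an extra counit of $u$, which is absorbed using the idempotency isomorphism $U \otimes U \simeq U$. Preservation of meets (and top) by $\Phi$ is inherited from $F_*$, and $\Psi$ becomes a semilattice morphism as its inverse bijection.

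The main obstacle is the backward direction: verifying that the underlying $\cat{C}$-morphism of a central idempotent in $\cat{C}\restrict{u}$ is genuinely a central idempotent in $\cat{C}$ and producing its half-braiding from the data available. The diagrammatic arguments are routine, but careful bookkeeping is needed to distinguish $U$-wires arising from the co-Kleisli structure from those that are genuine tensor factors of $V$, and to verify the interaction with the half-braiding of $u$ already present in $\cat{C}$.
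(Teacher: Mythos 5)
Your proposal is correct and follows essentially the same route as the paper: the substance in both is the backward direction, where one shows that centrality, idempotency, and the half-braiding of $q\colon Q\to I$ in $\cat{C}\restrict{u}$ translate precisely into the corresponding data for the underlying morphism $Q\otimes U\to I$ in $\cat{C}$ (which is automatically below $u$). Routing the forward direction through Lemma~\ref{lem:functorspreservingZI} applied to $\cat{C}\to\cat{C}\restrict{u}$ is a cosmetic repackaging of what the paper does by hand; the only point you wave at as ``trivial'' that still needs a line of checking is that the $\cat{C}$-half-braiding of a central idempotent remains natural with respect to the larger class of morphisms of $\cat{C}\restrict{u}$.
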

\begin{proof}
  Let $q \colon Q \to I$ be a morphism in $\cat{C}\restrict{u}$. 
  That is, $q$ is a morphism $Q \otimes U \to I$ in $\cat{C}$.
  By definition $q$ satisfies~\eqref{eq:central} in $\cat{C}\restrict{u}$ if and only if the following holds in $\cat{C}$:
  \[
    \begin{pic}
      \node[morphism,width=5mm] (q) at (0,.5) {$q$};
      \draw (q.south west) to ++(0,-.5) node[below]{$Q$};
      \draw (.2,-.2) node[below]{$Q$} to[out=90,in=-90] ++(.5,.5) to ++(0,.6) node[above]{$Q$};
      \draw[halo] (q.south east) to[out=-90,in=90,looseness=.8] ++(.5,-.5) node[below]{$U$};
      \node[morphism,width=5mm] (q) at (0,.5) {$q$};
    \end{pic}
    =
    \begin{pic}
      \node[morphism,width=5mm] (q) at (0,.5) {$q$};
      \draw (q.south west) to ++(0,-.5) node[below]{$Q$};
      \draw (q.south east) to ++(0,-.5) node[below]{$U$};
      \draw (-.6,-.2) node[below]{$Q$} to ++(0,1.1) node[above]{$Q$};
    \end{pic}
  \]
  But by Lemma~\ref{lem:braiding}, this means precisely that $q \colon Q \otimes U \to I$ satisfies~\eqref{eq:central} in $\cat{C}$. 

  Now, $q$ is idempotent in $\cat{C}\restrict{u}$ if and only if there is a morphism $f \colon Q \otimes U \to Q \otimes Q$ in $\cat{C}$ satisfying:
  \[
    \begin{pic}
      \node[morphism,width=4mm] (q) at (0,0) {$q$};
      \node[morphism,width=10mm] (f) at (0,.6) {$f$};
      \node[dot] (d) at (.35,-.5) {};
      \draw (q.south west) to ++(0,-.6) node[below]{$Q$};
      \draw (q.south east) to[out=-90,in=180,looseness=.7] (d);
      \draw ([xshift=2mm]f.south east) to[out=-90,in=0,looseness=.5] (d);
      \draw (d.south) to ++(0,-.2) node[below]{$U$};
      \draw ([xshift=-2mm]f.south west) to ++(0,-1.2) node[below]{$Q$};
      \draw (f.north west) to ++(0,.3) node[above]{$Q$};
      \draw (f.north east) to ++(0,.3) node[above]{$Q$};
    \end{pic}
    \;=
    \begin{pic}
      \draw (0,0) node[below]{$Q$} to ++(0,2) node[above]{$Q$};
      \draw (.4,0) node[below]{$Q$} to ++(0,2) node[above]{$Q$};
      \draw (.8,0) node[below]{$U$} to ++(0,1) node[dot]{};
    \end{pic}
    \qquad\qquad\qquad
    \begin{pic}
      \node[morphism,width=6mm] (f) at (0,0) {$f$};
      \node[morphism,width=4mm,anchor=south west] (q) at ([yshift=3mm]f.north east) {$q$};
      \draw (f.north east) to (q.south west);
      \node[dot] (d) at (.4,-.5) {};
      \draw (f.south east) to[out=-90,in=180,looseness=.5] (d);
      \draw ([xshift=1mm]q.south east) to[out=-90,in=0,looseness=.5] (d);
      \draw (d) to ++(0,-.3) node[below]{$U$};
      \draw (f.south west) to ++(0,-.6) node[below]{$Q$};
      \draw (f.north west) to ++(0,1) node[above]{$Q$};
    \end{pic}
    \;=
    \begin{pic}
      \draw (.4,0) node[below]{$Q$} to ++(0,2) node[above]{$Q$};
      \draw (.8,0) node[below]{$U$} to ++(0,1) node[dot]{};
    \end{pic}
  \]
  Similarly, $q$ is idempotent in $\cat{C}$ if and only if there is $g \colon Q \otimes U \to Q \otimes U \otimes Q \otimes U$ in $\cat{C}$ satisfying:
  \[
    \begin{pic}
      \node[morphism,width=12mm] (g) at (0,0) {$g$};
      \node[morphism,width=4mm] (q) at (.4,.7) {$q$};
      \draw (q.south west) to ++(0,-.3);
      \draw (q.south east) to ++(0,-.3);
      \draw ([xshift=-2mm]g.south) to ++(0,-.3) node[below]{$Q$};
      \draw ([xshift=2mm]g.south) to ++(0,-.3) node[below]{$U$};
      \draw ([xshift=-2mm]g.north west) to ++(0,1) node[above]{$\smash{Q}$};
      \draw ([xshift=2mm]g.north west) to ++(0,1) node[above]{$U$};
    \end{pic}
    \;=
    \begin{pic}
      \draw (.4,0) node[below]{$Q$} to ++(0,1.8) node[above]{$\smash{Q}$};
      \draw (.8,0) node[below]{$U$} to ++(0,1.8) node[above]{$U$};
    \end{pic}
    \qquad\qquad\qquad
    \begin{pic}
      \node[morphism,width=12mm] (g) at (0,0) {$g$};
      \node[morphism,width=4mm] (q) at (.4,-.7) {$q$};
      \draw ([xshift=-.5mm]q.south west) to ++(0,-.3) node[below]{$Q$};
      \draw ([xshift=.5mm]q.south east) to ++(0,-.3) node[below]{$U$};
      \draw ([xshift=-6.75mm]g.north) to ++(0,.3) node[above]{$\smash{Q}$};
      \draw ([xshift=-2.25mm]g.north) to ++(0,.3) node[above]{$U$};
      \draw ([xshift=2.25mm]g.north) to ++(0,.3) node[above]{$\smash{Q}$};
      \draw ([xshift=6.75mm]g.north) to ++(0,.3) node[above]{$U$};
      \draw ([xshift=-2mm]g.south west) to ++(0,-1) node[below]{$Q$};
      \draw ([xshift=2.5mm]g.south west) to ++(0,-1) node[below]{$U$};
    \end{pic}
    \;=
    \begin{pic}
      \draw (.4,0) node[below]{$Q$} to ++(0,1.8) node[above]{$\smash{Q}$};
      \draw (.8,0) node[below]{$U$} to ++(0,1.8) node[above]{$U$};
      \draw (1.2,0) node[below]{$Q$} to ++(0,1.8) node[above]{$\smash{Q}$};
      \draw (1.6,0) node[below]{$U$} to ++(0,1.8) node[above]{$U$};
    \end{pic}
  \]
  But these two properties are equivalent via:
  \[
    \begin{pic}
      \node[morphism,width=5mm] (f) {$f$};
      \draw (f.north west) to ++(0,.7) node[above]{$Q$};
      \draw (f.north east) to ++(0,.7) node[above]{$Q$};
      \draw (f.south west) to ++(0,-.5) node[below]{$Q$};
      \draw (f.south east) to ++(0,-.5) node[below]{$U$};
    \end{pic}
    \;=
    \begin{pic}
      \node[morphism,width=10mm] (g) {$g$};
      \draw ([xshift=-6mm]g.north) to ++(0,.7) node[above]{$Q$};
      \draw ([xshift=-2mm]g.north) to ++(0,.3) node[dot]{};
      \draw ([xshift=2mm]g.north) to ++(0,.7) node[above]{$Q$};
      \draw ([xshift=6mm]g.north) to ++(0,.3) node[dot]{};
      \draw (g.south west) to ++(0,-.5) node[below]{$Q$};
      \draw (g.south east) to ++(0,-.5) node[below]{$U$};
    \end{pic}
    \qquad\qquad\qquad
    \begin{pic}
      \node[morphism,width=10mm] (g) {$g$};
      \draw ([xshift=-6mm]g.north) to ++(0,.5) node[above]{$\smash{Q}$};
      \draw ([xshift=-2mm]g.north) to ++(0,.5) node[above]{$U$};
      \draw ([xshift=2mm]g.north) to ++(0,.5) node[above]{$\smash{Q}$};
      \draw ([xshift=6mm]g.north) to ++(0,.5) node[above]{$U$};
      \draw (g.south west) to ++(0,-.7) node[below]{$Q$};
      \draw (g.south east) to ++(0,-.7) node[below]{$U$};
    \end{pic}
    =    
    \begin{pic}
      \node[morphism,width=5mm] (f) at (0,0) {$f$};
      \node[dot] (l) at (.5,-.5) {};
      \node[dot] (r) at (.8,-.8) {};
      \draw (f.south east) to[out=-90,in=180] (l);
      \draw (l) to[out=-90,in=180] (r);
      \draw (r) to ++(0,-.3) node[below]{$U$};
      \draw (f.north west) to ++(0,.3) node[above]{$\smash{Q}$};
      \draw (f.south west) to ++(0,-.9) node[below]{$Q$};
      \draw (r) to[out=0,in=-90] ++(.3,.3) to ++(0,1) node[above]{$U$};
      \draw (f.north east) to[out=90,in=-90] ++(.5,.3) node[above]{$\smash{Q}$};
      \draw[halo] (l) to[out=0,in=-90] ++(.3,.3) to ++(0,.3) to[out=90,in=-90] ++(-.5,.4) node[above]{$U$};
    \end{pic}
  \]

  Finally, if $\tau_A \colon Q \otimes A \to A \otimes Q$ is a half-braiding in $\cat{C}\restrict{u}$, and $\sigma_A \colon U \otimes A \to A \otimes U$ is the given half-braiding in $\cat{C}$, then 
  \[
    Q \otimes U \otimes A
    \stackrel{Q \otimes \sigma_A}{\longrightarrow}
    Q \otimes A \otimes U
    \simeq
    Q \otimes A \otimes U \otimes U
    \stackrel{\tau_A \otimes U}{\longrightarrow}
    A \otimes Q \otimes U
  \]
  is a half-braiding in $\cat{C}\restrict{u}$.
  Thus any central idempotent $V$ in $\cat{C}$ induces a central idempotent $V$ in $\cat{C}\restrict{u}$, and any central idempotent $q$ in $\cat{C}\restrict{u}$ is represented by a central idempotent coming from $\cat{C}$ in this way.
\end{proof}

\begin{corollary}
  Let $\cat{C}$ be a monoidal monoidal category and $u \in \ZI(\cat{C})$.
  \begin{itemize}
    \item If $\cat{C}$ is stiff, then so is $\cat{C}\restrict{u}$.
    \item If $\cat{C}$ has finite joins of central idempotents, then so does $\cat{C}\restrict{u}$.
    \item If $\cat{C}$ has joins of central idempotents, then so does $\cat{C}\restrict{u}$.
  \end{itemize}
  The functor $\cat{C} \to \cat{C}\restrict{u}$ preserves joins of central idempotents.
\end{corollary}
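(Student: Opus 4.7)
The plan is to exploit the adjunction $L \dashv R$ from Lemma~\ref{lem:lari} instantiated with upper bound $1$, where $L = \cat{C}\Restrict{u \leq 1} \colon \cat{C}\restrict{u} \to \cat{C}$ and $R = \cat{C}\restrict{u \leq 1} \colon \cat{C} \to \cat{C}\restrict{u}$. Since its unit is invertible, $L$ is fully faithful, exhibiting $\cat{C}\restrict{u}$ as a coreflective subcategory of $\cat{C}$; in particular $L$ preserves colimits (being a left adjoint) and reflects all limits and colimits (being fully faithful). Throughout I identify $\ZI(\cat{C}\restrict{u})$ with $\ZI(\cat{C}) \cap \downset u$ via Lemma~\ref{lem:zi:Cu}, and note that this subsemilattice inherits all joins of $\ZI(\cat{C})$ that lie below $u$, which includes all bounded joins.

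To handle the three bullets uniformly, take central idempotents $p, q \in \ZI(\cat{C}\restrict{u})$, viewed as $p, q \leq u$ in $\cat{C}$, and any object $A$. The maps $P \otimes u \colon P \otimes U \to P$, $Q \otimes u$, and $(P \otimes Q) \otimes u$ are invertible, providing natural isomorphisms $L(A \otimes P) = A \otimes P \otimes U \cong A \otimes P$ in $\cat{C}$ (and similarly for the other corners of the relevant square). These identify the image under $L$ of the stiffness square in $\cat{C}\restrict{u}$ (respectively, the pullback-pushout square involving $p \vee q$, respectively, the join-cocone for a downward-directed family $\{p_i\}$) with the corresponding universality diagram in $\cat{C}$ with base $A \otimes U$. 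The hypothesis on $\cat{C}$ makes the latter a pullback (respectively a pullback and a pushout, respectively a colimit); fully-faithful reflection transports this conclusion back to $\cat{C}\restrict{u}$. The initial object $0 \in \cat{C}$ serves as the initial object of $\cat{C}\restrict{u}$, since morphisms $0 \to A$ there are morphisms $0 \otimes U \cong 0 \to A$ in $\cat{C}$, and the isomorphism $A \otimes 0 \cong 0$ passes through $R$.

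For the final assertion, $R \colon \cat{C} \to \cat{C}\restrict{u}$ is the identity on objects, and the induced map $\ZI(\cat{C}) \to \ZI(\cat{C}\restrict{u}) \cong \ZI(\cat{C}) \cap \downset u$ sends $v \mapsto v \wedge u$, which preserves joins because meets distribute over (arbitrary or finite) joins in a frame or distributive lattice. The main obstacle I anticipate is the bookkeeping required to confirm that the isomorphisms $A \otimes P \otimes U \cong A \otimes P$ arising from $p \leq u$ are natural and compatible with all the leg maps in the relevant square, so that $L$ truly sends the universality square of $\cat{C}\restrict{u}$ to a diagram isomorphic to the universality square of $\cat{C}$ based at $A \otimes U$; once this compatibility is in place, everything follows from the standard general facts about fully faithful left adjoints.
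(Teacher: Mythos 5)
Your proposal is correct, but it takes a more structured route than the paper, which disposes of this corollary in one line by citing Definition~\ref{def:universaljoins} and Lemma~\ref{lem:zi:Cu} and whose intended mechanism is the direct verification it carries out in full for the stalk analogue, Lemma~\ref{lem:Cxstiff}: unwind $\cat{C}\restrict{u}(X,Y)=\cat{C}(X\otimes U,Y)$ and check each universal property by hand. You instead route everything through the adjunction $\cat{C}\Restrict{u\leq 1}\dashv\cat{C}\restrict{u\leq 1}$ of Lemma~\ref{lem:lari}: the invertible unit makes the left adjoint $L$ fully faithful, fully faithful functors reflect both limits and colimits, so it suffices to recognise $L$ of each universality diagram of $\cat{C}\restrict{u}$ as isomorphic to the corresponding diagram of $\cat{C}$ based at $A\otimes U$. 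That recognition is indeed the only real work, and it does go through: for $p,q\le u$ both $P\otimes u$ and $u\otimes P$ (likewise for $Q$ and $P\vee Q$) are invertible, the join of $p,q\le u$ computed in $\ZI(\cat{C})$ lands in $\downset u$ and agrees with the join in $\ZI(\cat{C}\restrict{u})$, and the central equation together with naturality makes the resulting corner isomorphisms commute with all legs. Two small remarks: the fact that $L$ preserves colimits is never actually used --- reflection does all the work, including for the pushout half of~\eqref{eq:pullback-pushout}, which is the one place where a ``left adjoints preserve colimits'' reflex would point in the wrong direction; and your reduction of the final claim to distributivity of $-\wedge u$ over joins is exactly right, since under either joins hypothesis $\ZI(\cat{C})$ is a distributive lattice or a frame. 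What your packaging buys is a uniform treatment of all three bullets without repeating the diagram chases of Lemma~\ref{lem:Cxstiff}; what it costs is the corner-identification bookkeeping you honestly flag, which is routine but not free.
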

\begin{proof}
  Follows from Definition~\ref{def:universaljoins} and Lemma~\ref{lem:zi:Cu}.
\end{proof}

\section{Structure sheaf}\label{sec:sheaf}

This section establishes that the structure presheaf is in fact a sheaf.
We start with checking the sheaf condition for binary joins.

\begin{proposition}\label{prop:stalkfunctor}
  If $\cat{C}$ is a monoidal category with universal finite joins of central idempotents, then the following is an equaliser in $\cat{MonCat}$: 
  \[\begin{pic}[xscale=5]
    \node (l) at (-1,0) {$\cat{C}\restrict{u \vee v}$};
    \node (m) at (0,0) {$\cat{C}\restrict{u} \times \cat{C}\restrict{v}$};
    \node (r) at (1,0) {$\cat{C}\restrict{u \wedge v}$};
    \draw[->] (l) to node[above]{$\langle \cat{C}\restrict{u \leq u \vee v}, \cat{C}\restrict{v \leq u \vee v} \rangle$} (m);
    \draw[->] ([yshift=-1mm]m.east) to node[below]{$\cat{C}\restrict{u \wedge v \leq v} \circ \pi_2$} ([yshift=-1mm]r.west);
    \draw[->] ([yshift=1mm]m.east) to node[above]{$\cat{C}\restrict{u \wedge v \leq u} \circ \pi_1$} ([yshift=1mm]r.west);
  \end{pic}\]
\end{proposition}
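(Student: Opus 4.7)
The plan is to exploit the fact that all four categories $\cat{C}\restrict{u}$, $\cat{C}\restrict{v}$, $\cat{C}\restrict{u \wedge v}$, $\cat{C}\restrict{u \vee v}$ share the objects of $\cat{C}$, and that every restriction functor $\cat{C}\restrict{w \leq w'}$ from Lemma~\ref{lem:restrictionfunctor} is strict monoidal and acts as the identity on objects. Consequently, both composites $\cat{C}\restrict{u \vee v} \to \cat{C}\restrict{u \wedge v}$ coincide with $\cat{C}\restrict{u \wedge v \leq u \vee v}$, so the diagram commutes, and all higher coherence questions reduce to the hom-sets.

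First I would establish the equaliser on hom-sets. For $A, B \in \cat{C}$, the hom-sets are
\[
  \cat{C}\restrict{u \vee v}(A,B) = \cat{C}(A \otimes (U \vee V), B),\qquad
  \cat{C}\restrict{u \wedge v}(A,B) = \cat{C}(A \otimes U \otimes V, B),
\]
and similarly for $u,v$. Applying the contravariant representable $\cat{C}(- \otimes A, B) = \cat{C}(A \otimes -, B)$ to the pushout square of~\eqref{eq:pullback-pushout} converts it into a pullback in $\cat{Set}$, which says precisely that $\cat{C}\restrict{u \vee v}(A,B)$ is the equaliser of the parallel pair $\cat{C}\restrict{u}(A,B) \times \cat{C}\restrict{v}(A,B) \rightrightarrows \cat{C}\restrict{u \wedge v}(A,B)$. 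In particular the mediating functor is faithful, and a morphism in $\cat{C}\restrict{u \vee v}$ is determined by its pair of restrictions.

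Next I would verify the universal property in $\cat{MonCat}$. Given a lax monoidal functor $F = (F_1, F_2) \colon \cat{D} \to \cat{C}\restrict{u} \times \cat{C}\restrict{v}$ equalising the two arrows to $\cat{C}\restrict{u \wedge v}$, define $\tilde F$ on objects by the common value $F_1(D) = F_2(D)$ (forced since both restriction functors are identity on objects). For each $f \colon A \to B$ in $\cat{D}$, the equaliser condition on hom-sets gives a unique $\tilde F(f) \in \cat{C}\restrict{u \vee v}(\tilde F(A), \tilde F(B))$ restricting to $F_1(f)$ and $F_2(f)$. Functoriality of $\tilde F$ follows from uniqueness in the pushout applied to composites and identities. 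The lax monoidal coherence data $\theta_I \colon I \to \tilde F(I)$ and $\theta_{A,B} \colon \tilde F(A) \otimes \tilde F(B) \to \tilde F(A \otimes B)$ are obtained by the same gluing from the coherence data of $F_1$ and $F_2$, and the lax monoidal axioms (associativity, unitality) as well as invertibility of $\theta_I$ transfer to $\tilde F$ by faithfulness of the mediating functor into $\cat{C}\restrict{u} \times \cat{C}\restrict{v}$. Uniqueness of $\tilde F$ in $\cat{MonCat}$ likewise follows from faithfulness.

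The main obstacle is a bookkeeping one: checking that the lax monoidal coherence of $\tilde F$ is well-defined and satisfies the axioms. This is not a serious hurdle because, as noted, the mediating functor $\cat{C}\restrict{u \vee v} \to \cat{C}\restrict{u} \times \cat{C}\restrict{v}$ is faithful (from the pullback part of~\eqref{eq:pullback-pushout}), so every equation that needs to hold between morphisms in $\cat{C}\restrict{u \vee v}$ may be verified after restricting to the two factors, where it holds by hypothesis on $F_1$ and $F_2$. The genuinely substantive input is the pushout condition~\eqref{eq:pullback-pushout} of Definition~\ref{def:universaljoins}, which we use twice: as a pullback to identify morphisms, and as a pushout to construct $\tilde F(f)$ and the coherence morphisms.
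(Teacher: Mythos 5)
Your proposal is correct and follows essentially the same route as the paper: the paper likewise reduces everything to the pushout/pullback condition~\eqref{eq:pullback-pushout}, gluing $H(f)$ and the lax monoidal coherence data from their two restrictions and deducing functoriality and the coherence axioms from uniqueness of the mediating morphism. Your reformulation of the morphism-level step as ``apply $\cat{C}(A\otimes -,B)$ to the pushout to get an equaliser of hom-sets'' is just a tidier packaging of the paper's componentwise diagram chase.
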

\begin{proof}
  We will prove that the following diagram is a pullback in $\cat{MonCat}$:
  \[\begin{pic}[xscale=4,yscale=1.5]
  	\node (tl) at (0,1) {$\cat{C}\restrict{u \vee v}$};
  	\node (tr) at (1,1) {$\cat{C}\restrict{u}$};
  	\node (bl) at (0,0) {$\cat{C}\restrict{v}$};
  	\node (br) at (1,0) {$\cat{C}\restrict{u \wedge v}$};
  	\draw[->] (tl) to node[above]{$\cat{C}\restrict{u \leq u \vee v}$} (tr);
  	\draw[->] (tl) to node[left]{$\cat{C}\restrict{v \leq u \wedge v}$} (bl);
  	\draw[->] (tr) to node[right]{$\cat{C}\restrict{u \wedge v \leq u}$} (br);
  	\draw[->] (bl) to node[below]{$\cat{C}\restrict{u \wedge v \leq v}$} (br);
  \end{pic}\]
  Let $\cat{A}$ be a (monoidal) category and $F \colon \cat{A} \to \cat{C}\restrict{u}$ and $G \colon \cat{A} \to \cat{C}\restrict{v}$ (lax monoidal) functors satisfying:
 %  \[\begin{pic}
	% \node[morphism,width=9mm] (fg) at (0,0) {$f \otimes g$};
	% \draw ([xshift=-2mm]fg.north) to ++(0,.2) node[above]{$B$};
	% \draw ([xshift=2mm]fg.north) to ++(0,.2) node[above]{$D$};
	% \draw (fg.south west) to ++(0,-1.25) node[below]{$A$};
	% \draw (fg.south east) to ++(0,-1.25) node[below]{$C$};
 %    \node[dot] (m) at ([yshift=-7mm]fg.south east) {};
	% \draw (fg.south east) to node[right=-.5mm]{$T$} (m);
	% \draw (m) to ++(0,-.55) node[below]{$S$};	
 %  \end{pic}\]
  \begin{equation}\label{eq:pb_outersq}
    \cat{C}\restrict{u \wedge v \leq u} \circ F 
    = 
    \cat{C}\restrict{u \wedge v \leq v} \circ G 
  \end{equation}
  We will show that there is a unique functor
  $H \colon \cat{A} \to \cat{C}\restrict{u \vee v}$ satisfying:
  \begin{equation}\label{eq:pb_tri}
	\cat{C}\restrict{u \leq u \vee v} \circ H = F
  \qquad\qquad
	\cat{C}\restrict{v \leq u \vee v} \circ H = G 
  \end{equation}
%%  i.e. which completes the following diagram \todo{(in \cat{MonCat})}:
%%    \[\begin{pic}[xscale=4,yscale=2]
%%    	\node (tl) at (0,1) {$\cat{C}\restrict{s \vee t}$};
%%    	\node (tr) at (1,1) {$\cat{C}\restrict{s}$};
%%    	\node (bl) at (0,0) {$\cat{C}\restrict{t}$};
%%    	\node (br) at (1,0) {$\cat{C}\restrict{s \wedge t}$};
%%    	\node (c)  at (-1,2) {$\cat{A}$};
%%    	\draw[->] (tl) to node[above]{$\cat{C}\restrict{s \leq s \vee t}$} (tr);
%%    	\draw[->] (tl) to node[left]{$\cat{C}\restrict{t \leq s \wedge t}$} (bl);
%%    	\draw[->] (tr) to node[right]{$\cat{C}\restrict{s \wedge t \leq s}$} (br);
%%    	\draw[->] (bl) to node[below]{$\cat{C}\restrict{s \wedge t \leq t}$} (br);
%%    	\draw[->] (c) to node[above]{$F$} (tr);
%%    	\draw[->] (c) to node[below]{$G$} (bl);
%%          \draw[->,dashed] (c) to node{$H$} (tl);
%%    \end{pic}\]
  by first showing that~\eqref{eq:pb_tri} forces a unique choice for how $H$ must act on objects and morphisms, and then verifying that this indeed defines a (lax monoidal) functor.

  For an object $A$ of $\cat{A}$, condition~\eqref{eq:pb_outersq} implies that $F(A) = G(A)$ since the restriction functors act as the identity on objects, and~\eqref{eq:pb_tri} forces $H(A)$ to be the same object.
  For a morphism $m \colon A \to B$ in $\cat{A}$, equation~\eqref{eq:pb_outersq} says that the maps $F(m) \colon F (A) \otimes U \to F (B)$ and $G (m) \colon G (A) \otimes V \to G (B)$ in $\cat{C}$ satisfy:
% 	\[\begin{pic}
% 		\node[morphism,width=10mm] (f) at (0,0) {$Fa$};
% 		\draw (f.north) to ++(0,+.7) node[above]{$HB$};
% 		\draw (f.south west) to ++(0,-1.2) node[below]{$HA$};
% 		\node[dot] (d) at ([yshift=-7mm]f.south east) {};
% 		\draw (f.south east) to node[right]{$S$} (d);
% 		\draw (d) to ++(0,-.5) node[below]{$S \wedge T$};
%       \end{pic}
% 	  =
% 	  \begin{pic}
% 		\node[morphism,width=10mm] (f) at (0,0) {$Ga$};
% 		\draw (f.north) to ++(0,+.7) node[above]{$HB$};
% 		\draw (f.south west) to ++(0,-1.2) node[below]{$HA$};
% 		\node[dot] (d) at ([yshift=-7mm]f.south east) {};
% 		\draw (f.south east) to node[right]{$T$} (d);
% 		\draw (d) to ++(0,-.5) node[below]{$S \wedge T$};
%       \end{pic}
% 	\]
% Rewriting both sides,
	\[\begin{pic}
      \node[morphism,width=8mm] (f) at (0,0) {$F(m)$};
		  \draw (f.north) to ++(0,+.3) node[above]{$H(B)$};
		  \draw ([xshift=-1mm]f.south west) to ++(0,-.5) node[below]{$H(A)$};
		  \draw ([xshift=1mm]f.south east) to ++(0,-.5) node[below]{$U$};
      \node[dot] (d) at (1,0) {};
		  \draw (d) to ++(0,-.7) node[below]{$V$};
    \end{pic}
	  =
	  \begin{pic}
		  \node[morphism,width=12mm] (f) at (0,0) {$G(m)$};
		  \draw (f.north) to ++(0,+.3) node[above]{$H(B)$};
		  \draw ([xshift=-2mm]f.south west) to ++(0,-.5) node[below]{$H(A)$};
		  \draw ([xshift=2mm]f.south east) to ++(0,-.5) node[below]{$V$};
      \node[dot] (d) at (0,-.4) {};
		  \draw (d) to ++(0,-.3) node[below]{$U$};
    \end{pic}
	\]
  whereas \eqref{eq:pb_tri} says that $H(m) \colon H(A) \otimes (U \vee V) \to H(B)$ must satisfy:
	\begin{equation}\label{eq:pb_tri_morphisms}
	\begin{pic}
		\node[morphism,width=10mm] (f) at (0,0) {$H(m)$};
		\draw (f.north) to ++(0,+.3) node[above]{$H(B)$};
		\draw (f.south west) to ++(0,-.8) node[below]{$H(A)$};
		\node[dot] (d) at ([yshift=-5mm]f.south east) {};
		\draw (f.south east) to node[right=-1mm]{$U \vee V$} (d);
		\draw (d) to ++(0,-.3) node[below]{$V$};
  \end{pic}
	=
	\begin{pic}
		\node[morphism,width=10mm] (f) at (0,0) {$F(m)$};
		\draw (f.north) to ++(0,+.3) node[above]{$H(B)$};
		\draw (f.south west) to ++(0,-.8) node[below]{$H(A)$};
		\draw (f.south east) to ++(0,-.8) node[below]{$U$};
  \end{pic}
  \qquad\qquad
  \begin{pic}
		\node[morphism,width=10mm] (f) at (0,0) {$H(m)$};
		\draw (f.north) to ++(0,+.3) node[above]{$H(B)$};
		\draw (f.south west) to ++(0,-.8) node[below]{$H(A)$};
		\node[dot] (d) at ([yshift=-5mm]f.south east) {};
		\draw (f.south east) to node[right=-1mm]{$U \vee V$} (d);
		\draw (d) to ++(0,-.3) node[below]{$V$};
  \end{pic}
  =
  \begin{pic}
		\node[morphism,width=10mm] (f) at (0,0) {$G(m)$};
		\draw (f.north) to ++(0,+.3) node[above]{$H(B)$};
		\draw (f.south west) to ++(0,-.8) node[below]{$H(A)$};
		\draw (f.south east) to ++(0,-.8) node[below]{$V$};
  \end{pic}
  \end{equation}
  Both conditions are summarised by commutativity of the following diagram in $\cat{C}$:
  \[\begin{pic}[xscale=4,yscale=1.5]
  	\node (tl) at (0,1) {$H (A) \otimes U \otimes V$};
  	\node (tr) at (1,1) {$H (A) \otimes U$};
  	\node (bl) at (0,0) {$H (A) \otimes V$};
  	\node (br) at (1,0) {$H (A) \otimes (U \vee V)$};
  	\node (c)  at (1.75,-.6) {$H(B)$};
  	\draw[->] (tl) to node[above]{} (tr);
  	\draw[->] (tl) to node[left]{} (bl);
  	\draw[->] (tr) to node[right]{} (br);
  	\draw[->] (bl) to node[below]{} (br);
	\draw[->] (tr) to[out=-30,in=90] node[above=1mm]{$F(m)$} (c);
	\draw[->] (bl) to[out=-30,in=180] node[below]{$G(m)$} (c);
	\draw[->,dashed] (br) to node[above]{$H(m)$} (c);
  \end{pic}\]
  Universal finite joins of central idempotents make this square a pushout.
  Thus there is only one possible choice for the morphism $H(m)$.

  It remains to show that this indeed defines a (lax monoidal) functor.
  Let $m_1 \colon A \to B$ and $m_2 \colon B \to C$ in $A$. 
  We will show that $H(m_2) \circ_{u \vee v} H(m_1)$ satisfies the defining conditions of $H(m_2 \circ m_1)$. Observe that
  \begin{align*}
    \cat{C}\restrict{u \leq u \vee v}(H(m_2) \circ_{u \vee v} H(m_1))
    &= \cat{C}\restrict{u \leq u \vee v}(H(m_2)) \circ_{u} \cat{C}\restrict{u \leq u \vee v}(H(m_1)) \\
  	&= F(m_2) \circ_u F(m_1)\\
	  &= F(m_2 \circ m_1)
  \end{align*}
  by functoriality of restriction, \eqref{eq:pb_tri_morphisms} instantiated for both $m_1$ and $m_2$, and functoriality of $F$. The condition holds analogously for $G$.
  Since $H(m_2 \circ m_1)$ is the unique map in $\cat{C}$ satisfying these conditions, $H(m_2) \circ_{u \vee v} H(m_1) = H(m_2 \circ m_1)$.
  
  Identities are preserved similarly: if $A$ is an object of $\cat{A}$, then
  \[
    \cat{C}\restrict{u \leq u \vee v}(\id[H(A)]) 
    = \id[\cat{C}\restrict{u \leq u \vee v}(H(A))] 
    = \id[F(A)] 
    = F(\id[A])
  \] 
  and analogously $\cat{C}\restrict{v \leq u \vee v}(\id[H(A)]) = G(\id[A])$, so $\id[H(A)] = H(\id[A])$.

  Finally we show that $H$ is lax monoidal.
  For objects $A$ and $B$ of $\cat{A}$, write $\theta^F_{A,B}$ for the structure morphism
  $F(A) \otimes F(B) \to F(A \otimes_\cat{A} B)$ in $\cat{C}\restrict{u}$ witnessing that $F$ is lax monoidal. This is a morphism $\theta^F_{A,B} \colon F(A) \otimes F(B) \otimes U \to  F(A \otimes B)$ in $\cat{C}$.
  Similarly, write $\theta^G_{A,B} \colon G(A) \otimes G(B) \otimes V \to  G(A \otimes B)$ in $\cat{C}$.
  It follows from~\eqref{eq:pb_outersq} that:
	\[\begin{pic}
 		  \node[morphism,width=20mm] (f) at (0,0) {$\theta^F_{A,B}$};
		  \draw (f.north) to ++(0,+.3) node[above]{$H(A \otimes B)$};
		  \draw ([xshift=-1mm]f.south west) to ++(0,-.5) node[below]{$HA$};
		  \draw ([xshift=0mm]f.south) to ++(0,-.5) node[below]{$HB$};
		  \draw ([xshift=1mm]f.south east) to ++(0,-.5) node[below]{$U$};
      \node[dot] (d) at (1.5,0) {};
		  \draw (d) to ++(0,-.7) node[below]{$V$};
    \end{pic}
	  =
	  \begin{pic}
		  \node[morphism,width=20mm] (f) at (0,0) {$\theta^G_{A,B}$};
		  \draw (f.north) to ++(0,+.3) node[above]{$H(A \otimes B)$};
		  \draw ([xshift=-2mm]f.south west) to ++(0,-.5) node[below]{$HA$};
		  \draw ([xshift=-2mm]f.south) to ++(0,-.5) node[below]{$HB$};
		  \draw ([xshift=2mm]f.south east) to ++(0,-.5) node[below]{$V$};
      \node[dot] (d) at (.4,-.4) {};
		  \draw (d) to ++(0,-.3) node[below]{$U$};
    \end{pic}
	\]
  That is, the outer square in the following diagram commutes:
  \[\begin{pic}[xscale=4,yscale=1.5]
  	\node (tl) at (0,1) {$H (A) \otimes H(B) \otimes U \otimes V$};
  	\node (tr) at (1,1) {$H (A) \otimes H(B) \otimes U$};
  	\node (bl) at (0,0) {$H (A) \otimes H(B) \otimes V$};
  	\node (br) at (1,0) {$H (A) \otimes H(B) \otimes (U \vee V)$};
  	\node (c)  at (1.75,-.8) {$H(A \otimes B)$};
  	\draw[->] (tl) to node[above]{} (tr);
  	\draw[->] (tl) to node[left]{} (bl);
  	\draw[->] (tr) to node[right]{} (br);
  	\draw[->] (bl) to node[below]{} (br);
	  \draw[->] (tr) to[out=-30,in=90] node[above=1mm]{$\theta^F_{A,B}$} (c);
  	\draw[->] (bl) to[out=-60,in=180] node[below]{$\theta^G_{A,B}$} (c);
	  \draw[->,dashed] (br) to node[right=2mm]{$\theta^H_{A,B}$} (c);
  \end{pic}\]
  This uniquely determines the dashed morphism $\theta^H_{A,B}$ satisfying:
	\[\begin{pic}
		  \node[morphism,width=16mm] (f) at (0,0) {$\theta^F_{A,B}$};
  		\draw (f.north) to ++(0,+.3) node[above]{$H(A \otimes B)$};
  		\draw ([xshift=-1mm]f.south west) to ++(0,-.8) node[below]{$HA$};
  		\draw (f.south) to ++(0,-.8) node[below]{$HB$};
  		\draw ([xshift=1mm]f.south east) to ++(0,-.8) node[below]{$U$};
    \end{pic}
    =
	  \begin{pic}
  		\node[morphism,width=16mm] (f) at (0,0) {$\theta^H_{A,B}$};
	  	\draw (f.north) to ++(0,+.3) node[above]{$H(A \otimes B)$};
  		\draw ([xshift=-1mm]f.south west) to ++(0,-.8) node[below]{$HA$};
  		\draw (f.south) to ++(0,-.8) node[below]{$HB$};
  		\node[dot] (d) at ([xshift=1mm,yshift=-5mm]f.south east) {};
  		\draw ([xshift=1mm]f.south east) to node[right=-1mm]{$U \vee V$} (d);
  		\draw (d) to ++(0,-.3) node[below]{$S$};
    \end{pic}
	  \qquad
	  \begin{pic}
	  	\node[morphism,width=16mm] (f) at (0,0) {$\theta^F_{A,B}$};
		  \draw (f.north) to ++(0,+.3) node[above]{$H(A \otimes B)$};
  		\draw ([xshift=-1mm]f.south west) to ++(0,-.8) node[below]{$HA$};
  		\draw (f.south) to ++(0,-.8) node[below]{$HB$};
  		\draw ([xshift=1mm]f.south east) to ++(0,-.8) node[below]{$V$};
    \end{pic}
    =
    \begin{pic}
		  \node[morphism,width=16mm] (f) at (0,0) {$\theta^H_{A,B}$};
  		\draw (f.north) to ++(0,+.3) node[above]{$H(A \otimes B)$};
  		\draw ([xshift=-1mm]f.south west) to ++(0,-.8) node[below]{$HA$};
  		\draw (f.south) to ++(0,-.8) node[below]{$HB$};
  		\node[dot] (d) at ([xshift=1mm,yshift=-5mm]f.south east) {};
  		\draw ([xshift=1mm]f.south east) to node[right=-1mm]{$U \vee V$} (d);
  		\draw (d) to ++(0,-.3) node[below]{$V$};
    \end{pic}
	\]
  The structure morphism $I \to H(I)$ is defined similarly. Naturality of $\theta^H$ and the laws of unitality and associativity follow from those for the structure morphisms of $F$ and $G$. 
\end{proof}

We have to pay careful attention to the nullary case of the sheaf condition.

\begin{lemma}\label{lem:C0}
  If $\cat{C}$ be a monoidal category with universal finite joins of central idempotents, then $\cat{C}\restrict{0}$ is monoidally equivalent to the terminal category $\cat{1}$. 
\end{lemma}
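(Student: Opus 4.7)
The strategy is to show that $\cat{C}\restrict{0}$ is the \emph{indiscrete} category on its class of objects --- every hom-set is a singleton --- and therefore equivalent to $\cat{1}$ as a plain category; then to upgrade this to a monoidal equivalence essentially for free, because any diagram in an indiscrete category commutes.

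First I would unpack the hom-sets: a morphism $A \to B$ in $\cat{C}\restrict{0}$ is a morphism $A \otimes 0 \to B$ in $\cat{C}$. By Definition~\ref{def:universaljoins}, $0$ is an initial object of $\cat{C}$ and $A \otimes 0 \simeq 0$. Since initiality is invariant under isomorphism, $A \otimes 0$ is initial too, so $\cat{C}(A \otimes 0, B)$ is a singleton. Thus $\cat{C}\restrict{0}$ has exactly one morphism between any two objects.

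Next, I would exhibit the monoidal equivalence concretely. Take the unique functor $P \colon \cat{C}\restrict{0} \to \cat{1}$ and the functor $J \colon \cat{1} \to \cat{C}\restrict{0}$ sending the unique object to $I$. Then $P \circ J = \id_{\cat{1}}$ on the nose, and the components $\eta_A \colon A \to I$ given by the unique morphism in $\cat{C}\restrict{0}(A, I)$ are automatically natural and invertible (since every hom-set is a singleton, the inverse exists and naturality squares commute trivially), witnessing $\id \simeq J \circ P$. Both $P$ and $J$ are strict monoidal: for $J$, the coherence cells $I \to J(I) = I$ and $J(\ast) \otimes J(\ast) = I \otimes I \to J(\ast \otimes \ast) = I$ are forced to be the unique such morphisms in $\cat{C}\restrict{0}$, and the required coherence axioms are equalities of morphisms in the indiscrete category $\cat{C}\restrict{0}$ and so hold automatically.

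Finally, the same singleton observation shows that $\eta$ is a monoidal natural isomorphism, since its compatibility with associators and unitors reduces to equalities of parallel morphisms in $\cat{C}\restrict{0}$. There is essentially no obstacle here; the only care required is to confirm that $A \otimes 0 \simeq 0$ from Definition~\ref{def:universaljoins} really forces every hom-set of $\cat{C}\restrict{0}$ to be a singleton, after which the monoidal equivalence is formal.
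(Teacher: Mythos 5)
Your proof is correct and follows essentially the same route as the paper: both hinge on the observation that $A \otimes 0$ is initial, so every hom-set of $\cat{C}\restrict{0}$ is a singleton, after which the (monoidal) equivalence with $\cat{1}$ is automatic. The only quibble is that $J$ is strong rather than strict monoidal, since the coherence cell $I \otimes I \to I$ in $\cat{C}\restrict{0}$ need not be an identity --- but, as you note, all coherence data and axioms are forced in an indiscrete category, so this does not affect the argument.
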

\begin{proof}
  It suffices to show that every object $A$ is isomorphic to $0$ in $\cat{C}\restrict{0}$. 
  Because $A \otimes 0 \simeq 0$ by universal finite joins of central idempotents, there is a unique morphism $f \colon A \otimes 0 \to 0$ in $\cat{C}$.
  Similarly, there is a unique morphism $g \colon 0 \otimes 0 \to A$.
  The composition $g \circ f$ in $\cat{C}\restrict{0}$ is a morphism $A \otimes 0 \to A$ in $\cat{C}$, and therefore unique, so has to equal the identity in $\cat{C}\restrict{0}$.
  Similarly, the composition $f \circ g$ in $\cat{C}\restrict{0}$ is a morphism $0 \otimes 0 \to 0$ in $\cat{C}$ and so equals the identity in $\cat{C}\restrict{0}$ by uniqueness.
\end{proof}

We can now define the desired sheaf of categories. For clarity we have avoided stacks and worked everything out concretely so far, but at this point we have to make a small change because of the previous lemma, as $\cat{C}\restrict{0}$ is not isomorphic to the terminal category but only equivalent to it. In logical terms, $\cat{C}\restrict{0}$ models the theory in which $0=1$.

\begin{proposition}\label{prop:sheaf}
  Let $\cat{C}$ be a monoidal category with universal finite joins of central idempotents. The functor $\mathcal{O}(\Spec(\ZI(\cat{C})))\op\to\cat{MonCat}$ given by $s \mapsto \cat{C}\restrict{s}$ is naturally monoidally equivalent to \changed{its sheafification, which is the following sheaf of monoidal categories:}
  \begin{align*}
    F \colon \mathcal{O}(\Spec(\ZI(\cat{C})))\op & \to \cat{MonCat} \\
    0 & \mapsto \cat{1} \\
    0 \neq u & \mapsto \cat{C}\restrict{u}
  \end{align*}
\end{proposition}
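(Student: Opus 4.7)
The plan is to invoke Lemma~\ref{lem:equaliser}: it suffices to define $F$ on the basis $\{B_u \mid u \in \ZI(\cat{C})\}$ by $F(B_u) = \cat{C}\restrict{u}$ for $u \neq 0$ and $F(B_0) = \cat{1}$ (with morphisms given by the restriction functors of Lemma~\ref{lem:restrictionfunctor}, composed with the unique functor to $\cat{1}$ in the degenerate cases), and then check two things: that $F(B_0)$ is terminal in $\cat{MonCat}$, which holds by construction; and that the equaliser condition is satisfied for all pairs $B_u$, $B_v$.

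For the equaliser, I would split into cases based on whether $u$, $v$, and $u \wedge v$ vanish. The main case $u, v \neq 0$ with $u \wedge v \neq 0$ is exactly Proposition~\ref{prop:stalkfunctor}. If $u, v \neq 0$ but $u \wedge v = 0$, then the target $\cat{C}\restrict{u \wedge v}$ is replaced by $\cat{1}$, the two parallel arrows agree trivially, and the equaliser becomes all of $\cat{C}\restrict{u} \times \cat{C}\restrict{v}$; Proposition~\ref{prop:stalkfunctor} combined with the equivalence $\cat{C}\restrict{u \wedge v} \simeq \cat{1}$ of Lemma~\ref{lem:C0} shows $\cat{C}\restrict{u \vee v}$ is monoidally equivalent to this product. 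If exactly one of $u, v$ is zero, say $u = 0$ and $v \neq 0$, then $u \vee v = v$ and $u \wedge v = 0$, and the diagram becomes $\cat{C}\restrict{v} \to \cat{1} \times \cat{C}\restrict{v} \rightrightarrows \cat{1}$, which is an equaliser since both arrows into $\cat{1}$ coincide and the left-hand map is a monoidal equivalence. Finally, if $u = v = 0$, every entry is $\cat{1}$ and the equaliser is immediate.

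For the natural monoidal equivalence between $s \mapsto \cat{C}\restrict{s}$ and $F$, the component at $B_u$ with $u \neq 0$ is the identity, and the component at $B_0$ is the monoidal equivalence $\cat{C}\restrict{0} \simeq \cat{1}$ furnished by Lemma~\ref{lem:C0}. Naturality on the basis reduces to the observation that any two functors into an equivalent-to-terminal category are canonically isomorphic; this then propagates to all opens via the sheaf extension of Lemma~\ref{lem:equaliser}. The only subtlety, which is bookkeeping rather than a real obstacle, is handling the nullary/degenerate joins uniformly with the replacement $\cat{C}\restrict{0} \rightsquigarrow \cat{1}$; the nontrivial mathematical content all sits in Proposition~\ref{prop:stalkfunctor} and Lemma~\ref{lem:C0}.
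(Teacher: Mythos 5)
Your proposal matches the paper's proof essentially step for step: the paper likewise combines Lemma~\ref{lem:equaliser}, Proposition~\ref{prop:stalkfunctor}, and Lemma~\ref{lem:C0}, with the natural (monoidal) equivalence given by identity components at $u \neq 0$ and the unique functor to $\cat{1}$ at $0$; you are in fact slightly more explicit than the paper about the degenerate cases where $u$, $v$, or $u \wedge v$ vanish. The only piece you omit is the short remark justifying that $F$ is not merely an equivalent sheaf but \emph{the} sheafification, which the paper settles by noting that any sheaf of categories must assign $\cat{1}$ to the empty open, so the universal property is automatic.
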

\begin{proof}
  Because $\cat{1}$ is terminal, $F$ is indeed functorial. There is a natural transformation from $u \mapsto \cat{C}\restrict{u}$ to $F$, whose component at every $u \neq 0$ is the identity functor, and whose component at $0$ is the unique functor to $\cat{1}$. This natural transformation is a (monoidal) equivalence by Lemma~\ref{lem:C0}.  Combining Lemma~\ref{lem:equaliser} with Proposition~\ref{prop:stalkfunctor} shows that $F$ defines a sheaf of (monoidal) categories.
  \changed{The fact that $F$ is the sheafification follows from the defining universal property, because any sheaf of categories has to assign $0 \mapsto \cat{1}$.}
\end{proof}

\section{Stalks}\label{sec:stalks}

In this section, we study the stalks of the sheaf of Proposition~\ref{prop:sheaf}. We start by generalising the idea of germs to the monoidal setting.

\begin{definition}\label{def:stalk}
  Let $\cat{C}$ be a monoidal category, and $x \subseteq \ZI(\cat{C})$ a filter. Define a category $\cat{C}\restrict{x}$ as follows:
  \begin{itemize}
	\item objects are those of $\cat{C}$;
	\item morphisms $A \to B$ are equivalence classes of pairs of $v \in x$ and $f \colon A \otimes V \to B$, where we identify $(v,f)$ and $(v',f')$ when $u \leq v \wedge v'$ for some $u \in x$:
	\[
	  \begin{pic}
		\node[morphism,width=5mm]	(f) at (0,0) {$f$};
		\node[dot] (d) at ([yshift=-5mm]f.south east) {};
		\draw (f.south east) to node[right=-1mm]{$V$} (d);
		\draw (d) to ++(0,-.3) node[below]{$U$};
		\draw ([xshift=-.5mm]f.south west) to ++(0,-.8) node[below]{$A$};
		\draw (f.north) to ++(0,.3) node[above]{$B$};
	  \end{pic}
	  \; = \;
	  \begin{pic}
		\node[morphism,width=5mm]	(f) at (0,0) {$f'$};
		\node[dot] (d) at ([yshift=-5mm]f.south east) {};
		\draw (f.south east) to node[right=-1mm]{$V'$} (d);
		\draw (d) to ++(0,-.3) node[below]{$U$};
		\draw ([xshift=-.5mm]f.south west) to ++(0,-.8) node[below]{$A$};
		\draw (f.north) to ++(0,.3) node[above]{$B$};
	  \end{pic}
	\]
	\item composition of $(u,f) \colon A \to B$ and $(v,g) \colon B \to C$ is:
	\[
	  \Big[
	  u \wedge v,\;
    \begin{pic}[font=\tiny]
  		\node[morphism,width=6mm] (f) at (0,0) {$f$};
  		\node[morphism,width=6mm,anchor=south west] (g) at ([yshift=2mm]f.north east) {$g$};
 		  \draw (f.north east) to (g.south west);
		  \draw (f.south west) to ++(0,-.3) node[right=-1mm]{$A$};
		  \draw (f.south east) to ++(0,-.3) node[right=-1mm]{$U$};
		  \draw (g.north) to ++(0,.3) node[right=-1mm]{$C$};
		  \draw (g.south east) to ++(0,-.9) node[right=-1mm]{$V$};
    \end{pic}\;
	  \Big]
	\]
	\item identity on $A$ is $[1, \text{\raisebox{-1mm}{$\begin{pic}[font=\tiny] \draw (0,0) node[right=-1mm]{$A$} to (0,.3); \end{pic}$}}]$.
  \end{itemize}
\end{definition}

Notice that $[u,f]=[u \wedge v, f \otimes v]$ for any central idempotent $v$.

\begin{lemma}
  If $\cat{C}$ is a monoidal category, and $x \subseteq \ZI(\cat{C})$ a filter, then $\cat{C}\restrict{x}$ is monoidal.
\end{lemma}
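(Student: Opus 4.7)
The plan is to realize $\cat{C}\restrict{x}$ as a filtered colimit of the monoidal categories $\cat{C}\restrict{u}$ for $u \in x$, along the strict monoidal restriction functors $\cat{C}\restrict{u \leq v}$ of Lemma~\ref{lem:restrictionfunctor}. Since $x$ is a filter, any finite collection of its elements has a common lower bound in $x$, so $x\op$ is filtered; and the equivalence classes $[u,f]$ of Definition~\ref{def:stalk} are by design the morphisms of this colimit, so the category structure (composition and identities) already matches.

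To put a monoidal structure on $\cat{C}\restrict{x}$, I would first define the tensor of morphisms $[u, f] \colon A \to B$ and $[v, g] \colon C \to D$ as
\[
[u,f] \otimes [v,g] \;:=\; \bigl[\,u \wedge v,\; \cat{C}\restrict{u \wedge v \leq u}(f) \otimes_{u \wedge v} \cat{C}\restrict{u \wedge v \leq v}(g)\,\bigr],
\]
using downward-directedness of $x$ to secure $u \wedge v \in x$, and using the tensor product of $\cat{C}\restrict{u \wedge v}$ supplied by Lemma~\ref{lem:Cumonoidal}. The tensor unit, associator, and left and right unitors are then taken to be the classes $[1,\id]$, $[1,\alpha]$, $[1,\lambda]$, $[1,\rho]$ of the coherence data of $\cat{C}$, inserted under the top idempotent $1 \in x$.

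Next I would verify that $\otimes$ descends to equivalence classes: if $(u',f') \sim (u,f)$ via some $w \leq u \wedge u'$ in $x$, then strict monoidality and functoriality of the transition functors (Lemma~\ref{lem:restrictionfunctor}) shows that both tensors, after further restriction to $w \wedge v$, agree on the nose in $\cat{C}\restrict{w \wedge v}$; the case of the second argument is symmetric. Bifunctoriality of $\otimes$, naturality of the coherence isomorphisms, and the pentagon and triangle identities then all follow by picking, for any finite collection of representatives, a common $w \in x$ below all the central idempotents involved, whereupon each equation reduces to the corresponding instance already known to hold in the single monoidal category $\cat{C}\restrict{w}$ by Lemma~\ref{lem:Cumonoidal}.

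The main obstacle is really just the bookkeeping: ensuring that each coherence check can be carried out uniformly in one $\cat{C}\restrict{w}$. Downward-directedness of $x$ makes this systematic, after which essentially nothing new needs to be proved beyond what Lemmas~\ref{lem:Cumonoidal} and~\ref{lem:restrictionfunctor} already supply.
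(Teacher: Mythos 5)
Your proposal is correct and takes essentially the same approach as the paper: the paper defines the tensor of $[u,f]$ and $[v,g]$ directly as $[u \wedge v, (f \otimes g)\circ(\cdots)]$ using the half-braiding, which agrees with your formula once $\otimes_{u\wedge v}$ is unfolded, and takes the coherence isomorphisms to be $[1,\alpha]$, $[1,\lambda]$, $[1,\rho]$ just as you do. The common-lower-bound reduction you use for the coherence checks is precisely the verification the paper leaves implicit, and your organizing colimit viewpoint is made explicit immediately afterwards in Lemma~\ref{lem:stalkcolimit}.
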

\begin{proof}
  The tensor product of objects is as in $\cat{C}$, the tensor product of morphisms $[u,f] \colon A \to B$ and $[v,g] \colon C \to D$ is $[u,f] \otimes [v,g] = [u \wedge v, (f \otimes g) \circ (A \otimes \sigma_{U,V} \otimes D)] \colon A \otimes C \to B \otimes D$.
  The coherence isomorphisms are $[1,\alpha]$, $[1,\lambda]$, and $[1,\rho]$.
\end{proof}

\begin{lemma}\label{lem:stalkcolimit}
  If $\cat{C}$ is a monoidal category, and $x \subseteq \ZI(\cat{C})$ a filter, then
  \[
    \cat{C}\restrict{x} = \colim_{u \in x} \cat{C}\restrict{u}
  \]
  in $\cat{MonCat}$,
  where the colimit ranges over the diagram induced by the functors $\cat{C}\restrict{u \leq v} \colon \cat{C}\restrict{v} \to \cat{C}\restrict{u}$ from Lemma~\ref{lem:restrictionfunctor}.
\end{lemma}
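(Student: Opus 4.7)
The plan is to show that $\cat{C}\restrict{x}$, together with the obvious cocone, satisfies the universal property of the filtered colimit. Note that the indexing diagram is filtered: $x$ is downward-directed as a filter, so the opposite poset (which is the actual shape of the diagram, since $u \leq v$ induces $\cat{C}\restrict{v} \to \cat{C}\restrict{u}$) is upward-directed.

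First I would define, for each $u \in x$, a strict monoidal functor $\iota_u \colon \cat{C}\restrict{u} \to \cat{C}\restrict{x}$ that acts as the identity on objects and sends a morphism $f \colon A \otimes U \to B$ in $\cat{C}\restrict{u}$ to the equivalence class $[u, f]$. Strict monoidality is immediate from the definitions of composition and tensor product in $\cat{C}\restrict{x}$. The cocone condition $\iota_u \circ \cat{C}\restrict{u \leq v} = \iota_v$ reduces to checking that $[u, f \circ (A \otimes m)] = [v, f]$ for $u \leq v$ with mediating morphism $m \colon U \to V$; this is witnessed by the common lower bound $u \in x$, since applying the counits gives the same morphism $f \circ (A \otimes m)$ on both sides.

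Next I would verify universality. Given any cocone of lax monoidal functors $G_u \colon \cat{C}\restrict{u} \to \cat{D}$ (with invertible unit coherence) satisfying $G_u \circ \cat{C}\restrict{u \leq v} = G_v$, define $H \colon \cat{C}\restrict{x} \to \cat{D}$ by $H(A) = G_u(A)$ on objects (well-defined because every $G_u$ agrees with every other on objects, since all restriction functors are the identity on objects) and $H([u, f]) = G_u(f)$ on morphisms. Well-definedness on equivalence classes is the key step: if $(u, f) \sim (v, g)$ with witness $w \leq u \wedge v$ in $x$, then $\cat{C}\restrict{w \leq u}(f) = \cat{C}\restrict{w \leq v}(g)$ in $\cat{C}\restrict{w}$ by the very definition of the equivalence, whence $G_u(f) = G_w(\cat{C}\restrict{w \leq u}(f)) = G_w(\cat{C}\restrict{w \leq v}(g)) = G_v(g)$ by the cocone condition. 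Functoriality reduces to the fact that composition in $\cat{C}\restrict{x}$ uses a common lower bound $u \wedge v \in x$ and that the $G$'s are compatible. The monoidal coherence morphisms $\theta^H_{A,B} \colon H(A) \otimes H(B) \to H(A \otimes B)$ and $\theta^H_I$ are defined as $\theta^{G_u}_{A,B}$ and $\theta^{G_u}_I$ for any $u$; independence of $u$ and all coherence conditions again follow from the cocone property.

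Uniqueness of $H$ is forced because every morphism of $\cat{C}\restrict{x}$ has the form $\iota_u(f) = [u, f]$ for some $u \in x$, so any functor satisfying $H \circ \iota_u = G_u$ must send $[u, f]$ to $G_u(f)$. The main obstacle I anticipate is purely bookkeeping: ensuring that the monoidal coherence of $H$ is independent of the chosen representative $u$, for which one passes to a common refinement $u \wedge v \in x$ and applies the cocone condition. Since the diagram is filtered and the $\iota_u$ are jointly essentially surjective on both objects and morphisms, this is the standard construction of a filtered colimit in $\cat{MonCat}$, carried out concretely here.
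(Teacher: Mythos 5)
Your proposal is correct and follows essentially the same route as the paper: the cocone functors $f \mapsto [u,f]$, the mediating functor $[u,f] \mapsto G_u(f)$, and well-definedness via a common lower bound in the filter are exactly the paper's argument, which you have merely spelled out in more detail (the paper's own proof is three sentences long).
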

\begin{proof}
  The functors $F_u \colon \cat{C}\restrict{u} \to \cat{C}\restrict{x}$	 that send $f \colon A \otimes U \to B$ to $[u,f]$ form a cocone.
  If $G_u \colon \cat{C}\restrict{u} \to \cat{D}$ is another cocone, then there is a unique mediating functor $M \colon \cat{C}\restrict{x} \to \cat{D}$ given by $M[u,f] = G_u(f)$.
  If $\cat{C}$ and $G_u$ are monoidal, then $M$ is lax monoidal.
\end{proof}

% Recall that a locale $L$ has \emph{enough points} when two opens $U,V \in L$ are equal if and only if they contain the same points $x$, that is, if and only if they are contained in the same completely prime filters $x$. Under this condition, we can upgrade Lemma~\ref{lem:epimonoisolocal} from local sections to stalks.

% \begin{corollary}\label{cor:epimonoislstalks}
%   Let $f,g \colon A \to B$ be morphisms in a stiff category $\cat{C}$ with universal joins of subunits.
%   Suppose that $\ZI(\cat{C})$ has enough points.
%   Then $f=g$ if and only if $[f]_x = [g]_x$ in $\cat{C}\restrict{x}$ for every $x \in \Spec(\ZI(\cat{C}))$.
%   Hence $f$ is monic/epic/invertible in $\cat{C}$ if and only if each $[f]_x$ is monic/epic/invertible in $\cat{C}\restrict{x}$.
% \end{corollary}
% \noindent
% Monomorphisms in $\cat{MonCat}$ are faithful (lax) monoidal functors that are injective on objects. Because the functors $\cat{C} \to \cat{C}\restrict{x}$ are identity-on-objects, we may rephrase the statement as these functors being jointly monic.
% \begin{proof}
%   If $[f]_x = [g]_x$ in $\cat{C}\restrict{x}$, then by definition there exists a subunit $s_x$ in $\cat{C}$ with $f \otimes s_x = g \otimes s_x$.
%   Because there are enough points, $\bigvee_x s_x = 1$.
%   It now follows from Lemma~\ref{lem:epimonoisolocal} that $f=g$.
% \end{proof}

The next lemma characterises the central idempotents in $\cat{C}\restrict{x}$.
The equivalence relation of having the same germ specialises to a semilattice congruence of central idempotents as follows for a  filter $x$ of central idempotents:
\[
  v \sim_x w 
  \iff
  \exists u \in x \colon u \wedge v = u \wedge w
\]

\begin{lemma}\label{lem:zi:stalks}
  If $\cat{C}$ is a braided monoidal category and $x \subseteq \ZI(\cat{C})$ a filter, then there is an isomorphism $\ZI(\cat{C}\restrict{x}) \simeq \ZI(\cat{C}) \slash \smash{\sim}_x$ of semilattices.
\end{lemma}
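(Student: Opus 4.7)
The plan is to construct the isomorphism explicitly via the canonical embedding, using Lemma~\ref{lem:zi:Cu} and Lemma~\ref{lem:stalkcolimit} to reduce to a finite stage. There is a strict monoidal functor $J \colon \cat{C} \to \cat{C}\restrict{x}$ that is the identity on objects and sends $f$ to $[1, f]$, obtained as the composite of $\cat{C} \to \cat{C}\restrict{1}$ and $\cat{C}\restrict{1} \to \cat{C}\restrict{x}$. Because $\cat{C}$ is braided, Lemma~\ref{lem:halfbraidingisbraiding} and Lemma~\ref{lem:functorspreservingZI} ensure that $J$ induces a semilattice morphism $\Phi \colon \ZI(\cat{C}) \to \ZI(\cat{C}\restrict{x})$. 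The bulk of the argument is to show that $\Phi$ descends to an isomorphism $\overline{\Phi} \colon \ZI(\cat{C})/\mathord\sim_x \to \ZI(\cat{C}\restrict{x})$.

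First I would observe that $\Phi(w) = 1$ for every $w \in x$. To see this, factor $J$ as $\cat{C} \to \cat{C}\restrict{w} \to \cat{C}\restrict{x}$; by Lemma~\ref{lem:zi:Cu} the central idempotent $w$ corresponds to the top element $1 \in \ZI(\cat{C}\restrict{w})$, which is preserved by the (strict monoidal) map to $\ZI(\cat{C}\restrict{x})$. Consequently, if $v \wedge w = v' \wedge w$ for some $w \in x$, then $\Phi(v) = \Phi(v) \wedge \Phi(w) = \Phi(v \wedge w) = \Phi(v' \wedge w) = \Phi(v')$, so $\Phi$ factors through the quotient to give $\overline\Phi$.

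For surjectivity and injectivity of $\overline\Phi$, I would invoke Lemma~\ref{lem:stalkcolimit}, which exhibits $\cat{C}\restrict{x}$ as the filtered colimit of $\cat{C}\restrict{u}$ for $u \in x$. Any central idempotent $q$ in $\cat{C}\restrict{x}$ is represented by some $[u, q_0]$ with $q_0 \colon Q \otimes U \to I$ in $\cat{C}$; since the centrality equation, invertibility of the idempotency map, naturality of the half-braiding, and equation~\eqref{eq:halfbraidingrespectsu} all involve only finitely many morphisms and equations, standard filtered-colimit arguments show they hold in some $\cat{C}\restrict{u'}$ for $u' \leq u$ in $x$. Applying Lemma~\ref{lem:zi:Cu} then yields a central idempotent $v \in \ZI(\cat{C})$ with $v \leq u'$ whose image under $J$ is $q$, giving surjectivity. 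For injectivity, an equality $\Phi(v) = \Phi(v')$ is witnessed by an isomorphism in the centre of $\cat{C}\restrict{x}$; by the same finite-stage argument this isomorphism, and the equations it satisfies, descend to $\cat{C}\restrict{u}$ for some $u \in x$, where Lemma~\ref{lem:zi:Cu} translates it into $v \wedge u = v' \wedge u$ in $\ZI(\cat{C})$, that is, $v \sim_x v'$.

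The main obstacle is the finite-stage argument: rigorously unpacking that every central idempotent, every isomorphism between them respecting half-braidings, and every equation they must satisfy arise from $\cat{C}\restrict{u}$ for some $u \in x$. This essentially amounts to showing that $\ZI$ preserves the filtered colimit of Lemma~\ref{lem:stalkcolimit}, which can be read off directly from Definition~\ref{def:stalk} of morphisms in $\cat{C}\restrict{x}$ as equivalence classes of data at stages $u \in x$, but needs to be verified for each of the axioms defining a central idempotent.
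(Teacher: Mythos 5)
Your overall strategy matches the paper's: reduce a central idempotent of $\cat{C}\restrict{x}$ to data living at a single stage $u \in x$, and then identify that data with central idempotents of $\cat{C}$ below $u$. (The paper carries out the second step by direct graphical computation rather than by citing Lemma~\ref{lem:zi:Cu}, and it skips the explicit colimit framing, but the content is the same.) The construction of $\Phi$, its factorisation through $\sim_x$ via $\Phi(w)=1$ for $w \in x$, and the injectivity/surjectivity bookkeeping are all fine.

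The one genuine gap is the claim that ``the centrality equation, invertibility of the idempotency map, naturality of the half-braiding, and equation~\eqref{eq:halfbraidingrespectsu} all involve only finitely many morphisms and equations.'' This is false for the half-braiding: it is a family $\sigma_A$ indexed by \emph{all} objects $A$, subject to one naturality square per morphism of $\cat{C}$ and one instance of~\eqref{eq:halfbraidingrespectsu} per object. Each individual component and equation descends to some $u_A \in x$, but a filter is only closed under \emph{finite} meets, so there need be no single $u \in x$ at which the entire half-braiding is defined; the standard filtered-colimit argument does not apply to this part of the data. This is exactly where the braided hypothesis (absent from Lemma~\ref{lem:zi:Cu} but present here) has to do real work: by Lemmas~\ref{lem:braiding} and~\ref{lem:halfbraidingisbraiding} the half-braiding of a central idempotent in a braided category is forced to be the braiding, so centrality reduces to the single finitary equation~\eqref{eq:central}, and only then does your finite-stage argument go through. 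That reduction is the first move in the paper's proof, and the same point must be made for the isomorphisms ``respecting half-braidings'' in your injectivity step. With that repair your argument is correct.
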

\begin{proof}
  Let $[v,q] \colon Q \to I$ be a morphism in $\cat{C}\restrict{x}$. 
  That is, choose a representing morphism $q \colon Q \otimes V \to I$ in $\cat{C}$. 
  Then by definition $[v,q]$ satisfies~\eqref{eq:central} in $\cat{C}\restrict{x}$ if and only if:
  \[
    \begin{pic}
      \node[morphism,width=5mm] (q) at (0,0) {$q$};
      \draw (q.south east) to node[right=-1mm]{$V$} ++(0,-.4) node[dot]{} to ++(0,-.3) node[below]{$U$};
      \draw (q.south west) to ++(0,-.7) node[below]{$Q$};
      \draw (-.6,-.9) node[below]{$Q$} to (-.6,.5) node[above]{$\smash{Q}$};
    \end{pic}    
    =
    \begin{pic}
      \node[morphism,width=5mm] (q) at (0,0) {$q$};
      \draw (q.south west) to ++(0,-.7) node[below]{$Q$};
      \draw (.2,-.9) node[below]{$Q$} to (.2,-.7) to[out=90,in=-90] (.6,-.1) to (.6,.5) node[above]{$\smash{Q}$};
      \draw[halo] (q.south east) to[out=-90,in=90] ++(.4,-.4) node[dot]{} to ++(0,-.3) node[below]{$U$};
    \end{pic}    
  \]
  in $\cat{C}$ for some $u \in x \cap \downset v$.
  If $m \colon U \to V$ satisfies $u = v \circ m$, then $q \circ (Q \otimes m) \colon Q \otimes U \to I$ is central in $\cat{C}$ if and only if:
  \[
    \begin{pic}
      \node[morphism,width=5mm] (q) at (0,0) {$q$};
      \draw (q.south east) to node[right=-1mm]{$V$} ++(0,-.4) node[dot]{} to ++(0,-.3) node[below]{$U$};
      \draw (q.south west) to ++(0,-.7) node[below]{$Q$};
      \draw (.6,-.9) node[below]{$Q$} to (.6,.5) node[above]{$\smash{Q}$};
      \draw (1,-.9) node[below]{$U$} to (1,.5) node[above]{$U$};
    \end{pic}
    =
    \begin{pic}
      \node[morphism,width=5mm] (q) at (0,0) {$q$};
      \draw (q.south east) to node[right=-1mm]{$V$} ++(0,-.4) node[dot]{} to ++(0,-.3) node[below]{$U$};
      \draw (q.south west) to ++(0,-.7) node[below]{$Q$};
      \draw (-.6,-.9) node[below]{$U$} to (-.6,.5) node[above]{$U$};
      \draw (-1,-.9) node[below]{$Q$} to (-1,.5) node[above]{$\smash{Q}$};
    \end{pic}
  \]
  But these two equations are equivalent by Lemma~\ref{lem:braiding}.

  Similarly, $[v,q]$ is idempotent in $\cat{C}\restrict{x}$ if and only if there exist $u \in x \cap \downset v$, and a morphism $p \colon Q \otimes U \to Q \otimes Q$ in $\cat{C}$ such that:
  \[
    \begin{pic}
      \node[morphism,width=5mm] (p) at (0,0) {$p$};
      \node[morphism,width=5mm, anchor=south west] (q) at ([yshift=3mm]p.north east) {$q$};
      \node[dot] (d) at (.4,-.5) {};
      \draw (p.north east) to (q.south west);
      \draw (p.north west) to ++(0,1) node[above]{$Q$};
      \draw (p.south west) to ++(0,-.6) node[below]{$Q$};
      \draw (q.south east) to ++(0,-.5) node[dot]{} to[out=-90,in=0,looseness=.7] (d);
      \draw (p.south east) to[out=-90,in=180,looseness=.7] (d);
      \draw (d) to ++(0,-.3) node[below]{$U$};
    \end{pic}
    =
    \begin{pic}
      \draw (0,0) node[below]{$Q$} to ++(0,2) node[above]{$Q$};
      \draw (.4,0) node[below]{$U$} to ++(0,.8) node[dot]{};
    \end{pic}
    \qquad\qquad
    \begin{pic}
      \node[morphism,width=10mm] (p) at (0,0) {$p$};
      \node[morphism,width=4mm] (q) at (0,-.6) {$q$};
      \node[dot] (d) at (.4,-1) {};
      \draw ([xshift=-1mm]p.south west) to ++(0,-1.1) node[below]{$Q$};
      \draw (q.south east) to[out=-90,in=180] (d);
      \draw ([xshift=2mm]p.south east) to ++(0,-.4) node[dot]{} to[out=-90,in=0,looseness=.7] (d);
      \draw (d) to ++(0,-.3) node[below]{$U$};
      \draw ([xshift=1mm]q.south west) to ++(0,-.5) node[below]{$Q$};
      \draw ([xshift=-1mm]p.north west) to ++(0,.5) node[above]{$Q$};
      \draw ([xshift=2mm]p.north east) to ++(0,.5) node[above]{$Q$};
    \end{pic}
    =
    \begin{pic}
      \draw (-.4,0) node[below]{$Q$} to ++(0,2) node[above]{$Q$};
      \draw (0,0) node[below]{$Q$} to ++(0,2) node[above]{$Q$};
      \draw (.4,0) node[below]{$U$} to ++(0,.8) node[dot]{};
    \end{pic}
  \]
  If $u = v \circ m$, then $q \circ (Q \otimes m) \colon Q \otimes U \to I$ is idempotent in $\cat{C}$ if and only if there exists a morphism $f \colon Q \otimes U \to Q \otimes U \otimes Q \otimes U$ in $\cat{C}$ satisfying:
  \[
    \begin{pic}
      \node[morphism,width=10mm] (f) at (0,0) {$f$};
      \node[morphism,width=4mm] (q) at (.3,-.6) {$q$};
      \draw (q.south west) to ++(0,-.7) node[below]{$Q$};
      \draw (q.south east) to ++(0,-.3) node[dot]{} to ++(0,-.7) node[below]{$U$};
      \draw ([xshift=-1mm]f.south west) to ++(0,-1.3) node[below]{$Q$};
      \draw ([xshift=2mm]f.south west) to ++(0,-1.3) node[below]{$U$};
      \draw ([xshift=-1mm]f.north west) to ++(0,.3) node[above]{$\smash{Q}$};
      \draw ([xshift=2.5mm]f.north west) to ++(0,.3) node[above]{$U$};
      \draw ([xshift=-2.5mm]f.north east) to ++(0,.3) node[above]{$\smash{Q}$};
      \draw ([xshift=1mm]f.north east) to ++(0,.3) node[above]{$U$};
    \end{pic}
    \;=\!
    \begin{pic}
      \draw (-.4,0) node[below]{$Q$} to ++(0,2) node[above]{$\smash{Q}$};
      \draw (0,0) node[below]{$U$} to ++(0,2) node[above]{$U$};
      \draw (.4,0) node[below]{$Q$} to ++(0,2) node[above]{$\smash{Q}$};
      \draw (.8,0) node[below]{$U$} to ++(0,2) node[above]{$U$};
    \end{pic}
    \qquad\qquad\qquad
    \begin{tikzpicture}[baseline=3mm]
      \node[morphism,width=10mm] (f) at (0,0) {$f$};
      \node[morphism,width=4mm] (q) at (.3,1) {$q$};
      \draw (q.south east) to ++(0,-.3) node[dot]{} to ++(0,-.6);
      \draw (q.south west) to ++(0,-.6);
      \draw ([xshift=-1mm]f.north west) to ++(0,1.3) node[above]{$\smash{Q}$};
      \draw ([xshift=2.5mm]f.north west) to ++(0,1.3) node[above]{$\smash{U}$};
      \draw (f.south west) to ++(0,-.3) node[below]{$Q$};
      \draw (f.south east) to ++(0,-.3) node[below]{$U$};
    \end{tikzpicture}
    \;=\!
    \begin{pic}
      \draw (-.4,0) node[below]{$Q$} to ++(0,2) node[above]{$\smash{Q}$};
      \draw (0,0) node[below]{$U$} to ++(0,2) node[above]{$U$};
    \end{pic}
  \]
  These two properties are equivalent, by choosing:
  \[
    \begin{pic}
      \node[morphism,width=10mm] (f) {$f$};
      \draw (f.south west) to ++(0,-.6) node[below]{$Q$};
      \draw (f.south east) to ++(0,-.6) node[below]{$U$};
      \draw ([xshift=-1mm]f.north west) to ++(0,.8) node[above]{$\smash{Q}$};
      \draw ([xshift=2.5mm]f.north west) to ++(0,.8) node[above]{$U$};
      \draw ([xshift=-2.5mm]f.north east) to ++(0,.8) node[above]{$\smash{Q}$};
      \draw ([xshift=1mm]f.north east) to ++(0,.8) node[above]{$U$};
    \end{pic}
    =
    \begin{pic}
      \node[morphism,width=5mm] (p) at (0,0) {$p$};
      \node[dot] (l) at (.45,-.5) {};
      \node[dot] (r) at (.8,-.8) {};
      \draw (p.north west) to ++(0,.5) node[above]{$\smash{Q}$};
      \draw (p.south west) to ++(0,-.9) node[below]{$Q$};
      \draw (p.south east) to[out=-90,in=180] (l);
      \draw (l) to[out=-90,in=180] (r);
      \draw (r) to ++(0,-.3) node[below]{$U$};
      \draw (p.north east) to[out=90,in=-90] ++(.5,.5) node[above]{$\smash{Q}$};
      \draw (r) to[out=0,in=-90] ++(.3,.3) to ++(0,1.2) node[above]{$U$};
      \draw[halo] (l) to[out=0,in=-90] ++(.25,.3) to ++(0,.3) to[out=90,in=-90] ++(-.5,.6) node[above]{$U$};
    \end{pic}
    \qquad\qquad
    \begin{pic}
      \node[morphism,width=5mm] (p) {$p$};
      \draw (p.south west) to ++(0,-.7) node[below]{$Q$};
      \draw (p.south east) to ++(0,-.7) node[below]{$U$};
      \draw (p.north west) to ++(0,.7) node[above]{$\smash{Q}$};
      \draw (p.north east) to ++(0,.7) node[above]{$\smash{Q}$};
    \end{pic}
    =
    \begin{pic}
      \node[morphism,width=10mm] (f) {$f$};
      \draw (f.south west) to ++(0,-.7) node[below]{$Q$};
      \draw (f.south east) to ++(0,-.7) node[below]{$U$};
      \draw ([xshift=-1mm]f.north west) to ++(0,.7) node[above]{$\smash{Q}$};
      \draw ([xshift=2.5mm]f.north west) to ++(0,.5) node[dot]{};
      \draw ([xshift=-2.5mm]f.north east) to ++(0,.7) node[above]{$\smash{Q}$};
      \draw ([xshift=1mm]f.north east) to ++(0,.5) node[dot]{};
    \end{pic}
  \]
  Just like in Lemma~\ref{lem:zi:Cu}, a half-braiding on $Q$ in $\cat{C}\restrict{x}$ induces a half-braiding on $Q \otimes U$ in $\cat{C}$.
  Thus every central idempotent in $\cat{C}\restrict{x}$ is induced by a central idempotent in $\cat{C}$. But by definition two central idempotents $u$ and $v$ in $\cat{C}$ induce the same central idempotent in $\cat{C}\restrict{x}$ exactly when $u \sim_x v$.
\end{proof}

\begin{lemma}\label{lem:Cxstiff}
  Let $\cat{C}$ be a monoidal monoidal category and $x \subseteq \ZI(\cat{C})$ a filter.
  \begin{itemize}
    \item If $\cat{C}$ is stiff, then so is $\cat{C}\restrict{x}$.
    \item If $\cat{C}$ has universal finite joins of central idempotents, then so does $\cat{C}\restrict{x}$.
    \item If $\cat{C}$ has universal joins of central idempotents, then so does $\cat{C}\restrict{x}$.
  \end{itemize}
  The functor $\cat{C} \to \cat{C}\restrict{x}$ preserves joins of central idempotents.
\end{lemma}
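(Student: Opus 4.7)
The plan is to exploit the description of $\cat{C}\restrict{x}$ as the filtered colimit $\colim_{u \in x\op} \cat{C}\restrict{u}$ in $\cat{MonCat}$ from Lemma~\ref{lem:stalkcolimit}, combined with the corollary following Lemma~\ref{lem:zi:Cu}, which gives each $\cat{C}\restrict{u}$ the desired property and shows that the canonical functors preserve joins of central idempotents. Central idempotents of $\cat{C}\restrict{x}$ lift to $\cat{C}$ by Lemma~\ref{lem:zi:stalks} and hence restrict to each $\cat{C}\restrict{u}$; morphisms in $\cat{C}\restrict{x}$ are germs.

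For stiffness, given central idempotents $V_1, V_2$ in $\cat{C}\restrict{x}$ and a competing cone, I would align the finitely many germ-components to a common stage $\cat{C}\restrict{w}$ by filteredness of $x$; the commutativity equation then holds over some smaller $w' \leq w$ in $x$; stiffness of $\cat{C}\restrict{w'}$ supplies a unique mediator whose germ is the required morphism, with uniqueness also descending by filteredness. The case of universal finite joins is entirely analogous: $0 \in \cat{C}$ stays initial in each $\cat{C}\restrict{u}$ and in $\cat{C}\restrict{x}$, with $A \otimes 0 \simeq 0$ stagewise; binary joins from $\cat{C}$ descend to $\cat{C}\restrict{x}$ via the quotient $\ZI(\cat{C})/\!\sim_x$, which is well-defined on finite joins because the finitely many $\sim_x$-witnesses in $x$ can be meeted; and diagram~\eqref{eq:pullback-pushout} transfers both as a pullback and as a pushout by the same finite germ-lifting argument.

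For the infinite-join case, the main subtlety is that an infinite family of germs cannot in general be lifted to a single stage. I would represent a family $\{V_i\}$ of central idempotents in $\cat{C}\restrict{x}$ by a family in $\cat{C}$, take $\bigvee V_i$ there, and verify that its image furnishes the join in $\cat{C}\restrict{x}$. For universality, each $\cat{C}\restrict{u}$ has universal joins of the restricted family and, using the natural identification $(\cat{C}\restrict{v})\restrict{u} \simeq \cat{C}\restrict{u}$, the transition functors preserve these colimits by the corollary applied to $\cat{C}\restrict{v}$; a mediator for a cocone in $\cat{C}\restrict{x}$ is then assembled as a germ of stagewise mediators, compatible under the transitions. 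Preservation of joins by $\cat{C} \to \cat{C}\restrict{x}$ follows by factoring through $\cat{C} \to \cat{C}\restrict{u} \to \cat{C}\restrict{x}$ for any $u \in x$, each factor preserving joins. The main obstacle is this last argument: whereas in the finite case everything can be lifted to a single stage, the infinite case forces a genuinely colimit-level construction, gluing stagewise mediators into a germ.
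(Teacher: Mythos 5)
Your handling of the two finitary bullets is essentially correct, but it takes a genuinely different route from the paper. You go through the filtered colimit $\cat{C}\restrict{x} = \colim_{u \in x}\cat{C}\restrict{u}$ of Lemma~\ref{lem:stalkcolimit} together with the corollary to Lemma~\ref{lem:zi:Cu}, lift the finitely many germs of a square-plus-cone to a common stage by directedness of $x$, and invoke the stagewise pullback/pushout property there. The paper instead works with representatives directly in $\cat{C}$: commutativity of the outer square in $\cat{C}\restrict{x}$ unwinds to an equation in $\cat{C}$ after tensoring with a witnessing central idempotent $q$, stiffness (resp.\ universal finite joins) of $\cat{C}$ itself then produces a mediator $m$ in $\cat{C}$, and the germ $[u\wedge u'\wedge q, m]$ is the required morphism. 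The two arguments are interchangeable for finite diagrams; yours is more modular in that it reuses the stagewise corollary, while the paper's sidesteps having to check that the designated squares are sent to one another by the transition functors and that finite (co)limit cones survive a filtered colimit of categories.

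The third bullet is where your proposal has a genuine gap. You correctly identify the obstacle --- an infinite cocone $\{[u_i,f_i]\colon A\otimes V_i \to B\}$ cannot be lifted to a single stage because $\bigwedge_i u_i$ need not lie in the filter $x$ --- but your fix, assembling ``a germ of stagewise mediators'', does not get off the ground: for the same reason, there is no stage $u\in x$ at which the whole cocone is visible, so there are no stagewise mediators to glue. The paper resolves this by descending to $\cat{C}$ rather than to a stage: it shrinks each leg's defining central idempotent so as to absorb the germ-equality witnesses (in the binary case replacing $u\wedge v$ and $u'\wedge w$ by $q\wedge u\wedge v$ and $q\wedge u'\wedge w$), applies the wide pushout in $\cat{C}$ over the join of these shrunken idempotents to obtain a single morphism $m$, and then reads off the correct germ index from the lattice identity $(q\wedge u\wedge v)\vee(q\wedge u'\wedge w)=p\wedge(v\vee w)$ with $p=q\wedge(u\vee u')\wedge(u\vee w)\wedge(v\vee u')$. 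Some such genuinely colimit-level construction is unavoidable here. Relatedly, your step ``take $\bigvee V_i$ in $\cat{C}$ and verify that its image is the join in $\cat{C}\restrict{x}$'' also needs an argument in the infinite case, since the witnesses $u_i\in x$ attached to an upper bound of the $[V_i]$ cannot be intersected when there are infinitely many of them.
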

\begin{proof}
  We start with stiffness.
  Let $v,w$ be central idempotents in $\cat{C}$. 
  It is clear that the inner square below commutes in $\cat{C}\restrict{x}$.
  \[\begin{pic}[xscale=2.5,yscale=2]
    \node (br) at (2,0) {$A$};
    \node (tr) at (2,1) {$A \otimes W$};
    \node (bl) at (0,0) {$A \otimes V$};
    \node (tl) at (0,1) {$A \otimes V \otimes W$};
    \node (t) at (-.75,1.6) {$B$}; 
    \draw[>->] (tl) to node[below]{$[1,A \otimes v \otimes W]$} (tr);
    \draw[>->] (tr) to node[right]{$[1,A \otimes w]$} (br);
    \draw[>->] (tl) to node[right]{$[1,A \otimes V \otimes w]$} (bl);
    \draw[>->] (bl) to node[below]{$[1,A \otimes v]$} (br);
    \draw[->] (t) to[out=0,in=120,looseness=.5] node[above]{$[u',g]$} (tr);
    \draw[->] (t) to[out=-90,in=150] node[left]{$[u,f]$} (bl);
    \draw[->,dashed] (t) to node[right=4mm]{$[u \wedge u' \wedge q,m]$} (tl);
  \end{pic}\]
  Suppose the outer square commutes too.
  This means $(A \otimes v) \circ (f \otimes u' \otimes q) = (A \otimes w) \circ g \circ (B \otimes u \otimes U' \otimes q)$ for some central idempotent $q$ in $\cat{C}$.
  Because $\cat{C}$ is stiff, there is a morphism $m \colon B \otimes U \otimes U' \otimes Q \to A \otimes V \otimes W$ satisfying: 
  \[
    \begin{pic}
      \node[morphism,width=6mm] (f) at (0,0) {$f$};
      \draw (f.north west) to ++(0,.3) node[above]{$A$};
      \draw (f.north east) to ++(0,.3) node[above]{$V$};
      \draw (f.south west) to ++(0,-.4) node[below]{$B$};
      \draw (f.south east) to ++(0,-.4) node[below]{$U$};
      \draw (.75,0) node[dot]{} to ++(0,-.6) node[below]{$U\smash{'}$};
      \draw (1.2,0) node[dot]{} to ++(0,-.6) node[below]{$Q$};
    \end{pic}
    =
    \begin{pic}
      \node[morphism,width=14mm] (f) at (0,0) {$m$};
      \draw (f.north west) to ++(0,.3) node[above]{$A$};
      \draw (f.north) to ++(0,.3) node[above]{$V$};
      \draw (f.north east) to ++(0,.3) node[dot]{} node[above]{$W$};
      \draw (f.south west) to ++(0,-.4) node[below]{$B$};
      \draw ([xshift=-2mm]f.south) to ++(0,-.4) node[below]{$U$};
      \draw ([xshift=2mm]f.south) to ++(0,-.4) node[below]{$U\smash{'}$};
      \draw (f.south east) to ++(0,-.4) node[below]{$Q$};
    \end{pic}
    \qquad\qquad
    \begin{pic}
    \node[morphism,width=10mm] (f) at (0,0) {$g$};
    \draw (f.north west) to ++(0,.3) node[above]{$A$};
    \draw (f.north east) to ++(0,.3) node[above]{$V$};
    \draw (f.south west) to ++(0,-.4) node[below]{$B$};
    \draw (f.south east) to ++(0,-.4) node[below]{$U\smash{'}$};
    \draw (0,-.4) node[dot]{} to ++(0,-.2) node[below]{$U$};
    \draw (.9,0) node[dot]{} to ++(0,-.6) node[below]{$Q$};
    \end{pic}
    =
    \begin{pic}
    \node[morphism,width=14mm] (f) at (0,0) {$m$};
    \draw (f.north west) to ++(0,.3) node[above]{$A$};
    \draw (f.north) to ++(0,.3) node[dot]{} node[above]{$V$};
    \draw (f.north east) to ++(0,.3) node[above]{$W$};
    \draw (f.south west) to ++(0,-.4) node[below]{$B$};
    \draw ([xshift=-2mm]f.south) to ++(0,-.4) node[below]{$U$};
    \draw ([xshift=2mm]f.south) to ++(0,-.4) node[below]{$U\smash{'}$};
    \draw (f.south east) to ++(0,-.4) node[below]{$Q$};
    \end{pic}
  \]
  It follows that the dashed morphism $[u \wedge u' \wedge q,m]$ makes the two triangles commute. The uniqueness of $m$ in $\cat{C}$ also guarantees that the dashed morphism is the unique such morphism in $\cat{C}\restrict{x}$.

  Next we turn to joins of central idempotents.
  The initial object $0$ satisfying $A \otimes 0 \simeq 0$ in $\cat{C}$ is still initial and still satisfies $A \otimes 0 \simeq 0$ in $\cat{C}\restrict{x}$. The proof that~\eqref{eq:pullback-pushout} is a pullback in $\cat{C}\restrict{x}$ is virtually the same as in the stiff case. We focus on the pushout property. 
  Suppose that $[u,f] \circ [1,A \otimes V \otimes w] = [u',g] \circ [1,A \otimes v \otimes W]$ in $\cat{C}\restrict{x}$. This means:
  \[
    f \circ (q \otimes U \otimes u' \otimes A \otimes V \otimes w) 
    = 
    g \circ (q \otimes u \otimes U' \otimes A \otimes v \otimes W)
  \]
  for some central idempotent $q$ in $\cat{C}$.
  If $\cat{C}$ has finite joins of central idempotents, there is a morphism $m \colon ((Q \otimes U \otimes V) \vee (Q \otimes U' \otimes W)) \otimes A \to B$ satisfying:
  \[
    \begin{pic}
      \node[morphism,width=9mm] (m) at (0,0) {$f$};
      \draw (m.north) to ++(0,.3) node[above]{$B$};
      \draw ([xshift=1mm]m.south east) to ++(0,-1) node[below,xshift=2mm]{$Q \otimes U \otimes V$};
      \draw ([xshift=-1mm]m.south west) to ++(0,-1) node[below]{$A$};
    \end{pic}    
    \hspace*{-3mm}
    =
    \begin{pic}
  	  \node[morphism,width=11mm] (m) at (0,0) {$m$};
  	  \node[dot] (d) at ([xshift=1mm,yshift=-7mm]m.south east) {};
  	  \draw (m.north) to ++(0,.3) node[above]{$B$};
  	  \draw ([xshift=1mm]m.south east) to node[right=-1mm,font=\tiny,text width=17mm]{$(Q\!\otimes\!U\!\otimes\!V) \vee (Q\!\otimes\!U'\!\otimes\!W)$} (d);
  	  \draw (d) to ++(0,-.3) node[below,xshift=2mm]{$Q \otimes U \otimes V$};
  	  \draw ([xshift=-1mm]m.south west) to ++(0,-1) node[below]{$A$};
    \end{pic}
    \;\;
    \begin{pic}
      \node[morphism,width=9mm] (m) at (0,0) {$g$};
      \draw (m.north) to ++(0,.3) node[above]{$B$};
      \draw ([xshift=1mm]m.south east) to ++(0,-1) node[below,xshift=3mm]{$Q \otimes \smash{U'} \otimes W$};
      \draw ([xshift=-1mm]m.south west) to ++(0,-1) node[below]{$A$};
    \end{pic}    
    \hspace*{-3mm}
    =
    \begin{pic}
      \node[morphism,width=11mm] (m) at (0,0) {$m$};
      \node[dot] (d) at ([xshift=1mm,yshift=-7mm]m.south east) {};
      \draw (m.north) to ++(0,.3) node[above]{$B$};
      \draw ([xshift=1mm]m.south east) to node[right=-1mm,font=\tiny,text width=17mm]{$(Q\!\otimes\!U\!\otimes\!V) \vee (Q\!\otimes\!U'\!\otimes\!W)$} (d);
      \draw (d) to ++(0,-.3) node[below,xshift=2mm]{$Q \otimes \smash{U'} \otimes W$};
      \draw ([xshift=-1mm]m.south west) to ++(0,-1) node[below]{$A$};
    \end{pic}
  \]
  Observe that $(q \wedge u \wedge v) \vee (q \wedge u' \wedge w) = p \wedge (v \vee w)$ for $p=q \wedge (u \vee u') \wedge (u \vee w) \wedge (v \vee u')$.
  Now $[p,m]$ is the unique mediating morphism in $\cat{C}\restrict{x}$ satisfying $[p,m] \circ [1,A \otimes (v \leq v \vee w)] = [u,f]$ and $[p,m] \circ [1,A \otimes (w \leq v \vee w)] = [u',g]$.
  The above holds equally well for wide pushouts.
\end{proof}

\section{\changed{$\vee$-Locality}}\label{sec:local}

This section proves that the stalks are particularly easy, in the sense that the central idempotents behave well because there are few of them. 

\begin{definition}\label{def:sublocal}
\changed{Call a partially ordered set \emph{$\vee$-local} if it has at least two elements, and $u \vee w=1$ can only happen when $u=1$ or $w=1$.
  Call a monoidal category $\cat{C}$ \emph{$\vee$-local} when $\ZI(\cat{C})$ is $\vee$-local.}
\end{definition}

A partially ordered set is \changed{$\vee$-local} if and only if it has a unique maximal ideal.
\changed{We now connect this property to primality of filters.}

% \begin{lemma}
%   A partially ordered set is sublocal if and only if it has a unique maximal ideal.
% \end{lemma}
% \begin{proof}
%   Suppose that there is a unique maximal and $p \vee q=1$ but $p \neq 1 \neq q$.
%   Then $\downset p$ and $\downset q$ are ideals, which both extend to maximal ideals.
%   But since there is a unique maximal ideal, they must be the same, and contain $p \vee q$, contradicting $p \vee q \neq 1$.

%   Conversely, suppose that the poset is sublocal. 
%   Let $I,J$ be ideals.
%   Then $I \vee J$ is another ideal.
%   Let $r \in I \vee J$, say $r=\bigvee_{i=1}^n p_i \wedge q_i$ for $p_i \in I$ and $q_i \in J$.
%   Now if $r=1$, there exist $p \in I$ and $q \in J$ with $p \wedge q = 1$, so $p=q=1$; but this contradicts $I$ and $J$ being (proper) ideals.
%   Therefore $I \vee J$ is an ideal. 
%   If $I$ and $J$ were maximal, then $I=I \vee J = J$.
% \end{proof}

\begin{lemma}\label{lem:sublocal}
  If $\cat{C}$ is a monoidal category with universal finite joins of central idempotents, and $x \subseteq \ZI(\cat{C})$ is a prime filter, then $\cat{C}\restrict{x}$ is \changed{$\vee$-local}.
\end{lemma}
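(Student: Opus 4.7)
The plan is to reduce $\vee$-locality of $\cat{C}\restrict{x}$ to the primality of $x$ as a filter of $\ZI(\cat{C})$, via the identification of central idempotents in the stalk with equivalence classes of central idempotents in $\cat{C}$. Both parts of Definition~\ref{def:sublocal} (having at least two elements, and the $\vee$-condition) are verified this way.

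First I would check that $\ZI(\cat{C}\restrict{x})$ has at least two elements. By Lemma~\ref{lem:Cxstiff}, $\cat{C}\restrict{x}$ inherits an initial object $0$ with $A \otimes 0 \simeq 0$, so $[0]$ defines a central idempotent in $\cat{C}\restrict{x}$. It differs from $[1]$: under the identification of Lemma~\ref{lem:zi:stalks}, equality $[0] = [1]$ would require some $w \in x$ with $w \wedge 0 = w \wedge 1$, i.e., $w = 0$, contradicting that $x$ is a proper filter.

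Next, suppose $\bar{u}, \bar{v} \in \ZI(\cat{C}\restrict{x})$ satisfy $\bar{u} \vee \bar{v} = 1$. By Lemma~\ref{lem:zi:stalks}, write $\bar{u} = [u]$ and $\bar{v} = [v]$ for $u,v \in \ZI(\cat{C})$. The canonical functor $\cat{C} \to \cat{C}\restrict{x}$ preserves finite joins of central idempotents by Lemma~\ref{lem:Cxstiff}, so $[u \vee v] = \bar{u} \vee \bar{v} = [1]$. Hence $u \vee v \sim_x 1$, which by definition means there exists $w \in x$ with $w \wedge (u \vee v) = w$, i.e., $w \leq u \vee v$. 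Upward-closure of $x$ then yields $u \vee v \in x$, and primality of $x$ forces $u \in x$ or $v \in x$. In the first case, $u \wedge u = u = u \wedge 1$ shows $u \sim_x 1$, so $\bar{u} = 1$; symmetrically in the second case $\bar{v} = 1$.

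The main obstacle is simply ensuring that Lemma~\ref{lem:zi:stalks} applies to give the concrete description of $\ZI(\cat{C}\restrict{x})$ that the argument relies on; once the central idempotents in the stalk are known to be represented by those of $\cat{C}$ modulo $\sim_x$, everything else is an unpacking of definitions plus primality of $x$. There is no delicate graphical calculation to perform, since the technical content has already been handled in Section~\ref{sec:stalks}.
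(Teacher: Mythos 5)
Your proof is correct and follows essentially the same route as the paper: both arguments identify $\ZI(\cat{C}\restrict{x})$ with $\ZI(\cat{C})/\!\sim_x$ via Lemma~\ref{lem:zi:stalks} and then apply primality of $x$, differing only in bookkeeping (you deduce $u \vee v \in x$ from upward closure before invoking primality, where the paper applies primality directly to $(u \wedge v) \vee (v \wedge w) = v$; and for nondegeneracy you argue in the semilattice quotient that $0 \sim_x 1$ forces $0 \in x$, where the paper unwinds $0 \simeq I$ in $\cat{C}\restrict{x}$ to $U \simeq 0$ in $\cat{C}$). The reliance on Lemma~\ref{lem:zi:stalks}, which you rightly flag as the one point needing care since it is stated for braided categories, is shared with the paper's own proof.
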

\begin{proof}
  Consider two elements of $\ZI(\cat{C}\restrict{x})$. 
  They are represented by $u,w \in \ZI(\cat{C})$ by Lemma~\ref{lem:zi:stalks}.
  Now, $u \vee w \sim_x 1$ if and only if there is $v$ in the prime filter $x$ and $(u \wedge v) \vee (v \wedge w) = v$.
  Since $(u \wedge v) \vee (v \wedge w)$ is in the prime filter $x$, either $u \wedge v \in x$ or $v \wedge w \in x$.
  But this implies $u \sim_x 1$ or $w \sim_x 1$.

  \changed{Finally, we prove that $0 \neq 1$ in $\ZI(\cat{C}\restrict{x})$.
  For a contradiction suppose that $0 \simeq I$ in $\cat{C}\restrict{x}$. Then $0 \simeq I$ in $\ZI(\cat{C}\restrict{u})$ for some $u \in \ZI(\cat{C})$ contained in $x$. 
  Because $A \simeq B$ in $\cat{C}\restrict{u}$ if and only if $A \otimes U \simeq B \otimes U$ in $\cat{C}$, it follows that $0 \simeq 0 \otimes U \simeq I \otimes U \simeq U$ in $\cat{C}$. But that means that $x=\{0\}$, contradicting the fact that $x$ is a proper filter.}
\end{proof}

\changed{
\begin{theorem}\label{thm:main}
  Any small monoidal category with universal $\vee$-joins of central idempotents is monoidally equivalent to the category of global sections of a sheaf of $\vee$-local monoidal categories.
\end{theorem}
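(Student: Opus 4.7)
The plan is to assemble the components built in the preceding sections. Let $X = \Spec(\ZI(\cat{C}))$, which is a topological space by Definition~\ref{def:primespectrum} because $\ZI(\cat{C})$ is a distributive lattice with least element whenever $\cat{C}$ has universal finite joins of central idempotents. Proposition~\ref{prop:sheaf} already furnishes a sheaf $F \colon \mathcal{O}(X)\op \to \cat{MonCat}$ that is naturally monoidally equivalent to the presheaf $u \mapsto \cat{C}\restrict{u}$ on basic opens (with $F(B_0) = \cat{1}$). What remains is to identify the global sections with $\cat{C}$, and to verify that every stalk is $\vee$-local.

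For the global sections, I would observe that $X = B_1$ since every prime filter contains $1$, so the monoidal category of global sections is $F(X) = F(B_1) = \cat{C}\restrict{1}$. The right unitor provides a monoidal equivalence $\cat{C}\restrict{1} \simeq \cat{C}$: a morphism $A \to B$ in $\cat{C}\restrict{1}$ is a morphism $A \otimes I \to B$ in $\cat{C}$, which is in bijection with $\cat{C}(A,B)$ via precomposition with $\rho_A^{-1}$, and this bijection respects composition, identities, and the tensor product as defined in Lemma~\ref{lem:Cumonoidal}.

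For $\vee$-locality of stalks, recall that a point $x \in X$ is a prime filter of $\ZI(\cat{C})$, and that its basic open neighbourhoods $B_u$ correspond to $u \in x$ and form a cofinal subfamily of all open neighbourhoods of $x$. Combining this cofinality with Lemma~\ref{lem:stalkcolimit}, the stalk at $x$ computed in $\cat{MonCat}$ is
\[
  F_x \;=\; \colim_{U \ni x}\, F(U) \;=\; \colim_{u \in x}\, \cat{C}\restrict{u} \;=\; \cat{C}\restrict{x}.
\]
Lemma~\ref{lem:sublocal} then yields directly that $\cat{C}\restrict{x}$ is $\vee$-local, noting that $x$ is prime exactly by definition of the points of $\Spec(\ZI(\cat{C}))$.

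The main point of care is bookkeeping rather than mathematics: the colimit defining the stalk must be taken in $\cat{MonCat}$, since both the global-sections equivalence and the conclusion of $\vee$-locality are monoidal statements, and the sheaf $F$ itself was only specified on basic opens via Lemma~\ref{lem:equaliser}. Both issues are already handled --- the first by Lemma~\ref{lem:stalkcolimit}, the second by compactness of basic opens (Lemma~\ref{lem:basiscompact}) which reduces the sheaf condition to binary and nullary covers. After that, the theorem is an assembly of the lemmas of Sections~\ref{sec:basespace}--\ref{sec:local}, with no further computation required.
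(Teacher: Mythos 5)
Your proposal is correct and follows essentially the same route as the paper's proof: base space $\Spec(\ZI(\cat{C}))$, the sheaf from Proposition~\ref{prop:sheaf}, stalks identified as $\cat{C}\restrict{x}$ and shown $\vee$-local via Lemma~\ref{lem:sublocal}, and global sections $\cat{C}\restrict{1}\simeq\cat{C}$. The only difference is that you spell out the cofinality/colimit bookkeeping (via Lemmas~\ref{lem:basiscompact} and~\ref{lem:stalkcolimit}) that the paper leaves implicit, which is a welcome clarification rather than a deviation.
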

\begin{proof}
  Let $\cat{C}$ be the monoidal category with universal finite joins of central idempotents.
  Take $X=\Spec(\ZI(\cat{C}))$ as the base space, as in Definition~\ref{def:primespectrum}. 
  Then $B_u \mapsto \cat{C}\restrict{u}$ is a presheaf of monoidal categories by Lemma~\ref{lem:restrictionfunctor}.
  It extends to a sheaf as in Proposition~\ref{prop:sheaf}.
  The stalks $\cat{C}\restrict{x}$ for $x \in X$ are \changed{$\vee$-local} by Lemma~\ref{lem:sublocal}, so this is a sheaf of \changed{$\vee$-}local categories.
  Finally, the global sections of this sheaf form $\cat{C}\restrict{1}$, which is monoidally equivalent to $\cat{C}$ itself.
\end{proof}

\begin{corollary}\label{cor:subdirect}
  Any small monoidal category with universal $\vee$-joins of central idempotents embeds monoidally into a product of $\vee$-local monoidal categories.
\end{corollary}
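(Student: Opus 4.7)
The plan is to use Theorem~\ref{thm:main} as a springboard: $\cat{C}$ is monoidally equivalent to the category of global sections of a sheaf of $\vee$-local monoidal categories on $X = \Spec(\ZI(\cat{C}))$, so I would compose this equivalence with the product of the stalk projections and show the resulting monoidal functor $e \colon \cat{C} \to \prod_{x \in X} \cat{C}\restrict{x}$ is faithful. Each stalk $\cat{C}\restrict{x}$ is $\vee$-local by Lemma~\ref{lem:sublocal}, so once faithfulness is in place the codomain has the shape demanded by the corollary.

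The stalk functor $e_x \colon \cat{C} \to \cat{C}\restrict{x}$ factors as $\cat{C} \simeq \cat{C}\restrict{1} \to \cat{C}\restrict{x}$, where the second arrow is the colimit insertion of Lemma~\ref{lem:stalkcolimit} and the first is the equivalence; both are (lax) monoidal with invertible coherence by Lemmas~\ref{lem:restrictionfunctor} and~\ref{lem:stalkcolimit}, so $e_x$ and the tupled functor $e$ are monoidal. Hence only faithfulness of $e$ remains.

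For faithfulness, given parallel morphisms $f, g \colon A \to B$ in $\cat{C}$ with $e_x(f) = e_x(g)$ for all $x \in X$, I would consider the set
\[ S = \{ u \in \ZI(\cat{C}) \mid f \circ \rho_A \circ (A \otimes u) = g \circ \rho_A \circ (A \otimes u) \}. \]
Unpacking the equivalence relation in Definition~\ref{def:stalk}, $e_x(f) = e_x(g)$ amounts to the existence of some $u \in x$ with $u \in S$, so the basic opens $\{B_u \mid u \in S\}$ cover $X = B_1$. Next I would show $S$ is an ideal of $\ZI(\cat{C})$: downward closure is immediate from factoring $A \otimes u'$ through $A \otimes u$ whenever $u' \leq u$; closure under binary joins uses the pushout half of~\eqref{eq:pullback-pushout}, which assembles equality after restriction to $u$ and to $v$ into equality after restriction to $u \vee v$ by uniqueness of the mediating map. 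Compactness of $B_1$ from Lemma~\ref{lem:basiscompact} then extracts a finite subcover $B_{u_1} \cup \cdots \cup B_{u_n} = X$ with $u_i \in S$, and because $u \mapsto B_u$ is a lattice embedding $\ZI(\cat{C}) \hookrightarrow \mathcal{O}(X)$ this forces $u_1 \vee \cdots \vee u_n = 1$. Closure of $S$ under finite joins then gives $1 \in S$, which unwinds to $f = g$ once $\rho_A$ is cancelled.

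The main obstacle is verifying that $S$ is closed under finite joins: this is the only place that genuinely leverages the hypothesis of universal $\vee$-joins, and care is needed to massage the two local equalities into the cospan data that the pushout~\eqref{eq:pullback-pushout} requires. The remaining ingredients---downward closure of $S$, the colimit description of stalks that licenses the ``some $u \in x$'' translation, compactness of $B_1 = X$, and the Stone-style embedding of $\ZI(\cat{C})$ into $\mathcal{O}(X)$---are already available from the excerpt, so the bulk of the work is in that one ideal check.
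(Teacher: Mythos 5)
Your proposal is correct and follows essentially the same route as the paper: the paper likewise forms the tuple of quotient functors $\cat{C} \to \prod_x \cat{C}\restrict{x}$, observes that $[f]_x=[g]_x$ means $f\otimes u = g\otimes u$ for some $u\in x$, invokes compactness of $B_1$ (Lemma~\ref{lem:basiscompact}) to pass to a finite join equal to $1$, and concludes by the equality clause of Lemma~\ref{lem:epimonoisolocal}. The only cosmetic difference is that you re-derive that clause (closure of your set $S$ under finite joins via the pushout in~\eqref{eq:pullback-pushout}) where the paper simply cites the lemma.
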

\begin{proof}
  Consider the tuple $\cat{C} \to \prod_{x \in \Spec(\ZI(\cat{C}))} \cat{C}\restrict{x}$ of the quotient functors $\cat{C} \to \cat{C}\restrict{x}$.
  It acts as the identity on objects.
  Faithfulness means that if $[f]_x = [g]_x$ in $\cat{C}\restrict{x}$ for all $x \in \Spec(\ZI(\cat{C}))$ then $f=g \colon A \to B$ in $\cat{C}$.
  Because a colimiting cocone is jointly monic, by Lemma~\ref{lem:basiscompact} it suffices to prove that $f=g$ as soon as $f \otimes u = g \otimes u$ in $\cat{C}\restrict{u}$ for all $u \in \ZI(\cat{C})$.
  The result thus follows from (a jointly monic version of) Lemma~\ref{lem:epimonoisolocal}.
\end{proof}
}

\section{\changed{$\bigvee$-locality}}\label{sec:inflocal}

\changed{
Theorem~\ref{thm:main} is not entirely satisfactory when applying it to categories $\cat{C}$ whose central idempotents already form a frame, because the representation still treats $\ZI(\cat{C})$ as a mere lattice. 
For example, if $X$ is a topological space, then Theorem~\ref{thm:main} represents $\cat{C}=\mathrm{Sh}(X)$ as a sheaf of categories over a different space than $X$; the same holds for several examples in Section~\ref{sec:examples} below.
To remedy this, this section extends the representation from finite to arbitrary joins.
However, in that case there is no longer an analogue of Lemma~\ref{lem:basiscompact} to guarantee that the spectrum of $\ZI(\cat{C})$ has enough points.
We will take the frame $\ZI(\cat{C})$ itself as a base, which always yields a well-defined sheaf of categories. But we only obtain the full force of the representation theorem when we assume that the frame $\ZI(\cat{C})$ is spatial.
}

A \emph{completely prime filter} of a complete lattice $L$ is a nonempty upward-closed and downward-directed \changed{proper} subset $P$ where $\bigvee s_i \in P$ implies $s_i \in P$ for some $i$; equivalently, $P$ is the inverse image of 1 under a complete lattice morphism $L \to \{0,1\}$.

\begin{definition}
  Let $L$ be a frame.	
  Its \emph{completely prime spectrum} is a topological space $X$, whose points are completely prime \changed{filters} $P \subseteq L$, and whose topology is generated by a basis consisting of the sets
  \[
    B_u = \{P \in X \mid u \in P \}
  \]
  where $u$ ranges over $L$.
  \changed{A frame is \emph{spatial} when it is isomorphic to the frame of opens of its completely prime spectrum.}
\end{definition}

\begin{definition}\label{def:local}
\changed{Call a partially ordered set \emph{$\bigvee$-local} if it has at least two elements, and $\bigvee u_i=1$ can only happen when there is an $i$ with $u_i=1$.
  Call a monoidal category $\cat{C}$ \emph{$\bigvee$-local} when $\ZI(\cat{C})$ is $\bigvee$-local.}
\end{definition}

\begin{lemma}\label{lem:local}
  If $\cat{C}$ is a monoidal category with universal joins of central idempotents, and $x \subseteq \ZI(\cat{C})$	is a completely prime filter, then $\cat{C}\restrict{x}$ is \changed{$\bigvee$-local}.
\end{lemma}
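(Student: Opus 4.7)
The plan is to mimic the proof of Lemma~\ref{lem:sublocal} essentially verbatim, replacing binary joins with arbitrary joins, the distributive lattice law with the frame distributivity law, and primality of the filter with complete primality.

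First, I would represent a family of central idempotents $\{q_i\}$ in $\cat{C}\restrict{x}$ by central idempotents $u_i \in \ZI(\cat{C})$. This is possible because, by Lemma~\ref{lem:stalkcolimit}, every morphism in $\cat{C}\restrict{x}$ comes from some $\cat{C}\restrict{v}$ with $v \in x$, and by Lemma~\ref{lem:zi:Cu} (or the analogous argument in Lemma~\ref{lem:zi:stalks}), central idempotents of $\cat{C}\restrict{v}$ correspond to central idempotents of $\cat{C}$ below $v$. So up to the equivalence $\sim_x$, every element of $\ZI(\cat{C}\restrict{x})$ has a representative $u_i \in \ZI(\cat{C})$.

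Next, suppose $\bigvee_i q_i = 1$ in $\ZI(\cat{C}\restrict{x})$. By Lemma~\ref{lem:Cxstiff}, the canonical functor $\cat{C} \to \cat{C}\restrict{x}$ preserves joins of central idempotents, so $\bigvee_i q_i$ is represented by $\bigvee_i u_i \in \ZI(\cat{C})$, meaning $\bigvee_i u_i \sim_x 1$. Unfolding the definition of $\sim_x$, there exists $v \in x$ with $v \wedge \bigvee_i u_i = v$. Because $\ZI(\cat{C})$ is a frame, this gives
\[
  \bigvee_i (v \wedge u_i) \;=\; v \;\in\; x.
\]
Since $x$ is a completely prime filter, some $v \wedge u_i$ lies in $x$, hence $u_i \in x$, and therefore $u_i \sim_x 1$, that is $q_i = 1$ in $\ZI(\cat{C}\restrict{x})$.

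Finally, to see that $\ZI(\cat{C}\restrict{x})$ has at least two elements, one copies the argument at the end of Lemma~\ref{lem:sublocal}: if $0 \simeq I$ in $\cat{C}\restrict{x}$, then by Lemma~\ref{lem:stalkcolimit} this isomorphism is already witnessed in some $\cat{C}\restrict{u}$ with $u \in x$, so $0 \otimes U \simeq I \otimes U$ in $\cat{C}$, forcing $U \simeq 0$ and thus $u = 0 \in x$, contradicting properness of $x$. The only real subtlety, and the step to double-check, is that the quotient $\cat{C} \to \cat{C}\restrict{x}$ genuinely preserves infinite joins of central idempotents as claimed by Lemma~\ref{lem:Cxstiff}, since this is what converts a completely prime filter into the desired $\bigvee$-local conclusion.
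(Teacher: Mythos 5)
Your proposal is correct and is essentially the paper's own argument: the paper's proof of this lemma is literally ``Completely analogous to Lemma~\ref{lem:sublocal},'' and you have spelled out that analogy faithfully, replacing binary joins by arbitrary ones, distributivity by the frame law, and primality by complete primality, with the $0 \neq 1$ step copied verbatim. The one point you flag for checking --- that $\cat{C} \to \cat{C}\restrict{x}$ preserves arbitrary joins of central idempotents --- is exactly what Lemma~\ref{lem:Cxstiff} asserts, so no gap remains.
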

\begin{proof}
  Completely analogous to Lemma~\ref{lem:sublocal}.
\end{proof}

By a \emph{sheaf of \changed{($\vee$- or $\bigvee$-)local} categories}, we mean a sheaf $\mathcal{O}(X)\op \to \cat{MonCat}$ on a topological space $X$ whose stalks are all \changed{($\vee$- or $\bigvee$)local}. Pulling everything together, we now arrive at our main result. A \emph{global section} of a sheaf $F \colon \mathcal{O}(X)\op \to \cat{MonCat}$ is an object of $F(X)$. For $\cat{Set}$-valued sheaves, this corresponds to the more usual definition of a global section being a natural transformation $1 \Rightarrow F$. Enriching this as usual, we will call $F(X)$ the category of global sections of $F$.

\changed{
\begin{theorem}\label{thm:main2}
  Let $\cat{C}$ be a small monoidal category with universal $\bigvee$-joins of central idempotents.
  Then $u \mapsto \cat{C}\restrict{u}$ defines a sheaf $F \colon \ZI(\cat{C})\op \to \cat{MonCat}$ of $\bigvee$-local monoidal categories.
  If $\ZI(\cat{C})$ is spatial, then $\cat{C}$ is monoidally equivalent to the category of global sections of $F$.
\end{theorem}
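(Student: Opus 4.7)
The plan is to extend the finitary argument of Proposition~\ref{prop:sheaf} and Theorem~\ref{thm:main} to arbitrary covers, then to use spatiality to transport the resulting sheaf on the frame $\ZI(\cat{C})$ to a sheaf on its completely prime spectrum whose global sections are $\cat{C}\restrict{1}$, equivalent to $\cat{C}$. Presheaf functoriality of $u \mapsto \cat{C}\restrict{u}$ is Lemma~\ref{lem:restrictionfunctor}, and at $u=0$ the category $\cat{C}\restrict{0}$ is monoidally equivalent to $\cat{1}$ by Lemma~\ref{lem:C0}; exactly as in Proposition~\ref{prop:sheaf}, I would strictify by replacing the value at $0$ with $\cat{1}$ to obtain a functor naturally monoidally equivalent to the original assignment.

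The main technical step is verifying the sheaf condition for an arbitrary cover $u = \bigvee_{i \in I} u_i$ in $\ZI(\cat{C})$. Closing $\{u_i\}$ under binary meets preserves both its join and the equaliser diagram, so assume it is meet-closed. Generalising Proposition~\ref{prop:stalkfunctor} from the binary pushout to a wide colimit, I would show that
\[
  \cat{C}\restrict{u} \longrightarrow \prod_i \cat{C}\restrict{u_i} \rightrightarrows \prod_{i,j} \cat{C}\restrict{u_i \wedge u_j}
\]
is an equaliser in $\cat{MonCat}$. Given a compatible family of lax monoidal functors $F_i \colon \cat{A} \to \cat{C}\restrict{u_i}$, the mediating $H \colon \cat{A} \to \cat{C}\restrict{u}$ is forced on objects. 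On a morphism $m \colon A \to B$, the maps $F_i(m) \colon H(A) \otimes U_i \to H(B)$ in $\cat{C}$ form a cocone over the diagram whose objects are the $H(A) \otimes U_i$ and whose arrows come from the $\leq$-mediating morphisms $U_i \to U_j$, because compatibility on each meet $u_i \wedge u_j$ translates directly into the cocone equations. Universal $\bigvee$-joins, as formulated in Definition~\ref{def:universaljoins}, then make $H(A) \otimes U$ the colimit of this diagram, yielding a unique $H(m)$. Functoriality, the lax monoidal coherence $\theta^H_{A,B}$, the unit morphism and their naturality and coherence axioms all follow from uniqueness via the same colimit, exactly as in the binary case.

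The stalk at a completely prime filter $x \subseteq \ZI(\cat{C})$ is $\colim_{u \in x} \cat{C}\restrict{u} = \cat{C}\restrict{x}$ by Lemma~\ref{lem:stalkcolimit}, and hence $\bigvee$-local by Lemma~\ref{lem:local}. When $\ZI(\cat{C})$ is spatial, the canonical map $\ZI(\cat{C}) \to \mathcal{O}(\Spec(\ZI(\cat{C})))$ is a frame isomorphism, so $F$ transports to a genuine sheaf on $X = \Spec(\ZI(\cat{C}))$ with the same stalks. Its category of global sections is $F(X) = \cat{C}\restrict{1}$, which is monoidally equivalent to $\cat{C}$ since $-\otimes I$ is naturally isomorphic to the identity comonad on $\cat{C}$.

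The principal obstacle is the wide-cocone step: one must confirm that compatibility of the $F_i$ on pairwise meets is exactly the cocone condition over the diagram indexed by the meet-closed set $\{u_i\}$, and that universal $\bigvee$-joins in this form are strong enough to determine $H$, its unit map and its tensor coherences uniquely. Once this is in place, the remaining checks (stalk computation, transfer along the spatiality isomorphism, and identification of global sections) are driven by the universal properties already established.
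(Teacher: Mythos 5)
Your proposal is correct and follows essentially the same route as the paper: presheaf structure from Lemma~\ref{lem:restrictionfunctor}, the nullary case via Lemma~\ref{lem:C0}, the wide equaliser obtained by extending the pushout argument of Proposition~\ref{prop:stalkfunctor} to meet-closed covers using universal $\bigvee$-joins, $\bigvee$-locality of stalks from Lemma~\ref{lem:local}, and spatiality to transport the sheaf to the completely prime spectrum with global sections $\cat{C}\restrict{1}\simeq\cat{C}$. The only difference is that you spell out the wide-cocone step that the paper compresses into ``extends easily,'' and you do so correctly, including the reduction to a meet-closed cover required by Definition~\ref{def:universaljoins}.
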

\begin{proof}
  Lemma~\ref{lem:restrictionfunctor} still shows that $u \mapsto \cat{C}\restrict{u}$ is a presheaf.
  For the sheaf condition it now no longer suffices to verify binary equaliser of Lemma~\ref{lem:equaliser}, and we have to consider a wide equaliser instead. But the proof of Proposition~\ref{prop:stalkfunctor} extends easily to this case. The exceptional nullary case is taken care of by Lemma~\ref{lem:C0} as in Proposition~\ref{prop:sheaf} as before. The stalks are now \changed{$\bigvee$-}local by Lemma~\ref{lem:local}.
  Because $\ZI(\cat{C})$ is assumed spatial, it is isomorphic to the frame of opens of its completely prime spectrum $X$. 
  This makes $B_u \mapsto \cat{C}\restrict{u}$ a well-defined sheaf of $\bigvee$-local monoidal categories on $X$.
  Finally, the global sections of this sheaf form $\cat{C}\restrict{1}$, which is monoidally equivalent to $\cat{C}$ itself.
\end{proof}

The dependence on spatiality is related to projectivity of the tensor unit in the topos case, and removing it is left to future work.

\begin{corollary}
  Any small monoidal category with universal $\bigvee$-joins of central idempotents whose frame of central idempotents is spatial embeds monoidally into a product of $\bigvee$-local monoidal categories.
\end{corollary}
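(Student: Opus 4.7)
The plan is to run exactly the same subdirect-product argument as in Corollary~\ref{cor:subdirect}, but replacing the prime spectrum by the completely prime spectrum $X$ of the frame $\ZI(\cat{C})$. By Theorem~\ref{thm:main2} we already have a sheaf $F \colon \ZI(\cat{C})\op \to \cat{MonCat}$ of $\bigvee$-local monoidal categories, whose stalks at completely prime filters $x \subseteq \ZI(\cat{C})$ are the categories $\cat{C}\restrict{x}$ of Definition~\ref{def:stalk}; by Lemma~\ref{lem:local} each $\cat{C}\restrict{x}$ is $\bigvee$-local. I will then consider the functor
\[
  \cat{C} \longrightarrow \prod_{x \in X} \cat{C}\restrict{x}
\]
whose components are the quotient functors $\cat{C} \to \cat{C}\restrict{x}$ induced by the cocones in Lemma~\ref{lem:stalkcolimit}. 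Each quotient functor is strict monoidal on objects and lax monoidal on morphisms, so the tuple is a lax monoidal functor whose coherence morphism on $I$ is invertible; it acts as the identity on objects, so the only thing left to show is faithfulness.

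For faithfulness, suppose $f, g \colon A \to B$ in $\cat{C}$ become equal in every stalk, i.e.\ $[f]_x = [g]_x$ in $\cat{C}\restrict{x}$ for every completely prime filter $x \in X$. By the definition of the colimit in Lemma~\ref{lem:stalkcolimit}, for each such $x$ there exists a central idempotent $u_x \in x$ with $f \otimes U_x = g \otimes U_x$ in $\cat{C}\restrict{u_x}$, equivalently $f \otimes u_x = g \otimes u_x$ in $\cat{C}$. The key point is now that the family $\{u_x\}_{x \in X}$ covers~$1$: spatiality of $\ZI(\cat{C})$ means that $\ZI(\cat{C}) \simeq \mathcal{O}(X)$ via $u \mapsto B_u$, and since every point $x \in X$ lies in $B_{u_x}$, we have $\bigcup_{x \in X} B_{u_x} = X$, hence $\bigvee_{x \in X} u_x = 1$ in the frame $\ZI(\cat{C})$.

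With this cover in hand, the infinitary case of Lemma~\ref{lem:epimonoisolocal} — whose first bullet is a direct consequence of universal joins as described in Definition~\ref{def:universaljoins} — yields $f = g$ from the equalities $f \otimes u_x = g \otimes u_x$, establishing faithfulness and hence that the tuple is a monoidal embedding. The main obstacle, and the only place where the hypothesis bites beyond Theorem~\ref{thm:main2}, is precisely the passage from ``$[f]_x = [g]_x$ for all~$x$'' to a family of central idempotents whose join is $1$; without spatiality one cannot guarantee that the completely prime filters see enough of the frame to produce such a cover, and this is where the assumption that $\ZI(\cat{C})$ be spatial is essential.
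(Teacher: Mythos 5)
Your proposal is correct and follows essentially the same route as the paper, which simply declares the proof ``completely analogous to Corollary~\ref{cor:subdirect}, where spatiality is needed to replace Lemma~\ref{lem:basiscompact}''. You have filled in exactly the intended details: the tuple of quotient functors to the stalks $\cat{C}\restrict{x}$, the extraction of witnesses $u_x \in x$ with $f \otimes u_x = g \otimes u_x$, the observation that spatiality is precisely what turns the pointwise cover $\{B_{u_x}\}$ into the identity $\bigvee_x u_x = 1$ in $\ZI(\cat{C})$, and the appeal to the infinitary case of Lemma~\ref{lem:epimonoisolocal} to conclude faithfulness.
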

\begin{proof}
  Completely analogous to Corollary~\ref{cor:subdirect}, where spatiality is needed to replace Lemma~\ref{lem:basiscompact}.
\end{proof}
}

\section{Preservation}\label{sec:preservation}

Theorem~\ref{thm:main} showed that any small monoidal category with universal joins of central idempotents is a category of global sections of a sheaf of \changed{($\vee-$ or $\bigvee-$)}local monoidal categories. In this section we extend that main result by showing that it preserves various properties: if the original category has a certain property, then so do the stalks. We consider two kinds of properties: properties associated with monoidal categories and linear logic, such as being compact, and having a trace; and properties associated with toposes and intuitionistic logic, such as having a Boolean algebra of central idempotents, and having limits. We also investigate the property of being closed, which resides in both camps.
So for example, we show that any small monoidal category with (finite) universal joins of central idempotents that is closed, is equivalent to the category of global sections of a sheaf of \changed{($\vee-$ or $\bigvee$-)}local closed categories.

We start with compactness. Recall that a symmetric monoidal category is compact when every object has a dual~\cite[Chapter~3]{heunenvicary:cqm}.

\begin{corollary}
  If $\cat{C}$ is a compact category, $u$ is a central idempotent, and $x \subseteq \ZI(\cat{C})$ is a prime filter, then $\cat{C}\restrict{u}$ and $\cat{C}\restrict{x}$ are compact categories, too.
\end{corollary}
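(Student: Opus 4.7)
The plan is to reduce the corollary to the well-known fact that strong monoidal functors preserve dual objects. The special case $v = 1$ of Lemma~\ref{lem:restrictionfunctor} already provides a strict monoidal functor $\cat{C} \to \cat{C}\restrict{u}$ that is the identity on objects. For the stalk, Lemma~\ref{lem:stalkcolimit} presents $\cat{C}\restrict{x}$ as a colimit in $\cat{MonCat}$ with monoidal cocone components $\cat{C}\restrict{u} \to \cat{C}\restrict{x}$ sending $f$ to $[u,f]$; composing with $\cat{C} \to \cat{C}\restrict{1}$ (or, equivalently, with $\cat{C} \to \cat{C}\restrict{u}$ for any $u \in x$) yields a monoidal functor $\cat{C} \to \cat{C}\restrict{x}$ that is again the identity on objects, sending $f \colon A \to B$ to $[1,f]$ modulo unitors. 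Both induced functors are strong monoidal.

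The standard argument for the preservation of duals now applies verbatim. If $(A, A^{*}, \eta, \varepsilon)$ is a dual pair in $\cat{C}$ and $F$ is either of the two functors above, then $F(\eta)$ and $F(\varepsilon)$, composed with the invertible coherence morphisms of $F$, exhibit $F(A^{*}) = A^{*}$ as a dual of $F(A) = A$ in the target category, with the triangle identities for the new pair following from those for $(A, A^{*})$ together with the monoidal coherence of $F$. Since every object of $\cat{C}$ has a dual by hypothesis, the same holds in $\cat{C}\restrict{u}$ and in $\cat{C}\restrict{x}$. The symmetric braiding of $\cat{C}$ descends strictly through the same functors, so if ``compact'' is read in the symmetric compact closed sense, that structure transports as well.

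The argument is essentially bookkeeping rather than genuine mathematics; the only point requiring a moment's care is the confirmation that the composite $\cat{C} \to \cat{C}\restrict{x}$ really is strong monoidal and bijective on objects, which is a routine consequence of the explicit colimit description in Lemma~\ref{lem:stalkcolimit} together with the fact that the monoidal structure on $\cat{C}\restrict{x}$ is computed exactly as in $\cat{C}$ on representatives. No obstacle is expected.
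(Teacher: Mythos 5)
Your proposal is correct and follows essentially the same route as the paper: both arguments reduce the claim to the fact that strong monoidal functors preserve dual objects, applied to the identity-on-objects strong monoidal functors $\cat{C}\to\cat{C}\restrict{u}$ and $\cat{C}\to\cat{C}\restrict{x}$. The only cosmetic difference is that you obtain the stalk functor via the colimit description of Lemma~\ref{lem:stalkcolimit}, whereas the paper cites Proposition~\ref{prop:stalkfunctor}; the underlying functor and the preservation argument are the same.
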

\begin{proof}
  Strong monoidal functors preserve dual objects~\cite[Theorem~3.14]{heunenvicary:cqm}, so this follows directly from Lemma~\ref{lem:restrictionfunctor} and Proposition~\ref{prop:stalkfunctor}.
\end{proof}

Notice, however, that a central idempotent $U$ has itself as a dual only if it is (represented by) a split monomorphism.

\begin{lemma}
  A central idempotent $u$ is split monic if and only if $U \dashv U$ with counit $\rho \circ (u \otimes u)$.
\end{lemma}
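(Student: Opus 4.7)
The plan is to verify both directions by a single zigzag calculation in the graphical calculus. For any candidate unit $\eta \colon I \to U \otimes U$, with the prescribed counit $\varepsilon = \rho_I \circ (u \otimes u)$ (two adjacent counit dots), the zigzag composite $\rho_U \circ (U \otimes \varepsilon) \circ (\eta \otimes U) \circ \lambda_U^{-1}$ reduces to $s \circ u$, where $s := \rho_U \circ (U \otimes u) \circ \eta \colon I \to U$ is obtained from $\eta$ by applying $u$ to its right leg. Graphically this is immediate: both the right leg of $\eta$ and the input wire acquire a counit dot, leaving the left leg of $\eta$ (i.e.\ $s$) in parallel with $u$ acting on the input. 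The mirror zigzag reduces similarly to $t \circ u$ where $t := \lambda_U \circ (u \otimes U) \circ \eta$.

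For the ($\Leftarrow$) direction, any $\eta$ witnessing $U \dashv U$ with the prescribed counit forces $s \circ u = \id_U$ via this zigzag, so $s$ is a retraction of $u$ and hence $u$ is split monic.

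For the ($\Rightarrow$) direction, given a retraction $s \colon I \to U$ of $u$, I would take $\eta := \Delta \circ s$, where $\Delta \colon U \to U \otimes U$ is the canonical comultiplication inverting $\lambda_U \circ (u \otimes U)$. The two defining identities $\rho_U \circ (U \otimes u) \circ \Delta = \id_U$ and $\lambda_U \circ (u \otimes U) \circ \Delta = \id_U$, both displayed among the first block of graphical identities in Section~\ref{sec:centralidempotents}, force both projections of this $\eta$ to equal $s$. Hence each zigzag reduces to $s \circ u$, which is $\id_U$ by hypothesis.

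The only real obstacle is routine unitor bookkeeping; the essential content lies in the graphical identity $\rho_U \circ (U \otimes u) \circ \Delta = \id_U$ of central idempotents, applied to the section $s$.
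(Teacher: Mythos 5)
Your proof is correct and follows essentially the same route as the paper: the unit you construct, $\eta = \Delta \circ s$, coincides (up to unitor bookkeeping) with the paper's $\eta = (U \otimes u)^{-1} \circ (e \otimes I) \circ \lambda^{-1}$, and your retraction $s = \rho_U \circ (U \otimes u) \circ \eta$ is exactly the paper's $e = (U \otimes u) \circ \eta$. Your observation that each zigzag reduces to a projection of $\eta$ composed with $u$ neatly packages both directions, and is slightly more explicit than the paper about the second triangle identity, but the substance is identical.
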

\begin{proof}
  If $e \circ u = U$, set $\eta = (U \otimes u)^{-1} \circ (e \otimes I) \circ \lambda^{-1}$ and $\varepsilon = \rho \circ (u \otimes u)$, so:
  \[
    \begin{pic}
	  \node[morphism,width=6mm] (eta) at (0,-.5) {$\eta$};
	  \node[morphism,width=6mm,anchor=south west] (eps) at ([yshift=3mm]eta.north east) {$\varepsilon$};
	  \draw (eta.north east) to (eps.south west);
	  \draw (eta.north west) to ++(0,.8) node[above]{$U$};
	  \draw (eps.south east) to ++(0,-.8) node[below]{$U$};
    \end{pic}
    =
    \begin{pic}
	  \node[morphism] (e) at (0,0) {$e$};
	  \node[dot] (d) at (0,.5) {};
	  \draw (e.north) to (d);
	  \draw (d) to[out=180,in=-90] ++(-.3,.3) to ++(0,.2) node[above]{$U$};
	  \draw (d) to[out=0,in=-90] ++(.3,.3) node[dot]{};
	  \draw (.65,-.3) node[below]{$U$} to ++(0,1.1) node[dot]{};
    \end{pic}
    = \;
    \begin{pic}
	  \draw (0,1) node[above]{$U$} to (0,.5) node[morphism]{$e$};
	  \draw (0,-.3) node[below]{$U$} to ++(0,.35) node[dot]{};
    \end{pic}
  \]
  Conversely, if $U \dashv U$ with unit $\eta$ and counit $u \otimes u$, then $e \circ u = U$ for $e = (U \otimes u) \circ \eta$.
\end{proof}

Another linear property that is preserved is having a trace~\cite{joyalstreetverity:traced}.

\begin{lemma}
  If a monoidal category $\cat{C}$ with universal finite joins of central idempotents is traced, $u$ is a central idempotent, and $x \subseteq \ZI(\cat{C})$ a prime filter, then $\cat{C}\restrict{u}$ and $\cat{C}\restrict{x}$ are traced too, and the functors $\cat{C}\restrict{u \leq v} \colon \cat{C}\restrict{v} \to \cat{C}\restrict{u}$ and $\cat{C} \to \cat{C}\restrict{x}$ preserve the trace.
\end{lemma}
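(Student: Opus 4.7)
The plan is to transport the trace of $\cat{C}$ through the co-Kleisli construction, using the half-braiding of $u$ to move $U$-wires past traced-out wires, and then to obtain the stalk case either by the universal property of the colimit in Lemma~\ref{lem:stalkcolimit} or by a direct germ-level definition.

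First I would define the trace on $\cat{C}\restrict{u}$. A morphism $f \colon A \otimes X \to B \otimes X$ in $\cat{C}\restrict{u}$ is a morphism $f \colon A \otimes X \otimes U \to B \otimes X$ in $\cat{C}$. Using the half-braiding $\sigma_X \colon U \otimes X \to X \otimes U$ of the central idempotent $u$, set
\[
  \Tr^{X}_{A,B}(f) \;:=\; \Tr^{X}_{A \otimes U, B}\!\big( f \circ (A \otimes \sigma_X) \big) \colon A \otimes U \to B,
\]
which is a morphism $A \to B$ in $\cat{C}\restrict{u}$. The verification of the four axioms (naturality in $A$ and $B$, dinaturality in $X$, vanishing, superposing, and yanking) reduces to the corresponding axioms in $\cat{C}$ together with the naturality and hexagon equations for the half-braiding $\sigma$, plus the centrality equation~\eqref{eq:halfbraidingrespectsu}. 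For instance, yanking $\Tr^{A}_{A,A}(\id[A \otimes A])$ in $\cat{C}\restrict{u}$ unfolds to $\Tr^{A}_{A \otimes U, A}(A \otimes \sigma_A)$ in $\cat{C}$, which by dinaturality equals $\rho_{A \otimes U} = \id[A]$ as a morphism in $\cat{C}\restrict{u}$. Superposing and vanishing are similar slides of $U$-wires through coherence using $\sigma_{X \otimes Y} = (X \otimes \sigma_Y)(\sigma_X \otimes Y)$.

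Next, for preservation by $\cat{C}\restrict{u \leq v} \colon \cat{C}\restrict{v} \to \cat{C}\restrict{u}$, take $f \colon A \otimes X \otimes V \to B \otimes X$ in $\cat{C}$. Its image under the restriction functor is $f \circ (A \otimes X \otimes m) \colon A \otimes X \otimes U \to B \otimes X$ where $m \colon U \to V$ witnesses $u \leq v$, and $m$ respects half-braidings, so $(X \otimes m) \circ \sigma^U_X = \sigma^V_X \circ (m \otimes X)$. Plugging in, $\Tr^{X}_{A,B}$ of $\cat{C}\restrict{u \leq v}(f)$ equals $\Tr^{X}_{A \otimes U, B}(f \circ (A \otimes \sigma^V_X) \circ (A \otimes m \otimes X))$, which by naturality of the trace in the $A$-input equals $\cat{C}\restrict{u \leq v}\big(\Tr^{X}_{A,B}(f)\big)$.

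For the stalks $\cat{C}\restrict{x}$ at a prime filter $x$, the cleanest route is to invoke Lemma~\ref{lem:stalkcolimit}: the traced structure maps $\Tr^{X}_{A,B} \colon \cat{C}\restrict{u}(A \otimes X, B \otimes X) \to \cat{C}\restrict{u}(A, B)$ assemble, via the preservation property just established, into a cocone over the filtered diagram $\{\cat{C}\restrict{u}\}_{u \in x}$, and the colimit $\cat{C}\restrict{x}$ therefore inherits a trace making the quotient functor preserve it. Concretely, on germs one sets $\Tr^{X}_{A,B}[u,f] := [u, \Tr^{X}_{A \otimes U, B}(f \circ (A \otimes \sigma_X))]$, and well-definedness under the relation $\sim$ is exactly the preservation statement for $\cat{C}\restrict{u \leq v}$.

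The main obstacle is purely bookkeeping: the trace axioms involve several coherence morphisms, and the interposition of $\sigma_X$ must be shown not to obstruct naturality, sliding, and yanking. All of these reduce to the hexagon for $\sigma$ plus naturality of $\sigma$ in its argument, both of which are part of the definition of a half-braiding; no new monoidal input is needed beyond what central idempotents already supply.
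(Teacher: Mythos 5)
Your proposal is correct and follows essentially the same route as the paper: the trace on $\cat{C}\restrict{u}$ is defined by precomposing with the half-braiding to move the $U$-wire past the traced-out wire and then tracing in $\cat{C}$, the restriction functors preserve it by construction, and the stalk trace is defined on representatives with well-definedness reducing to compatibility under restriction. The only cosmetic difference is your optional appeal to the colimit description of $\cat{C}\restrict{x}$, which the paper replaces by a direct graphical check of well-definedness on germs; both amount to the same verification.
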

\begin{proof}
  If $\cat{C}$ is braided, then so are $\cat{C}\restrict{u}$ and $\cat{C}\restrict{x}$.
  The trace of $f \in \cat{C}\restrict{u}(A \otimes Z, B \otimes Z) = \cat{C}(A \otimes Z \otimes U, B \otimes Z)$ is defined to be the trace of $f \circ (A \otimes \sigma) \in \cat{C}(A \otimes U \otimes Z, B \otimes Z)$. It is easy to see that this satisfies the axioms for a trace.
  For example, with the usual graphical calculus~\cite{selinger:graphicallanguages} of the traced monoidal category $\cat{C}$, the superposing axiom becomes:
  \[
    g \otimes \Tr(f)
    =
    \begin{pic}
	  \node[morphism,width=6mm] (g) at (0,0) {$g$};
	  \node[morphism,width=8mm] (f) at (1.2,0) {$f$};
	  \draw (g.north) to ++(0,.5) node[above]{$B$};
	  \draw (f.north west) to ++(0,.5) node[above]{$D$};
	  \draw (g.south west) to ++(0,-1) node[below]{$A$};
	  \draw (f.south west) to[out=-90,in=90] ++(-.7,-.5) to ++(0,-.5) node[below]{$C$};
	  \node[dot] (d) at (.9,-.7) {};
	  \draw[halo] (g.south east) to[out=-90,in=180] (d);
	  \draw (d) to ++(0,-.5) node[below]{$U$};
	  \draw (f.south) to[out=-90,in=90] ++(.5,-.5) to[out=-90,in=-90,looseness=2] ++(.7,0) to ++(0,.8) node[right=-1mm]{$Z$} to[out=90,in=90,looseness=2] (f.north east);
    \draw[halo] (d) to[out=0,in=-90] (f.south east);
	  \node[dot] (d) at (.9,-.7) {};
    \node[morphism,width=6mm] (g) at (0,0) {$g$};
    \node[morphism,width=8mm] (f) at (1.2,0) {$f$};
    \end{pic}
    =\;
    \begin{pic}
	  \node[morphism,width=6mm] (g) at (0,0) {$g$};
	  \node[morphism,width=8mm] (f) at (1.2,0) {$f$};
	  \draw (g.north) to ++(0,.5) node[above]{$B$};
	  \draw (f.north west) to ++(0,.5) node[above]{$D$};
	  \draw (g.south west) to ++(0,-1) node[below]{$A$};
	  \draw (f.south west) to[out=-90,in=90] ++(-.7,-.5) to ++(0,-.5) node[below]{$C$};
	  \node[dot] (d) at (1.2,-.7) {};
    \draw (f.south) to[out=-90,in=90] ++(-.3,-.5) to[out=-90,in=-90,looseness=.7] ++(1.2,0) to ++(0,.9) node[right=-1mm]{$Z$} to[out=90,in=90,looseness=2] (f.north east);
	  \draw[halo] (g.south east) to[out=-90,in=180] (d);
	  \draw[halo] (d) to ++(0,-.5) node[below]{$U$};
	  \draw[halo] (d) to[out=0,in=-90] (f.south east);
	  \node[dot] (d) at (1.2,-.7) {};
    \node[morphism,width=6mm] (g) at (0,0) {$g$};
    \node[morphism,width=8mm] (f) at (1.2,0) {$f$};
    \end{pic}
    =
    \Tr(g \otimes f)
  \]

  Stalks are entirely similar: define the trace of $[v,f] \in \cat{C}\restrict{x}(A \otimes Z, B \otimes Z)$ to be $[v,g]$ where $g$ is the trace of $f$ in $\cat{C}\restrict{v}$. To verify that this is well-defined, if $[v,f]=[v',f']$, say because $f \otimes u \otimes v' = f' \otimes u \otimes v$ for a central idempotent $u$ in $\cat{C}$, then:
  \[
    \begin{pic}
	  \node[morphism,width=6mm] (f) at (0,0) {$f$};
	  \node[dot] (d) at (.9,0) {};
	  \node[dot] (d2) at (1.2,0) {};
	  \draw (f.north west) to ++(0,.5) node[above]{$B$};
	  \draw (f.south west) to ++(0,-.5) node[below]{$A$};
	  \draw (f.south) to[out=-90,in=-90,looseness=1.5] ++(.7,0) to ++(0,.4) to[out=90,in=90,looseness=2] node[right=1mm]{$Z$} (f.north east);
	  \draw[halo] ([xshift=1mm]f.south east) to[out=-90,in=90] ++(-.25,-.5) node[below]{$V$};
	  \draw (d) to ++(0,-.7) node[below]{$V\smash{'}$};
	  \draw (d2) to ++(0,-.7) node[below]{$U$};
    \end{pic}
    \;=\;\;
    \begin{pic}
	  \node[morphism,width=10mm] (f) at (0,0) {$f$};
	  \node[dot] (d2) at (1.1,0) {};
	  \draw (f.north west) to ++(0,.4) node[above]{$B$};
	  \draw ([xshift=-1mm]f.south west) to ++(0,-.6) node[below]{$A$};  
	  \draw ([xshift=-2mm]f.south) to[out=-90,in=-90,looseness=1.5] ++(1,0) to ++(0,.4) to[out=90,in=90,looseness=2] node[right=2mm]{$Z$} (f.north east);
	  \draw[halo] ([xshift=1mm]f.south east) to ++(0,-.6) node[below]{$V\smash{'}$};
	  \draw[halo] (.1,-.8) node[below]{$V$} to ++(0,.4) node[dot]{};
	  \draw (d2) to ++(0,-.8) node[below]{$U$};	
	\end{pic}
  \]
  The functors $\cat{C}\restrict{u \leq v} \colon \cat{C}\restrict{v} \to \cat{C}\restrict{u}$ and $\cat{C} \to \cat{C}\restrict{x}$ preserve trace by construction.
\end{proof}

Next we turn to closedness.

\begin{lemma}\label{lem:localsectionsclosed}
  If a monoidal category $\cat{C}$ is closed and $u$ is a central idempotent, then $\cat{C}\restrict{u}$ is closed.
  If $u \leq v$, then the functor $\cat{C}\restrict{u \leq v} \colon \cat{C}\restrict{v} \to \cat{C}\restrict{u}$ is closed.
\end{lemma}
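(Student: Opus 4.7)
The plan is to use the same internal hom $[A,B]$ from $\cat{C}$ as the internal hom in $\cat{C}\restrict{u}$. Specifically, I would show that $[A,B]$ satisfies the required universal property in $\cat{C}\restrict{u}$ via the natural chain of bijections
\[
  \cat{C}\restrict{u}(X \otimes A, B)
  = \cat{C}(X \otimes A \otimes U, B)
  \cong \cat{C}(X \otimes U \otimes A, B)
  \cong \cat{C}(X \otimes U, [A,B])
  = \cat{C}\restrict{u}(X, [A,B]),
\]
where the first isomorphism is precomposition with $X \otimes \sigma^U_A$ (the half-braiding of the central idempotent $u$), and the second is the closed structure of $\cat{C}$. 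Naturality in $X$ and $B$ follows at once; naturality in $A$ is precisely the naturality of the half-braiding $\sigma^U$. Tracing through the adjunction, the evaluation $\mathrm{ev}^u \colon [A,B] \otimes A \to B$ in $\cat{C}\restrict{u}$ corresponds to the morphism $\mathrm{ev} \circ ([A,B] \otimes \sigma^U_A) \colon [A,B] \otimes A \otimes U \to B$ in $\cat{C}$.

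For the second claim, recall that a (strict) monoidal functor $F$ between closed monoidal categories is closed when the canonical comparison $F[A,B] \to [FA, FB]$, defined by transposing $F(\mathrm{ev})$, is invertible. The functor $\cat{C}\restrict{u \leq v}$ is strict monoidal (Lemma~\ref{lem:restrictionfunctor}) and acts as the identity on objects, so both $\cat{C}\restrict{u \leq v}[A,B]_v$ and $[A,B]_u$ are the \emph{same} object $[A,B]$ in $\cat{C}$; the task reduces to checking that the canonical comparison is the identity. Unpacking, $\cat{C}\restrict{u \leq v}(\mathrm{ev}^v)$ is $\mathrm{ev} \circ ([A,B] \otimes \sigma^V_A) \circ ([A,B] \otimes A \otimes m)$ in $\cat{C}$, where $m \colon U \to V$ witnesses $u \leq v$; using~\eqref{eq:halfbraidingrespectsu} (which says the half-braiding on $V$ pulls back along $m$ to the half-braiding on $U$), this equals $\mathrm{ev} \circ ([A,B] \otimes \sigma^U_A) \circ ([A,B] \otimes A \otimes m)$, which is precisely $\cat{C}\restrict{u \leq v}(\mathrm{ev}^u)$. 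Transposing shows that the comparison morphism is the identity on $[A,B]$.

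The main subtlety will be bookkeeping around the two half-braidings and the morphism $m \colon U \to V$ between central idempotents: one must verify that the compatibility condition~\eqref{eq:halfbraidingrespectsu} makes the chosen evaluation maps in the various $\cat{C}\restrict{u}$ fit together coherently along the restriction functors. Everything else is a routine application of the closed-category adjunction in $\cat{C}$ transported along the naturality of $\sigma^U$.
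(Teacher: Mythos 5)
Your proposal is correct and follows essentially the same route as the paper: both transport the internal hom of $\cat{C}$ to $\cat{C}\restrict{u}$ by using the half-braiding of the central idempotent to move $U$ past the tensor argument, the only difference being that the paper explicitly constructs the right adjoint $B\multimap^u(-)$ on morphisms before exhibiting the bijection, whereas you give the hom-set bijection directly and let functoriality come from the parametrised-adjoint machinery. Two small corrections: your formula $\mathrm{ev}\circ([A,B]\otimes\sigma^U_A)$ for the evaluation does not typecheck as written --- after the half-braiding one must still discard the $U$ wire, so the counit $u$ (or equivalently, by~\eqref{eq:halfbraidingrespectsu}, just $\mathrm{ev}\circ([A,B]\otimes A\otimes u)$ up to unitors) is needed; and the fact that $m\colon U\to V$ intertwines $\sigma^U$ and $\sigma^V$ is not equation~\eqref{eq:halfbraidingrespectsu} but part of the definition of $u\leq v$ as in Proposition~2.3.
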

\begin{proof}
  Suppose that $(-) \otimes B \colon \cat{C} \to \cat{C}$ has a right adjoint $B \multimap (-) \colon \cat{C} \to \cat{C}$. 
  Write $\varepsilon_{B,C} \colon (B \multimap C) \otimes B \to C$ for the counit and $\eta_{A,B} \colon A \to (B \multimap (A \otimes B))$ for the unit.
  Define a functor $B \multimap^u (-) \colon \cat{C}\restrict{u} \to \cat{C}\restrict{u}$ by $B \multimap^u C = B \multimap C$ on objects, and by sending a morphism $f \in \cat{C}\restrict{u}(C,D) = \cat{C}(C \otimes U,D)$ to $B \multimap^u f$ defined as:
  \[\begin{pic}[xscale=5,yscale=1.5]
	\node (tl) at (0,1) {$(B \multimap C) \otimes U$};
	\node (tr) at (2,1) {$B \multimap D$};
	\node[font=\tiny] (bl) at (0,0) {$B \multimap \big( (B \multimap C) \otimes U \otimes B \big)$};
	\node[font=\tiny] (b) at (1,0) {$B \multimap \big( (B \multimap C) \otimes B \otimes U \big)$};
	\node[font=\tiny] (br) at (2,0) {$B \multimap (C \otimes U)$};
	\draw[->,dashed] (tl) to node[above]{$B \multimap^u f$} (tr);
	\draw[->,font=\tiny] (tl) to node[right]{$\eta_{(B \multimap C) \otimes U, B}$} (bl);
	\draw[->,font=\tiny] (bl) to node[below]{$B \multimap (1 \otimes \sigma_{U,B})$} (b);
	\draw[->,font=\tiny] (b) to node[below]{$B \multimap (\varepsilon_{B,C} \otimes 1)$} (br);
	\draw[->,font=\tiny] (br) to node[left]{$B \multimap f$} (tr);
  \end{pic}\]
  It is straightforward to see that $B \multimap^u (-)$ is functorial. 
  There is a bijection
  \[ 
    \cat{C}\restrict{u}(A, B \multimap^u C)
    \simeq 
    \cat{C}\restrict{u}(A \otimes B, C) 
  \]
  that sends $g \in \cat{C}\restrict{u}(A, B \multimap^u C) = \cat{C}(A \otimes U, B \multimap C)$ to the morphism
  \[\begin{pic}
	\node[morphism,width=6mm] (g) at (0,0) {$g$};
	\node[morphism,width=8mm,anchor=south west] (e) at ([yshift=5mm]g.north) {$\varepsilon_{B,C}$};
	\draw (g.north) to node[left=-1mm]{$B \multimap C$} (e.south west);
	\draw (e.north) to ++(0,.3) node[above]{$C$};
	\draw (g.south west) to ++(0,-.4) node[below]{$A$};
	\draw ([xshift=1mm]e.south east) to ++(0,-.8) to[out=-90,in=90] ++(-.5,-.5) node[below]{$B$};
  \draw[halo] (g.south east) to[out=-90,in=90] ++(.5,-.4) node[below]{$U$};
  \end{pic}\]
  in $\cat{C}\restrict{u}(A \otimes B, C)$.
  It is easy to see that this bijection is natural in $A$ and $C$.
  Thus $B \multimap^u (-)$ is right adjoint to $(-) \otimes B$, and $\cat{C}\restrict{u}$ is closed.

  It is clear that the functor $\cat{C}\restrict{u \leq v}$ preserves $\multimap$ strictly.
\end{proof}

\begin{lemma}\label{lem:stalksclosed}
  If $\cat{C}$ is a closed monoidal category and $x \subseteq \ZI(\cat{C})$ a prime filter, then $\cat{C}\restrict{x}$ is closed, and the functor $\cat{C} \to \cat{C}\restrict{x}$ is closed.
\end{lemma}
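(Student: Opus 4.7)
The plan is to lift the construction of Lemma~\ref{lem:localsectionsclosed} from each restriction $\cat{C}\restrict{v}$ to the stalk $\cat{C}\restrict{x}$, using that $x$ is a filter. On objects, set $B \multimap^x C = B \multimap C$, borrowed from $\cat{C}$. On a morphism $[v,f] \colon C \to D$ in $\cat{C}\restrict{x}$, represented by $f \colon C \otimes V \to D$ in $\cat{C}$, define $B \multimap^x [v,f] = [v, B \multimap^v f]$, where $B \multimap^v f \colon (B \multimap C) \otimes V \to B \multimap D$ is the action on morphisms of the closed structure on $\cat{C}\restrict{v}$ produced in Lemma~\ref{lem:localsectionsclosed}.

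Next I would verify well-definedness. If $(v,f) \sim_x (v',f')$ via some $u \in x$ with $u \leq v \wedge v'$, then since $\cat{C}\restrict{u \leq v}$ strictly preserves $\multimap$ by Lemma~\ref{lem:localsectionsclosed}, applying this functor to $B \multimap^v f$ yields $B \multimap^u \cat{C}\restrict{u \leq v}(f)$, and analogously for $v'$. The two right-hand sides agree in $\cat{C}\restrict{u}$, so the chosen germs coincide. The same argument gives functoriality of $B \multimap^x (-)$ once established at each level.

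For the adjunction, I would transport the pointwise bijections of Lemma~\ref{lem:localsectionsclosed} through the colimit description $\cat{C}\restrict{x} = \colim_{v \in x} \cat{C}\restrict{v}$ of Lemma~\ref{lem:stalkcolimit}. Explicitly, a morphism $[v,g] \colon A \to B \multimap^x C$ in $\cat{C}\restrict{x}$, represented by $g \colon A \otimes V \to B \multimap C$ in $\cat{C}$, is sent to $[v,\, \varepsilon_{B,C} \circ (g \otimes B) \circ (A \otimes \sigma_{V,B})] \colon A \otimes B \to C$. Well-definedness on germs, bijectivity, and naturality in $A$ and $C$ each descend from the corresponding statements at level $v$ together with compatibility under the connecting functors $\cat{C}\restrict{u \leq v}$.

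Finally, the quotient functor $\cat{C} \to \cat{C}\restrict{x}$ factors as $\cat{C} = \cat{C}\restrict{1} \to \cat{C}\restrict{x}$, where $1 \in x$ because $x$ is a nonempty upward-closed filter; both steps preserve $\multimap$, the first trivially and the second by the construction above. The main obstacle is the bookkeeping required to check well-definedness on germs at each stage, but this reduces to the observation that the restriction functors $\cat{C}\restrict{u \leq v}$ strictly preserve the internal hom, already noted in Lemma~\ref{lem:localsectionsclosed}.
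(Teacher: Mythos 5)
Your proposal is correct and follows essentially the same route as the paper: define $B \multimap^x(-)$ germwise by $[v,f]\mapsto [v, B\multimap^v f]$, and obtain the hom-tensor bijection on germs from the level-$v$ bijections of Lemma~\ref{lem:localsectionsclosed}. The only cosmetic difference is that you justify well-definedness of the bijection by compatibility with the connecting functors in the colimit of Lemma~\ref{lem:stalkcolimit}, whereas the paper checks it by a direct computation using naturality of the unit $\eta$; these amount to the same verification.
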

\begin{proof}
  The proof is very close to that of Lemma~\ref{lem:localsectionsclosed}. 
  The functor $B \multimap^x (-) \colon \cat{C}\restrict{x} \to \cat{C}\restrict{x}$ is defined as $C \mapsto B \multimap C$ on objects and as $[u,f] \mapsto [u, B\multimap^u f]$ on morphisms. This is well-defined and functorial as before.
  There is a natural bijection $\cat{C}\restrict{x}(A \otimes B,C) \simeq \cat{C}\restrict{x}(A, B \multimap C)$ that sends $[u,f]$ to $[u,g]$ where $g = (B \multimap f) \circ (B \multimap (1 \otimes \sigma)) \circ \eta \colon A \otimes U \to B \multimap C$ corresponds to $f \colon A \otimes B \otimes U \to C$ under the bijection $\cat{C}\restrict{u}(A \otimes B,C) \simeq \cat{C}\restrict{u}(A, B \multimap C)$. 
  To see that it is well-defined: if $[u,f] = [u',f'] \in \cat{C}\restrict{x}(A \otimes B,C)$, then for some central idempotent $v$ of $\cat{C}$,
  \begin{align*}
	& (B \multimap f) \circ (B \multimap (1 \otimes \sigma)) \circ \eta \circ (1 \otimes u' \otimes v) \\
	& = (B \multimap (f \otimes u' \otimes v)) \circ (B \multimap (1 \otimes \sigma)) \circ \eta \\
	& = (B \multimap (f' \otimes u \otimes v)) \circ (B \multimap (1 \otimes \sigma)) \circ \eta \\
	& = (B \multimap f') \circ (B \multimap (1 \otimes \sigma)) \circ \eta \circ (1 \otimes u \otimes v \otimes 1) 
  \end{align*}
  by naturality of $\eta$, so $[u,g] = [u',g']$. 
  Thus $\cat{C}\restrict{x}$ is closed, and the functor $\cat{C} \to \cat{C}\restrict{x}$ preserves $\multimap$ strictly.
\end{proof}

Call a monoidal category \emph{Boolean} when its central idempotent semilattice is a Boolean algebra.
In this case, the stalks are also Boolean, and in fact two-valued, as follows.

\begin{lemma}\label{lem:booleancongruence}
  If $L$ is a Boolean algebra and $x$ a prime filter, then $L \slash \smash{\sim}_x$ is again a Boolean algebra.
\end{lemma}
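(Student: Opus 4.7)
The plan is to verify that $\sim_x$ is a lattice congruence on $L$, so that $L/\sim_x$ inherits the distributive lattice structure from $L$, and then to exhibit Boolean complements in the quotient by pushing down the ones in $L$.

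First, I would check that $\sim_x$ is an equivalence relation. Reflexivity and symmetry are immediate. For transitivity, suppose $u \wedge v = u \wedge v'$ and $u' \wedge v' = u' \wedge v''$ with $u, u' \in x$; then $(u \wedge u') \wedge v = (u \wedge u') \wedge v''$, and the witness $u \wedge u'$ lies in $x$ because $x$ is downward-directed. Next, I would verify compatibility with $\wedge$ and $\vee$. Meet-compatibility: if $u \wedge v = u \wedge v'$ and $u' \wedge w = u' \wedge w'$, then $(u \wedge u') \wedge (v \wedge w) = (u \wedge u') \wedge (v' \wedge w')$. Join-compatibility uses distributivity of $L$: for the same witnesses, $(u \wedge u') \wedge (v \vee w) = ((u \wedge u') \wedge v) \vee ((u \wedge u') \wedge w) = ((u \wedge u') \wedge v') \vee ((u \wedge u') \wedge w') = (u \wedge u') \wedge (v' \vee w')$.

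Since $\sim_x$ is a lattice congruence on the distributive lattice $L$, the quotient $L/\sim_x$ is naturally a distributive lattice with operations $[v] \wedge [w] = [v \wedge w]$ and $[v] \vee [w] = [v \vee w]$, with top $[1]$ and bottom $[0]$. To upgrade this to a Boolean algebra, for each $v \in L$ with Boolean complement $\neg v$ in $L$, I would claim that $[\neg v]$ is a complement of $[v]$ in $L/\sim_x$. Indeed, $[v] \wedge [\neg v] = [v \wedge \neg v] = [0]$ and $[v] \vee [\neg v] = [v \vee \neg v] = [1]$. Distributivity then forces uniqueness, giving a Boolean algebra.

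There is no real obstacle here: the proof is essentially a mechanical verification of a congruence and an observation that complementation descends. The only mild subtlety worth flagging is that combining two witnesses $u, u' \in x$ into one via $u \wedge u'$ requires the filter axiom (downward-directedness), which is why a filter, not just a semilattice ideal, is needed; primality of $x$ plays no role in this lemma, though it is of course used elsewhere to obtain $\vee$-locality.
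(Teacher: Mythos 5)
Your proof is correct and follows essentially the same route as the paper: show that $\sim_x$ is a lattice congruence (using $s\wedge t\in x$ for the combined witness and distributivity for join-compatibility), then note that complements descend because they are uniquely determined by the distributive lattice structure. Your version is just slightly more explicit in checking the equivalence relation and in exhibiting $[\neg v]$ as the complement of $[v]$, and your remark that primality of $x$ is not needed here matches the paper, which also uses only the filter axioms.
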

\begin{proof}
  It suffices to prove that $\sim_x$ is a congruence of lattices, because complements are uniquely defined in terms of joins and meets. Suppose $a \sim_x a'$ and $b \sim_x b'$. Say $a \wedge s = a' \wedge s$ and $b \wedge t = b' \wedge t$ for $s,t \in x$. For $r=s \wedge t$, then clearly $a \wedge b \wedge r = a' \wedge b' \wedge r$ so $a \wedge b \sim_x a' \wedge b'$. Similarly, $(a \vee b) \wedge r = (a \wedge r) \vee (b \wedge r) = (a' \wedge r) \vee (b' \wedge r) = (a' \vee b') \wedge r$, so $a \vee b \sim_x a' \vee b'$.
\end{proof}

\begin{corollary}\label{cor:Boolean}
  If a monoidal category $\cat{C}$ with universal finite joins of central idempotents is Boolean, and $x \subseteq \ZI(\cat{C})$ is a prime filter, then $\ZI(\cat{C}\restrict{x})=\{0,1\}$.
\end{corollary}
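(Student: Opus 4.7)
The plan is to combine three facts already established in the paper. First, by Lemma~\ref{lem:zi:stalks}, the semilattice $\ZI(\cat{C}\restrict{x})$ is isomorphic to the quotient $\ZI(\cat{C})/\!\sim_x$. Second, by Lemma~\ref{lem:booleancongruence}, since $\ZI(\cat{C})$ is a Boolean algebra and $x$ is a prime filter, this quotient is again a Boolean algebra; in particular $\ZI(\cat{C}\restrict{x})$ inherits complements. Third, by Lemma~\ref{lem:sublocal}, since $x$ is prime, $\cat{C}\restrict{x}$ is $\vee$-local, so $\ZI(\cat{C}\restrict{x})$ is a $\vee$-local partially ordered set with at least two elements.

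It then remains to observe the purely order-theoretic fact that a $\vee$-local Boolean algebra $B$ has exactly two elements. Indeed, given any $a \in B$, one has $a \vee \neg a = 1$, so by $\vee$-locality either $a = 1$ or $\neg a = 1$; in the latter case $a = 0$. Combined with the hypothesis $0 \neq 1$ from $\vee$-locality, this gives $B = \{0,1\}$.

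There is no real obstacle here: each ingredient is already proved earlier in the text, and the final step is an elementary observation about Boolean algebras. The only point requiring mild care is that Lemma~\ref{lem:zi:stalks} is stated for \emph{braided} monoidal categories, but since we already have a Boolean algebra of central idempotents in $\cat{C}$ and are working with their quotient, this hypothesis is either available or can be circumvented by working directly with the characterisation of central idempotents in $\cat{C}\restrict{x}$ given by central idempotents of $\cat{C}$ modulo $\sim_x$, as in the proof of Lemma~\ref{lem:sublocal}.
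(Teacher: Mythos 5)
Your proof is correct and follows essentially the same route as the paper: Lemma~\ref{lem:zi:stalks} and Lemma~\ref{lem:booleancongruence} give that $\ZI(\cat{C}\restrict{x})$ is a Boolean algebra, and $\vee$-locality from Lemma~\ref{lem:sublocal} then forces $u = 1$ or $\neg u = 1$ for every central idempotent $u$. Your side remark about the braided hypothesis in Lemma~\ref{lem:zi:stalks} is a fair observation --- the paper's own proof invokes that lemma without comment --- but as you note it does not affect the substance of the argument.
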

\begin{proof}
  By Lemmas~\ref{lem:zi:stalks} and~\ref{lem:booleancongruence}, $\ZI(\cat{C}\restrict{x})$ is again a Boolean algebra. 
  Hence if $u \in \ZI(\cat{C}\restrict{x})$, then $u \vee \neg u = 1$.
  By Lemma~\ref{lem:sublocal} $\ZI(\cat{C}\restrict{x})$ is also \changed{$\vee$-}local.
  Hence $u=1$ or $\neg u=1$, that is, $u \in \{0,1\}$.
\end{proof}

\changed{
For toposes $\cat{C}$, it is known that various logical properties are preserved by passing to stalks $\cat{C}\restrict{x}$, such the internal axiom of choice. Next we show that the external axiom of choice is preserved for monoidal categories $\cat{C}$.

\begin{lemma}
  If all epimorphism in a small monoidal category $\cat{C}$ split, $u$ is a central idempotent, and $x \subseteq \ZI(\cat{C})$ is a prime filter, then all epimorphisms in $\cat{C}\restrict{u}$ and $\cat{C}\restrict{x}$ also split.
\end{lemma}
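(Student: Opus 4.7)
The plan is to exploit the adjunction from Lemma~\ref{lem:lari} applied with $v = 1$: for each $u \in \ZI(\cat{C})$, the functor $L := \cat{C}\Restrict{u} \colon \cat{C}\restrict{u} \to \cat{C}$, $A \mapsto A \otimes U$, is left adjoint to the restriction functor $R \colon \cat{C} \to \cat{C}\restrict{u}$, with the unit $\eta_A \colon A \to A \otimes U$ of this adjunction invertible in $\cat{C}\restrict{u}$. This lets us transfer splittings in $\cat{C}$ back to $\cat{C}\restrict{u}$.

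For the case of $\cat{C}\restrict{u}$, let $f \colon A \to B$ be an epimorphism. Since the left adjoint $L$ preserves epimorphisms, $L(f) \colon A \otimes U \to B \otimes U$ is epi in $\cat{C}$ and hence splits by hypothesis, yielding $s \colon L(B) \to L(A)$ with $L(f) \circ s = \id_{L(B)}$. I would set $\bar s := \eta_A^{-1} \circ R(s) \circ \eta_B \colon B \to A$ as the candidate splitting in $\cat{C}\restrict{u}$. Naturality of $\eta$ rewrites $\eta_B \circ f = RL(f) \circ \eta_A$, and functoriality of $R$ together with the splitting identity yields
\[
  f \circ \bar s = \eta_B^{-1} \circ RL(f) \circ R(s) \circ \eta_B = \eta_B^{-1} \circ R(L(f) \circ s) \circ \eta_B = \id_B.
\]

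For the case of $\cat{C}\restrict{x}$, I would descend to the previous case via Lemma~\ref{lem:stalkcolimit}, which expresses the stalk as the filtered colimit $\cat{C}\restrict{x} = \colim_{w \in x} \cat{C}\restrict{w}$. Given an epimorphism $[v, f] \colon A \to B$ in $\cat{C}\restrict{x}$, the strategy is to locate some $w \in x$ with $w \leq v$ at which the restriction $\cat{C}\restrict{w \leq v}(f)$ is epi in $\cat{C}\restrict{w}$; applying the first case in $\cat{C}\restrict{w}$ then produces a splitting whose image along the quotient $\cat{C}\restrict{w} \to \cat{C}\restrict{x}$ splits $[v, f]$.

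The principal obstacle will be establishing the existence of such a $w$, because epi-ness in the filtered colimit is a priori weaker than epi-ness in a fixed $\cat{C}\restrict{w}$: a parallel pair $g, h$ in some $\cat{C}\restrict{u}$ equalising the restriction of $f$ to $\cat{C}\restrict{u \wedge v}$ need only become equal in some strictly smaller $\cat{C}\restrict{u'}$ by the stalk-level epi property, not necessarily in $\cat{C}\restrict{u}$ itself. I would resolve this using the smallness of $\cat{C}$ to bound the collection of such witnessing pairs and the closure of the prime filter $x$ under finite meets, carefully tracking the filtered colimit of hom-sets to combine the pair-by-pair resolutions into a single $w \in x \cap \downset{v}$ where the restricted $f$ is genuinely epi in $\cat{C}\restrict{w}$.
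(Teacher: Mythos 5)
Your treatment of $\cat{C}\restrict{u}$ is correct and is essentially the paper's own argument in adjunction clothing: the paper observes directly that $f$ being epi in $\cat{C}\restrict{u}$ means $f \otimes U$ is epi in $\cat{C}$, splits it there by some $g$, and transports the splitting back as $(A \otimes u \otimes u) \circ g$ --- which is exactly what your $\eta_A^{-1} \circ R(s) \circ \eta_B$ unwinds to. That half is fine.

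The stalk case is where the proposal has a genuine gap. You are right to flag it as the delicate point: the paper dismisses it with ``the same reasoning holds for $\cat{C}\restrict{x}$'', which implicitly amounts to claiming that an epimorphism $[v,f]$ in the filtered colimit $\cat{C}\restrict{x} = \colim_{w \in x} \cat{C}\restrict{w}$ has a representative that is an epimorphism at some stage, whereas epi-ness in $\cat{C}\restrict{x}$ only guarantees that each offending parallel pair $g,h$ out of $B$ becomes identified after restricting along \emph{some} $w_{g,h} \in x$ depending on the pair. But your proposed repair does not close this gap. Smallness of $\cat{C}$ bounds the collection of witnessing pairs by a \emph{set}, not a finite set: there may be infinitely many parallel pairs $g \neq h$ with $g \circ (f \otimes V) = h \circ (f \otimes V)$, each demanding its own $w_{g,h}$, and a (prime) filter is closed only under \emph{finite} meets, so there is in general no single $w \in x \cap \downset v$ lying below all of them. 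Consequently you cannot produce a stage $w$ at which the restriction of $f$ is genuinely epi in $\cat{C}\restrict{w}$, and the reduction to the first case fails. To finish you would need either to actually justify the claim that an epimorphism in $\cat{C}\restrict{x}$ admits a representative that is epi in some $\cat{C}\restrict{w}$ (equivalently in $\cat{C}$), or to construct the splitting of $[v,f]$ by some route that does not pass through a single-stage epimorphism of $\cat{C}$; the finiteness argument as stated does neither.
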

\begin{proof}
  Let $f \colon A \to B$ be an epimorphism in $\cat{C}\restrict{u}$. That means that $f \otimes U \colon A \otimes U \otimes U \to B \otimes U$ is an epimorphism in $\cat{C}$. It is split by some $g \colon B \otimes U \to A \otimes U \otimes U$ in $\cat{C}$. Taking $h=(A \otimes u \otimes u) \circ g \colon B \otimes U \to A$, it follows that $f \circ (g \otimes U) = (B \otimes u \otimes u) \circ (f \otimes U \otimes U) \circ (g \otimes U) = B \otimes u \otimes u$ in $\cat{C}$, that is, $f \circ g = B$ in $\cat{C}\restrict{u}$, so $f$ is a split epimorphism in $\cat{C}\restrict{u}$. The same reasoning holds for $\cat{C}\restrict{x}$.
\end{proof}
}

Next, we turn to limits. In the cartesian case, as in Lemma~\ref{lem:slice}, it is well known that limits in the slice category are computed as in the base category. In the monoidal setting we need to be more careful.

\begin{lemma}
  Let $\cat{J}$ be a small category, and $\cat{C}$ a monoidal category with $\cat{J}$-shaped limits.
  \begin{itemize}
  	\item 
  	If $\cat{C}$ has universal finite joins of central idempotents, and $u$ is a central idempotent, then $\cat{C}\restrict{u}$ has $\cat{J}$-shaped limits, and the functors $\cat{C}\restrict{u \leq v} \colon \cat{C}\restrict{v}\to\cat{C}\restrict{u}$ preserve them.
  	\item 
  	If $\cat{C}$ has universal finite joins of central idempotents, $x \subseteq \ZI(\cat{C})$ is a prime filter, and $\cat{J}$ is finite, then $\cat{C}\restrict{x}$ has $\cat{J}$-shaped limits, and the functor $\cat{C}\to\cat{C}\restrict{x}$ preserves them.
  	\item
  	If $\cat{C}$ has universal joins of central idempotents, and $x \subseteq \ZI(\cat{C})$ is a prime filter, then $\cat{C}\restrict{x}$ has $\cat{J}$-shaped limits, and the functor $\cat{C}\to\cat{C}\restrict{x}$ preserves them.
  \end{itemize}
\end{lemma}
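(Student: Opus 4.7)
The plan is to handle the three bullets in order of increasing difficulty.

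For the first bullet, I specialise Lemma~\ref{lem:lari} to $v = 1$: the adjunction $\cat{C}\Restrict{u \leq 1} \dashv \cat{C}\restrict{u \leq 1}$ between $\cat{C}\restrict{u}$ and $\cat{C}$ has invertible unit, so the left adjoint $\cat{C}\Restrict{u \leq 1}$ is fully faithful. A standard Yoneda calculation then gives that $\cat{C}\restrict{u}$ inherits all limits from $\cat{C}$: the limit of $D \colon \cat{J} \to \cat{C}\restrict{u}$ is $\cat{C}\restrict{u \leq 1}\bigl(\lim(\cat{C}\Restrict{u \leq 1} D)\bigr)$, computed in $\cat{C}$ (which exists by assumption). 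Preservation by each transition $\cat{C}\restrict{u \leq v}$ is automatic, since by Lemma~\ref{lem:lari} this functor is itself a right adjoint.

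For the second bullet, I combine Lemma~\ref{lem:stalkcolimit} with the standard fact that filtered colimits in $\cat{MonCat}$ commute with finite limits. The filter $x$ is downward-directed, so $x\op$ is filtered and $\cat{C}\restrict{x} = \colim_{v \in x\op}\cat{C}\restrict{v}$. By the first bullet each $\cat{C}\restrict{v}$ has $\cat{J}$-shaped limits and the transition functors preserve them. A finite diagram $D$ in the stalk lifts to one in some $\cat{C}\restrict{v}$ with $v \in x$: pick representatives for the finitely many morphisms, then shrink $v$ within $x$ to satisfy the finitely many composition equations, which is possible by downward-directedness. The limit computed there descends to the required limit in $\cat{C}\restrict{x}$, and the functor $\cat{C} \to \cat{C}\restrict{x}$ preserves it.

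The main obstacle is the third bullet, because filtered colimits need not commute with arbitrary limits. The plan is to bypass the stalk-as-colimit description and work directly within $\cat{C}$, exploiting universality of \emph{infinite} joins. Given $D \colon \cat{J} \to \cat{C}\restrict{x}$, I would pick representatives $(v_\alpha, f_\alpha)$ of each morphism and for every finite subdiagram $K \subseteq \cat{J}$ form $v_K = \bigwedge_{\alpha \in K} v_\alpha \in x$, obtaining by the first bullet a limit object $L_K$ in $\cat{C}\restrict{v_K}$. These $L_K$ form a cofiltered system as $K$ ranges over finite subdiagrams ordered by inclusion, and universal infinite joins in $\cat{C}$ are used to glue them into a single object representing the limit in $\cat{C}\restrict{x}$; uniqueness of mediating morphisms can be verified via a jointly-monic variant of Lemma~\ref{lem:epimonoisolocal}. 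Preservation by $\cat{C} \to \cat{C}\restrict{x}$ then follows from the construction. The delicate step, which I expect to be the hardest, is verifying that the patched object satisfies a strict rather than merely pseudo universal property — this is where the strengthening to universal joins (and not just finite ones) in the hypothesis is essential, precisely because the gluing ranges over the possibly infinite cofiltered system of finite subdiagrams.
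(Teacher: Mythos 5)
Your first two bullets are correct but take a genuinely different, more abstract route than the paper. For $\cat{C}\restrict{u}$ the paper constructs the limit by hand (it writes out the equaliser case: take the equaliser $l \colon E \to X \otimes U$ in $\cat{C}$ of the two maps $X \otimes U \to Y$, turn it into a fork in $\cat{C}\restrict{u}$ using the central equation, and verify the universal property directly), whereas you obtain it from the adjunction $\cat{C}\Restrict{u \leq 1} \dashv \cat{C}\restrict{u \leq 1}$ of Lemma~\ref{lem:lari} with invertible unit; that is a legitimate shortcut, and your observation that the transition functors preserve limits because they are right adjoints is cleaner than the paper's ``by construction''. For the second bullet the paper again works explicitly in $\cat{C}\restrict{x}$ (for products: projections $[1,\pi_i]$ and tuples $[u_1 \wedge \cdots \wedge u_n, \langle f_i\rangle]$); your lift-to-a-stage argument via Lemma~\ref{lem:stalkcolimit} is the same computation packaged as ``filtered colimits of categories commute with finite limits'', and it goes through because $x$ is downward directed and a finite diagram involves only finitely many morphisms and relations.

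The third bullet, however, has a genuine gap. Your plan --- take limits $L_K$ of finite subdiagrams $K$ and then assemble the cofiltered system $\{L_K\}$ using universal joins --- fails for two reasons. First, it is circular: reassembling the $L_K$ into $\lim D$ is itself a cofiltered limit over the (generally infinite) poset of finite subdiagrams, so the infinite limit has not been reduced to anything you already have. Second, universal joins do not supply such a gluing. The axiom says that $A \otimes \bigvee u_i$ is the colimit of the restrictions $A \otimes U_i$ for a meet-closed family of central idempotents; it glues \emph{morphisms out of restrictions along a cover}, and it says nothing about cofiltered limits of \emph{objects}. Indeed there is nothing to glue: objects of $\cat{C}\restrict{x}$ are just objects of $\cat{C}$. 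Your family $v_K$ is a decreasing, meet-closed chain whose join is $v_\emptyset = 1$, so the universal-join axiom applied to it is vacuous for your purposes.

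What the paper actually does is much more direct, and the real role of the infinitary hypothesis is different from the one you identify. The limit object is simply the limit computed in $\cat{C}$, and the limit cone needs no shrinking of central idempotents (for products the projections are $[1,\pi_j]$, defined on all of $I$). The only place where infinitariness bites is in forming the \emph{mediating morphism} for a cone with infinitely many legs $[u_j, f_j]$: one needs a single central idempotent below all of the $u_j$ on which to define the tuple $\langle f_j \rangle$, and this is exactly where universal $\bigvee$-joins are used --- $\ZI(\cat{C})$ is then a frame, hence a complete lattice, and the mediating morphism is $[\bigwedge_j u_j, \langle f_j\rangle]$. So the ``delicate step'' is not a strict-versus-pseudo universal property of a patched object; it is the tupling of infinitely many cone legs over the infinite meet $\bigwedge_j u_j$, which your construction never addresses.
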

\begin{proof}
  For the sake of clarity we will consider equalisers, but the proof works for arbitrary (finite) shapes $\cat{J}$. 
  Let $f,g \in \cat{C}\restrict{u}(X,Y)$. Let $l \colon E \to X \otimes U$ be their equaliser in $\cat{C}$. 
  Then $e = (X \otimes u \otimes u) \circ (l \otimes U)$ defines a morphism $E \to X$ in $\cat{C}\restrict{u}$.
  Now $f \circ e = f \circ g$ in $\cat{C}\restrict{u}$, by the central equation~\ref{eq:central}:
  \[
    \begin{pic}
	  \node[morphism,width=8mm] (f) at (0,1) {$f$};
	  \node[morphism,width=4mm] (l) at (-.6,0) {$l$};
	  \node[dot] (d) at (.15,-.5) {};
	  \draw (f.north) to ++(0,.3) node[above]{$Y$};
	  \draw ([xshift=-1mm]f.south west) to[out=-90,in=90] node[left]{$X$} (l.north west);
	  \draw (l.south) to ++(0,-.6) node[below]{$E$};
	  \draw (d) to ++(0,-.3) node[below]{$U$};
	  \draw (d) to[out=180,in=-90,looseness=.5] ++(-.2,.5) node[dot]{};
	  \draw (d) to[out=0,in=-90,looseness=.5] ([xshift=1.5mm]f.south east);
	  \draw (l.north east) to ++(0,.2) node[right]{$U$} node[dot]{};
    \end{pic}
    =
    \begin{pic}
	  \node[morphism,width=8mm] (f) at (0,1) {$f$};
	  \node[morphism,width=8mm] (l) at (0,0) {$l$};
	  \node[dot] (d) at (.4,-.5) {};
	  \draw (f.north) to ++(0,.3) node[above]{$Y$};
	  \draw (l.north west) to node[left]{$X$} (f.south west);
	  \draw (l.north east) to node[right]{$U$} (f.south east);
	  \draw (l.south) to ++(0,-.7) node[below]{$E$};
	  \draw (d) to ++(0,-.4) node[below]{$U$};
    \end{pic}
    =
    \begin{pic}
	  \node[morphism,width=8mm] (f) at (0,1) {$g$};
	  \node[morphism,width=8mm] (l) at (0,0) {$l$};
	  \node[dot] (d) at (.4,-.5) {};
	  \draw (f.north) to ++(0,.3) node[above]{$Y$};
	  \draw (l.north west) to node[left]{$X$} (f.south west);
	  \draw (l.north east) to node[right]{$U$} (f.south east);
	  \draw (l.south) to ++(0,-.7) node[below]{$E$};
	  \draw (d) to ++(0,-.4) node[below]{$U$};
    \end{pic}
    =
    \begin{pic}
	  \node[morphism,width=8mm] (f) at (0,1) {$g$};
	  \node[morphism,width=4mm] (l) at (-.6,0) {$l$};
	  \node[dot] (d) at (.15,-.5) {};
	  \draw (f.north) to ++(0,.3) node[above]{$Y$};
	  \draw ([xshift=-1mm]f.south west) to[out=-90,in=90] node[left]{$X$} (l.north west);
	  \draw (l.south) to ++(0,-.6) node[below]{$E$};
	  \draw (d) to ++(0,-.3) node[below]{$U$};
	  \draw (d) to[out=180,in=-90,looseness=.5] ++(-.2,.5) node[dot]{};
	  \draw (d) to[out=0,in=-90,looseness=.5] ([xshift=1.5mm]f.south east);
	  \draw (l.north east) to ++(0,.2) node[right]{$U$} node[dot]{};
    \end{pic}  
  \]
  Suppose that $f \circ e' = g \circ e'$ for some $e' \colon E' \to X$ in $\cat{C}\restrict{u}$, so $e' \colon E' \otimes U \to X$ in $\cat{C}$. 
  Setting $l' = (e' \otimes U) \circ (E' \otimes U \otimes u)^{-1} \colon E' \otimes U \to X \otimes U$ in $\cat{C}$, then $f \circ l' = g \circ l'$ in $\cat{C}$.
  Hence there is a mediating map $m \colon E' \otimes U \to E$ with $l' = l \circ m$ in $\cat{C}$. 
  \[
    \begin{pic}
	  \node[morphism,width=8mm] (f) at (0,1) {$f$};
	  \node[morphism,width=4mm,anchor=north] (e) at ([yshift=-5mm]f.south west) {$e'$};
	  \node[dot] (d) at ([xshift=2.5mm,yshift=-3mm]e.south east) {};
	  \draw (e.north) to node[left]{$X$} (f.south west);
	  \draw (f.north) to ++(0,.3) node[above]{$Y$};
	  \draw (e.south west) to ++(0,-.5) node[below]{$E'$};
	  \draw (d) to ++(0,-.2) node[below]{$U$};
	  \draw (d) to[out=180,in=-90] (e.south east);
	  \draw (d) to[out=0,in=-90,looseness=.5] ([xshift=1mm]f.south east);
    \end{pic}
    =
    \begin{pic}
	  \node[morphism,width=8mm] (f) at (0,1) {$g$};
	  \node[morphism,width=4mm,anchor=north] (e) at ([yshift=-5mm]f.south west) {$e'$};
	  \node[dot] (d) at ([xshift=2.5mm,yshift=-3mm]e.south east) {};
	  \draw (e.north) to node[left]{$X$} (f.south west);
	  \draw (f.north) to ++(0,.3) node[above]{$Y$};
	  \draw (e.south west) to ++(0,-.5) node[below]{$E'$};
	  \draw (d) to ++(0,-.2) node[below]{$U$};
	  \draw (d) to[out=180,in=-90] (e.south east);
	  \draw (d) to[out=0,in=-90,looseness=.5] ([xshift=1mm]f.south east);    	
    \end{pic}
    \qquad\qquad
    l' 
    = 
    \begin{pic}
	  \node[morphism,width=5mm] (e) at (0,0) {$e'$};
	  \node[dot] (d)at (.5,-.5) {};
	  \draw (e.north) to ++(0,.6) node[above]{$X$};
	  \draw (e.south west) to ++(0,-.8) node[below]{$E'$};
	  \draw (d) to ++(0,-.5) node[below]{$U$};
	  \draw (d) to[out=180,in=-90] (e.south east);
	  \draw (d) to[out=0,in=-90,looseness=.5] ++(.3,1.3) node[above]{$U$};
    \end{pic}
    =
    \begin{pic}
	  \node[morphism,width=6mm] (l) at (0,.4) {$l$};
	  \node[morphism,width=6mm] (m) at (0,-.4) {$m$};
	  \draw (m.north) to node[left]{$E$} (l.south);
	  \draw (l.north west) to ++(0,.3) node[above]{$X$};
	  \draw (l.north east) to ++(0,.3) node[above]{$U$};
	  \draw (m.south west) to ++(0,-.3) node[below]{$E'$};
	  \draw (m.south east) to ++(0,-.3) node[below]{$U$};
    \end{pic}
  \]
  But then $e' = e \circ m$ in $\cat{C}\restrict{u}$:
  \[
    \begin{pic}
	  \node[morphism,width=6mm] (l) at (0,.4) {$l$};
	  \node[morphism,width=6mm] (m) at (0,-.4) {$m$};
	  \draw (m.north) to node[left]{$E$} (l.south);
	  \draw (l.north west) to ++(0,.5) node[above]{$X$};
	  \draw (l.north east) to ++(0,.3) node[right]{$U$} node[dot]{};
	  \draw (m.south west) to ++(0,-.7) node[below]{$E'$};
	  \node[dot] (d) at (.5,-1) {};
	  \draw (d) to[out=180,in=-90] (m.south east);
	  \draw (d) to ++(0,-.3) node[below]{$U$};
	  \draw (d) to[out=0,in=-90] ++(.3,.3) to ++(0,1.6) node[right]{$U$} node[dot]{};
    \end{pic}
    = 
    \begin{pic}
	  \node[morphism,width=6mm] (e) at (0,0) {$e'$};
	  \draw (e.north) to ++(0,1) node[above]{$X$};
	  \draw (e.south west) to ++(0,-1) node[below]{$E'$};
	  \node[dot] (l) at (.5,-.5) {};
	  \node[dot] (r) at (.8,-.9) {};
	  \draw (r) to[out=180,in=-90] (l);
	  \draw (l) to[out=180,in=-90] (e.south east);
	  \draw (r) to ++(0,-.3) node[below]{$U$};
	  \draw (l) to[out=0,in=-90] ++(.3,.3) to ++(0,.6) node[dot]{};
	  \draw (r) to[out=0,in=-90] ++(.3,.3) to ++(0,1) node[dot]{};
    \end{pic}
  \]
  Conversely, if $e'=e \circ m$ in $\cat{C}\restrict{u}$, then $(e' \otimes U) \circ (E' \otimes U \otimes u)^{-1} = l \circ m$ in $\cat{C}$, so $m$ is the unique such morphism.
  The functors $\cat{C}\restrict{u \leq v} \colon \cat{C}\restrict{v}\to\cat{C}\restrict{u}$ preserve these limits by construction.

  The same idea works for $\cat{C}\restrict{x}$, except that we may need the diagram $\cat{J}$ to be finite, depending on the amount of universal joins of central idempotents available in $\cat{C}$. This is best illustrated for products. Suppose that $\cat{C}$ has universal finite joins of central idempotents and has finite products, then $\prod_{i=1}^n X_i$ is also a product in $\cat{C}\restrict{x}$. Define the projections to be $[1,\pi_i] \colon \prod_i X_i \to X_i$, and given finitely many $[u_i,f_i] \colon A \otimes U_i \to X_i$, define their tuple to be $[u_1 \wedge \ldots \wedge u_n, \langle f_i \rangle] \colon A \to \prod_i X_i$ in $\cat{C}\restrict{x}$. It is easily checked that this is well-defined and satisfies the universal property. 
  If $\cat{C}$ had universal joins of central idempotents, we could have defined the tuple $[\bigwedge u_i, \langle f_i \rangle] \colon A \to \prod X_i$ for an arbitrary number of morphisms $A \to X_i$.
  In either case, the functor $\cat{C}\to\cat{C}\restrict{x}$ preserves these limits by construction.  
\end{proof}

Finally, let us mention two properties that are preserved only in certain cases.

If $\cat{C}$ has colimits of a certain shape $\cat{J}$, and $U \otimes (-) \colon \cat{C} \to \cat{C}$ preserves these colimits for a central idempotent $u$, then $\cat{C}\restrict{u}$ inherits these $\cat{J}$-shaped colimits, and the functor $\cat{C}\to\cat{C}\restrict{u}$ preserves them. If $\cat{J}$ is finite or $\cat{C}$ has universal joins of central idempotents, then the same holds for $\cat{C}\restrict{x}$.

If $\cat{C}$ has a dagger~\cite[Section~2.3]{heunenvicary:cqm} then $\cat{C}\restrict{u}$ inherits it if and only if the central idempotent $u$ is (represented by) an isometry, and $\cat{C}\restrict{x}$ inherits it if and only if for every central idempotent $v$ there is a central idempotent $u \leq v$ that is (represented by) an isometry.

\section{Examples}\label{sec:examples}

This brief section illustrates the sheaf representation theorem in several example cases that are genuinely monoidal and could not have been handled using sheaf representation theorems for toposes.
\changed{We start by considering three posetal examples.}

\begin{example}\label{ex:frame}
  A topological space $X$ has a frame of open sets $L=\mathcal{O}(X)$ that may be regarded as a stiff monoidal category with universal joins of central idempotents (but that is not a topos unless $X$ is empty).
  Because $\ZI(L)\simeq L$, the completely prime spectrum $\Spec(\ZI(L))$ is (homeomorphic to) $X$ itself.
  Theorem~\ref{thm:main} says that $L$ is isomorphic to the global sections of a sheaf of \changed{$\bigvee$-}local monoidal categories on $X$. If $x$ is a point of $X$, then the stalk $L\restrict{x}$ is the quotient of $L$ by the filter $x$, that is, the quotient $L\slash\mathop{\sim}$ by the equivalence relation $\sim$ where $U \sim V$ when $U \cap W = V \cap W$ for some $W \in L$. %~\cite{dowkerpapert:quotientframes}
  This is sometimes called the frame of germs of open subsets of $X$ with respect to $x$~\cite[I.6.10]{bourbaki:generaltopology}.
  Concretely, the stalk $L\restrict{x}$ has the same objects as $L$, that is, open subsets $U$ of $X$. There is a unique morphism $U \to V$ in $L\restrict{x}$ when there is an open neighbourhood $W$ of $x$ with $U \cap W \subseteq V$. 
  \begin{itemize}
	\item If $x \in V$, then there is a morphism $U \to V$: take $W=V$;
	\item If $x \in U$ and $x \not\in V$, then there is no morphism $U \to V$: for any open neighbourhood $W$ of $x$, there is a point $x$ in $U \cap W$ that is not in $V$;
	\item If $x \not\in\overline{U}$, then there is a morphism $U \to V$: there is a open neighbourhood $W$ of $x$ disjoint from $U$;
  \end{itemize}
  Hence all open neighbourhoods of $x$ are isomorphic as objects in $L\restrict{x}$, and they are all terminal.
  Similarly, all open sets whose closure does not contain $x$ are isomorphic as objects in $L\restrict{x}$, and they are all initial.
  Therefore, if $x$ is an isolated point, $L\restrict{x}$ is equivalent to the partially ordered set $0 \leq 1$ regarded as a 2-object category.
  But in general more subtle behaviour can occur:
  \begin{itemize}
	\item If $x \in\overline{U}$ and $x\not\in\overline{V}$, then there is no morphism $U \to V$: because $x \not\in\overline{V}$ there is an open neighbourhood $W$ of $x$ disjoint from $V$, but because $x \in \overline{U}$ this neighbourhood is not disjoint from $U$.
	\item If $x \in \partial{U} = \overline{U} \setminus U$ and $x \in \partial{V} = \overline{V} \setminus V$, there may be a morphism $U \to V$ or there may not be. For example, take $X=\mathbb{R}$, $x=0$, and $U=(-1,0)$.
	If $V=(0,1)$, then any open neighbourhood $W$ of $x$ contains points of $U$ that are not in $V$, so there is no morphism $U \to V$. 
	But if $V=(-2,0)$, then $U \subseteq V$, so $W=X$ shows that there is a morphism $U \to V$.
  \end{itemize}  
  Nevertheless, the global sections of the sheaf $U \mapsto L\restrict{U}$
  correspond to continuous functions $X \to \{0,1\}$.
\end{example}

\begin{example}
  A Boolean algebra $B$ may be regarded as a stiff monoidal category with finite universal joins of central idempotents.
  Because $\ZI(B) \simeq B$, in this case $\Spec(\ZI(B))$ is the Stone space of $B$.
  Theorem~\ref{thm:main} hence says that any Boolean algebra is isomorphic to the global sections of a sheaf of \changed{$\vee$-}local rings on a Stone space.
  By Corollary~\ref{cor:Boolean} the stalks are two-valued, so this becomes Stone's representation theorem~\cite{johnstone:stonespaces}: any Boolean algebra is isomorphic \changed{to} the algebra of clopen subsets of its Stone space.
  Going from \changed{$\vee$-}local to \changed{$\bigvee$-}local, this extends to complete Boolean algebras and Stonean spaces.

  In fact, this holds more generally for a distributive lattice $L$ and its prime spectrum $X$. 
  As the quotient of $L$ by a prime filter $x$ is always $\{0,1\}$,
  \changed{ we obtain a sheaf whose stalks are all isomorphic to the two-element lattice. Global sections are functions $X \to \{0,1\}$ that are continuous with respect to the discrete topology on $\{0,1\}$, that is, the clopen subsets of $X$, and so correspond to elements of $L$.}

  \changed{Similarly, starting with a locale $L$ regarded as a monoidal category with universal joins of central idempotents, Theorem~\ref{thm:main2} gives the Stone representation on a sober space, and recovers the locale $L$ if it is spatial.}
\end{example}

\begin{example}
  A quantale $Q$ may be regarded as a monoidal category with universal joins of central idempotents.
  Theorem~\ref{thm:main} shows that any quantale is equivalent to the category of global sections of a sheaf of monoidal categories.
  This gives a different view on works combining sheaves and quantales~\cite{borceuxvdbossche:quantalesheaves,resende:quantalesheaves,heymans:quantalesheaves}.
  The central idempotents in $Q$ are the central elements $u$ satisfying $u \cdot u = u \leq e$. The category $Q\restrict{u}$:
  \begin{itemize}
    \item has objects $q \in Q$;
    \item there is a unique morphism $p \to q$ when $p \cdot u \leq q$; 
    \item composition is uniquely determined: $p \cdot u \leq q$ and $q \cdot u \leq v$ imply $p \cdot u \leq v$;
    \item the identity morphism is the equality $q \cdot e = q$.
  \end{itemize} 
  % $\ZI(Q\restrict{u}) = \{q \in Q \mid qqu=qu \leq e\}$
  If $x \subseteq \ZI(Q)$ is a completely prime filter, $Q\restrict{x}$ has:
  \begin{itemize}
  \item objects $q \in Q$;
  \item a unique morphism $p \to q$ if there exists $u \in x$ satisfying $p \cdot u \leq q$.
  % \item composition of $u \colon p \to q$ and $v \colon q \to r$ is $u\cdot v \colon p \to r$;
  % \item the identity morphism is $e \colon q \to q$.
  \end{itemize}
  The partially ordered sets $Q\restrict{x}$ still have finite meets and joins, bottom and top elements, and a multiplication that distributes over joins. To see that $Q\restrict{x}$ inherits meets: $p_i \cdot 1 = p_i \leq p_1 \wedge p_2$ and $1 \in x$ so $p_i \leq_x p_1 \wedge p_2$, and if $q \leq_x p_i$, say because $q \cdot u_i \leq p_i$, then $q \cdot (u_1 \wedge u_2) \leq q \cdot u_1 \wedge q \cdot u_2 \leq p_1 \wedge p_2$ with $u_1 \wedge u_2 \in x$, and so $q \leq_x p_1 \wedge p_2$; a similar reasoning holds for joins.
\end{example}

\changed{Next we discuss three examples that are not posetal.}

\begin{example}\label{ex:hilbagain}
  If $X$ is a locally compact Hausdorff space, the category $\cat{Hilb}_{C_0(X)}$ of Hilbert $C_0(X)$-modules is a stiff monoidal category with universal joins of central idempotents. Here $\ZI(\cat{Hilb}_{C_0(X)}) \simeq \mathcal{O}(X)$~\cite[3.16]{enriquemolinerheunentull:tensortopology}, so as in Example~\ref{ex:frame}, the representation theorem says that $\cat{Hilb}_{C_0(X)}$ is isomorphic to the global sections of a sheaf of \changed{$\bigvee$-}local monoidal categories over $X$. 
  In this case, every stalk $\cat{Hilb}_{C_0(X)}\restrict{x}$ is the category $\cat{Hilb}$ of Hilbert spaces (see also~\cite[2.5,2.8]{heunenreyes:frobenius}). Thus we recover Takahashi's representation theorem of Hilbert modules as continuous fields of Hilbert spaces~\cite{takahashi:hilbertmodules}.
\end{example}

\begin{example}
  Any commutative ring $R$ has a category $\cat{Mod}_R$ of modules that is a stiff monoidal category with finite universal joins of central idempotents.
  As in Example~\ref{ex:modules}, the central idempotents of $\cat{Mod}_R$ are linear maps $A \to R$ whose image is an idempotent ideal of $R$.
  If $R$ is semisimple, any idempotent ideal is generated by an idempotent element, and hence $\ZI(\cat{Mod}_R)$ includes the set $E(R)=\{e \in R \mid e^2=e \}$.
  Therefore $\Spec(\ZI(\cat{Mod}_R))$ is a quotient of the Pierce spectrum of $R$~\cite[V.2]{johnstone:stonespaces}.
  Theorem~\ref{thm:main} now says that an $R$-module corresponds to a global section of a sheaf of \changed{$\vee$-}local modules over a quotient of the Pierce spectrum of $R$. Indeed, any semisimple commutative ring is a finite product of fields.
\end{example}

% \begin{example}
% \todo{  Combining the previous example with Examples~\ref{ex:bialgebra} shows that, if $A$ is a bialgebra over a field, a left $A$-module corresponds to a global section of a sheaf of sublocal modules over the Pierce spectrum of $A$.}
% \end{example}

\begin{example}\label{example:linearnonlinear}
  % \marginpar{\todo{ Can you get a completeness theorem for linear logic out of this example (or a related one), by applying it to a syntactic category?
  % I guess this is stated in Sec. 13.}}
  Consider a linear-non-linear model of linear logic~\cite{benton:linearnonlinear}, that is, a symmetric monoidal adjunction between a symmetric monoidal closed category $\cat{L}$ and a cartesian closed category $\cat{C}$. 
  \[\begin{pic}
    \node (C) at (0,0) {$\cat{C}$};
    \node (L) at (3,0) {$\cat{L}$};
    \draw[->] ([yshift=2mm]C.east) to node[above]{$F$} ([yshift=2mm]L.west);
    \draw[<-] ([yshift=-2mm]C.east) to node[below]{$G$} ([yshift=-2mm]L.west);
    \draw[draw=none] (C) to node{$\perp$} (L);
  \end{pic}\]
  Because $F$ is automatically strong monoidal~\cite[Proposition~1]{benton:linearnonlinear}, there is a semilattice morphism $\ZI(\cat{C}) \to \ZI(\cat{L})$, and a functor $\cat{C}\restrict{s} \to \cat{L}\restrict{F(s)}$ for each $s \in \ZI(\cat{C})$.
  If $\cat{C}$ and $\cat{L}$ both have universal joins of (finite) central idempotents, then $\ZI(\cat{C}) \to \ZI(\cat{L})$ preserves (finite) joins, because $F$ is a left adjoint and so preserves colimits.
  If $G$ preserves central idempotents, we obtain a Galois connection between $\ZI(\cat{C})$ and $\ZI(\cat{L})$, and a functor $\cat{L}\restrict{u} \to \cat{C}\restrict{G(u)}$ for each $u \in \ZI(\cat{L})$.
\end{example}

% \begin{example}
%   For a monoid $M$, the category $\cat{C}=[M\op,\cat{Set}]$ is a topos, \todo{and its central idempotents correspond to right ideals of $M$}.
%   The category $\cat{C}$ is local if and only if $M$ has a right-absorbing element~\cite{hemelaerrogers:msets}. 
%   Hence Theorem~\ref{thm:main} shows that any $M$-action corresponds to a global section of a sheaf of categories of monoid actions whose stalks are categories of actions of monoids with right-absorbing elements.
% \end{example}

\section{Functoriality}\label{sec:functoriality}

This section shows that the main construction of Theorem~\ref{thm:main} is functorial. We start by defining the appropriate category of sheaves of monoidal categories, and prove that it is dual to the category of monoidal categories. Lemma~\ref{lem:lari} shows that the sheaves of monoidal categories that arise from the representation theorem are (a categorification of) flabby sheaves~\cite{godement:flabby}.

\begin{definition}
  A sheaf $P \colon \mathcal{O}(X)\op \to \cat{MonCat}$ of monoidal categories is \emph{flabby} when the functors $P_{u \leq v} \colon P_v \to P_u$ all act the same on objects, and each has a left adjoint $P^{u \leq v} \colon P_v \to P_u$ and the unit of the adjunction is invertible.
\end{definition}

Recall that a topological space is \emph{spectral} \changed{(or \emph{coherent})} when it is $T_0$, compact, sober, and its compact open subsets form a base and are closed under finite intersections. A continuous function between spectral spaces is a \emph{morphism of spectral spaces} when the preimage of a compact open subset is again compact. The category of spectral spaces is dually equivalent to the category of distributive lattices~\cite{hochster:primeideal,johnstone:stonespaces}.

\begin{definition}
  Write $\cat{MonScheme}$ for the following category. 
  Objects are pairs of a topological space $X$ and a flabby sheaf $P \colon \mathcal{O}(X)\op \to \cat{MonCat}$ of monoidal categories where all $P_u$ have the same objects and $X=\ZI(P_1)$.
  A morphism $(X,P) \to (Y,Q)$ consists of a continuous function $\varphi \colon X \to Y$ and a natural transformation $F_v \colon Q(v) \to P(\varphi^{-1}(v))$ such that
  \begin{equation}\label{eq:localfunctor}
	 F_1(u) = \theta_{I,U}
  \end{equation} 
  or in other words $\varphi^{-1}(u) = \theta_I \circ F_1(u)$.
  Write $\cat{MonScheme}_{\cat{l}}$ for the full subcategory of sheaves of \changed{$\bigvee$-}local categories.
  Write $\cat{MonScheme}_\cat{sl}$ for the subcategory of sheaves of \changed{$\vee$-}local categories where $X$ is a spectral space, and morphisms where $\varphi$ is a morphism of spectral spaces.
\end{definition}

\changed{Recall from Definition~\ref{def:moncat} that $\cat{MonCat_{fj}}$ has as objects small monoidal categories with universal finite joins of central idempotents, and as morphisms lax monoidal functors $F$ whose coherence morphism $I \to F(I)$ is invertible that preserve finite joins of central idempotents. Similarly, $\cat{MonCat_j}$ is the subcategory of small monoidal categories whose central idempotents have universal joins and form a spatial frame, and morphisms $F$ that preserve joins of central idempotents.}

\begin{theorem}\label{thm:functoriality}
  The representation of Proposition~\ref{prop:sheaf} extends to equivalences of categories $\cat{MonCat}_\cat{fj} \to \cat{MonScheme}_{\cat{sl}}\op$ and $\cat{MonCat}_\cat{j} \to \cat{MonScheme}_{\cat{l}}\op$.
\end{theorem}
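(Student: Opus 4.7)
The plan is to exhibit functors in both directions and show they constitute an equivalence. I focus on the finitary case; the $\bigvee$-join case is analogous after replacing the prime spectrum by the completely prime one and spectral spaces by the sober ones dual to spatial frames.

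First, extend Proposition~\ref{prop:sheaf} to a functor $\Phi \colon \cat{MonCat}_\cat{fj} \to \cat{MonScheme}_\cat{sl}\op$. On objects, set $\Phi(\cat{C}) = (\Spec(\ZI(\cat{C})),\, u \mapsto \cat{C}\restrict{u})$; the sheaf is flabby by Lemma~\ref{lem:lari}, and $\ZI(\cat{C}\restrict{1}) \simeq \ZI(\cat{C})$ identifies the base space with $\ZI$ of the global sections via Lemma~\ref{lem:zi:Cu}. A morphism $F \colon \cat{C} \to \cat{D}$ yields a finite-join-preserving lattice morphism $\ZI(F)$ by Lemma~\ref{lem:functorspreservingZI}, which dualises to a spectral-space morphism $\varphi \colon \Spec(\ZI(\cat{D})) \to \Spec(\ZI(\cat{C}))$ satisfying $\varphi^{-1}(B_u) = B_{F(u)}$; the natural-transformation components $F_u \colon \cat{C}\restrict{u} \to \cat{D}\restrict{F(u)}$ are obtained by applying $F$ to the defining morphism $A \otimes U \to B$ together with the coherence $\theta$. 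Condition~\eqref{eq:localfunctor} then holds by naturality and preservation of central idempotents.

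Second, define a quasi-inverse $\Psi \colon \cat{MonScheme}_\cat{sl}\op \to \cat{MonCat}_\cat{fj}$ as the global-sections functor, $\Psi(X,P) = P_1$ and $\Psi(\varphi, F) = F_1$. That $P_1$ has universal finite joins of central idempotents uses $\ZI(P_1) = X$ together with the sheaf condition applied to covers $B_u \cup B_v = B_{u \vee v}$: pullback-compatible pairs of morphisms glue to a global morphism, realising the pushout property of equation~\eqref{eq:pullback-pushout}. That $F_1$ preserves finite joins of central idempotents follows from~\eqref{eq:localfunctor} and continuity of $\varphi$.

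The main obstacle is the counit isomorphism $\Phi \Psi \simeq \id$. The unit $\id \simeq \Psi \Phi$ is immediate from the equivalence $\cat{C} \simeq \cat{C}\restrict{1}$ noted in Proposition~\ref{prop:sheaf}, naturally in $\cat{C}$. For the counit, given a flabby sheaf $P$ with $X \cong \Spec(\ZI(P_1))$, one must show that the canonical comparison functor $P_1\restrict{u} \to P_u$ is a monoidal equivalence natural in $u$. Both categories share the objects of $P_1$. On morphisms, the comparison sends $f \colon A \otimes U \to B$ in $P_1$ to $P_{u \leq 1}(f) \circ \eta_A^{-1}$ in $P_u$, where $\eta$ is the invertible unit of the adjunction $P^{u \leq 1} \dashv P_{u \leq 1}$ provided by flabbiness; this is essentially surjective on morphisms because the adjunction identifies $P^{u \leq 1}(A)$ with $A$ tensored by the central idempotent for $u$, and faithfulness follows from the sheaf condition on the cover of $X$ by basic opens. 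Once this pointwise equivalence is established, assembling it into a natural equivalence of scheme morphisms is a routine verification via naturality of $\varphi$ and $F$.
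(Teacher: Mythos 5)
Your construction of the forward functor coincides with the paper's, but you certify that it is an equivalence by a genuinely different route: you build an explicit quasi-inverse (the global-sections functor $\Psi(X,P)=P_1$) and argue via unit and counit isomorphisms, whereas the paper never writes down a functor out of $\cat{MonScheme}$ and instead shows the forward functor is faithful, full, and essentially surjective. The two routes buy different things. Yours makes explicit something the paper leaves implicit, namely that the global sections $P_1$ of an object of $\cat{MonScheme}_{\cat{sl}}$ must themselves be shown to lie in $\cat{MonCat}_{\cat{fj}}$ (your gluing argument for the pushout property of~\eqref{eq:pullback-pushout} is the right idea, though producing the join $U\vee V$ as a central idempotent of $P_1$ from the identification of $X$ with $\Spec(\ZI(P_1))$ deserves a sentence of its own). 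The paper's route avoids naming $\Psi$ but pays for it with an explicit fullness computation.

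The one place where your write-up underestimates the work is the final ``routine verification'' that the pointwise comparison equivalences $P_1\restrict{u}\to P_u$ assemble into a naturality square for the counit on \emph{morphisms} of schemes. This is exactly where the paper's only nontrivial computation lives: given $(\varphi,\{F_u\})$, one must show $F_u(f)=F_1(f)\circ\theta_{A,U}$, and the paper does this by exhibiting $A\otimes U\colon A\to A\otimes U$ as an isomorphism in $\cat{C}\restrict{u}$ with inverse $A\otimes u\otimes u$, applying $F_u$, and then using condition~\eqref{eq:localfunctor} to identify $F_u(A\otimes U)$ with $\theta_{A,U}$. Naturality of $F$ with respect to the restriction functors alone does not give this; you need~\eqref{eq:localfunctor} at precisely this point, so the step should be carried out rather than waved at. Two smaller slips: the adjoint transpose of $f\colon P^{u\leq 1}(A)\to B$ is $P_{u\leq 1}(f)\circ\eta_A$, not $P_{u\leq 1}(f)\circ\eta_A^{-1}$ (harmless since $\eta$ is invertible); and faithfulness of the comparison $P_1\restrict{u}\to P_u$ is immediate from the adjunction bijection on hom-sets, with no appeal to the sheaf condition needed.
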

\begin{proof}
  If $F \colon \cat{C} \to \cat{D}$ is a morphism in $\cat{MonCat}_\cat{fj}$ then it induces a semilattice morphism $\ZI(\cat{C}) \to \ZI(\cat{D})$, and hence a continuous function $\varphi \colon \Spec(\ZI(\cat{D})) \to \Spec(\ZI(\cat{C}))$, by Lemma~\ref{lem:functorspreservingZI}. 
  Furthermore, if $u \in \ZI(\cat{C})$, then $F$ induces a morphism $\cat{C}\restrict{u} \to \cat{D}\restrict{F(u)}$ in $\cat{MonCat}$ that acts as $F$ on objects and that sends a morphism $f \colon A \otimes U \to B$ to $F(f) \circ \theta_{A,U} \colon F(A) \otimes F(U) \to F(B)$. 
	This is natural with respect $\cat{C}\restrict{u \leq v}$, and satisfies the condition $F_1(u)=\theta_{I,U}$, and so gives a well-defined morphism $\cat{D}\restrict{(-)} \to \cat{C}\restrict{(-)}$ in $\cat{MonScheme}_\cat{sl}$.
  This assignment and its counterpart $\cat{MonCat}_{\cat{j}} \to \cat{MonScheme}_{\cat{l}}\op$ are clearly functorial, faithful, and injective on objects.

	To see that these functors are full, suppose $(\varphi,\{F_u\}_u)$ is a map in $\cat{MonScheme}$ from $P_-=\cat{D}\restrict{-}$ to $Q_-=\cat{C}\restrict{-}$. 
  Then $F=F_1$ is a morphism $\cat{C} \to \cat{D}$ in $\cat{MonCat}$. 
  It follows from naturality of that all $F_u$ act the same on objects.
  Regarding a morphism $f \colon A \otimes U \to B$ in $\cat{C}$ as a morphism in $\cat{C}\restrict{u}$, we find:
  \begin{align*}
    & F_u\big( A \otimes U \stackrel{f}{\to} B \big) \\
    & =  F_u\big( A \otimes U \otimes U \stackrel{f \otimes u}{\longrightarrow} B \big) \:\circ_{Fu}\: F_u \big( A \otimes U \stackrel{(A \otimes U \otimes u)^{-1}}{\longrightarrow} A \otimes U \otimes U \big) \\
    & = F_u\big( \cat{C}\restrict{u \leq 1}\big( A \otimes U \stackrel{f}{\to} B\big)\big) \:\circ _{Fu}\: F_u\big( A \otimes U \stackrel{(A \otimes U \otimes u)^{-1}}{\longrightarrow} A \otimes U \otimes U \big) \\
    & = \cat{D}\restrict{F(u) \leq 1} \big(F_1 \big( A \otimes U \stackrel{f}{\to} B \big) \big) \:\circ _{Fu}\: F_u\big( A \otimes U \stackrel{(A \otimes U \otimes u)^{-1}}{\longrightarrow} A \otimes U \otimes U \big) \\
    & = F_1(f) \circ F_u \big( A \otimes U \stackrel{A \otimes U}{\longrightarrow} A \otimes U \big)
  \end{align*}
	Now, observe that $A \otimes U \colon A \otimes U \to A \otimes U$, regarded as a morphism $A \to A \otimes U$ in $\cat{C}\restrict{u}$, is an isomorphism in $\cat{C}\restrict{u}$, with inverse $A \otimes u \otimes u \colon A \otimes U \otimes U \to A$, regarded as a morphism $A \to_u A \otimes U$ in $\cat{C}\restrict{u}$. 
  Hence	$F_u(A \otimes U)$ is inverse to $F_u(A \otimes u \otimes u)$ in $\cat{D}\restrict{F(u)}$.
	But now it follows from $F_1(A \otimes u) \circ \theta_{A,U} = F(A) \otimes F(U)$ that $F_u(A \otimes u \otimes u) = F_1(A \otimes u) \otimes F(U)$ is inverted by $\theta_{A,U} \colon F(A) \otimes F(U) \to F(A \otimes U)$, regarded as a morphism $F(A) \to F(A \otimes U)$ in $\cat{D}\restrict{F(u)}$.
	Thus $F_u(A \otimes U) = \theta_{A,U}$, and so
  $F_u(f) = F_1\big(f \colon A \otimes U B\big) \circ \theta_{A,U}$.
  Therefore the morphism $(\varphi,\{F_u\})$ in $\cat{MonScheme}$ is indeed induced by the morphism $F$ in $\cat{MonCat}$.

  Similarly, any object $P$ in $\cat{MonScheme}$ is (monoidally naturally isomorphic to) $\cat{C}\restrict{-}$ for $\cat{C}=P_1$, because there is a bijection between morphism $f \colon A \to B$ in $P_u$ and morphisms $g \colon A \otimes U \to B$ in $P(1)$ by flabbiness. Thus the functors $\cat{MonCat}_\cat{fj} \to \cat{MonScheme}_\cat{sl}\op$ and $\cat{MonCat}_\cat{j} \to \cat{MonScheme}_\cat{l}\op$ are essentially surjective, and so equivalences.
\end{proof}

\changed{Apart from the issue surrounding projectivity of the tensor unit discussed in footnote~\ref{footnote:hyperlocal}, the previous theorem generalises from the topos case the functoriality result that there is a dual equivalence between the following two categories~\cite{awodey:sheafrepresentationsinlogic}: Boolean pretoposes and pretopos morphisms; and so-called affine logical schemes, consisting of a stack $\widetilde{\cat{A}}$ on a topological groupoid $\cat{A}$, with as morphisms a continuous functor $F \colon \cat{A} \to \cat{B}$ together with a pretopos morphism $\widetilde{F} \colon \widetilde{\cat{B}} \to F_*(\widetilde{\cat{A}})$.}

Next we adapt the equivalence between \'etale bundles and sheaves (see~\cite[II.6.5]{maclanemoerdijk:sheaves} and~\cite{cardechi:etale}) to the current setting.

\begin{lemma}
  Morphisms $(X,P) \to (Y,Q)$ in $\cat{MonScheme}$ that do not necessarily satisfy~\eqref{eq:localfunctor} are equivalently given by continuous functions $\varphi \colon X \to Y$ together with a family $F_x \colon Q_{\varphi(x)} \to P_x$ of morphisms in $\cat{MonCat}$ between stalks indexed by $x \in X$ satisfying:
  \begin{itemize} 
    \item all $F_x$ act the same on objects;
    \item for all $f \colon A \to B$ in $Q(v)$, the functions
      $X \supseteq \varphi^{-1}(v) \to \Lambda P$ given by
      \[
        x \mapsto F_x\big( [f]_{\varphi(x)} \big)
      \]
      are continuous, where $[-]_x \colon P_u \to P_x = \colim_{x \in u} P_u$ are the stalk maps, and $\Lambda P = \coprod_{A,B \in P_x, x \in X} P_x(A,B)$ has a base of open sets
      $
        \{ [g]_x \mid x \in u \}
      $
      for $g \colon A \to B$ in $P(u)$;
    \item the coherence morphisms $\theta_I^x \colon I \to F_x(I)$ in
      $Q_x$ form a continuous function $X \to \coprod_{x \in X} Q_x\big(I,F_x(I)\big)$, where the latter has a base of open sets
      $
        \{ [\theta^u_I]_x \mid x \in u\}
      $
      for $\theta^u_I \colon I \to F_x(I)$ in $Q_u$;
    \item the coherence morphisms 
      $\theta_{A,B}^x \colon F_x(A) \otimes F_x(B) \to F_x(A \otimes B)$ in $Q_x$ form a continuous function $X \to \coprod_{A,B \in P(x), x \in X} Q_x\big( F_x(A) \otimes F_x(B), F_x(A \otimes B) \big)$, where the latter has a base of open sets
      $
        \{ [ \theta_{A,B}^u]_x \mid x \in u\}
      $
      for $\theta_{A,B}^u \colon F_x(A) \otimes F_x(B) \to F_x(A \otimes B)$ in $Q_u$.
  \end{itemize}
\end{lemma}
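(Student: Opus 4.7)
The plan is to set up a bijection between the two descriptions by passing back and forth through the étale-space construction $\Lambda P$, mimicking the classical sheaf/étale bundle correspondence \cite[II.6.5]{maclanemoerdijk:sheaves} but applied to the hom-sheaves $u \mapsto P(u)(A,B)$ rather than to a set-valued sheaf.

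First, going from a morphism $(\varphi,\{F_v\}_v)$ of $\cat{MonScheme}$ to stalk data: for each $x \in X$, the diagram $v \mapsto F_v$ with $\varphi(x) \in v$ induces, by taking colimits, a lax monoidal functor
\[
  F_x \colon Q_{\varphi(x)} = \colim_{\varphi(x) \in v} Q(v)
  \longrightarrow
  \colim_{\varphi(x) \in v} P(\varphi^{-1}(v))
  \longrightarrow P_x.
\]
Naturality of $\{F_v\}_v$ in $v$ forces all $F_x$ to act identically on objects (since all $F_v$ do, as in Theorem~\ref{thm:functoriality}). For a morphism $f \in Q(v)$ the section $F_v(f) \in P(\varphi^{-1}(v))(A,B)$ has stalk $[F_v(f)]_x = F_x([f]_{\varphi(x)})$ at each $x \in \varphi^{-1}(v)$, and by the very definition of the topology on $\Lambda P$ this exhibits $x \mapsto F_x([f]_{\varphi(x)})$ as continuous. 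The same argument applied to the local coherence morphisms $\theta^v_I$ and $\theta^v_{A,B}$ of $F_v$ gives the two remaining continuity conditions.

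For the converse, suppose we are given $\varphi$ and a family $\{F_x\}_{x \in X}$ with the three listed continuity properties. Fix a morphism $f \colon A \to B$ in $Q(v)$. The map $s_f \colon \varphi^{-1}(v) \to \Lambda P$, $x \mapsto F_x([f]_{\varphi(x)})$ is continuous by assumption, and lands in the subspace $\coprod_{x \in \varphi^{-1}(v)} P_x(F_x(A),F_x(B))$. This subspace is precisely the étale space of the hom-sheaf $u \mapsto P(u)(F(A),F(B))$ on $\varphi^{-1}(v)$; applying the classical correspondence between continuous sections of an étale space and elements of the sheaf yields a unique $F_v(f) \in P(\varphi^{-1}(v))$ with $[F_v(f)]_x = F_x([f]_{\varphi(x)})$ for all $x$. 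The coherence morphisms $\theta^v_I$ and $\theta^v_{A,B}$ of a candidate monoidal functor $F_v$ are reconstructed the same way from the second and third continuity conditions.

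The only substantial work, and the main obstacle, is verifying that these pointwise-defined $F_v$ actually assemble into a morphism in $\cat{MonScheme}$. Functoriality of $F_v$ (preservation of identities and composition), naturality in $v$ (compatibility with the restriction functors $Q_{u \leq v}$ and $P_{\varphi^{-1}(u) \leq \varphi^{-1}(v)}$), and the naturality and coherence axioms required of a lax monoidal functor are all equations between morphisms in the various $P(\varphi^{-1}(v))$. Each such equation can be checked stalkwise: both sides are sections with the same stalk at every $x$, because the analogous equation holds in $P_x$ for the given stalk data $F_x$. Since a section of a sheaf is determined by its stalks (the sheaf $u \mapsto P(u)(A,B)$ separates points of $\Lambda P$), the equation holds globally. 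Finally, the two passages are mutually inverse by construction: starting from $\{F_v\}$, taking stalks, and reassembling by unique lifting of continuous sections returns the same $F_v(f)$, while starting from $\{F_x\}$, reassembling to $F_v$, and re-taking stalks returns the original $F_x$ by the formula $[F_v(f)]_x = F_x([f]_{\varphi(x)})$.
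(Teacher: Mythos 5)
Your proposal is correct and follows the same overall shape as the paper's proof: the forward direction is identical (induce $F_x$ on stalks by taking colimits over the opens containing $x$, and read off the continuity conditions from the topology on $\Lambda P$), and the converse in both cases proceeds by extracting local representatives $g_x \in P(\varphi^{-1}(v_x))$ from continuity and gluing them. The difference is in how the gluing is justified. You package it as the classical correspondence between continuous sections of an \'etale space and elements of a sheaf, applied to the hom-sheaves $u \mapsto P(u)(A,B)$; the paper instead glues explicitly inside the monoidal category, defining $F_v(f) := \bigvee_{x} g_x$ via universal joins of central idempotents and then checking functoriality and the lax monoidal axioms stalkwise using Lemma~\ref{lem:epimonoisolocal}. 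Your route is cleaner but hides two points you should make explicit. First, you need that $u \mapsto P(u)(A,B)$ really is a sheaf of sets with stalk $P_x(A,B)$; this does follow from the equaliser form of the sheaf condition in $\cat{MonCat}$ together with the requirement that all $P_u$ share the same objects and the restrictions are identity on objects, but it is not automatic from ``$P$ is a sheaf of categories'' without that observation. Second, in the finitary case ($\cat{MonScheme}_{\cat{sl}}$) the sheaf condition is only directly available for finite covers of compact basic opens, so the gluing of the a priori infinite family $\{g_x\}$ needs the compactness of $\varphi^{-1}(v)$ (Lemma~\ref{lem:basiscompact}) to reduce to a finite subcover before universal \emph{finite} joins suffice; the paper states this reduction explicitly, and your appeal to the \'etale correspondence silently relies on the extension of the basis-sheaf to the whole space that this compactness argument provides. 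Neither point is a genuine obstruction, but both deserve a sentence.
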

\begin{proof}
  Fix a continuous function $\varphi \colon X \to Y$.
  Given $\{F_v\}_{v \in \mathcal{O}(Y)}$, take $F_x$ to be the morphism $Q_{\varphi(x)} \to P_x$ in $\cat{MonCat}$ induced by the fact that the stalk $P_x$ is the colimit of $P_{\varphi^{-1}(v)}$ over $x \in \varphi^{-1}(x)$, and the stalk $Q_{\varphi(x)}$ is the colimit of $Q_v$ over $\varphi(x) \in v$. This satisfies the conditions by construction.

  Conversely, suppose given $\{F_x\}_{x \in X}$, and fix $v \in \mathcal{O}(Y)$. On objects, set $F_v(A)=F_x(A)$ for any $x \in X$.
  Given $f \colon A \to B$ in $Q_v$, the continuity condition lets us pick opens $v_x$ around each point $x \in u=\varphi^{-1}(v)$ and $g_x \colon A \to B$ in $P(\varphi^{-1}(v_x))$ with $F_x\big([f]_{\varphi(x)}\big) = [g_x]_x$. By universality of joins in $P_u$, we can paste these into $F_v(f) := \bigvee_{x \in u} g_x \colon A \to B$ in $P_u$. In the \changed{$\vee$-}local case, $X$ is a spectral space and hence compact, so finitely many $v_x$ already cover $X$, and universal finite joins of central idempotents in $P_u$ suffice to define $F(f)$. Lemma~\ref{lem:epimonoisolocal} makes $F$ functorial.

  Similarly, there are opens $v_x$ around each point $x \in u$ and $\theta_I^{v_x} \colon I \to F(I)$ and $\theta_{A,B}^{v_x} \colon F(A) \otimes F(B) \to F(A \otimes B)$ in $P_{\varphi^{-1}(v_x)}$ with $[\theta_I^{v_x}]_x=\theta_I^x$ and $[\theta_{A,B}^{v_x}]_x=\theta_{A,B}^x$. These paste into $\theta_I := \bigvee_x \theta_I^{v_x} \colon I \to F(I)$ and $\theta_{A,B} := \bigvee_x \theta_{A,B}^{v_x} \colon F(A) \otimes F(B) \to F(A \otimes B)$ in $\cat{D}$. These make the functor $F_v$ into a morphism in $\cat{MonCat}$ by Lemma~\ref{lem:epimonoisolocal} because $\theta_I^x$ and $\theta^x_{A,B}$ make $F_x$ into a morphism in $\cat{MonCat}$ for each point $x$. Thus we have defined a natural transformation $F_v \colon Q_v \to P_{\varphi^{-1}(v)}$. 

  It is straightforward to verify that these two constructions are inverses.
\end{proof}

We can now recognise~\eqref{eq:localfunctor} as a condition on the morphisms $F_x$ between \changed{($\vee$- or $\bigvee$-)}local categories, to make the previous lemma into an equivalence.

\begin{definition}\label{def:localfunctor}
  A \emph{morphism of \changed{($\vee$- or $\bigvee$-)}local monoidal categories} $F \colon \cat{C} \to \cat{D}$ is a morphism in $\cat{MonCat_{(f)j}}$ that is conservative on central idempotents: if $F(u)=1$, then the central idempotent $u$ itself must be $1$.
\end{definition}

If $\cat{C}$ and $\cat{D}$ are toposes, the previous definition means that $F$ reflects all isomorphisms~\cite[Definition~3.3.2]{breiner:logicalschemes}. 
If $\cat{C}$ and $\cat{D}$ are the frames of opens of topological spaces, it is equivalent to the continuous function preserving focal points. 

\begin{proposition}
  Let $P \colon \mathcal{O}(X)\op \to \cat{MonCat}$ and $Q \colon \mathcal{O}(Y)\op \to \cat{MonCat}$ be sheaves of \changed{($\vee$- or $\bigvee$-)}local categories, let $\varphi \colon X \to Y$ be a continuous function, and let $F \colon Q \Rightarrow \varphi_* P$ be a natural transformation.
  Condition~\eqref{eq:localfunctor} holds if and only if the induced functors $F_x$ on stalks are morphisms of \changed{($\vee$- or $\bigvee$-)}local monoidal categories.
\end{proposition}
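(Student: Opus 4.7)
The plan is to unpack both directions via the germ/point correspondence for central idempotents. The key setup, which I would establish first, is the following: via the defining condition $X = \ZI(P_1)$ for a MonScheme object, we have $\ZI(P_1) \cong \mathcal{O}(X)$, and similarly $\ZI(Q_1) \cong \mathcal{O}(Y)$. In both $\cat{MonScheme}_{\cat{sl}}$ (where $X$ is spectral) and $\cat{MonScheme}_{\cat{l}}$ (where $X$ arises as the completely prime spectrum of $\ZI(P_1)$), the base spaces are sober, so two central idempotents of $P_1$ coincide iff the corresponding opens contain the same points. The crucial germ fact, extracted from Lemma~\ref{lem:zi:stalks} (and its sheaf-theoretic version via flabbiness), is that for $a \in \ZI(P_1)$ and $x \in X$, the germ $a_x \in \ZI(P_x)$ equals $1$ iff $a \in x$ (where $x$ is viewed as a prime filter), iff $x \in a$ as an open. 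Moreover, every central idempotent of $Q_{\varphi(x)}$ is of the form $u_{\varphi(x)}$ for some $u \in \ZI(Q_1)$, and by naturality of $F$ along the stalk maps $Q_1 \to Q_{\varphi(x)}$ and $P_1 \to P_x$, we have $F_x(u_{\varphi(x)}) = (F_1(u))_x$.

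For the forward direction, assuming~\eqref{eq:localfunctor}, the morphisms $F_1(u)$ and $\varphi^{-1}(u)$ represent the same central idempotent of $P_1$ (modulo $\theta_I$). Join preservation of each $F_x$ follows: $F_1$ acts on central idempotents as $\varphi^{-1}$, which preserves all joins, and naturality transfers this to each stalk. For conservativity, given $w = u_{\varphi(x)} \in \ZI(Q_{\varphi(x)})$ with $F_x(w) = 1$, chase the identity $F_x(w) = (F_1(u))_x = (\varphi^{-1}(u))_x = 1$ through the germ fact to conclude $x \in \varphi^{-1}(u)$, hence $\varphi(x) \in u$, hence $w = 1$.

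For the backward direction, conservativity of each $F_x$ together with sobriety of $X$ forces $F_1(u) = \varphi^{-1}(u)$ for every $u \in \ZI(Q_1)$. Indeed, by sobriety it suffices to check membership of each $x \in X$, and the chain
\[
  x \in F_1(u) \iff (F_1(u))_x = 1 \iff F_x(u_{\varphi(x)}) = 1 \iff u_{\varphi(x)} = 1 \iff \varphi(x) \in u \iff x \in \varphi^{-1}(u)
\]
uses the germ fact at the outer equivalences, naturality in the middle, and the two directions of the third equivalence come respectively from conservativity of $F_x$ and from $F_x$ preserving the tensor unit (hence the top central idempotent) via the invertible $\theta_I$.

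The main obstacle I anticipate is not the logical chain, which is short, but carefully justifying the two background facts we rely on: (i) the identification $\ZI(P_1) \cong \mathcal{O}(X)$ and the germ-at-a-point characterisation, which in the $\bigvee$-local setting implicitly requires $X$ to be sober, and (ii) the surjectivity of $\ZI(Q_1) \to \ZI(Q_{\varphi(x)})$ on central idempotents, which needs the flabbiness of the sheaf and the description of $\ZI(Q_{\varphi(x)})$ from Lemma~\ref{lem:zi:stalks} to extend from the canonical sheaves of the representation theorem to arbitrary objects of $\cat{MonScheme}$. Once those are in place, the two directions are symmetric applications of the same dictionary between central idempotents, opens, germs and points.
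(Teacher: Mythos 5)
Your proof is correct and follows essentially the same route as the paper: the paper's argument is exactly the chain $F_1(u) \in x \iff F_1(u) \sim_x 1 \iff F_x([u]_{\varphi(x)})=1 \iff [u]_{\varphi(x)}=1 \iff u \in \varphi(x)$, with conservativity (Definition~\ref{def:localfunctor}) supplying the middle equivalence, which is the same dictionary between opens, germs and points that you set up. Your version is merely more explicit about the background facts (sobriety, surjectivity of $\ZI(Q_1) \to \ZI(Q_{\varphi(x)})$ via Lemma~\ref{lem:zi:stalks} and flabbiness) that the paper leaves implicit.
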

\begin{proof}
  It suffices to see that if $F \colon \cat{C} \to \cat{D}$ is a morphism in $\cat{MonCat_{(f)j}}$ such that each $F_x$ is a morphism of \changed{($\vee$- or $\bigvee$-)}local monoidal categories, $F_1(u) \in x \iff u \in \varphi(x)$.

  For a central idempotent $u$ in $\cat{C}$, observe that $F_1(u) \in x$ if and only if $F_1(u) \sim_x 1$, if and only if $F_x\big( [u]_{\varphi(x)} \big)=1$. Definition~\ref{def:localfunctor} makes this equivalent to $[u]_{\varphi(x)} = 1$. But that holds exactly when $u \in \varphi(x)$.
\end{proof}

\section{Embedding}\label{sec:embedding}

The goal of this setion is to prove that any small stiff monoidal category can be freely completed with universal (finite) joins of central idempotents.
The guiding idea will be that objects of the completion of $\cat{C}$ with universal (finite) joins can be thought of as formal colimits of the appropriate kind. 

\begin{definition}
  A morphism $f \colon A \to B$ in a monoidal category \emph{restricts} to a central idempotent $u$ if it factors through $B \otimes u$ via some $g \colon A \to U$.
  \[
    \begin{tikzcd}[row sep=5mm]
      A \ar[rr,"f"] \ar[ddrr,dashed,swap,"g"] & &  B \\ ~\\
      && B \otimes U \ar[uu,swap,"\rho_B \circ (B \otimes u)"]\\ 
    \end{tikzcd}
    \qquad\qquad
    \begin{pic}
      \node[morphism,width=8mm] (f) at (0,0) {$f$};
      \draw (f.north) to ++(0,.5) node[above]{$B$};
      \draw (f.south) to ++(0,-.5) node[below]{$A$};
    \end{pic}
    \quad=\quad
    \begin{pic}
      \node[morphism,width=8mm] (f) at (0,0) {$g$};
      \node[dot] (d) at ([yshift=5mm]f.north east) {};
      \draw (f.north west) to ++(0,.5) node[above]{$B$};
      \draw (f.north east) to node[right=0mm]{$U$} (d);
      \draw (f.south) to ++(0,-.5) node[below]{$A$};
    \end{pic}
  \]
\end{definition}

It is clear that if $u \leq v$ are central idempotents, then any morphisms $f \colon A \otimes U \to B$ restricts to $v$.

\begin{proposition}
  For a small stiff monoidal category $\cat{C}$, there is a category $D[\cat{C}]$:
  \begin{itemize}
    \item \emph{objects} are pairs $\tuple{D,A}$ of a down-closed $D \subseteq \ZI(\cat{C})$ and an object $A \in \cat{C}$;
    \item \emph{morphisms} $\tuple{D,A} \to \tuple{E,B}$ in $D[\cat{C}]$ are families $\{\eta_u \colon A \otimes U \to B\}_{u \in D}$ of morphisms in $\cat{C}$ such that each $\eta_u$ restricts to some $v \in E$, and if $u \leq u'$:~\footnote{That is, $\eta$ is a compatible family for the presheaf $\cat{C}\restrict{(-)}\colon D\op \to \cat{MonCat}$.}
    \[\begin{pic}
      \node[morphism,width=8mm] (f) at (0,0) {$\eta_{u}$};
      \draw (f.north) to ++(0,.3) node[above]{$B$};
      \draw (f.south west) to ++(0,-.8) node[below]{$A$};
      \draw (f.south east) to ++(0,-.8) node[below]{$U$};
    \end{pic}
    \quad=\quad
    \begin{pic}
      \node[morphism,width=8mm] (f) at (0,0) {$\eta_{u'}$};
      \node[dot] (d) at ([yshift=-5mm]f.south east) {};
      \draw (f.north) to ++(0,.3) node[above]{$B$};
      \draw (f.south east) to node[right=-.7mm]{$U'$} (d);
      \draw (d) to ++(0,-.3) node[below]{$U$};
      \draw (f.south west) to ++(0,-.8) node[below]{$A$};
    \end{pic}\]
    \item the \emph{identity} on $\tuple{D,A}$ has components $A \otimes u \colon A \otimes U \to D$;
    \item Composition of $\eta \colon \tuple{D,A} \to \tuple{E,B}$ and $\zeta \colon \tuple{E,B} \to \tuple{F,C}$ is given by $(\zeta \widehat{\circ} \eta)_u = \zeta_v \circ \tilde{\eta}_{u,v}$ for (any) $v \in E$ and $\tilde{\eta}_{u,v} \colon U\otimes A \to V\otimes B$ satisfying $\eta_u = (B \otimes v) \circ \tilde{\eta}_{u,v}$.
    % https://q.uiver.app/?q=WzAsNCxbMCwwLCJYIFxcb3RpbWVzIFUiXSxbMSwwLCJZIl0sWzEsMSwiWSBcXG90aW1lcyBWIl0sWzAsMSwiWiJdLFswLDEsIlxcZXRhX3UiXSxbMiwzLCJcXHpldGFfdiJdLFsyLDEsIlkgXFxvdGltZXMgdiIsMl0sWzAsMiwiXFx0aWxkZXtcXGV0YX1fe3Usdn0iLDJdLFswLDMsIihcXHpldGFcXHdpZGVoYXR7XFxjaXJjfVxcZXRhKV91IiwyLHsic3R5bGUiOnsiYm9keSI6eyJuYW1lIjoiZGFzaGVkIn19fV1d
    \[\begin{tikzcd}[column sep=20mm, row sep=1cm]
      {A \otimes U} & B \\
      C & {B \otimes V}
      \arrow["{\eta_u}", from=1-1, to=1-2]
      \arrow["{\zeta_v}", from=2-2, to=2-1]
      \arrow["{B \otimes v}"', from=2-2, to=1-2]
      \arrow["{\tilde{\eta}_{u,v}}"', from=1-1, to=2-2]
      \arrow["{(\zeta\widehat{\circ}\eta)_u}"', from=1-1, to=2-1]
    \end{tikzcd}\]
  \end{itemize}
\end{proposition}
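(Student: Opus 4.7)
The plan is to verify the three category axioms. The crux is well-definedness of composition, which is where stiffness of $\cat{C}$ enters. First I would argue that given $\eta_u \colon A \otimes U \to B$ and a choice $v \in E$ with $\eta_u = (B \otimes v) \circ \tilde{\eta}_{u,v}$, the factorisation $\tilde{\eta}_{u,v}$ is \emph{uniquely} determined. Apply the stiffness pullback to the square with $A=B$ and $U=V$: the two ``projections'' $B \otimes V \otimes V \to B \otimes V$ are $B \otimes (\lambda_V \circ (v \otimes V))$ and $B \otimes (\rho_V \circ (V \otimes v))$, which coincide by the central equation~\eqref{eq:central}. The universal property of the pullback then forces any two factorisations $g_1, g_2$ with $(B \otimes v) \circ g_1 = (B \otimes v) \circ g_2 = \eta_u$ to be equal.

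Next I would show independence of the choice of $v$. Suppose $\eta_u$ restricts to both $v_1, v_2 \in E$ via $\tilde{\eta}_{u,v_1}, \tilde{\eta}_{u,v_2}$. Since $E$ is down-closed, $v_1 \wedge v_2 \in E$. Applying the stiffness pullback to $B, V_1, V_2$, the outer commuting data $(B \otimes v_1) \circ \tilde{\eta}_{u,v_1} = \eta_u = (B \otimes v_2) \circ \tilde{\eta}_{u,v_2}$ yields a unique $\tilde{\eta}_{u, v_1 \wedge v_2} \colon A \otimes U \to B \otimes V_1 \otimes V_2$ whose projections are $\tilde{\eta}_{u,v_1}$ and $\tilde{\eta}_{u,v_2}$. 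The compatibility condition for $\zeta$ (applied to $v_1 \wedge v_2 \leq v_i$) then gives $\zeta_{v_i} \circ \tilde{\eta}_{u,v_i} = \zeta_{v_1 \wedge v_2} \circ \tilde{\eta}_{u, v_1 \wedge v_2}$ for $i=1,2$, so both possible values of $(\zeta \widehat{\circ} \eta)_u$ coincide.

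With well-definedness established, I would verify that $\{(\zeta \widehat{\circ} \eta)_u\}_{u \in D}$ is genuinely a morphism in $D[\cat{C}]$: each $(\zeta \widehat{\circ} \eta)_u = \zeta_v \circ \tilde{\eta}_{u,v}$ restricts to an element of $F$ because $\zeta_v$ does (post-composition preserves restriction); the compatibility under $u \leq u'$ in $D$ follows by choosing a common $v$ witnessing both restrictions and combining the compatibilities of $\eta$ and $\zeta$. For identities, note $(\mathrm{id}_{\tuple{E,B}})_v = B \otimes v$, so $(\mathrm{id}_{\tuple{E,B}} \widehat{\circ} \eta)_u = (B \otimes v) \circ \tilde{\eta}_{u,v} = \eta_u$; for the other side, in $(\eta \widehat{\circ} \mathrm{id}_{\tuple{D,A}})_u$ take $v = u$ so that $(\mathrm{id}_{\tuple{D,A}})_u = A \otimes u$ trivially factors via $\widetilde{A \otimes u}_{u,u} = A \otimes U$, giving $\eta_u \circ (A \otimes U) = \eta_u$. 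Associativity reduces to a diagram chase: picking $v \in E$ and $w \in F$ witnessing the successive factorisations, both orderings of the triple composition produce $\xi_w \circ \tilde{\zeta}_{v,w} \circ \tilde{\eta}_{u,v}$, appealing once more to uniqueness of factorisations to match intermediate data.

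The principal obstacle is the very first step: without stiffness the factorisations $\tilde{\eta}_{u,v}$ would be defined only up to a nontrivial automorphism of $B \otimes V$, and one could not guarantee that post-composition with $\zeta_v$ eliminates the ambiguity. Every subsequent step---independence of $v$, compatibility of the composite family, associativity---ultimately reduces to this uniqueness.
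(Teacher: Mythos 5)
Your proposal is correct and follows essentially the same route as the paper: the heart of both arguments is the stiffness pullback applied to $B$, $V_1$, $V_2$, producing a factorisation through $v_1\wedge v_2$ whose projections recover the given ones, after which the compatibility of $\zeta$ under $v_1\wedge v_2\leq v_i$ yields independence of the choice of $v$ (and the remaining axioms are the same routine checks the paper leaves to the reader). The one genuine addition is your opening observation that taking $U=V$ in the stiffness square, whose two projections coincide by the central equation~\eqref{eq:central}, makes the factorisation $\tilde{\eta}_{u,v}$ literally unique; the paper instead handles arbitrary factorisations through the same $v$ as the special case $v_1=v_2$ of its meet argument, so both treatments cover the same ground.
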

\begin{proof}
  We prove that the composition is independent of the choice of $v$ and $\tilde{\eta}_{u,v}$. Suppose that $v, v' \in E$ satisfy $v \leq v'$, and that $\eta_u$ restricts to $v$ via $\tilde{\eta} \colon A \otimes U \to B \otimes V$, and hence also to $v'$ via a morphism $\tilde{\eta}' = (B \otimes m_{v,v'}) \circ \tilde{\eta}$. Then:
  % \begin{equation}\label{eq:welldefined_compcdown}
  %   \zeta_v' \circ \tilde{\eta}'
  %   =
  %   \zeta_v' \circ (B \otimes m_{v,v'}) \circ \tilde{\eta}
  %   =
  %   \zeta_v \circ \tilde{\eta}
  % \end{equation}
  % https://q.uiver.app/?q=WzAsNSxbMCwxLCIgWSJdLFsxLDAsIlggXFxvdGltZXMgVSJdLFszLDAsIlkgXFxvdGltZXMgViciXSxbMSwxLCJZIFxcb3RpbWVzIFYiXSxbMywxLCJaIl0sWzEsMiwiXFx0aWxkZXtcXGV0YX0nIl0sWzMsMl0sWzMsNCwiXFx6ZXRhX3YiLDJdLFsxLDAsIlxcZXRhX3UiLDJdLFsyLDQsIlxcemV0YV97did9Il0sWzEsMywiXFx0aWxkZXtcXGV0YX0iXSxbMywwLCJZIFxcb3RpbWVzIHYnIl1d
  \begin{equation}\label{eq:welldefinedcomposition}\tag{$*$}
  \begin{tikzcd}[row sep=1cm, column sep=12mm]
    & {A \otimes U} && {B \otimes V'} \\
    { B} & {B \otimes V} && C
    \arrow["{\tilde{\eta}'}", from=1-2, to=1-4]
    \arrow[from=2-2, to=1-4]
    \arrow["{\zeta_v}"', from=2-2, to=2-4]
    \arrow["{\eta_u}"', from=1-2, to=2-1]
    \arrow["{\zeta_{v'}}", from=1-4, to=2-4]
    \arrow["{\tilde{\eta}}", from=1-2, to=2-2]
    \arrow["{B \otimes v'}", from=2-2, to=2-1]
  \end{tikzcd}\end{equation}
  Now suppose $\eta_u$ restricts to $v_1$ and $v_2$ in $E$ via $\tilde{\eta}_i \colon A \otimes U \to B \otimes V_i$. By stiffness, it then  restricts to $v_1 \wedge v_2$ via $\tilde{\eta} \colon A \otimes U \to B \otimes V_1 \otimes V_2$:
  % https://q.uiver.app/?q=WzAsNSxbMSwxLCJYIFxcb3RpbWVzIFUiXSxbMCwwLCJZIFxcb3RpbWVzIFZfMSBcXG90aW1lcyBWXzIiXSxbMiwwLCJZIFxcb3RpbWVzIFZfMSJdLFswLDIsIlkgXFxvdGltZXMgVl8yIl0sWzIsMiwiWSJdLFswLDEsIlxcdGlsZGV7XFxldGF9Il0sWzEsMiwiWSBcXG90aW1lcyBWXzEgXFxvdGltZXMgdl8yIl0sWzMsNCwiWSBcXG90aW1lcyB2XzIiLDJdLFsyLDQsIlkgXFxvdGltZXMgdl8xIl0sWzEsMywiWSBcXG90aW1lcyB2XzEgXFxvdGltZXMgVl8yIiwyXSxbMCwzLCJcXHRpbGRle1xcZXRhfV8yIl0sWzAsMiwiXFx0aWxkZXtcXGV0YX1fMSIsMl1d
  \[\begin{tikzcd}[row sep=5mm]
    {B \otimes V_1 \otimes V_2} && {B \otimes V_1} \\
    & {A \otimes U} \\
    {B \otimes V_2} && Y
    \arrow["{\tilde{\eta}}", from=2-2, to=1-1]
    \arrow["{B \otimes V_1 \otimes v_2}", from=1-1, to=1-3]
    \arrow["{B \otimes v_2}"', from=3-1, to=3-3]
    \arrow["{B \otimes v_1}", from=1-3, to=3-3]
    \arrow["{B \otimes v_1 \otimes V_2}"', from=1-1, to=3-1]
    \arrow["{\tilde{\eta}_2}", from=2-2, to=3-1]
    \arrow["{\tilde{\eta}_1}"', from=2-2, to=1-3]
  \end{tikzcd}\]
  Applying~\eqref{eq:welldefinedcomposition} twice, we conclude that
  $\zeta_{v_1} \circ \tilde{\eta}_1
  =
  \zeta_{v_1\wedge v_2} \circ \tilde{\eta}
  =
  \zeta_{v_2} \circ \tilde{\eta}_1
  $, so $\zeta \widehat{\circ} \eta$ is well defined.
  It is routine to verify that the composition is associative and satisfies the identity laws.
\end{proof}

\begin{proposition}
  If $\cat{C}$ is a small stiff monoidal category, then $D[\cat{C}]$ is monoidal:
  \begin{itemize}
    \item the tensor product of objects is $\tuple{D_1,A_1} \widehat{\otimes} \tuple{D_2,A_2} = \tuple{D_1 \cap D_2, A_1 \otimes A_2}$;
    \item the tensor unit is $\widehat{I} = \tuple{\ZI(\cat{C},I}$;
    \item the tensor product of morphisms is:
    \[
      (\eta_1 \widehat{\otimes} \eta_2)_u
      = 
      \begin{pic}
      \node[morphism,width=7mm] (f) at (0,0) {$\eta_{1,u}$};
      \node[morphism,width=7mm] (g) at (1.5,0) {$\eta_{2,u}$};
      \node[dot] (d) at (1.5,-.7) {};
      \draw (f.north) to ++(0,.4);% node[above]{$B_1$};
      \draw (g.north) to ++(0,.4);% node[above]{$B_2$};
      \draw (g.south east) to[out=-90,in=0] (d);
      \draw (d) to (1.5,-1) node[below]{$U$};
      \draw (f.south west) to ++(0,-.8);% node[below]{$A_1$};
      \draw (g.south west) to[out=-90,in=90] ++(-.6,-.8);% node[below]{$A_2$};
      \draw[halo] (f.south east) to[out=-90,in=180] (d);
      \node[dot] (d) at (1.5,-.7) {};
      \end{pic}
    \]
  \end{itemize}
\end{proposition}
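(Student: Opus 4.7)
The plan is to verify in order: (i) well-definedness of the tensor product on objects and on morphisms, together with bifunctoriality; (ii) definition, naturality and invertibility of associator and unitors inherited componentwise from $\cat{C}$; and (iii) the pentagon and triangle coherence equations.

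For (i), the intersection of down-closed subsets of $\ZI(\cat{C})$ is down-closed, so $\widehat{\otimes}$ is well-defined on objects and $\widehat{I}$ is trivially well-defined. On morphisms, each component $(\eta_1 \widehat{\otimes} \eta_2)_u$ is a morphism in $\cat{C}$ of the required type $A_1 \otimes A_2 \otimes U \to B_1 \otimes B_2$, so what needs checking is that it restricts to an element of $E_1 \cap E_2$ and that the family is compatible with the ordering on $D_1 \cap D_2$. For the restriction, if $\eta_{i,u}$ restricts to $v_i \in E_i$ via $\tilde{\eta}_{i,u} \colon A_i \otimes U \to B_i \otimes V_i$, then substituting $(B_i \otimes v_i) \circ \tilde{\eta}_{i,u}$ for $\eta_{i,u}$ in the defining formula and rearranging via naturality of the half-braiding of $U$ together with the comonoid identities for $(U,\tinycomult,\tinycounit)$ exhibits a factorisation through $(B_1 \otimes B_2) \otimes V_1 \otimes V_2$, that is, through $v_1 \wedge v_2 \in E_1 \cap E_2$ (which lies in $E_1 \cap E_2$ by down-closure). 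Compatibility with the ordering follows directly from compatibility of each $\eta_i$.

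Bifunctoriality amounts to preservation of identities and composition. The former is an easy graphical computation reducing $(\id \widehat{\otimes} \id)_u$ to $(A_1 \otimes A_2) \otimes u$ via the comonoid identities on $U$. The latter is the technical core: given composable pairs $\eta_i, \zeta_i$, both sides of $(\zeta_1 \widehat{\otimes} \zeta_2) \widehat{\circ} (\eta_1 \widehat{\otimes} \eta_2) = (\zeta_1 \widehat{\circ} \eta_1) \widehat{\otimes} (\zeta_2 \widehat{\circ} \eta_2)$ at $u$ can be brought to the same normal form by choosing $v_1 \wedge v_2$ as the common witness of the restriction for $(\eta_1 \widehat{\otimes} \eta_2)_u$, then rearranging $U$-, $V_1$-, and $V_2$-wires using the half-braiding axiom~\eqref{eq:halfbraidingrespectsu} and the central equation~\eqref{eq:central}.

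For (ii) and (iii), I would define the associator componentwise so that $(\widehat{\alpha}_{\tuple{D_1,A_1},\tuple{D_2,A_2},\tuple{D_3,A_3}})_u$ is the standard associator $\alpha_{A_1,A_2,A_3}$ of $\cat{C}$ with the additional $U$-wire counited out, and analogously for the unitors. Invertibility is immediate since the corresponding inverses in $\cat{C}$ yield inverse families, and the composition formula reduces $\widehat{\alpha} \widehat{\circ} \widehat{\alpha}^{-1}$ to the identity thanks to the counit law for $U$. Naturality of $\widehat{\alpha}$, $\widehat{\lambda}$, $\widehat{\rho}$, and the pentagon and triangle axioms, then reduce to the corresponding equations in $\cat{C}$ after moving $U$-wires aside using the half-braiding.

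The main obstacle is the preservation-of-composition step, because the two sides invoke restrictions taken at different stages: on one side the tensor is formed first and then restricted through $V_1 \otimes V_2$, whereas on the other the factors are restricted separately through $V_1$ and $V_2$ and then tensored. Reconciling these two orderings requires careful graphical manipulation of $U$-, $V_1$- and $V_2$-wires using the half-braidings of central idempotents together with the centrality equation, exactly as in the well-definedness argument of step (i).
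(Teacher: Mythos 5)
Your proposal is correct and follows essentially the same route as the paper: well-definedness of the tensor of morphisms is checked by showing that order-compatibility is inherited (the paper cites strict monoidality of the restriction functors $\cat{C}\restrict{u\leq u'}$ from Lemma~\ref{lem:restrictionfunctor}) and that the tensor restricts to $v_1 \wedge v_2$, which lies in $E_1 \cap E_2$ by down-closure, with the associator, unitors, and coherence then inherited componentwise from $\cat{C}$. The only difference is one of emphasis: you flag the interchange law as the technical core, whereas the paper subsumes it under the routine graphical verifications already carried out for $\cat{C}\restrict{u}$ in Lemma~\ref{lem:Cumonoidal}.
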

\begin{proof}
  To see that the tensor product of morphisms is well-defined, two conditions must be verified. The first holds because $\cat{C}\restrict{u\leq u'}$ is strict monoidal by Lemma~\ref{lem:restrictionfunctor}.
  For the second, if $\eta_i$ restricts to $v_i \in E_i$, then $(\eta_1 \widehat{\otimes} \eta_2)_u$ restricts to $v_1 \wedge v_2$, which is in $E_1 \cap E_2$ because these sets are down-closed.
  The associator and unitors, and the coherence conditions these satisfy, now follow routinely from those in $\cat{C}$.
\end{proof}

Notice that if $A \simeq B$ in $\cat{C}$, then $\tuple{D,A} \simeq \tuple{D,B}$ in $D[\cat{C}]$ for any $D$.
% We use this fact without comment, especially with coherence isomorphisms, \eg $\tuple{D,I} \cong_{\Cdown} \tuple{D,I \otimes I}$.

\begin{lemma}\label{lem:embedding}
  There is a strictly monoidal full embedding $\cat{C} \to D[\cat{C}]$ that sends an object $A$ to $\widehat{A}=\tuple{\ZI(\cat{C},A}$ and a morphism $f$ to $\{f \otimes u\}_{u \in \ZI(\cat{C})}$.
\end{lemma}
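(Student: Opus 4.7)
The plan is to check four things in turn: functoriality, strict monoidality, faithfulness, and fullness, with the strict monoidality step (and the unitor bookkeeping it requires) being the main calculation.

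First I would verify functoriality. The identity on $\widehat{A}$ is the family $\{A \otimes u\}_u$, which is exactly $\{\id[A] \otimes u\}_u$, so identities are preserved. For composition of $f \colon A \to B$ and $g \colon B \to C$ in $\cat{C}$, I would apply the defining formula of $\widehat{\circ}$: since $E = \ZI(\cat{C})$, I may pick $v = u$ in that formula and take $\tilde{f}_{u,u} = f \otimes U \colon A \otimes U \to B \otimes U$, because $(B \otimes u) \circ (f \otimes U) = f \otimes u = \widehat{f}_u$. Then $(\widehat{g} \widehat{\circ} \widehat{f})_u = (g \otimes u) \circ (f \otimes U) = (g \circ f) \otimes u = \widehat{g \circ f}_u$, by bifunctoriality of $\otimes$.

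Second, I would check strict monoidality. On objects, $\widehat{A} \widehat{\otimes} \widehat{B} = \tuple{\ZI(\cat{C}) \cap \ZI(\cat{C}), A \otimes B} = \widehat{A \otimes B}$, and $\widehat{I}$ is the unit of $D[\cat{C}]$ by definition. On morphisms, $(\widehat{f_1} \widehat{\otimes} \widehat{f_2})_u$ is obtained from the graphical formula by inserting the comultiplication of $u$ and then applying $(f_1 \otimes u) \otimes (f_2 \otimes u)$ with the appropriate half-braiding; the comultiplication followed by $u \otimes u$ reduces to $u$ via the comonoid identity for central idempotents already displayed after Definition~3.1, so the whole composite collapses to $(f_1 \otimes f_2) \otimes u = \widehat{f_1 \otimes f_2}_u$. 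The coherence isomorphisms in $D[\cat{C}]$ are defined componentwise from those in $\cat{C}$, so $\widehat{\alpha_{A,B,C}}$, $\widehat{\lambda_A}$, $\widehat{\rho_A}$ are precisely the coherence isomorphisms in $D[\cat{C}]$ for $\widehat{A}, \widehat{B}, \widehat{C}$, giving strictness.

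Third, faithfulness follows by evaluating at $u = 1 \colon I \to I$: if $\widehat{f} = \widehat{g}$, then $f \otimes 1 = g \otimes 1$, and composing with the unitor $\rho_B$ yields $f = g$.

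Fourth, for fullness, given $\eta \colon \widehat{A} \to \widehat{B}$, set $f = \rho_B \circ \eta_1 \circ \rho_A^{-1} \colon A \to B$. The compatibility condition of $\eta$ applied to the relation $u \leq 1$ (with mediating morphism $u \colon U \to I$) gives $\eta_u = \eta_1 \circ (A \otimes u)$ up to unitors, and hence $\eta_u = f \otimes u = \widehat{f}_u$ for every $u \in \ZI(\cat{C})$. The restriction clause in the definition of morphisms in $D[\cat{C}]$ is trivially satisfied (every $\eta_u$ restricts to $1 \in E = \ZI(\cat{C})$), so this $f$ exists in $\cat{C}$ and $\widehat{f} = \eta$.

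The main obstacle is the strict monoidality on morphisms: showing that the comultiplication-of-$u$ picture collapses to a single $u$ requires careful use of the comonoid graphical identities, but once that collapse is verified the rest is formal. The other potential irritation is keeping track of unitors when identifying $\eta_1 \colon A \otimes I \to B$ with a morphism $A \to B$ in $\cat{C}$, but this is routine.
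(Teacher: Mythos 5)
Your proof is correct and follows essentially the same route as the paper: verify functoriality and strict monoidality by direct computation (the comonoid/counit identity for $u$ collapsing the tensor-of-morphisms formula), and obtain fullness and faithfulness from the observation that a morphism $\widehat{A} \to \widehat{B}$ is determined by its component at $1$, i.e.\ $D[\cat{C}](\widehat{A},\widehat{B}) \simeq \cat{C}(A \otimes I, B) \simeq \cat{C}(A,B)$. The only cosmetic slip is the spurious $\rho_B$ in your fullness step ($\eta_1$ already has codomain $B$), which does not affect the argument.
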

\begin{proof}
  This is clearly a functor that is injective on objects. 
  If $E \subseteq \ZI(\cat{C})$ is down-closed and $u \in E$, then $D[\cat{C}]\big( \tuple{\downset u,A}, \tuple(E,B) \big) \simeq \cat{C}(A \otimes U, Y) \simeq \cat{C}\restrict{u}(A,B)$. It follows that $D[\cat{C}](\widehat{A},\widehat{B}) \simeq \cat{C}(A,B)$, so the functor is full.
  Finally, observe that $\widehat{I}$ is the monoidal unit in $D[\cat{C}]$, and that $\widehat{A \otimes B}=\widehat{A} \otimes \widehat{B}$.
\end{proof}

\begin{proposition}\label{prop:ZIDfreeframe}
  If $\cat{C}$ is a small stiff monoidal category, there is an isomorphism of partially ordered sets $\ZI(D[\cat{C}]) \simeq \{ D \subseteq \ZI(\cat{C}) \mid D = \downset D \}$ that sends a down-closed $D \subseteq \ZI(\cat{C})$ to the morphism $\tuple{D,I} \to \widehat{I}$ with component $u \circ \lambda_I$ at $u \in D$.
\end{proposition}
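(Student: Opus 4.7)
The plan is to exhibit $\Phi \colon D \mapsto \eta^D$ as an order isomorphism, where $\eta^D \colon \tuple{D,I} \to \widehat{I}$ has components $\eta^D_u := u \circ \lambda_U \colon I \otimes U \to I$ for $u \in D$. First I would verify that $\eta^D$ is a well-defined morphism in $D[\cat{C}]$: the compatibility condition for $u \leq u'$ in $D$ reads $u \circ \lambda_U = u' \circ \lambda_{U'} \circ (I \otimes m_{u,u'})$, which follows from $u = u' \circ m_{u,u'}$ together with naturality of $\lambda$; each component $\eta^D_u$ visibly restricts to $u$ itself via the identity $g = \id_{I \otimes U}$.

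Next I would establish that $\eta^D$ is a central idempotent in $D[\cat{C}]$. The half-braiding on $\tuple{D,I}$ is defined componentwise from the canonical half-braiding $\rho^{-1} \circ \lambda$ of $I$ in $\cat{C}$, and both the central equation \eqref{eq:central} and the left-idempotency of $\eta^D$ reduce, component by component, to the same properties of each $u \in D$ inside $\cat{C}$. Explicitly, the inverse of $\lambda \circ (\eta^D \otimes \id_{\tuple{D,I}})$ has its $u$-component given by the comultiplication $\tinycomult$ of $u$ in $\cat{C}$, tensored with the appropriate identities; that the resulting family is a morphism in $D[\cat{C}]$ (i.e.\ satisfies the presheaf compatibility in $u$) follows from the graphical identities listed just before Lemma~\ref{lem:zi:cartesian}.

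For the order-embedding part I would argue: if $D \subseteq E$, the family $m_u = \id_{I \otimes U}$ (for $u \in D \subseteq E$) defines a morphism $\tuple{D,I} \to \tuple{E,I}$ in $D[\cat{C}]$ satisfying $\eta^E \mathbin{\widehat{\circ}} m = \eta^D$, so $\eta^D \leq \eta^E$. Conversely, any such mediating $m$ has a component $m_u \colon I \otimes U \to I$ for each $u \in D$ which by definition restricts to some $v \in E$; taking $v = u$ (and using downward-closure of $E$) forces $u \in E$, so $D \subseteq E$. Injectivity then follows from antisymmetry.

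The main obstacle is essential surjectivity: showing every central idempotent $\zeta \colon \tuple{D',A} \to \widehat{I}$ in $D[\cat{C}]$ is equivalent, as a central idempotent, to $\Phi(D)$ for the down-closed set
\[ D \;=\; \downset\big\{ v \in \ZI(\cat{C}) \;\big|\; \exists u \in D',\; \zeta_u \text{ restricts to } v \big\}. \]
Each component $\zeta_u$ factors as $\zeta_u = (v \circ \lambda_V) \circ g_{u,v}$ for some $g_{u,v} \colon A \otimes U \to V$, and these assemble into a morphism $\tuple{D',A} \to \tuple{D,I}$ over $\widehat{I}$. To build an inverse one invokes the central idempotency of $\zeta$: invertibility of $\lambda \circ (\zeta \otimes \id_{\tuple{D',A}})$ in $D[\cat{C}]$ provides, componentwise, data allowing one to absorb $A$ (in the same spirit as the proposition after Lemma~\ref{lem:braiding}, which shows that the domain of a central idempotent determines it up to unique iso). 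The bulk of the work is checking that the resulting iso $\tuple{D',A} \simeq \tuple{D,I}$ is independent of the factorization choices — which should follow from stiffness of $\cat{C}$ exactly as in the proof that composition in $D[\cat{C}]$ is well defined — and that it respects both the counits to $\widehat{I}$ and the half-braidings, so that it is an equality in $\ZI(D[\cat{C}])$.
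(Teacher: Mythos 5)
Your construction of $\Phi$, the verification that $\eta^D$ is a central idempotent componentwise, and the order-embedding argument all track the paper's proof closely (the paper phrases order-reflection as: the component of the mediating map at $u \in D$ composes with $e$ to give $u$, so $u$ factors through some $v \in E$, whence $u \leq v$ and $u \in E$ by down-closure --- your ``taking $v = u$'' is loose but repairable). The problem is the surjectivity step, which is where all the real work lies, and there your argument breaks in two ways.

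First, your candidate preimage $D = \downset\{ v \mid \exists u \in D',\ \zeta_u \text{ restricts to } v\}$ is wrong: every morphism $f \colon A \otimes U \to I$ restricts to the top central idempotent $1 = \id[I]$ (take $g = \rho_I^{-1} \circ f$ in the definition of restriction), so your set always contains $1$ and its down-closure is always all of $\ZI(\cat{C})$. Applied to $\zeta = \Phi(D_0)$ for a proper down-closed $D_0$ this would give $D = \ZI(\cat{C}) \neq D_0$, contradicting the injectivity you just proved. Second, the construction of the isomorphism $\tuple{D',A} \simeq \tuple{D,I}$ is only gestured at (``data allowing one to absorb $A$''), and this is precisely the content the paper has to supply. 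Its route is: use the equality $(\tuple{D',A} \mathbin{\widehat{\otimes}} \zeta)_u = (\zeta \mathbin{\widehat{\otimes}} \tuple{D',A})_u$ and the invertibility of $\tuple{D',A} \mathbin{\widehat{\otimes}} \zeta$ to show, by an explicit graphical computation (precomposing the components of the inverse with $(U \otimes u)^{-1}$), that each component $\zeta_u \colon A \otimes U \to I$ is itself a central idempotent \emph{of $\cat{C}$}. Once that is known, the correct preimage is (the down-closure of) $\{\zeta_u\}_{u \in D'}$, and the components $\zeta_u$ themselves tautologically assemble into the required isomorphism $\tuple{D',A} \to \widehat{\{\zeta_u\}_{u \in D'}}$ --- no choice of factorisation enters, so the well-definedness issue you flag as ``the bulk of the work'' does not arise. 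Without the lemma that the components are central idempotents of $\cat{C}$, there is no reason the domain $A$ can be traded for $I$ at all.
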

\begin{proof}
  For down-closed $D \subseteq \ZI(\cat{C})$, define $\widehat{D} = \tuple{D,I}$, and write $d \colon \widehat{D} \to \widehat{I}$ for the morphism in $D[\cat{C}]$ given by $d_u = u \circ \lambda_I$ for each $u \in D$.

  We first prove that $d$ is a central idempotent in $D[\cat{C}]$. 
  Note that $\widehat{D} \widehat{\otimes} \widehat{D} = \tuple{D,I \otimes I}$. Now $d_u = u \circ \lambda_I = \lambda_I \circ (I \otimes u) = \rho_I \circ (I \otimes u) = \widehat{D}_u$, so the components of $d$ and the identity on $\widehat{D}$ are the same morphisms in $\cat{C}$. Hence $d \widehat{\otimes} \widehat{D} = \widehat{D} \widehat{\otimes} d$, and this has an inverse with components $I \otimes u \colon I \otimes U \to I \otimes I$.

  The assignment $D \mapsto d$ is a semilattice morphism, because $\widehat{D} \widehat{\otimes} \widehat{E} \simeq \widehat{D \cap E}$ via an isomorphism with components $\rho_U \circ (u \otimes \lambda_I) = \rho_u \circ (u \otimes \rho_I) \colon U \otimes I \otimes I \to I$.

  Moreover, if $d \leq e$ in $D[\cat{C}]$ then $d$ factors through $e$ via a map $\widehat{D} \to \widehat{E}$. Its component at $u \in D$ must be $u \colon U \to I$ factoring via $v \colon V \to I$ for some $v \in E$. So any $u \in D$ is below some $v \in E$, so $D \subseteq E$. Thus the map $D \mapsto d$ is injective. It remains to show that it is surjective.

  Let $\eta \colon \tuple{D,A} \to \widehat{I}$ be a central idempotent in $D[\cat{C}]$. It suffices to prove that each $\eta_u \colon A \otimes U \to I$ is a central idempotent in $\cat{C}$, for then the maps $\eta_u$ themselves evidently restrict to $\eta_u$ and form an isomorphism $\tuple{D,A} \to \widehat{\{\eta_u\}_{u \in D}}$.
    
  From $(\tuple{D,A} \widehat{\otimes} \eta)_u = (\eta \widehat{\otimes} \tuple{D,A})_u$, it follows that
  \[
    \begin{pic}
      \node[morphism,width=7mm] (eta) at (0,0) {$\eta_u$};
      \node[dot] (d) at (0,-.7) {};
      \draw (d) to ++(0,-.3) node[below]{$U$};
      \draw (d) to[out=0,in=-90] ([xshift=1mm]eta.south east);
      \draw (eta.south west) to[out=-90,in=90] ++(-.6,-.8) node[below]{$A$};
      \draw[halo] (d) to[out=180,in=-90] ([xshift=-10mm]eta) node[dot]{};
      \draw (-1.6,-1) node[below]{$A$} to (-1.6,.5);
    \end{pic}
    \quad=\quad
    \begin{pic}
      \node[morphism,width=7mm] (eta) at (0,0) {$\eta_u$};
      \node[dot] (d) at (1,-.7) {};
      \draw (.3,-1) node[below]{$A$} to[out=90,in=-90] (1,.5);
      \draw (d) to ++(0,-.3) node[below]{$U$};
      \draw[halo] (d) to[out=180,in=-90] ([xshift=1mm]eta.south east);
      \draw (eta.south west) to ++(0,-.8) node[below]{$A$};
      \draw (d) to[out=0,in=-90,looseness=.8] ++(.3,.7) node[dot]{};
    \end{pic}
  \]
  and upon simplification and tensoring with $U$, we find
  $\eta \otimes A \otimes U = A \otimes U \otimes \eta$.

  Invertibility of $\tuple{D,A} \widehat{\otimes} \eta$ gives a map $\zeta \colon \tuple{D,A} \to \tuple{D, A \otimes A}$ %with components $\zeta_u \colon A \otimes U \to A \otimes A$,
  such that:
  % \[
  % \begin{pic}
  %    \node[width=7mm,morphism] (f) at (0,0) {$\zeta_u$};
  %     \draw (f.north west) to ++(0,.4) node[above]{$X$};
  %     \draw (f.north east) to ++(0,.4) node[above]{$X$};
  %     \draw (f.south west) to ++(0,-.4) node[below]{$X$};
  %     \draw (f.south east) to ++(0,-.4) node[below]{$U$};
  % \end{pic}
  % \quad=\quad
  % \begin{pic}
  %    \node[width=7mm,morphism] (f) at (0,0) {$\zeta_u$};
  %     \draw (f.north west) to ++(0,.4) node[above]{$X$};
  %     \draw (f.north east) to ++(0,.4) node[above]{$X$};
  %     \node[dot] (d) at (1.5,-.7) {};
  %     \draw (f.south west) to ++(0,-.4) node[below]{$X$};
  %     \draw (d) to ++(0,-.4) node[below]{$U$};
  %     \draw(f.south east) to[out=-90,in=180] (d);
  %     \draw (d) to[out=0,in=-90] ++(0.5,1) node[dot]{};
  % \end{pic}
  % ,\]\todo{right diagram similarly needs fixing alignment}
  % such that
  \[
    \begin{pic}
      \node[width=7mm,morphism] (f) at (0,0) {$\zeta_u$};
      \draw (f.north west) to ++(0,.4) node[above]{$A$};
      \draw (f.north east) to ++(0,.4) node[above]{$A$};
      \node[morphism,width=7mm] (g) at (1.5,0) {$\eta_{u}$};
      \node[dot] (d) at (1.5,-.7) {};
      \draw (g.south east) to[out=-90,in=0] (d);
      \draw (d) to (1.5,-1) node[below]{$U$};
      \draw (f.south west) to ++(0,-.8) node[below]{$A$};
      \draw (g.south west) to[out=-90,in=90] ++(-.6,-.8) node[below]{$A$};
      \draw[halo] (f.south east) to[out=-90,in=180] (d);
    \end{pic}
    \;=
    \begin{pic}
      \draw (0,0) node[below]{$A$} to ++(0,1.5) node[above]{$A$};
      \draw (.4,0) node[below]{$A$} to ++(0,1.5) node[above]{$A$};
      \draw (.8,0) node[below]{$U$} to ++(0,1) node[dot]{};
    \end{pic}
    \qquad\qquad\qquad
    \begin{pic}
      \node[morphism,width=7mm] (f) at (0,0) {$\zeta_u$};
      \node[morphism,width=7mm,anchor=south west] (g) at ([yshift=2mm]f.north east) {$\eta_{u}$};
      \node[dot] (d) at (.6,-.5) {};
      \draw (f.north west) to ++(0,.6) node[above]{$A$};
      \draw (g.south west)to[out=-90,in=90] (f.north east); 
      \draw ([xshift=1mm]g.south east) to[out=-90,in=0,looseness=.8] (d);
      \draw (d) to ++(0,-.3) node[below]{$U$};
      \draw (f.south west) to ++(0,-.6) node[below]{$A$};
      \draw[halo] (f.south east) to[out=-90,in=180] (d);
    \end{pic}
    \;=\;
    \begin{pic}
      \draw (0,0) node[below]{$A$} to ++(0,1.5) node[above]{$A$};
      \draw (.4,0) node[below]{$U$} to ++(0,1) node[dot]{};
    \end{pic}
  \]
  Precomposing with $(U \otimes u)^{-1}$ (twice on the left, once on the right) and simplifying shows that
  \[\begin{pic}
    \node[width=7mm,morphism] (f) at (0,0) {$\zeta_u$};
    \node[dot] (d) at (.6,-.5) {};
    \node[dot] (d2) at (1.1,0) {};
    \draw (f.north west) to ++(0,.4) node[above]{$A$};
    \draw (f.north east) to ++(0,.4) node[above]{$A$};
    \draw (f.south west) to ++(0,-.5) node[below]{$A$};
    \draw (d) to ++(0,-.2) node[below]{$U$};
    \draw (f.south east) to[out=-90,in=180] (d);
    \draw (d) to[out=0,in=-90] (d2);
    \draw (d2) to[out=180,in=-90,looseness=.8] ++(-0.3,.6) node[above]{$U$};
    \draw (d2) to[out=0,in=-90,looseness=.8] ++(+0.3,.6) node[above]{$U$};
  \end{pic}\]
  inverts $A \otimes U \otimes \eta_u$, and so $\eta_u$ is a central idempotent in $\cat{C}$.
\end{proof}

\begin{lemma}\label{lem:DChasjoins}
  If $\cat{C}$ is a small stiff monoidal category, then $D[\cat{C}]$ has universal joins of central idempotents.
\end{lemma}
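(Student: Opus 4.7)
By Proposition~\ref{prop:ZIDfreeframe} the partial order $\ZI(D[\cat{C}])$ is the frame of down-closed subsets of $\ZI(\cat{C})$ under inclusion, so it has all joins, computed as unions. The bottom element $\widehat{\emptyset} = \tuple{\emptyset, I}$ is initial in $D[\cat{C}]$ since morphisms out of it are indexed by the empty set, and $\tuple{E,A} \widehat{\otimes} \widehat{\emptyset} = \tuple{\emptyset, A \otimes I}$ is initial for the same reason. This takes care of the empty join, giving an initial object with $A \widehat{\otimes} 0 \simeq 0$.

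For an arbitrary meet-closed family $\{d_i\}$ of central idempotents, corresponding to down-closed subsets $\{D_i\}$ closed under pairwise intersection, and any object $\tuple{E, A}$, the plan is to show that the cocone with vertex $\tuple{(\bigcup_i D_i) \cap E, A}$ and legs $\tuple{D_i \cap E, A} \to \tuple{(\bigcup_i D_i) \cap E, A}$ given by identity families is the colimit of the diagram with morphisms $\tuple{D_i \cap E, A} \to \tuple{D_j \cap E, A}$ when $D_i \subseteq D_j$. Given any competing cocone $\{f^{(i)} \colon \tuple{D_i \cap E, A} \to \tuple{F, B}\}_i$, I will define a mediating morphism $g \colon \tuple{(\bigcup_i D_i) \cap E, A} \to \tuple{F, B}$ component-wise by $g_u := f^{(i)}_u$ for any index $i$ with $u \in D_i$.

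The hard step is well-definedness. If $u \in D_i \cap D_{i'}$, meet-closure supplies an index $k$ with $D_k = D_i \cap D_{i'}$, and the cocone condition forces both $f^{(i)}$ and $f^{(i')}$ to restrict to $f^{(k)}$ on $D_k \cap E$; hence $f^{(i)}_u = f^{(k)}_u = f^{(i')}_u$. The remaining verifications, namely that $g$ is a morphism in $D[\cat{C}]$ (compatibility for $u \leq u'$ uses down-closure of each $D_i$ to force both components to come from the same $f^{(i)}$; restriction of components to some element of $F$ is inherited pointwise) and that $g$ is the unique mediating morphism (any such must restrict to $f^{(i)}$ along each leg, pinning down every component), are routine and follow directly from the pointwise definition. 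The main subtlety is that meet-closure, rather than the more familiar join-closure of a directed system, is used here by descending to a common lower bound of two indices rather than lifting to a common upper bound.
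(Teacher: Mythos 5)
Your proposal is correct and follows essentially the same route as the paper: invoke Proposition~\ref{prop:ZIDfreeframe} for the frame of down-closed subsets, exhibit $\tuple{(\bigcup_i D_i)\cap E, A}$ as the colimit, define the mediating morphism componentwise by $g_u = f^{(i)}_u$, and use meet-closure of the family to descend to $D_i \cap D_{i'}$ for well-definedness, with the remaining compatibility and restriction checks done pointwise exactly as in the paper. Your explicit treatment of the empty join via the initial object $\tuple{\emptyset, I}$ is a harmless addition that the paper leaves implicit.
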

\begin{proof}
  It follows from Proposition~\ref{prop:ZIDfreeframe} that $\ZI(D[\cat{C}])$ is the free frame on the semilattice $\ZI(\cat{C})$, so it certainly has arbitrary suprema.

  Let $\{D_i\}$ be a family of down-closed subsets of $\ZI(\cat{C})$ closed under intersection, so $\{\widehat{D_i}\}$ is a family of central idempotents of $D[\cat{C}]$ closed under meets. Let $\tuple{E,B}$ be an object in $D[\cat{C}]$. 
  Consider the diagram in $D[\cat{C}]$ consisting of the objects 
  $\tuple{E,B} \widehat{\otimes} \widehat{D_i} \simeq \tuple{D_i \cap E, B}$ and morphisms
  $\tuple{E,B} \widehat{\otimes} \mu_{D_i,D_j} \colon \tuple{E,B} \widehat{\otimes} \widehat{D_i}  \to \tuple{E,B} \widehat{\otimes} \widehat{D_j}$ whenever $D_i \subset D_j$. The components of these maps are just $B \otimes u \colon B \otimes U \to B$ for each $u \in D_i \cap E$, and evidently restrict to $u \in D_i \cap E \subset D_j \cap E$ itself.

  We show that
  $
    \tuple{E,B} \widehat{\otimes} \bigvee \widehat{D_i}
    =
    \tuple{E,B} \widehat{\otimes} \widehat{\bigcup D_i} 
    \simeq
    \tuple{D_i \cap E, B}
  $
  with cocone maps
  \[
    \tuple{E,B} \widehat{\otimes} \mu_{D_i,\bigcup D_i}
    \colon
    \tuple{E,B} \widehat{\otimes} \widehat{D_i}
    \to
    \tuple{E,B} \widehat{\otimes} \widehat{\bigcup D_i}
  \]
  is the colimit of this diagram. 
  Suppose that there were another cocone with vertex $\tuple{F,C}$ and morphisms
  $\zeta_i \colon \tuple{E \cap D_i,B} \to \tuple{F,C}$.
  A mediating morphism $\eta \colon \tuple{E\cap\bigcup D_i, B} \to \tuple{F,C}$ must satisfy
  $\zeta_i = \eta \,\widehat{\circ}\, (\tuple{E,B} \widehat{\otimes} \mu_{D_i,\bigcup D_i})$.
  Componentwise, by the definition of composition in $D[\cat{C}]$, this yields for any $u \in E \cap D_i$:
  % https://q.uiver.app/?q=WzAsNCxbMCwwLCJZIFxcb3RpbWVzIFUiXSxbMSwwLCJVIl0sWzEsMSwiWSBcXG90aW1lcyBVIl0sWzAsMSwiWiJdLFswLDEsIlxcbXVfe0RfaSxcXGJpZ2N1cCBEX2l9Il0sWzIsMywiXFxldGFfdSJdLFsyLDEsIlkgXFxvdGltZXMgdSIsMl0sWzAsMywiXFx6ZXRhX3tpLHV9IiwyXSxbMCwyLCIiLDEseyJsZXZlbCI6Miwic3R5bGUiOnsiaGVhZCI6eyJuYW1lIjoibm9uZSJ9fX1dXQ==
  \[\begin{tikzcd}[column sep=10mm, row sep=10mm]
    {Y \otimes U} & U \\
    Z & {Y \otimes U}
    \arrow["{\mu_{D_i,\bigcup D_i}}", from=1-1, to=1-2]
    \arrow["{\eta_u}", from=2-2, to=2-1]
    \arrow["{Y \otimes u}"', from=2-2, to=1-2]
    \arrow["{\zeta_{i,u}}"', from=1-1, to=2-1]
    \arrow[Rightarrow, no head, from=1-1, to=2-2]
  \end{tikzcd}\]
  Hence, $u \in E \cap \bigcup D_i$ forces $\eta_u = \zeta_{i,u}$ for any $i$ with $u \in E \cap D_i$. Hence mediating morphisms is unique. It remains to show that this mediating morphism is well defined.

  First, we show that the definition of $\nu_u$ does not depend on $i$.
  Suppose that $u \in E \cap D_i$ and $u \in E \cap D_j$. Then $u \in E \cap (D_i\cap D_j)$, and we write $i \wedge j$ for the index such that $D_{i \wedge j} = D_i \cap D_j$.
  By the fact that $\{\zeta_i\}$ is a cocone, $\zeta_{i \wedge j} = \zeta_i \widehat{\circ} (\tuple{E,B} \widehat{\otimes} \mu_{i \wedge j,i})$.
  Since the component of $\mu_{i\wedge j,i}$ at $u$ is $Y \otimes u = (B \otimes u) \circ (B \otimes U)$,
  we get $\zeta_{i \wedge j,u} = \zeta_{i,u} \circ (B \otimes U) = \zeta_{i,u}$. Consequently, $\zeta_{i,u} = \zeta_{i \wedge j,u} = \zeta_{j,u}$ as required.

  Next, we check that $\eta$ is indeed a morphism in $D[\cat{C}]$.
  Given $u \leq u' \in E \cap \bigcup D_i$, the restriction condition
  $\eta_u = \eta_{u'} \circ m_{u,u'}$ follows from the corresponding condition for $\zeta_i$ for any $i$ such that $u' \in E \cap D_i$ (and thus also $u \in E \cap D_i$).
  Finally, each $\eta_u$ evidently restricts to a central idempotent in $F$ since all $\zeta_{i,u}$ do.  
\end{proof}

An analogous construction can be carried out for finite joins of central idempotents, by considering only finitely generated downsets, which form free distributive lattice on a meet-semilattice. This proves the Corollary from the introduction.

\begin{corollary}\label{cor:embedding}
  Any small stiff monoidal category allows a central idempotent-preserving monoidal embedding into a category of global sections of a sheaf of \changed{($\vee$- or $\bigvee$-)}local monoidal categories.
\end{corollary}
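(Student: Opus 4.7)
The plan is to factor the embedding through the free completion $D[\cat{C}]$ (resp.\ its finitary analogue) constructed above, and then invoke the representation theorem on the larger category, which by construction has universal joins.

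First, Lemma~\ref{lem:embedding} provides a strict monoidal full embedding $\cat{C} \hookrightarrow D[\cat{C}]$ sending $A \mapsto \widehat{A}$. Because a strict monoidal full embedding preserves the equations and invertibility demands defining central idempotents (and the central equation and half-braiding axioms transport verbatim), this embedding preserves central idempotents, inducing the unit $\ZI(\cat{C}) \to \ZI(D[\cat{C}])$ of the free-frame adjunction: by Proposition~\ref{prop:ZIDfreeframe} it sends $u \in \ZI(\cat{C})$ to the principal downset $\downset u$. By Lemma~\ref{lem:DChasjoins} the category $D[\cat{C}]$ has universal joins of central idempotents.

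Second, I would observe that the free frame on a meet-semilattice $L$---namely the frame $\mathrm{Dwn}(L)$ of down-closed subsets of $L$ described in Proposition~\ref{prop:ZIDfreeframe}---is spatial. This is the standard fact that the Alexandrov topology on a poset (here on $L\op$, with opens the downsets of $L$) is $T_0$, so its completely prime filters are in bijection with the elements of $L$ and the frame of opens recovered from this space is precisely $\mathrm{Dwn}(L)$. Hence $D[\cat{C}]$ lives in $\cat{MonCat}_{\cat{j}}$, and Theorem~\ref{thm:main2} yields a sheaf $F$ of $\bigvee$-local monoidal categories on $\Spec(\ZI(D[\cat{C}]))$ whose category of global sections is monoidally equivalent to $D[\cat{C}]$. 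Composing with the embedding of the previous paragraph produces the desired central-idempotent-preserving monoidal embedding of $\cat{C}$ into the category of global sections.

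For the $\vee$-local variant, the same argument goes through using the analogous finite completion $D_{\mathrm{fin}}[\cat{C}]$ consisting of pairs $\tuple{D,A}$ with $D$ a \emph{finitely generated} down-closed subset of $\ZI(\cat{C})$: its central idempotents form the free distributive lattice on the meet-semilattice $\ZI(\cat{C})$, and the proof of Lemma~\ref{lem:DChasjoins} restricted to finite families yields universal finite joins, placing $D_{\mathrm{fin}}[\cat{C}]$ in $\cat{MonCat}_{\cat{fj}}$ so that Theorem~\ref{thm:main} applies. The embedding into a product of $\vee$-local monoidal categories follows by further composing with Corollary~\ref{cor:subdirect}. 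The only real subtlety is the spatiality claim in the $\bigvee$-case; once that is noted, everything else is bookkeeping.
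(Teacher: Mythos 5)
Your proof is correct and follows essentially the same route as the paper, whose proof simply combines Lemma~\ref{lem:embedding}, Lemma~\ref{lem:DChasjoins}, and Theorem~\ref{thm:main}; your extra attention to spatiality (needed to invoke Theorem~\ref{thm:main2} in the $\bigvee$-case) and to the finitely generated downset variant spells out details the paper leaves implicit. One small inaccuracy: the completely prime filters of the downset frame on $\ZI(\cat{C})$ correspond to \emph{filters} of the semilattice rather than to its elements, but spatiality still holds because the principal filters already separate distinct downsets.
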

\begin{proof}
  Combine Theorem~\ref{thm:main} with Lemmas~\ref{lem:embedding} and~\ref{lem:DChasjoins}.
\end{proof}

\changed{The previous corollary generalises the same results for (pre)toposes~\cite{breiner:logicalschemes,awodey:sheafrepresentations,awodey:sheafrepresentationsinlogic}.}

Finally, we show that $D[\cat{C}]$ is in fact the free such completion of $\cat{C}$.

\begin{theorem}
  If $\cat{C}$ is a small stiff monoidal category, $\cat{D}$ is a monoidal category with universal joins of central idempotents, and $F \colon \cat{C} \to \cat{D}$ a morphism in $\cat{MonCat_s}$, then there is a unique morphism $\widehat{F} \colon D[\cat{C}] \to \cat{D}$ in $\cat{MonCat_j}$ with: % $F = \widehat{F} \circ \widehat{(-)}$.
  \[\begin{tikzcd}[row sep=2mm]
    \cat{C} \ar[rr,"\widehat{(-)}"] \ar[ddrr,swap,"F"]
    & &  D[\cat{C}] \ar[dd,dashed,"\widehat{F}"] \\ ~\\
    & & \cat{D} \\ 
  \end{tikzcd}\]
\end{theorem}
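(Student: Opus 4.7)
The strategy is to exploit the structural decomposition of $D[\cat{C}]$: every object $\tuple{D,A}$ is naturally isomorphic to $\widehat{A} \widehat{\otimes} \tuple{D,I}$, where $\tuple{D,I}$ is a central idempotent that decomposes as the join $\bigvee_{u \in D} \tuple{\downset u, I}$ (by Proposition~\ref{prop:ZIDfreeframe} and Lemma~\ref{lem:DChasjoins}). Commutation of the triangle and preservation of joins of central idempotents therefore force the definition of $\widehat{F}$ on objects. Concretely, Lemma~\ref{lem:functorspreservingZI} gives a semilattice morphism $\overline{F} \colon \ZI(\cat{C}) \to \ZI(\cat{D})$ sending $u$ to $\theta_I^{-1} \circ F(u)$; since $\ZI(\cat{D})$ is a frame and $\ZI(D[\cat{C}])$ is the free frame on $\ZI(\cat{C})$, this extends to a frame morphism $\widetilde{F}(D) = \bigvee_{u \in D} \overline{F}(u)$, whose domain I denote $W_D$. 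I set $\widehat{F}\tuple{D, A} := F(A) \otimes W_D$; in particular $\widehat{F}(\widehat{A}) \simeq F(A)$ because $W_{\ZI(\cat{C})} \simeq I$.

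For morphisms $\eta \colon \tuple{D,A} \to \tuple{E,B}$, universal joins in $\cat{D}$ present $F(A) \otimes W_D$ as the colimit of $F(A) \otimes F(U)$ over $u \in D$ (a set closed under meets since $\overline{F}$ is a semilattice morphism and $D$ is down-closed). Given the components $\eta_u \colon A \otimes U \to B$ restricting to some $v \in E$ via $\tilde{\eta}_{u,v} \colon A \otimes U \to B \otimes V$, I define a morphism $\widehat{\eta}_u \colon F(A) \otimes F(U) \to F(B) \otimes W_E$ as the composition
\[ F(A) \otimes F(U) \xrightarrow{\theta_{A,U}} F(A \otimes U) \xrightarrow{F(\tilde{\eta}_{u,v})} F(B \otimes V) \xrightarrow{\theta_{B,V}^{-1}} F(B) \otimes F(V) \longrightarrow F(B) \otimes W_E, \]
where the last arrow is the inclusion coming from $\overline{F}(v) \leq \widetilde{F}(E)$. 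Two choices of $(v, \tilde{\eta}_{u,v})$ agree when compared at $v_1 \wedge v_2$ using stiffness, so $\widehat{\eta}_u$ is independent of the choice. The compatibility of the family $\{\eta_u\}$ under restriction $u \leq u'$ translates via $F$ into the cocone condition for $\{\widehat{\eta}_u\}_{u \in D}$, yielding a unique mediating morphism $\widehat{F}(\eta) \colon F(A) \otimes W_D \to F(B) \otimes W_E$.

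Routine checks then confirm that $\widehat{F}$ is functorial (identities and composition preserved by uniqueness of mediating morphisms), lax monoidal with $\theta_I^{\widehat{F}} = \theta_I^F$ (invertible) and $\theta^{\widehat{F}}_{\tuple{D_1,A_1},\tuple{D_2,A_2}}$ built from $\theta^F_{A_1,A_2}$ together with the canonical iso $W_{D_1} \otimes W_{D_2} \simeq W_{D_1 \cap D_2}$ (arising because $\widetilde{F}$ preserves meets), and preserves joins of central idempotents by construction. The triangle commutes since $\widehat{F}(\widehat{A}) \simeq F(A)$ and $\widehat{F}(\widehat{f})$ reduces to $F(f)$ under this identification. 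For uniqueness, any $\widehat{F}'$ in $\cat{MonCat_j}$ making the triangle commute must send each central idempotent $\tuple{\downset u, I}$ to $\overline{F}(u)$ and hence each $\tuple{D, I}$ to $\widetilde{F}(D)$, determining its action on objects up to canonical isomorphism; its action on a morphism $\eta$ is then forced by the images of the components $\eta_u$ (via the triangle applied to $\widehat{\eta_u}$) and the universality of colimits. The main technical obstacle is the careful verification of well-definedness on morphisms and of monoidal coherence, both of which reduce to compatibility checks under the universal properties invoked.
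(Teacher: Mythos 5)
Your proposal is correct and follows essentially the same route as the paper: you define $\widehat{F}\tuple{D,A}$ as the colimit $\colim_{u\in D}F(A)\otimes F(U)$ (packaged as $F(A)\otimes W_D$ via universal joins), obtain $\widehat{F}$ on morphisms as the mediating map of a cocone built from the components $\eta_u$ and their restrictions to $E$, and verify join preservation by the same computation. The only (harmless) deviations are that you land the cocone legs directly in $F(B)\otimes W_E$ and check well-definedness at $v_1\wedge v_2$, where the paper first maps to $F(B)$ and then factors, and that you spell out the uniqueness argument, which the paper leaves implicit.
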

\begin{proof}
  Let $\tuple{D,A}$ be an object in $D[\cat{C}]$.
  Consider the diagram in $\cat{D}$ with objects $F(U) \otimes F(A)$ for $u \in D$ and morphisms whenever $u \leq u' \in D$.
  Note that $F(U) \to F(U')$ is the morphism in $\cat{D}$ witnessing $F(u) \leq F(u')$ as central idempotents in $\cat{D}$.
  Because $D$ is closed under meets, and $F$ induces a semilattice homomorphism $\ZI(\cat{C}) \to \ZI(\cat{D})$, also $\{F(u) \mid u \in D\}$ is closed under meets.
  Because $\cat{D}$ has universal joins of central idempotents, the diagram under consideration has a colimit, given by the image of $\tuple{D,A}$ under $\widehat{F}$:
  \[
    \widehat{F}(\tuple{D,A}) = \colim_{u \in D} F(A) \otimes F(U)
  \]
  Now, let $\eta \colon \tuple{D,A} \to \tuple{E,B}$ be a morphism in $D[\cat{C}]$.
  Writing $\theta$ for the coherence morphism witnessing that $F$ is monoidal, the morphisms
  \[
    F(A) \otimes F(U)
    \stackrel{\theta_{A,U}}{\longrightarrow}
    F(A \otimes U)
    \stackrel{F(\eta_u)}{\longrightarrow}
    Y
  \]
  form a cocone for the diagram in $\cat{D}$ with objects $\{F(A) \otimes F(U)\}_{u \in D}$ and morphisms induced by $u \leq u'$, since the following diagram commutes:% (the triangle by functoriality and the square by naturality of $\theta$).
  \[
  \begin{tikzcd}
    F(A) \otimes F(U') \ar[rr,"\theta_{A,U'}"]
    & & F(A \otimes U') \ar[rr,"F(\eta_{u'})"]
    & & F(B)
    \\~\\
    F(A) \otimes F(U)  \ar[rr,swap,"\theta_{A,U}"] \ar[uu]
    & & F(A \otimes U) \ar[uu] \ar[uurr,swap,"F(\eta_{u})"]
    & & \\ 
  \end{tikzcd}
  \]
  This gives a mediating morphism $h \colon \widehat{F}(\tuple{D,A}) \to F(B)$ in $\cat{D}$.

  Finally, each $\eta_u$ restricts to some $v \in E$:
  \[\begin{tikzcd}[row sep=5mm]
    A \otimes U \ar[rr,"\eta_u"] \ar[dr,dashed,swap,"\tilde{\eta}"] &&  B \\
    & B \otimes V \ar[ur,swap,"B \otimes v"] 
  \end{tikzcd}\]
  Hence the morphisms $F(\eta_u) \circ \theta_{A,U}$ in the cocone restrict to $F(v)$:
  \[\begin{tikzcd}[row sep=6mm, column sep=6mm]
    F(A) \otimes F(U) \ar[rr,"\theta_{A,U}"] && F(A \otimes U) \ar[rrrr, bend left,"F(\eta_u)"] \ar[rr,"F(\tilde{\eta})"] \ar[ddrr,dashed] & &  F(B \otimes V) \ar[rr,"F(B \otimes v)"] & & F(Y)
    \\ ~\\
    & & & & F(B) \otimes F(V) \ar[uu,"\theta_{B,V}"] \ar[rr,swap,"F(B) \otimes  F(v)"] && F(B) \otimes F(I) \ar[uu,swap,"\simeq"]\ar[uu,"F(B) \otimes \theta_{I}^{-1}"]
  \end{tikzcd}\]
  Consequently, the mediating morphism $h$ factors through the colimit of the diagram in $\cat{D}$ of objects $\{F(B) \otimes F(V)\}_{v \in E}$ and morphisms induced by $v \leq v' \in E$, that is, through $\widehat{F}(\tuple{E,B})$. We define $\widehat{F}(\eta)$ to be this factor $\widehat{F}(\tuple{D,A}) \to \tilde{F}(\tuple{E,B})$.

  It is routine to verify that this is functorial. Because
  \[
    \widehat{F}\left(\bigvee_i \overline{D_i}\right)
    =
    \widehat{F}\left(\overline{\bigcup_i D_i}\right)
    =
    \colim_{u\in \bigcup_i D_i}F(U)
    =
    \bigvee_i \colim_{u \in D_i} F(U)
    =
    \bigvee \widehat{F}(\overline{D_i}) \, .
  \]
  it preserves joins of central idempotents.
\end{proof}

In other words, $\cat{C} \mapsto D[\cat{C}]$ is functorial and left adjoint to the forgetful functor $\cat{MonCat_j} \to \cat{MonCat_s}$. Similarly, the forgetful functor $\cat{MonCat_{fj}} \to \cat{MonCat_s}$ has a left adjoint.

This generalises and improves~\cite[Theorem~10.4]{enriquemolinerheunentull:tensortopology} in two ways.
First, that result is based on subunits (see Appendix~\ref{sec:subunits}).
Second, it encodes objects of $D[\cat{C}]$ as presheaves on $\cat{C}$. 
Both lead to somewhat ad-hoc steps in the proof that do not generalise to central idempotents easily (namely Lemma 8.3 and Proposition 9.6 of ~\cite{enriquemolinerheunentull:tensortopology}). The results of this section are more general and conceptually cleaner.

\section{Further work}\label{sec:conclusion}

Theorem~\ref{thm:main} and Lemma~\ref{lem:stalksclosed} do for multiplicative linear logic what earlier results~\cite{awodey:sheafrepresentations} do for higher-order intuitionistic logic. This raises several questions.
\begin{itemize}
  \item Does our construction preserve additive connectives too, so that our main result can be extended from the multiplicative fragment to full linear logic? We expect that weakly distributive categories~\cite{cockettseely:weaklydistributive} are global sections of sheaves of \changed{($\vee$- or $\bigvee$-)}local weakly distributive categories. 

  \item The representation result for toposes is often used to obtain logical completeness theorems~\cite{lambekscott:categoricallogic,awodey:sheafrepresentations}. Can we derive a similar completeness result for (multiplicative) linear logic? Concrete questions towards this aim include: if a morphism is stalkwise epi/mono/iso, must it be epi/mono/iso? Similarly, one could study converses to the results in Section~\ref{sec:preservation}, for example: if all stalks $\cat{C}\restrict{x}$ are closed, must $\cat{C}$ itself be closed?

  \item Completeness theorems for linear logic give rise to coherence proofs for symmetric monoidal closed categories~\cite{soloviev:coherence}. Does a coherence theorem follow from our construction?

  \item What does a central idempotent signify in a model of linear logic, and when is the model \changed{($\vee$- or $\bigvee$-)}local? Example~\ref{example:linearnonlinear} gives one first step. For another, consider the model of linear logic given by a \emph{Petri net}~\cite{engbergwinskel:linearlogicpetrinets}. Every Petri net generates a monoidal category~\cite{baezgenovesemastershulman:nets} whose objects are markings and whose morphisms are firing sequences. Central idempotents then correspond to full subnets with one place and two transitions, where both transitions have no other incoming or outgoing edges, and the place is marked by either none or a nonzero number of tokens.
  \tikzstyle{place}=[circle,draw,minimum size=5mm,outer sep=1mm]
  \tikzstyle{transition}=[rectangle,draw,minimum size=5mm,outer sep=1mm]    
  \[\begin{pic}[xscale=2]
    \node[transition] (l) at (0,0) {};
    \node[place] (m) at (1,0) {};
    \node[transition] (r) at (2,0) {};
    \draw[->] (m) to (r);
    \draw[->] ([yshift=2mm]m.west) to ([yshift=2mm]l.east);
    \draw[<-] (m.west) to (l.east);
    \draw[<-] ([yshift=-2mm]m.west) to ([yshift=-2mm]l.east);
  \end{pic}\]
\end{itemize}
Additionally, the structure of the sheaf representation raises several questions.
\begin{itemize}
  \changed{\item In the case of toposes, the sheaf representation has the additional property that the tensor unit becomes projective in the stalks~\cite{awodey:sheafrepresentations,awodey:sheafrepresentationsinlogic}. That requires taking Henkin models (coherent functors to $\cat{Set}$) instead of the completely prime spectrum as the base space. Is there an analogue for monoidal categories that removes the dependence on spatiality in Theorem~\ref{thm:main2}?}

  \item The stalks $\cat{C}\restrict{x}$ are closely related to certain restriction categories called tensor-restriction categories~\cite{heunenlemay:tensorrestriction}. Is there a general representation theorem for restriction categories as categories of global sections?

  \item Corollary~\ref{cor:embedding} first embedded a semilattice of central idempotents into a distributive lattice (or frame) to obtain a sheaf representation. But a spectrum can be defined directly for a semilattice~\cite{bezhanishvilijansana:priestleydualitysemilattices}. Does the representation Theorem~\ref{thm:main} extend to a sheaf on the spectrum of the semilattice of central idempotents of a stiff monoidal category without universal joins directly?

  \item The prime ideal (Zariski) spectrum of the central idempotents of a monoidal category is analogous to the Pierce spectrum, which is the prime ideal spectrum of the central idempotents, of a ring~\cite{johnstone:stonespaces}. Is there an analogon for monoidal categories of the prime spectrum of a ring?

  \item \changed{The sheaf representation theorem trivialises when applied to the topos of actions of a monoid: there are only two central idempotents. However, that topos carries other interesting topologies involving idempotent ideals of the monoid~\cite{hemelaerrogers:msets,pirashvili:msets}. This seems related to Example~\ref{ex:modules}. Is there a refinement of the sheaf representation theorem that takes this into account?}
\end{itemize}
Finally, there are many applications of localisation of rings in algebra, algebraic geometry, and tensor-triangular geometry~\cite{balmer:spectrum,balmerfavi:telescope,brandenburg:localizations,boyarchenkodrinfeld:idempotent}. 
We have not yet explored such applications of the representation theorem.
For example, Theorem~\ref{thm:main} may carry generalisations of simplicity assumptions in linear categories~\cite{kuperberg:linearcategories}, and Corollary~\ref{cor:embedding} may let us drop simplicity assumptions from embedding and characterisation theorems~\cite{heunen:embedding,heunenkornell:axioms}.

\bibliographystyle{plain}	
\bibliography{bibliography}

\begin{thebibliography}{10}

\bibitem{aneljoyal:topologie}
M.~Anel and A.~Joyal.
\newblock Topo-logie.
\newblock In M.~Anel and G.~Catren, editors, {\em New spaces in mathematics,
  formal and conceptual reflections}, pages 155--257. Cambridge University
  Press, 2021.

\bibitem{awodey:sheafrepresentations}
S.~Awodey.
\newblock Sheaf representation for topoi.
\newblock {\em Journal of Pure and Applied Algebra}, 145:107--121, 2000.

\bibitem{awodey:sheafrepresentationsinlogic}
S.~Awodey.
\newblock Sheaf representations in duality and logic.
\newblock In {\em Joachim Lambek: the interplay of mathematics, logic, and
  linguistics}, pages 39--57. Springer, 2021.

\bibitem{baez:2hilbertspaces}
J.~C. Baez.
\newblock Higher-dimensional algebra {II}: 2-{H}ilbert spaces.
\newblock {\em Advances in Mathematics}, 127:125--189, 1997.

\bibitem{baezgenovesemastershulman:nets}
J.~C. Baez, F.~Genovese, J.~Master, and M.~Shulman.
\newblock Categories of nets.
\newblock arXiv:2101.04238 [math.CT], 2021.

\bibitem{balmer:spectrum}
P.~Balmer.
\newblock The spectrum of prime ideals in tensor triangulated categories.
\newblock {\em Journal f{\"u}r die {R}eine und {A}ngewandte {M}athematik},
  588:149--168, 2005.

\bibitem{balmerfavi:telescope}
P.~Balmer and G.~Favi.
\newblock Generalized tensor idempotents and the telescope conjecture.
\newblock {\em Proceedings of the London Mathematical Society},
  102(6):1161--1185, 2011.

\bibitem{benton:linearnonlinear}
N.~Benton.
\newblock A mixed linear and non-linear logic: Proofs, terms and models.
\newblock In {\em Computer Science Logic}, volume 933 of {\em Lecture Notes in
  Computer Science}, pages 121--135. Springer, 1994.

\bibitem{bezhanishvilijansana:priestleydualitysemilattices}
G.~Bezhanishvili and R.~Jansana.
\newblock Priestley style duality for distributive meet-semilattices.
\newblock {\em Studia Logica}, 98:83--122, 2011.

\bibitem{borceux:2}
F.~Borceux.
\newblock {\em Handbook of Categorical Algebra 2: Categories and Structures}.
\newblock Cambridge University Press, 1994.

\bibitem{borceux:3}
F.~Borceux.
\newblock {\em Handbook of Categorical Algebra 3: Categories of Sheaves}.
\newblock Cambridge University Press, 1994.

\bibitem{borceuxvdbossche:quantalesheaves}
F.~Borceux and G.~{Van den Bossche}.
\newblock Quantales and their sheaves.
\newblock {\em Order}, 3:61--87, 1986.

\bibitem{bourbaki:generaltopology}
N.~Bourbaki.
\newblock {\em Elements of Mathematics: General topology, Chapters 1--4}.
\newblock Springer, 1987.

\bibitem{boyarchenkodrinfeld:idempotent}
M.~Boyarchenko and V.~Drinfeld.
\newblock Idempotents in monoidal categories.

\bibitem{brandenburg:localizations}
M.~Brandenburg.
\newblock Localizations of tensor categories and fiber products of schemes.
\newblock arXiv:2002.00383 [math.AG], 2020.

\bibitem{breiner:logicalschemes}
S.~Breiner.
\newblock {\em Scheme representation for first-order logic}.
\newblock PhD thesis, Carnegie Mellon University, 2014.

\bibitem{cardechi:etale}
D.~Cardechi.
\newblock An {\'e}tal{\'e} space construction for stacks.
\newblock {\em Algebraic \& Geometric Topology}, 13:831--903, 2013.

\bibitem{clarkwisbauer:idempotent}
J.~Clark and R.~Wisbauer.
\newblock Idempotent monads and $\ast$-functors.
\newblock {\em Journal of Pure and Applied Algebra}, 215(2):145--153, 2011.

\bibitem{cockettseely:weaklydistributive}
J.~R.~B. Cockett and R.~A.~G. Seely.
\newblock Weakly distributive categories.
\newblock {\em Journal of Pure and Applied Algebra}, 114:133--173, 1997.

\bibitem{constantindicaireheunen:localisablemonads}
C.~Constantin, N.~Dicaire, and C.~Heunen.
\newblock Localisable monads.
\newblock In {\em Computer Science Logic}, volume 216, pages 15:1--17. Leibniz
  International Proceedings in Informatics, 2022.

\bibitem{daunshofmann:sections}
J.~Dauns and K.~H. Hofmann.
\newblock {\em Representations of rings by sections}.
\newblock Number~83 in Memoirs. American Mathematical Society, 1968.

\bibitem{dickmannschwartztressl:spectralspaces}
M.~Dickmann, N.~Schwartz, and M.~Tressl.
\newblock {\em Spectral spaces}.
\newblock Cambridge University Press, 2019.

\bibitem{engbergwinskel:linearlogicpetrinets}
U.~H. Engberg and G.~Winskel.
\newblock Linear logic on {P}etri nets.
\newblock Technical Report RS-94-3, Basic Research in Computer Science (BRICS)
  report, February 1994.

\bibitem{enriquemolinerheunentull:space}
P.~{Enrique Moliner}, C.~Heunen, and S.~Tull.
\newblock Space in monoidal categories.
\newblock In {\em Quantum Physics and Logic}, volume 266 of {\em Electronic
  Proceedings in Theoretical Computer Science}, pages 399--410, 2017.

\bibitem{enriquemolinerheunentull:tensortopology}
P.~{Enrique Moliner}, C.~Heunen, and S.~Tull.
\newblock Tensor topology.
\newblock {\em Journal of Pure and Applied Algebra}, 224(10):106378, 2020.

\bibitem{godement:flabby}
R.~Godement.
\newblock {\em Topologie Alg{\'e}brique et Th{\'e}orie des Faisceux}.
\newblock Number 1252 in Actualit{\'e}s scientifique et industrielles. Hermann,
  1958.

\bibitem{hemelaerrogers:msets}
J.~Hemelaer and M.~Rogers.
\newblock Monoid properties as invariants of toposes of monoid actions.
\newblock {\em Applied Categorical Structures}, 29:379--413, 2021.

\bibitem{heunen:embedding}
C.~Heunen.
\newblock An embedding theorem for {H}ilbert categories.
\newblock {\em Theory and Applications of Categories}, 22(13):321--344, 2009.

\bibitem{heunenkornell:axioms}
C.~Heunen and A.~Kornell.
\newblock Axioms for the category of {H}ilbert spaces.
\newblock {\em Proceedings of the National Academy of Sciences},
  119(9):e2117024119, 2022.

\bibitem{heunenlemay:tensorrestriction}
C.~Heunen and J.~S. {Pacaud Lemay}.
\newblock Tensor-restriction categories.
\newblock {\em Theory and Application of Categories}, 37(21):635--670, 2021.

\bibitem{heunenreyes:frobenius}
C.~Heunen and M.~L. Reyes.
\newblock Frobenius structures over {H}ilbert {C}*-modules.
\newblock {\em Communications in Mathematical Physics}, 361(2):787--824, 2018.

\bibitem{heunenvicary:cqm}
C.~Heunen and J.~Vicary.
\newblock {\em Categories for quantum theory: an introduction}.
\newblock Oxford University Press, 2019.

\bibitem{heymans:quantalesheaves}
H.~Heymans.
\newblock {\em Sheaves on quantales as generalized metric spaces}.
\newblock PhD thesis, Universiteit Antwerpen, 2010.

\bibitem{hochster:primeideal}
M.~Hochster.
\newblock Prime ideal structure in commutative rings.
\newblock {\em Transactions of the American Mathematical Society}, pages
  43--60, 1968.

\bibitem{johnstone:stonespaces}
P.~T. Johnstone.
\newblock {\em Stone spaces}.
\newblock Cambridge University Press, 1982.

\bibitem{joyalstreet:yangbaxter}
A.~Joyal and R.~Street.
\newblock Tortile {Y}ang-{B}axter operators in tensor categories.
\newblock {\em Journal of Pure and Applied Algebra}, 71:43--51, 1991.

\bibitem{joyalstreetverity:traced}
A.~Joyal, R.~Street, and D.~Verity.
\newblock Traced monoidal categories.
\newblock {\em Mathematical Proceedings of the Cambridge Philosophical
  Society}, 119:447--468, 1996.

\bibitem{kapranovvoevodsky:2vectorspaces}
M.~Kapranov and V.~Voevodsky.
\newblock 2-categories and {Z}amolodchikov tetrahedra equations in algebraic
  groups and their generalization.
\newblock In {\em Proceedings of Symposia in Pure Mathematics}, volume~56,
  pages 177--259. American Mathematical Society, 1994.

\bibitem{kassel:quantumgroups}
C.~Kassel.
\newblock {\em Quantum groups}.
\newblock Springer, 1995.

\bibitem{kelly:transfinite}
G.~M. Kelly.
\newblock A unified treatment of transfinite constructions for free algebras,
  free monoids, colimits, associated shaves, and so on.
\newblock {\em Bulletin of the Australian Mathematical Society}, 22:1--83,
  1980.

\bibitem{kuperberg:linearcategories}
G.~Kuperberg.
\newblock Finite, connected, semisimple, rigid tensor categories are linear.
\newblock {\em Mathematical Research Letters}, 10:411--421, 2003.

\bibitem{lam:modules}
T.~Y. Lam.
\newblock {\em Lectures on Modules and Rings}.
\newblock Springer, 1999.

\bibitem{lambek:possibleworlds}
J.~Lambek.
\newblock On the sheaf of possible worlds.
\newblock In J.~Adamek and S.~{Mac Lane}, editors, {\em Categorical topology
  and its relation to analysis, algebra, and combinatorics}, pages 36--54.
  World Scientific, 1989.

\bibitem{lambek:world}
J.~Lambek.
\newblock What is the world of mathematics?
\newblock {\em Annals of Pure and Applied Logic}, 126(1--3):149--158, 2004.

\bibitem{lambekmoerdijk:sheafrepresentations}
J.~Lambek and I.~Moerdijk.
\newblock Two sheaf representations of elementary toposes.
\newblock {\em The {L. E. J.} {B}rouwer centenary symposium}, pages 275--295,
  1982.

\bibitem{lambekrattray:localization}
J.~Lambek and B.~A. Rattray.
\newblock Localization and duality in additive categories.
\newblock {\em Houston Journal of Mathematics}, 1(1):87--100, 1975.

\bibitem{lambekscott:categoricallogic}
J.~Lambek and P.~Scott.
\newblock {\em Introduction to higher order categorical logic}.
\newblock Cambridge University Press, 1986.

\bibitem{lance1995hilbert}
E.~C. Lance.
\newblock {\em Hilbert C*-modules: a toolkit for operator algebraists}.
\newblock Cambridge University Press, 1995.

\bibitem{lurie:highertopos}
J.~Lurie.
\newblock {\em Higher topos theory}.
\newblock Princeton University Press, 2009.

\bibitem{maclanemoerdijk:sheaves}
S.~{Mac Lane} and I.~Moerdijk.
\newblock {\em Sheaves in Geometry and Logic}.
\newblock Springer, 1992.

\bibitem{pirashvili:msets}
I.~Pirashvili.
\newblock Idempotents and the points of the topos of {$M$}-sets.
\newblock {\em arXiv:2011.11747}, 2020.

\bibitem{resende:quantalesheaves}
P.~Resende.
\newblock Groupoid sheaves as quantale sheaves.
\newblock {\em Journal of Pure and Applied Algebra}, 216:41--70, 2012.

\bibitem{rosenthal:quantales}
K.~I. Rosenthal.
\newblock {\em Quantales and their applicatoins}.
\newblock Pitman Research Notes in Mathematics. Longman Scientific \&
  Technical, 1990.

\bibitem{selinger:graphicallanguages}
P.~Selinger.
\newblock A survey of graphical languages for monoidal categories.
\newblock Number 813 in Lecture Notes in Physics, pages 289--356. Springer,
  2009.

\bibitem{soloviev:coherence}
S.~Soloviev.
\newblock Proof of a conjecture of {S}. {M}ac {L}ane.
\newblock {\em Annals of Pure and Appplied Logic}, 90:101--162, 1997.

\bibitem{takahashi:hilbertmodules}
A.~Takahashi.
\newblock Hilbert modules and their representation.
\newblock {\em Revista {C}olumbiana de matematicas}, 13:1--38, 1979.

\end{thebibliography}

\appendix
\section{Subunits}\label{sec:subunits}

In this appendix we compare central idempotents with \emph{subunits}, which are central idempotents $u \colon U \to I$ that are monic. 
There is an inclusion $\ISub(\cat{C}) \subseteq \ZI(\cat{C})$; see~\cite[Lemma~2.2]{heunenlemay:tensorrestriction}.
In a so-called \emph{firm} braided monoidal category, the subunits form a semilattice, and this inclusion is an embedding of semilattices.
However, $\ZI(\cat{C})$ does not need $\cat{C}$ to be braided, allowing more examples such as Example~\ref{ex:quantale} and Example~\ref{ex:idempotentmonad} above. Additionally, several conditions simplify; for example, central idempotents do not require firmness. 
The next definition exhibits a condition that holds very often, under which the subunits and central idempotents coincide.

\begin{definition}
  Call a monoidal category \emph{bilinear} when morphisms $f,g \colon A \to B \otimes U$ for a central idempotent $u$ are equal if (and only if) $f \otimes U = g \otimes U$.
\end{definition}

\begin{lemma}
  If a braided monoidal category $\cat{C}$ is bilinear, then $\ISub(\cat{C}) = \ZI(\cat{C})$.
\end{lemma}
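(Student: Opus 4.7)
The plan is to show the nontrivial inclusion $\ZI(\cat{C}) \subseteq \ISub(\cat{C})$; the reverse inclusion holds in general. So let $u \colon U \to I$ be a central idempotent, and I will show that $u$ is monic. Take any pair $f, g \colon A \to U$ with $u \circ f = u \circ g$; the goal is to conclude $f = g$.

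The first step is to tensor on the right by $U$, obtaining $(u \otimes U) \circ (f \otimes U) = (u \otimes U) \circ (g \otimes U)$ as maps $A \otimes U \to I \otimes U$. Now because $u$ is a central idempotent, $\lambda_U \circ (u \otimes U)$ is invertible by definition, and since $\lambda_U$ is itself an isomorphism, $u \otimes U$ is invertible too (equivalently, $u \leq u$ via the characterisation of $\leq$ from the proposition in Section~\ref{sec:centralidempotents}). Cancelling this invertible map yields $f \otimes U = g \otimes U$.

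The second step invokes bilinearity. Composing $f$ and $g$ with the left unitor's inverse gives morphisms $\lambda_U^{-1} \circ f,\, \lambda_U^{-1} \circ g \colon A \to I \otimes U$, and tensoring the previous equality on the right by $U$ shows $(\lambda_U^{-1} \circ f) \otimes U = (\lambda_U^{-1} \circ g) \otimes U$. Bilinearity (applied with $B = I$) then forces $\lambda_U^{-1} \circ f = \lambda_U^{-1} \circ g$, hence $f = g$. Therefore $u$ is monic, so $u \in \ISub(\cat{C})$.

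There is no real obstacle; the braiding hypothesis is used only implicitly to guarantee that $\ISub(\cat{C})$ is well-behaved (via Lemma~\ref{lem:halfbraidingisbraiding}, the half-braiding carried by a central idempotent is automatically the braiding), and the core of the argument is simply that $u \otimes U$ is an isomorphism for any central idempotent, so monicity after tensoring with $U$ is automatic, and bilinearity transports this back to monicity of $u$ itself.
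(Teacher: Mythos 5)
Your proof is correct and follows essentially the same route as the paper's: tensor the equation $u\circ f = u\circ g$ with $U$, cancel the invertible $u\otimes U$, and apply bilinearity. The only difference is your extra care in inserting $\lambda_U^{-1}$ so that the codomain is literally of the form $B\otimes U$ before invoking bilinearity, which the paper glosses over.
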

\begin{proof}
  Let $u \colon U \to I$ be a central idempotent. 
  Suppose that $u \circ f = u \circ g$ for $f,g \colon A \to U$.
  Then $(u \otimes U) \circ (f \otimes U) = (u \otimes U) \circ (g \otimes U)$, and hence $f \otimes U = g \otimes U$. 
  But bilinearity now implies $f=g$.
\end{proof}

Any posetal category is bilinear, including semilattices, frames, and quantales.
Any sheaf category is bilinear, because there is only one function into the empty set (namely the empty function, which has the empty set as domain). More generally, the following lemma shows that any cartesian category is bilinear.

\begin{lemma}
  Any cartesian category is bilinear. 
\end{lemma}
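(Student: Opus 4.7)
The plan is to exploit that in a cartesian category the tensor product is the categorical product and the tensor unit is terminal, so the hypothesis $f \otimes U = g \otimes U$ reads as the equation $f \times \mathrm{id}_U = g \times \mathrm{id}_U \colon A \times U \to (B \times U) \times U$. The idea is to cancel the extra $U$-factor on the source by finding a section of the first projection $\pi_A \colon A \times U \to A$, and to cancel the extra $U$-factor on the target by post-composing with the obvious projection $(B \times U) \times U \to B \times U$ that discards the last component.

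Concretely, I would proceed in three short steps. First, post-compose both sides of $f \otimes U = g \otimes U$ with the map $(B \otimes U) \otimes u \colon B \otimes U \otimes U \to B \otimes U \otimes I \simeq B \otimes U$; in cartesian terms this projects out the final factor of $U$, and the resulting equation is $f \circ \pi_A = g \circ \pi_A$. Second, observe that the morphism $\pi_U \circ f \colon A \to U$ exists simply because $f$ does, so $\langle \mathrm{id}_A, \pi_U \circ f\rangle \colon A \to A \times U$ is a section of $\pi_A$. Third, conclude $f = g$ from the fact that $\pi_A$ is a split, hence ordinary, epimorphism.

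There is essentially no obstacle: the whole argument is a direct application of the universal property of the product together with the existence of the morphism $\pi_U \circ f$. I would note in passing that one could alternatively invoke Lemma~\ref{lem:zi:cartesian} to use that $U$ is subterminal, hence $U \times U \simeq U$ via the diagonal, but the section-of-projection argument above is self-contained and avoids that detour. The only point to get right is matching the graphical/categorical bookkeeping so that the ``extra'' copy of $U$ that appears on the source of $f \otimes U$ is precisely the one providing the section of $\pi_A$ needed to cancel.
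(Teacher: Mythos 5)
Your proof is correct and is essentially the paper's own argument: the section $\langle \mathrm{id}_A, \pi_U \circ f\rangle$ of the projection $A \times U \to A$ is exactly the morphism $e$ the paper constructs, and cancelling via ``split epi implies epi'' is the same cancellation the paper performs by writing $f = (f\otimes u)\circ e = (g\otimes u)\circ e = g$. Your observation that subterminality of $U$ (Lemma~\ref{lem:zi:cartesian}) is not actually needed is also consistent with the paper, which mentions the independence of $e$ from $f$ only as an aside.
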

\begin{proof}
  Observe that a cartesian category satisfies the following property:
  \[\begin{tikzpicture}[xscale=3,yscale=1.5]
  \node (A) at (0,0) {$A$};
  \node (BS) at (2,0) {$B \otimes U$};
  \node (AS) at (1,1) {$A \otimes U$};
  \draw[->] (A) to node[below]{$f$} (BS);
  \draw[->] (AS) to node[above right]{$f \otimes u$} (BS);
  \draw[>->] ([xshift=-.5mm]AS.-135) to node[above left]{$A \otimes u$} ([xshift=-.5mm]A.45);
  \draw[<<-,dashed] ([xshift=.5mm]AS.-135) to node[below right]{$e$} ([xshift=.5mm]A.45);
  \draw[->] (AS) to[out=110,in=70,looseness=10] node[right=1mm,font=\tiny]{$A \otimes U$} (AS);
  \end{tikzpicture}\]   
  Namely, given $f$, take $e=\langle A, f \circ \pi_2 \rangle$.
  In fact, notice that $e$ is independent of $f$, because $f \circ \pi_2$ is the unique morphism from $A$ into the subterminal object $U$ by Lemma~\ref{lem:zi:Cu}.
  Now if $f,g \colon A \to B \otimes U$ satisfy $f \otimes U = g \otimes U$, then $f = (f \otimes u) \circ e = (g \otimes u) \circ e = g$.
\end{proof}

The next two lemmas show that categories of (Hilbert) modules (over nonunital rings) are bilinear too.

\begin{lemma}
  If $R$ is a nondegenerate firm commutative ring, then the category $\cat{Mod}_R$ of nondegenerate firm $R$-modules and linear maps is bilinear.
\end{lemma}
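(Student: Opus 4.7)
The plan is to transfer the hypothesis $f\otimes U = g\otimes U$ into an annihilation statement in $B\otimes_R U$, and then to deduce the conclusion using firmness and nondegeneracy.

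First, I would unpack the central idempotent structure using the description from Example~\ref{ex:modules}. A central idempotent in $\cat{Mod}_R$ is represented by a linear map $u\colon U\to R$ for which the multiplication map $\rho_U\circ(U\otimes u)\colon U\otimes_R U\to U$, given by $v_1\otimes v_2\mapsto u(v_1)v_2=u(v_2)v_1$ (equality by centrality), is an isomorphism. Tensoring on the left with $B$ yields an isomorphism $\mu_B\colon B\otimes_R U\otimes_R U\to B\otimes_R U$ sending $b\otimes v_1\otimes v_2\mapsto u(v_2)(b\otimes v_1)$, namely the action of $u(v_2)\in R$ on the factor $b\otimes v_1\in B\otimes_R U$. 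Postcomposing the equality $f\otimes U=g\otimes U$ with $\mu_B$ then gives, on each element, $u(v)\cdot f(a)=u(v)\cdot g(a)$ in $B\otimes_R U$ for every $a\in A$ and $v\in U$. Equivalently, the idempotent ideal $u(U)\subseteq R$ annihilates $y_a:=f(a)-g(a)$ for every $a\in A$.

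Second, I would establish the key claim: if $y\in B\otimes_R U$ satisfies $u(v)\cdot y = 0$ for every $v\in U$, then $y=0$. For this, $B\otimes_R U$ is itself a nondegenerate firm $R$-module, since it is the tensor product inside the category $\cat{Mod}_R$ of two such modules. Moreover, tensoring the isomorphism $U\otimes_R U\cong U$ with $B$ exhibits a canonical isomorphism $U\otimes_R(B\otimes_R U)\cong B\otimes_R U$ via the action $v\otimes z\mapsto u(v)z$, making $B\otimes_R U$ a firm module over $U$ acting through $u$. Standard results on firm modules over nondegenerate firm rings---concretely, the production of local units in $U$ adapted to any finite subset of $B\otimes_R U$---imply that this $U$-action is nondegenerate. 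Applying nondegeneracy to each $y_a$ yields $f(a)=g(a)$ for every $a\in A$, so $f=g$.

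The main obstacle will be justifying that the $U$-action on $B\otimes_R U$ is nondegenerate in the required sense. This is essentially the content of classical theorems on firm modules over non-unital rings: for commutative rings, nondegeneracy plus firmness produces local units for any finite subset, from which the annihilation condition forces the element to vanish. Once this ingredient is granted, the rest of the proof is a direct chase through the central idempotent structure, and the dual version exchanging left for right proceeds identically.
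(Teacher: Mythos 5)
Your proposal is correct and takes essentially the same route as the paper: both reduce the hypothesis $f\otimes U=g\otimes U$ to the statement that the idempotent ideal $u(U)$ multiplies each $f(a)-g(a)$ to zero, and both then exploit the isomorphism $U\otimes U\simeq U$ (firmness of $B\otimes U$ over the ideal) to conclude $f(a)=g(a)$. The only cosmetic difference is that the paper phrases the final step as producing a local unit $\varphi$ with $u(\varphi)\cdot f(a)=f(a)$, whereas you phrase it as nondegeneracy of the $u(U)$-action on $B\otimes U$; the former implies the latter, and both rest on the same standard local-units fact for firm modules that you explicitly flag.
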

\begin{proof}
  By Example~\ref{ex:modules}, a central idempotent in $\cat{Mod}_R$ is a morphism $u \colon U \to S$ for an ideal $S\subseteq R$ that is nondegenerate, firm, and idempotent~\cite[Proposition~3.11]{enriquemolinerheunentull:tensortopology}.
  If $B$ is an $R$-module, then $B \otimes U$ is the submodule $\{\varphi \cdot b \mid \varphi \in S, b \in B \}$.
  Suppose that $f,g \colon A \to B \otimes U$ are linear maps, such that $f \otimes U = g \otimes U$.
  The latter means that $f(u(\varphi) \cdot a)=g(u(\varphi) \cdot a)$ for any $a \in A$ and $\varphi \in U$.
  Let $a \in A$. Because $U \otimes U \simeq U$, then $f(a) \in B \otimes U$ corresponds to $u(\varphi) \cdot f(a) \in B \otimes U \otimes U$ for some $\varphi \in U$. But $u(\varphi) \cdot f(a) = f(u(\varphi) \cdot a) = g(u(\varphi) \cdot a) = u(\varphi) \cdot g(a)$, so $f(a)=g(a)$. Thus $f=g$.
\end{proof}

\begin{lemma}
  If $X$ is a locally compact Hausdorff space, $\cat{Hilb}_{C_0(X)}$ is bilinear.
\end{lemma}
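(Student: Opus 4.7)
The plan is to reduce to a fibrewise statement via the monoidal equivalence $\cat{Hilb}_{C_0(X)} \simeq$ continuous fields of Hilbert spaces over $X$ already used in Example~\ref{ex:hilbert}, where the assertion becomes almost tautological.

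First, I would translate the data along that equivalence. A central idempotent $u \colon U \to I$ corresponds to a continuous field whose fibres $u_x \colon U_x \to \mathbb{C}$ are fibrewise central idempotents in $\cat{Hilb}$. Since $U \otimes U \simeq U$ fibrewise gives $U_x \otimes U_x \simeq U_x$, each $U_x$ is either $0$ or one-dimensional; concretely, the open set $V \subseteq X$ associated to $u$ under $\ZI(\cat{Hilb}_{C_0(X)}) \simeq \{V \subseteq X \text{ open}\}$ is exactly $\{x \mid U_x \neq 0\}$. Moreover, the monoidal structure being fibrewise yields $(B \otimes U)_x = B_x \otimes U_x$ and similarly $(A \otimes U)_x = A_x \otimes U_x$, $(B \otimes U \otimes U)_x = B_x \otimes U_x \otimes U_x$.

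Now suppose $f, g \colon A \to B \otimes U$ satisfy $f \otimes U = g \otimes U$. I would check fibrewise equality $f_x = g_x \colon A_x \to (B \otimes U)_x$ for each $x \in X$. For $x \notin V$, one has $(B \otimes U)_x = 0$, so the equality is automatic. For $x \in V$, the canonical identifications $U_x \cong \mathbb{C}$, $A_x \otimes U_x \cong A_x$, and $B_x \otimes U_x \otimes U_x \cong B_x \otimes U_x$ send the hypothesis $f_x \otimes U_x = g_x \otimes U_x$ directly to $f_x = g_x$ (via the unitors $\lambda,\rho$). Because a morphism of continuous fields of Hilbert spaces is determined by its fibrewise values, we conclude $f = g$.

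I do not foresee a serious obstacle: essentially all the difficulty has been done upstream in Example~\ref{ex:hilbert}, where the fibrewise description and the classification $\ZI(\cat{Hilb}_{C_0(X)}) \simeq \mathcal{O}(X)$ are established. The only point requiring mild care is bookkeeping with the unitor isomorphisms when identifying $f \otimes U$ with $f$ on the fibres above $V$, but this is routine.
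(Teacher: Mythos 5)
Your proof is correct, but it takes a genuinely different route from the paper's. The paper argues directly inside $\cat{Hilb}_{C_0(X)}$: it identifies $B \otimes U$ with the submodule $\{b \in B \mid \langle b \mid b\rangle(X\setminus U) = 0\}$, picks a net $\varphi_n \in C(X)$ vanishing on $X \setminus U$ with $\varphi_n(x) \to 1$ for $x \in U$, and shows $f(a) = \lim_n \varphi_n \cdot f(a) = \lim_n \varphi_n \cdot g(a) = g(a)$ by an explicit inner-product estimate. You instead transport the whole problem across the monoidal equivalence with continuous fields of Hilbert spaces, where the tensor product and composition are fibrewise, and observe that over points of the open set $V$ the idempotent fibre $U_x$ is a one-dimensional unit (so tensoring with it is invertible), while over points outside $V$ the codomain fibre is zero; equality of morphisms of fields is detected fibrewise, so $f = g$. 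Both arguments are sound. Yours is shorter and more conceptual, but it leans on the full strength of the equivalence $\cat{Hilb}_{C_0(X)} \simeq$ fields of Hilbert spaces (a nontrivial external theorem, though one the paper already invokes in Example~\ref{ex:hilbert}, so you are entitled to it) together with the injectivity of each $u_x$ to conclude $U_x \cong \mathbb{C}$ on $V$; the paper's approach is self-contained analysis requiring only the concrete description of $B \otimes U$ as a submodule and a standard approximate-unit argument. The only point you should state explicitly rather than wave at is that $u_x$ is injective on each fibre (so that $U_x \neq 0$ really forces $U_x \cong \mathbb{C}$ via $u_x$), which the paper extracts from Example~\ref{ex:modules}.
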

\begin{proof}
  As in Example~\ref{ex:hilbert}, a central idempotent $U$ in $\cat{C}=\cat{Hilb}_{C_0(X)}$ is of the form $U=\{\varphi \in C_0(X) \mid \varphi(X \setminus U)=0\}$ for an open set $U \subseteq X$~\cite[Proposition~3.16]{enriquemolinerheunentull:tensortopology}.
  If $B$ is a Hilbert $C_0(X)$-module, then $B \otimes U$ is the submodule $\{b \in B \mid \langle b \mid b \rangle (X \setminus U)=0\}$.
  Suppose that $f,g \colon A \to B \otimes U$ are bounded $C_0(X)$-linear maps, and that $f \otimes U = g \otimes U$.
  The latter means that $f(a)=g(a)$ for $a \in A$ with $\langle a \mid a \rangle (X \setminus U)=0$.
  Now pick a net $\varphi_n \in C(X)$ such that $\varphi_n(X \setminus U)=0$ but $\lim_n \varphi_n(x)=1$ for every $x \in U$.
  Then $\varphi_n \cdot f(a) = f(\varphi_n \cdot a) = g(\varphi_n \cdot a) = \varphi_n \cdot g(a)$ for every $a \in A$.
  Moreover, $\lim_n \langle (\varphi_n-1)f(a) \mid (\varphi_n-1)g(a)\rangle (x)$ vanishes for every $x \in X$: for if $x \not\in U$ then the limit equals $\lim \langle f(a) \mid f(a) \rangle(x)=0$; but if $x \in U$ then $\varphi_n-1$ tends to zero and so the limit vanishes too.
  Thus $f(a) = \lim_n \varphi_n \cdot f(a) = \lim_n \varphi_n \cdot g(a) = g(a)$ for every $a$, that is, $f=g$.
\end{proof}

\end{document}